\newtheoremstyle{newremark}
  {5pt}
  {5pt}
  {\rmfamily}
  {}
  {\rmfamily\bf}
  {.}
  {.5em}
  {}
\newtheorem{theorem}{Theorem}
\newtheorem{lemma}[theorem]{Lemma}
\newtheorem{corollary}[theorem]{Corollary}
\newtheorem{proposition}[theorem]{Proposition}
\theoremstyle{newremark}
\newtheorem{remark}[theorem]{Remark}
\newtheorem{definition}[theorem]{Definition}
\newtheorem*{definition*}{Definition} %no numbering for Theorem*
\newtheorem*{notations*}{Notations}
\numberwithin{theorem}{section}
\numberwithin{equation}{section}
\newcommand{\N}{\mathbb{N}} %natural numbers
\newcommand{\R}{\mathbb{R}} %real numbers
\newcommand{\Rn}{\R^n}
\newcommand{\Z}{\mathbb{Z}} %integers
\def\Xint#1{\mathchoice
{\XXint\displaystyle\textstyle{#1}}%
{\XXint\textstyle\scriptstyle{#1}}%
{\XXint\scriptstyle\scriptscriptstyle{#1}}%
{\XXint\scriptscriptstyle
\scriptscriptstyle{#1}}%
\!\int}
\def\XXint#1#2#3{{%
\setbox0=\hbox{$#1{#2#3}{\int}$}
\vcenter{\hbox{$#2#3$}}\kern-.5\wd0}}
\def\dashint{\Xint-}
\renewcommand{\leq}{\leqslant}
\renewcommand{\geq}{\geqslant}
\renewcommand{\subset}{\subseteq}
\newcommand{\res}{\mathop{\hbox{\vrule height 7pt width .5pt depth 0pt
\vrule height .5pt width 6pt depth 0pt}}\nolimits}
\newcommand{\eps}{\varepsilon}
\newcommand{\e}{{\rm e}}
\newcommand{\de}{{\rm d}}
\begin{document}

%=================
% TITLE AND AUTHOR
%=================

\title[\bf Partial regularity for fractional harmonic maps]{Partial regularity for\\ fractional harmonic maps into spheres}

\author{Vincent Millot}
\address{LAMA, Univ Paris Est Creteil, Univ Gustave Eiffel, UPEM, CNRS, F-94010, Cr\'eteil, France}
\email{vincent.millot@u-pec.fr}

\author{Marc Pegon}
\address{Universit\'e de Paris, Laboratoire Jacques-Louis Lions (LJLL), F-75013 Paris, France}
\email{mpegon@math.univ-paris-diderot.fr}

\author{Armin Schikorra}
\address{University of Pittsburgh, Department of Mathematics, 301 Thackeray Hall, Pittsburgh, PA15260, USA}
\email{armin@pitt.edu}

%\date{\today}

%=========
% ABSTRACT
%=========

\begin{abstract}
This article addresses the regularity issue for stationary or minimizing fractional harmonic maps into spheres of order $s\in(0,1)$ in arbitrary dimensions. It is shown that such fractional harmonic maps are $C^\infty$ away from a small closed singular set. The Hausdorff dimension of the singular set is also estimated in terms of  $s\in(0,1)$ and the stationarity/minimality assumption.  
\end{abstract}

%\keywords{keywords}

%\thanks{{\it Aknowledgements.} }

\maketitle

%==================
% TABLE OF CONTENTS
%==================

\tableofcontents

%%%%%%%%%%%%%%%%%%%%%%%%%%%%%%%%%%%%%%%%%%%%%%%%%%%%%%%%%%

\section{Introduction}

The theory of fractional harmonic maps  into a manifold is quite recent. It is has been initiated some years ago by F. Da Lio and T. Rivi\`ere in \cite{DaLiRi1,DaLiRi2}. In those first articles, they have introduced and studied $1/2$-harmonic maps from the real line into a smooth and compact closed submanifold $\mathcal{N}\subset\R^d$. A map $u:\R\to\mathcal{N}$ is said to be a  $1/2$-harmonic map into $\mathcal{N}$ if it is a critical point of the $1/2$-Dirichlet energy  
$$\mathcal{E}_{\frac{1}{2}}(u,\R):=\frac{1}{2}\int_{\R}\big|(-\Delta)^{\frac{1}{4}}u\big|^2\,\de x=\frac{1}{4\pi}\iint_{\R\times\R}\frac{|u(x)-u(y)|^2}{|x-y|^2}\,\de x\de y \,,$$ 
among all maps with values into $\mathcal{N}$, or equivalently, if it satisfies the Euler-Lagrange equation 
\begin{equation}\label{eq1/2harmintro1}
(-\Delta)^{\frac{1}{2}}u\perp{\rm Tan}(u,\mathcal{N})
\end{equation}
in the distributional sense. Here $(-\Delta)^s$ denotes the integro-differential (multiplier) operator associated to the Fourier symbol $(2\pi|\xi|)^{2s}$, $s\in(0,1)$. The notion of $1/2$-harmonic map into $\mathcal{N}$ appears in several geometrical problems, such as free boundary minimal surfaces or  Steklov eigenvalue problems, see \cite{DaLi2} and references therein. The special case $\mathcal{N}=\mathbb{S}^{d-1}$ is important for both geometrical and analytical issues. From the analytical point of view, it enlightens the internal structure  of equation \eqref{eq1/2harmintro1}. Indeed, the Lagrange multiplier associated to the constraint to be $\mathbb{S}^{d-1}$-valued takes a very simple form, and \eqref{eq1/2harmintro1} reduces to the equation
\begin{equation}\label{eq1/2harmsphereintro}
(-\Delta)^{\frac{1}{2}}u(x)=\left(\frac{1}{2\pi}\int_{\R}\frac{|u(x)-u(y)|^2}{|x-y|^2}\,\de y\right)u(x)\,, 
\end{equation}
 which is in clear analogy with the equation for usual harmonic maps from a $2$d-domain into the sphere. In particular, there is a similar analytical issue concerning regularity of solutions since the right hand side of \eqref{eq1/2harmsphereintro} has {\sl a priori} no better integrability than $L^1$, and elliptic linear theory does not  apply. In their pioneering work  \cite{DaLiRi1}, F. Da Lio and T. Rivi\`ere proved complete smoothness of $1/2$-harmonic maps through a reformulation of equation \eqref{eq1/2harmsphereintro} in terms of algebraic quantities, the ``3-terms commutators",  exhibiting some compensation phenomena.  In  \cite{DaLiRi2} (dealing with arbitrary targets), smoothness  of $1/2$-harmonic maps follows from a more general compensation result for nonlocal systems with antisymmetric potential, in the spirit of~\cite{Riv2}.  
 In the same stream of ideas, K. Mazowiecka and the third author obtained in \cite{MazSchi} a new proof of the regularity of $1/2$-harmonic maps, very close  to the original argument of F.~H\'elein~\cite{Hel1} to prove smoothness of harmonic maps from surfaces into spheres (see also~\cite{Hel2}). Once again, the key point in \cite{MazSchi} is to rewrite the right hand side of \eqref{eq1/2harmsphereintro} to discover a suitable ``fractional div-curl structure". From the new form of the equation, they deduce that  $(-\Delta)^{\frac{1}{2}}u$ belongs (essentially) to the Hardy space~$\mathcal{H}^1$ by applying their main result \cite[Theorem 2.1]{MazSchi}, a generalization to the fractional setting of the div-curl estimate of R. Coifman, P.L. Lions, Y.~Meyer,  and S.~Semmes~\cite{CLMS}. Continuity of solutions is then a consequence of Calder\'on-Zygmund theory, from which it is possible to  deduce $C^\infty$-regularity. 

Several generalizations of the regularity result of \cite{DaLiRi1,DaLiRi2} have been obtained, e.g. for critical points of   higher order or/and $p$-power type energies (still in the corresponding critical dimension), see   \cite{DaLi3,DaLiSchi1,DaLiSchi2,MazSchi,Schi1,Schi2,Schi3}. 
The regularity theory for $1/2$-harmonic maps into a manifold in higher dimensions has been addressed in \cite{Moser} and \cite{MS} (see also \cite{MilPeg}). In higher dimensions, the theory provides  partial regularity (i.e. regularity away from a ``small'' singular set) for stationary $1/2$-harmonic maps (i.e. critical points for both {\sl inner and outer variations}), and energy minimizing $1/2$-harmonic maps. It can be seen as the analogue of the partial regularity theory for harmonic maps by R. Schoen and K. Uhlenbeck \cite{SchUhl1,SchUhl2} in the minimizing case, and by L.C. Evans \cite{Evans} and F. Bethuel \cite{Bet} in the stationary case. In \cite{MS}, the argument consists in considering the harmonic extension to the upper half space in one more dimension provided by the convolution with the Poisson kernel. The extended map is then harmonic and satisfies a nonlinear Neumann boundary condition which fits within the (previously known) theory of harmonic maps with partially free boundary, see \cite{Duz1,Duz2,GJ,HL,Schev}. 
\vskip3pt

The purpose of this article is to extend the regularity theory  for fractional harmonic maps in arbitrary dimensions to the context of {\sl $s$-harmonic maps}, i.e., when the operator $(-\Delta)^{\frac{1}{2}}$ is replaced by $(-\Delta)^{s}$ with arbitrary power $s\in(0,1)$. As a first attempt in this direction, we only consider the case where the target manifold $\mathcal{N}$ is the standard unit sphere $\mathbb{S}^{d-1}$ of $\R^d$, $d\geq 2$. We now describe the functional setting.

Given $s\in(0,1)$ and $\Omega\subset\R^n$ a bounded open set, the fractional $s$-Dirichlet energy in $\Omega$ of a measurable map $u:\R^n\to\R^d$ is defined by 
\begin{equation}\label{defsdirenerg}
\mathcal{E}_s(u,\Omega):=\frac{\gamma_{n,s}}{4}\iint_{(\R^n\times\R^n)\setminus(\Omega^c\times\Omega^c)}\frac{|u(x)-u(y)|^2}{|x-y|^{n+2s}}\,\de x\de y\,,
%+\frac{\gamma_{n,s}}{2}\iint_{\Omega\times(\R^n\setminus\Omega)}\frac{|u(x)-u(y)|^2}{|x-y|^{n+2s}}\,\de x\de y \,,
\end{equation}
where  $\Omega^c$ denotes the complement of $\Omega$, i.e. $\Omega^c:=\R^n\setminus\Omega$. The normalisation constant $\gamma_{n,s}>0$, whose precise value is given by \eqref{defHsandgammans}, is chosen in such a way that 
$$ \mathcal{E}_s(u,\Omega)=\frac{1}{2}\int_{\R^n}\big|(-\Delta)^{\frac{s}{2}}u\big|^2\,\de x\qquad\forall u\in \mathscr{D}(\Omega;\R^d)\,.$$
Following \cite{MS,MSK},  we denote by $\widehat H^s(\Omega;\R^d)$ the Hilbert space made of $L^2_{\rm loc}(\R^n)$-maps $u$ such that $\mathcal{E}_s(u,\Omega)<\infty$, and we set 
$$\widehat H^s(\Omega;\mathbb{S}^{d-1}):= \Big\{u\in \widehat H^s(\Omega;\R^d) : u(x)\in\mathbb{S}^{d-1}\text{ for a.e. }x\in\R^n\Big\}\,.$$
We then define weakly $s$-harmonic maps in $\Omega$ as critical points of $ \mathcal{E}_s(u,\Omega)$ in the (nonlinear) space $\widehat H^s(\Omega;\mathbb{S}^{d-1})$. 
More precisely, we say that a map $u\in \widehat H^s(\Omega;\mathbb{S}^{d-1})$ is a {\sl weakly $s$-harmonic map} in $\Omega$ into $\mathbb{S}^{d-1}$ if 
$$\left[\frac{\de}{\de t}\mathcal{E}_s\Big(\frac{u+t\varphi}{|u+t\varphi|},\Omega\Big)\right]_{t=0}=0\qquad\forall \varphi\in\mathscr{D}(\Omega,\R^d)\,.$$
Exactly as \eqref{eq1/2harmsphereintro}, the Euler-Lagrange equation reads 
\begin{equation}\label{sharmmapeqintro}
(-\Delta)^su(x)=\left(\frac{\gamma_{n,s}}{2}\int_{\R^n}\frac{|u(x)-u(y)|^2}{|x-y|^{n+2s}}\,\de y\right)u(x)\quad\text{in $\mathscr{D}^\prime(\Omega)$}\,, 
\end{equation}
where $(-\Delta)^s$ is the integro-differential operator given by 
$$(-\Delta)^s u(x):={\rm p.v.}\left(\gamma_{n,s}\int_{\R^n}\frac{u(x)-u(y)}{|x-y|^{n+2s}}\,\de y\right)\,, $$
and the notation ${\rm p.v.}$ means that the integral is taken in the Cauchy principal value sense. We refer to Section \ref{prelim} and \ref{ELandConsLaw} for the precise  weak (variational) formulation of equation \eqref{sharmmapeqintro}. 
\vskip3pt

Once again, the right hand side in \eqref{sharmmapeqintro} has a priori no better integrability than $L^1$, and linear elliptic theory does not apply to determine the regularity of solutions. However, in the case $n\leq 2s$, that is $n=1$ and $s\in[1/2,1)$, the equation is {\sl subcritical}. For $n=1$ and $s=1/2$, this is the result of \cite{DaLiRi1,DaLiRi2}. For $n=1$ and $s\in(1/2,1)$, solutions are at least H\"older continuous by the embedding $H^s\hookrightarrow C^{0,s-1/2}$, and this is to enough to reach $C^\infty$-smoothness by applying Schauder type estimates for the fractional Laplacian.

\begin{theorem}\label{mainthm1}
Assume that $n=1$ and $s\in[1/2,1)$. If $u\in \widehat H^s(\Omega;\mathbb{S}^{d-1})$ is a weakly  $s$-harmonic map in $\Omega$, then $u\in C^\infty(\Omega)$. 
\end{theorem}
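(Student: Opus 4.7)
The proof naturally splits according to the value of $s$, and the case $n = 1$, $s = 1/2$ is exactly the $C^\infty$-regularity of weakly $1/2$-harmonic maps into $\mathbb{S}^{d-1}$ proved by Da Lio and Rivi\`ere in \cite{DaLiRi1,DaLiRi2}, which I would simply invoke.

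For $n = 1$, $s \in (1/2, 1)$, the equation is subcritical since $2s > n = 1$, and the plan is a classical bootstrap. By the Sobolev embedding $\widehat H^s(\Omega) \hookrightarrow C^{0, s-1/2}_{\rm loc}(\Omega)$, the map $u$ is already locally $(s-1/2)$-H\"older continuous. I would then bootstrap using the Euler--Lagrange equation
\begin{equation*}
(-\Delta)^s u = \lambda u \qquad \text{in } \mathscr{D}'(\Omega),
\end{equation*}
with $\lambda \in L^1_{\rm loc}(\Omega)$ (from the finiteness of the $s$-Dirichlet energy) and $|u| \equiv 1$. Localising via a cut-off and using the fact that in dimension one the Riesz kernel $|x|^{2s-1}$ is itself H\"older continuous (since $2s > 1$), one decomposes $u$ locally as a Riesz potential with compactly supported $L^1$ datum plus an $s$-harmonic remainder, which is smooth by the interior regularity of $s$-harmonic functions; this upgrades the initial H\"older exponent. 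Further loops rely on the standard fractional Schauder estimate --- roughly $(-\Delta)^s v = g$ with $g \in C^{\beta}_{\rm loc}$ implies $v \in C^{\beta + 2s}_{\rm loc}$ (suitably re-indexed through integer parts) --- combined with the improved regularity of $\lambda$ obtained by plugging the improved H\"older regularity of $u$ back into its defining integral. Iterating then yields $u \in C^\infty(\Omega)$.

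The principal technical difficulty is the first bootstrap loops, where the H\"older exponent of $u$ remains strictly below $s$, so the integral defining $\lambda$ is not absolutely convergent pointwise under the naive bound. I would circumvent this either by extracting $L^p_{\rm loc}$-integrability of $\lambda$ for some $p > 1$ from the combined $L^\infty \cap C^{0,\alpha}_{\rm loc}$ control on $u$ together with the $L^2$-finiteness of the Gagliardo seminorm (via a Minkowski-type splitting of the domain of integration), or by passing to the Caffarelli--Silvestre extension and invoking boundary Schauder estimates for the associated degenerate elliptic equation in $\R^2_+$ with $A_2$-weight $y^{1-2s}$. Once the H\"older exponent of $u$ exceeds $s$, $\lambda$ becomes locally H\"older continuous and the bootstrap proceeds routinely.
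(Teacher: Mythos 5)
Your high-level plan has the right shape, but it contains a genuine gap at the decisive step, namely the passage from the initial H\"older exponent $s-\tfrac12$ (which is strictly less than $s$) to a regularity scale at which the Lagrange multiplier
\[
\lambda(x)=\frac{\gamma_{1,s}}{2}\int_{\R}\frac{|u(x)-u(y)|^2}{|x-y|^{1+2s}}\,\de y
\]
is pointwise defined, and you correctly flag this as the crux. Neither of your two workarounds is made to close. Option (1) — extracting $L^p_{\rm loc}$, $p>1$, of $\lambda$ by interpolating the $L^\infty\cap C^{0,\alpha}$ control with the Gagliardo $L^2$ bound — does not appear to go through: by Minkowski and H\"older one is reduced to bounding $\int_{|h|<1}|h|^{2\alpha-2s-1}\,\de h$, which diverges precisely because $\alpha<s$; the additional $H^s$-information lives in $B^s_{2,2}$, not $B^s_{2,\infty}$, and does not furnish a pointwise modulus $\|u-\tau_hu\|_{L^2}\lesssim|h|^s$. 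Option (2) is indeed the paper's direction, but the phrase ``boundary Schauder estimates for the degenerate equation'' underestimates the difficulty: the boundary condition satisfied by $u^\e$ on $\{z=0\}$ is not a prescribed Neumann datum but the \emph{partially free boundary} condition $z^a\partial_\nu u^\e\perp{\rm Tan}(u,\mathbb{S}^{d-1})$, for which no a priori H\"older control of the normal derivative is available, so linear boundary Schauder does not apply directly.

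What the paper actually does at this stage (Theorem~\ref{thmepsregLip}, Proposition~\ref{themholdimplLip}, Lemma~\ref{lemmodphase}, Corollary~\ref{eqsymtrizedphase}) is to pass to the $\mathbb{S}^{d-1}$-valued phase $v:=u^\e/|u^\e|$ (well defined once $u^\e$ is H\"older and $|u^\e|\geq 1/2$), which satisfies a degenerate harmonic-map system with \emph{homogeneous} Neumann condition and a H\"older weight $\rho^2=|u^\e|^2$. An adaptation of Roberts' harmonic-replacement argument, via the even reflection across $\{z=0\}$ and the monotonicity of the weighted Dirichlet energy (Lemmas~\ref{minimalityharmonw}--\ref{monotIharmreplac2}), then yields a Campanato-type decay and hence Lipschitz continuity of $v$, so of $u$ on the trace. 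Only then does the $\lambda$-integral become absolutely convergent pointwise ($|u(x)-u(y)|^2/|x-y|^{1+2s}\lesssim |x-y|^{1-2s}\in L^1_{\rm loc}$ since $2s<2$), and the bootstrap via Schauder estimates for $(-\Delta)^s$ that you sketch proceeds as in Theorem~\ref{highordthm}, Lemma~\ref{keybootstraplemma} and Propositions~\ref{C1alphareg}--\ref{Ckalphareg}. You would need to supply this H\"older-to-Lipschitz step (or an alternative of comparable substance) for the argument to be complete; invoking Da Lio--Rivi\`ere for $s=1/2$ is fine, though the paper treats that case uniformly through its own $\eps$-regularity scheme (Proposition~\ref{energimprovprop}), whose smallness hypothesis is automatically achievable when $n\leq 2s$ since $\boldsymbol{\theta}_s(u,x_0,r)=r^{2s-n}\mathcal{E}_s(u,D_r(x_0))\to 0$ and the monotonicity of $\boldsymbol{\Theta}_s$ is trivial.
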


On the other hand, the case $n>2s$ is {\sl supercritical}, and by analogy with (usual) weakly harmonic maps in dimension at least $3$, we do not expect any regularity without further assumptions. Indeed, in his groundbreaking article \cite{Riv1}, T. Rivi\`ere has constructed a weakly harmonic map from the $3$-dimensional ball into $\mathbb{S}^2$ which is everywhere discontinuous. A natural extra assumption to assume on a weakly $s$-harmonic map is {\sl stationarity}, that is 
$$\left[\frac{\de}{\de t}\mathcal{E}_s\big(u\circ\phi_{t},\Omega\big)\right]_{t=0}=0\qquad\forall X\in C^1_c(\Omega;\R^n)\,, $$
where $\{\phi_t\}_{t\in\R}$ denotes the integral flow of the vector field $X$. According to the standard terminology in calculus of variations, a weakly $s$-harmonic map in $\Omega$ is a critical point of $\mathcal{E}_s(\cdot,\Omega)$ with respect to outer variations (i.e. in the target), a stationary map is a critical point of $\mathcal{E}_s(\cdot,\Omega)$ with respect to inner variations (i.e. in the domain), and thus  a {\sl stationary weakly $s$-harmonic map} in $\Omega$ is a critical point of $\mathcal{E}_s(\cdot,\Omega)$ with respect to both inner and outer variations. 

Our second main result provides partial regularity for such maps. In its statement, 
the {\sl singular set} of $u$ in $\Omega$ is defined as 
$${\rm sing}(u):=\Omega\setminus\big\{x\in\Omega: \text{$u$ is continuous in a neighborhood of $x$}\big\}\,, $$
${\rm dim}_{\mathcal{H}}$ denotes the Hausdorff dimension, and $\mathcal{H}^{n-1}$ is the $(n-1)$-dimensional Hausdorff measure. 

\begin{theorem}\label{mainthm2}
Assume that $s\in(0,1)$ and $n> 2s$. If $u\in \widehat H^s(\Omega;\mathbb{S}^{d-1})$ is a stationary weakly  $s$-harmonic map in $\Omega$, then $u\in C^\infty(\Omega\setminus{\rm sing}(u))$ and 
\begin{enumerate}
\item for $s>1/2$ and $n\geq 3$, ${\rm dim}_{\mathcal{H}}\,{\rm sing}(u)\leq n-2$; 
\vskip3pt
\item for $s>1/2$ and $n=2$, ${\rm sing}(u)$ is locally finite in $\Omega$; 
\vskip3pt
\item  for $s=1/2$ and $n\geq 2$, $\mathcal{H}^{n-1}({\rm sing}(u))=0$; 
\vskip3pt
\item  for $s<1/2$ and $n\geq 2$, ${\rm dim}_{\mathcal{H}}\,{\rm sing}(u)\leq n-1$; 
\vskip3pt
\item  for $s<1/2$ and $n=1$, ${\rm sing}(u)$ is locally finite in $\Omega$. 
\end{enumerate}
\end{theorem}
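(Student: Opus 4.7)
My overall plan is to reduce the nonlocal problem to a local free-boundary problem via the Caffarelli--Silvestre extension and then run the partial regularity machinery of Schoen--Uhlenbeck / Evans--Bethuel for this extended problem. Concretely, I would work with the weighted extension $U : \R^{n+1}_+ \to \R^d$ of $u$ obtained by convolution against the Poisson-type kernel associated to $(-\Delta)^s$. The extension $U$ satisfies the degenerate elliptic equation $\rmdiv(y^{1-2s}\nabla U)=0$ in the half-space, the trace $U(\cdot,0)=u$ takes values in $\bbS^{d-1}$, and the Euler--Lagrange equation \eqref{sharmmapeqintro} translates into a nonlinear Neumann-type boundary condition (the weighted normal derivative is parallel to $U$ on $\R^n\times\{0\}$). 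In this framework, the fractional $s$-energy becomes the weighted Dirichlet energy $\int y^{1-2s}|\nabla U|^2$, which is of Muckenhoupt $A_2$ type for every $s\in(0,1)$, and partial regularity of $u$ becomes partial regularity of $U$ near its boundary. This is exactly the viewpoint used in \cite{MS} for $s=1/2$, which I would generalize.

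\textbf{Monotonicity and $\varepsilon$-regularity.} The next step is to extract from stationarity a monotonicity formula. Using inner variations $u\circ\phi_t$ with $X$ compactly supported in $\Omega$, one derives a Pohozaev-type identity on the extension (with a boundary contribution from the weighted normal derivative) that implies the normalized energy
\begin{equation*}
\Theta_r(x_0) := r^{2s-n}\!\int_{B_r^+(x_0,0)} y^{1-2s}|\nabla U|^2\,\de x\,\de y
\end{equation*}
is monotone non-decreasing in $r$ (possibly with a lower-order correction). I would then prove the key $\varepsilon$-regularity statement: there exists $\varepsilon_0=\varepsilon_0(n,s,d)>0$ such that if $\Theta_r(x_0)\leq\varepsilon_0$ then $u\in C^{0,\alpha}$ in a smaller ball, whence $C^\infty$ by Schauder bootstrap for $(-\Delta)^s$. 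This is the main analytic obstacle: one must exploit the compensation phenomena (fractional div-curl / $3$-term commutator structure as in \cite{DaLiRi1,MazSchi}) now combined with the free-boundary condition. For $s\geq 1/2$ I expect the Hardy-space duality argument of \cite{MS} to adapt, whereas for $s<1/2$ the degeneracy of the weight at $y=0$ makes trace estimates and compensation harder, and one likely has to work directly with fractional Hodge-type decompositions on $\R^n$ rather than through the extension.

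\textbf{Dimension reduction.} Given the monotonicity formula, define $\Theta_*(x_0):=\lim_{r\to 0^+}\Theta_r(x_0)$ and observe that $x_0\mapsto\Theta_*$ is upper semicontinuous. The $\varepsilon$-regularity theorem yields $\rmsing(u)=\{\Theta_*\geq\varepsilon_0\}$ (closed) and allows one to extract homogeneous tangent maps at singular points via blow-up $U_{x_0,r_k}(x,y)=U(x_0+r_k x, r_k y)$. A Federer--Almgren dimension-reduction argument then gives the Hausdorff dimension bound $\rmdim_{\mathcal H}\rmsing(u)\leq n-d_*$, where $d_*$ is one more than the largest dimension supporting a non-constant homogeneous tangent map. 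Classification of these tangent maps distinguishes the cases:
\begin{itemize}
\item For $s>1/2$, the monotone quantity controls a Dirichlet-type integral strong enough that $1$-dimensional homogeneous tangent maps must be constant (this uses $n=1$, $s\geq 1/2$ being subcritical, Theorem \ref{mainthm1}), hence $d_*\geq 2$ and we get codimension~$2$, with $n=2$ giving a locally finite singular set by a standard $\eps$-covering argument.
\item For $s=1/2$, one adapts the argument of \cite{MS,MilPeg} to conclude $\mathcal H^{n-1}(\rmsing(u))=0$.
\item For $s<1/2$, $n=1$ is already supercritical, so only the trivial bound $d_*\geq 1$ is available from the reduction; this gives codimension~$1$ in $n\geq 2$ and locally finite singular set for $n=1$ via an isolated-singularity argument.
\end{itemize}

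\textbf{Main obstacle.} The crux is the $\varepsilon$-regularity theorem and the derivation of the monotonicity formula with the right scaling. Compared to the local case ($s=1$) and to $s=1/2$ (half-harmonic extension), the difficulty lies in handling the degenerate/singular weight $y^{1-2s}$ simultaneously with the free-boundary sphere constraint, and in setting up a compensation scheme producing a Hardy-space right-hand side for the extension equation with prescribed weighted normal derivative. Once both are established, the dimension reduction proceeds by the now-standard Federer scheme, the technicalities being the verification of strong $L^2$-convergence of blow-ups (needed to pass the stationary property and monotonicity to the limit), which I would handle by a defect measure / energy concentration analysis adapted to the weighted setting.
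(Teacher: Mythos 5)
Your overall architecture — Caffarelli--Silvestre extension, monotonicity from stationarity, $\varepsilon$-regularity, tangent maps, Federer dimension reduction — matches the paper's, and the boundary-condition translation, the scaling of the density, and the rough case analysis by $s$ are all on the right track. However, there are two places where your plan either deviates from what the paper does in a substantial way or has a genuine gap.

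\emph{$\varepsilon$-regularity.} You propose to adapt the Hardy-space duality argument of \cite{MS} for $s\geq 1/2$ and to use ``fractional Hodge-type decompositions'' for $s<1/2$. The paper does something uniformly for all $s\in(0,1)$: it does \emph{not} reduce to the harmonic-map-with-partially-free-boundary theory (which would require a compensated-compactness apparatus for the degenerate weight $y^{1-2s}$ that is not readily available), but instead runs an Evans-type compactness/contradiction argument on the level of the $s$-harmonic map equation itself. Inside this argument, the fractional div-curl lemma of \cite{MazSchi} handles the ``good'' part of the right-hand side; but, crucially, the fractional reformulation of the sphere equation produces an extra error term $T^i$ with no analogue in the local harmonic map case, and controlling it in arbitrary dimension requires a new embedding between Triebel--Lizorkin--Morrey type spaces (Appendix C of the paper). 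Your sketch does not anticipate either the error term or the tool used to tame it; as written, the compensation you describe would not close.

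\emph{Compactness of blow-ups.} You mention a ``defect measure / energy concentration analysis'' to get strong $H^1$ convergence of the rescaled extensions, but you give no mechanism to rule out the defect for \emph{stationary} (non-minimizing) maps. This is the central difficulty, and the paper's resolution is specific and elegant: by monotonicity, the defect measure has finite, positive $(n-2s)$-dimensional density everywhere on its support; by Marstrand's theorem, such a measure can only exist if $n-2s$ is an integer, which fails whenever $s\neq 1/2$. Hence the defect vanishes and blow-ups converge strongly. Without this observation your tangent-map analysis does not get off the ground in the stationary case, so (1), (2), (4), (5) would remain unproved. (Consistently, for $s=1/2$ the paper does \emph{not} use tangent maps at all in the stationary case: it obtains $\mathcal H^{n-1}(\mathrm{sing}(u))=0$ from a direct density/measure argument combined with the boundary regularity of Gulliver--Jost, since compactness of stationary $1/2$-harmonic maps genuinely fails.) You should identify the Marstrand step explicitly; it is the novel ingredient that makes the dimension reduction work for fractional $s\neq 1/2$.
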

\vskip3pt

The other common assumption to consider is energy minimality. We say that a map $u\in \widehat H^s(\Omega;\mathbb{S}^{d-1})$ is a minimizing $s$-harmonic map in $\Omega$ if 
$$\mathcal{E}_s(u,\Omega)\leq \mathcal{E}_s(v,\Omega) $$
for every competitor $v\in \widehat H^s(\Omega;\mathbb{S}^{d-1})$ such  that $v-u$ is compactly supported in $\Omega$. Notice that minimality implies criticality with respect to both inner and outer variations, 
so that a minimizing $s$-harmonic map in $\Omega$ is in particular a stationary weakly  $s$-harmonic map in $\Omega$. However, minimality implies a stronger partial regularity, at least for $s\in(0,1/2)$. 

\begin{theorem}\label{mainthm3}
Assume that $s\in(0,1)$ and $n>2s$. If $u\in \widehat H^s(\Omega;\mathbb{S}^{d-1})$ is a minimizing  $s$-harmonic map in~$\Omega$, then $u\in C^\infty(\Omega\setminus{\rm sing}(u))$ and 
\begin{enumerate}
\item for $n\geq 3$, ${\rm dim}_{\mathcal{H}}\,{\rm sing}(u)\leq n-2$; 
\vskip3pt
\item for  $n=2$, ${\rm sing}(u)$ is locally finite in $\Omega$; 
%\vskip3pt
%\item  for $s=1/2$ and $n\geq 3$, $\mathscr{H}^{n-1}({\rm sing}(u))=0$; 
%\vskip3pt
%\item  for $s<1/2$ and $n\geq 2$, ${\rm dim}_{\mathscr{H}}\,{\rm sing}(u)\leq n-1$; 
\vskip3pt
\item  for   $n=1$, ${\rm sing}(u)=\emptyset$ (i.e., $u\in C^\infty(\Omega)$). 
\end{enumerate}
\end{theorem}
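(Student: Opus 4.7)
Since minimality implies stationarity, Theorem \ref{mainthm2} already gives the smoothness statement $u\in C^\infty(\Omega\setminus{\rm sing}(u))$ together with the dimension bound $\dim_{\mathcal H}\,{\rm sing}(u)\leq n-1$ in all ranges of $s$. The task is therefore to upgrade this bound by exploiting minimality, following the blueprint of the Schoen--Uhlenbeck partial regularity theory. The plan is to pass to the Caffarelli--Silvestre extension $U:\R^n\times(0,\infty)\to\R^d$ of $u$, which minimizes the weighted Dirichlet energy $\int y^{1-2s}|\nabla U|^2$ among extensions whose trace on $\Omega$ is $\mathbb{S}^{d-1}$-valued. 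This localizes the nonlocal minimization into a partially free-boundary minimization problem on a half-space, to which tangent map techniques apply.

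First, I would set up the three standard ingredients: a monotonicity formula for the scaled weighted energy on half-balls; an $\varepsilon$-regularity theorem asserting that sufficiently small scaled energy implies smoothness of $u$; and a strong $\widehat H^s_{\rm loc}$-compactness result under which sequences of minimizers with locally bounded energy converge, up to subsequences, to minimizers. The first two are inherited directly from the stationary analysis used to prove Theorem \ref{mainthm2}; the compactness together with preservation of minimality in the limit is the additional input specific to the minimizing case. Combined with the scaling invariance of the minimizing property, these yield, at any $x_0\in{\rm sing}(u)$, a tangent map $\varphi\in\widehat H^s_{\rm loc}(\R^n;\mathbb{S}^{d-1})$ arising as an $\widehat H^s_{\rm loc}$-limit of rescalings $u_{x_0,r_k}(x):=u(x_0+r_kx)$, which is $0$-homogeneous by monotonicity and minimizing on every ball of $\R^n$ by compactness.

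The decisive step, where minimality is used beyond Theorem \ref{mainthm2}, is the classification of $0$-homogeneous minimizing $s$-harmonic maps in low dimensions. In dimension $n=1$, any $0$-homogeneous map $\varphi:\R\to\mathbb{S}^{d-1}$ is piecewise constant with at most a single jump at the origin from some $a\in\mathbb{S}^{d-1}$ to some $b\in\mathbb{S}^{d-1}$. For $s\geq 1/2$ such a jump has infinite $s$-energy unless $a=b$; for $s<1/2$ the jump is integrable but unstable, and I would establish instability by a direct comparison: replacing $\varphi$ on a small interval $(-\delta,\delta)$ by a geodesic arc in $\mathbb{S}^{d-1}$ joining $a$ to $b$ and computing the change in the Gagliardo energy should yield a strictly negative contribution as $\delta\to 0$, forcing $a=b$. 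This classification, together with the $\varepsilon$-regularity theorem, immediately yields item~(3). In dimension $n=2$, a $0$-homogeneous minimizer is determined by its trace on $S^1$; a secondary blow-up of that trace at any candidate jump point reduces matters to the one-dimensional case just settled, forcing the minimizer to be constant, which yields item~(2) since the density $\Theta(u,\cdot)$ is then isolated on its positive level set.

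Item~(1) for $n\geq 3$ is obtained by Federer--Almgren dimension reduction applied in the standard manner via the upper semicontinuity of $\Theta(u,\cdot)$: the existence of a stratum of ${\rm sing}(u)$ of dimension at least $n-1$ would, through iterated tangent blow-ups, produce a nonconstant $0$-homogeneous minimizer on $\R^2$ or $\R^1$, contradicting the classification above. The main obstacle I foresee is precisely the quantitative energy comparison at a jump in the case $n=1$, $s<1/2$: because the fractional seminorm is fully nonlocal, the insertion of a geodesic arc on $(-\delta,\delta)$ interacts with the entire real line through the Gagliardo kernel $|x-y|^{-1-2s}$, and one must track the competition between the local improvement and the cross-terms with the exterior $\R\setminus(-\delta,\delta)$ with enough precision to ensure a strict decrease.
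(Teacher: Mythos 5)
Your high-level strategy matches the paper's: reduce via extension and monotonicity to a dimension-reduction argument over $0$-homogeneous minimizing tangent maps, then classify those in low dimension. Where the two diverge is precisely the step you flag as uncertain, and that uncertainty is justified: the paper does \emph{not} prove directly that a jump at the origin is energetically unstable for $s\in(0,1/2)$. It instead shows (Lemma~\ref{rigidminsharmtangmap}, Step~2) that if a nonconstant tangent map $\varphi$ on $\R^n$ has $\dim S(\varphi)=n-1$, then its one-dimensional trace $\psi$ is itself a \emph{minimizing} $s$-harmonic map on the line, and then invokes the external result \cite[Theorem 1.2]{MSY}, which establishes continuity of one-dimensional minimizing $s$-harmonic maps for $s\in(0,1/2)$. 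That continuity is a theorem in its own right; your proposed geodesic-interpolation comparison would amount to reproving it, and the nonlocal cross-terms you correctly identify as a ``main obstacle'' are exactly what make that comparison nontrivial --- the paper deliberately routes around this by citation rather than resolving it.

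A second gap, which your write-up elides, is the reduction from $n$ dimensions to one: it is not immediate that a $0$-homogeneous $\varphi$ depending only on $x_1$ and minimizing the $n$-dimensional energy has a trace $\psi(x_1)$ that minimizes the \emph{one}-dimensional fractional energy. Because of nonlocality, a competitor modifying $\psi$ on $(-1,1)$ must be tested against $\varphi$ on expanding cylinders $(-1,1)\times D'_r$ and one has to control the cross-terms between the cylinder and its complement; the paper does this by a Fubini/slicing computation showing $|D'_r|^{-1}\mathcal E_s(\widetilde v_r,Q_r)\to\mathcal E_s(v,(-1,1))$ as $r\to\infty$, together with the identity $\alpha_{n,s}=\gamma_{1,s}/\gamma_{n,s}$ of Remark~\ref{remcomputmasspoisskern}. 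Your ``secondary blow-up of the trace on $S^1$'' is a loose phrasing of the Federer--Almgren reduction but does not, as stated, supply this slicing lemma. In short: the structure of your argument is sound, but the decisive classification step for $s<1/2$ is missing, and the standard remedy here is to import \cite[Theorem 1.2]{MSY} after first establishing, via the cylinder slicing, that the one-dimensional trace is minimizing.
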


Before describing  the way we prove Theorem \ref{mainthm2} and Theorem \ref{mainthm3}, let us comment on the sharpness of the results above.

\begin{remark}\label{rem1intro}
In the case $s\in(0,1/2)$, essentially no better regularity than the one coming from the energy space can be expected from a weakly $s$-harmonic map in $\Omega$. 
Indeed, for an arbitrary set $E\subset \R^n$ such that the characteristic function $\chi_E$ belongs to $\widehat H^s(\Omega)$, consider the function $u:=\chi_E-\chi_{E^c}$. Identifying $\R^2$ with the complex plane $\mathbb{C}$, we can see $u$ as a map from $\R^n$ into~$\mathbb{S}^1$, and it belongs to $\widehat H^s(\Omega;\mathbb{S}^1)$. It has been observed in \cite[Remark 1.7]{MSK} that $u$ is a weakly $s$-harmonic map in $\Omega$ into~$\mathbb{S}^1$, i.e., it satisfies \eqref{sharmmapeqintro}. 
For $s=1/2$, we believe that, in the spirit of \cite{Riv1}, it should be possible to construct an example of a $1/2$-harmonic map from the $2$-dimensional disc into $\mathbb{S}^1$ which is discontinuous everywhere using the material in \cite{MPis}. However, for $s\in(1/2,1)$ and $n=2$, it remains open whether or not such pathological example do exist.
%, or if weakly $s$-harmonic maps do enjoy some partial regularity. 
\end{remark}

\begin{remark}
For $s\in(0,1/2)$, the partial regularity for stationary weakly $s$-harmonic maps is sharp in the sense that the size of the singular set can not be improved. Following Remark \ref{rem1intro} above and \cite[Remark 1.7]{MSK}, for a set $E\subset \R^n$ such that  $\chi_E\in\widehat H^s(\Omega)$, the map $u:=\chi_E-\chi_{E^c}$ is a weakly $s$-harmonic map in $\Omega$ into $\mathbb{S}^1$, and 
$$\mathcal{E}_s(u,\Omega)= \gamma_{n,s}P_{2s}(E,\Omega)\,,$$
where $P_{2s}(E,\Omega)$ is the fractional $2s$-perimeter of $E$ in $\Omega$ introduced by L. Caffarelli, J.M. Roquejoffre, and O. Savin in \cite{CRS}, and it is given by 
$$P_{2s}(E,\Omega)=\left(\iint_{(E\cap\Omega)\times(E^c\cap\Omega)}+\iint_{(E\cap\Omega^c)\times(E^c\cap\Omega)}+\iint_{(E\cap\Omega)\times(E^c\cap\Omega^c)}\right)\frac{\de x\de y}{|x-y|^{n+2s}} \,.$$
Therefore, $u$ is a stationary weakly $s$-harmonic map in $\Omega$ if and only if $E$ is stationary in $\Omega$ for the shape functional $P_{2s}(\cdot,\Omega)$ (see \cite{MSK}). This includes the case where $\partial E$ is a nonlocal minimal surface in the sense of \cite{CRS}. In particular, if $E$ is a half space, then $u$ is a stationary weakly $s$-harmonic map in $\Omega$, and ${\rm sing}(u)=\partial E\cap\Omega$ is an hyperplane. 
\end{remark}

\begin{remark}
For arbitrary spheres, Theorem \ref{mainthm3} is sharp for $s=1/2$. Indeed, we know from~\cite[Theorem 1.4]{MilPeg} that the map $x/|x|$ is a minimizing $1/2$-harmonic map into $\mathbb{S}^1$ in the unit disc $D_1\subset\R^2$. The minimality of $x/|x|$ for $s\not=1/2$ is open, but one can check that it is at least a stationary $s$-harmonic map into $\mathbb{S}^1$ in $D_1$, showing that Theorem \ref{mainthm2} is sharp also for~$s\in[1/2,1)$.

For arbitrary $s\in(0,1)$, the following classical example suggests that Theorem \ref{mainthm3} might be  sharp anyway.  Consider the minimization problem (still in dimension $n=2$), 
$$\min\Big\{\mathcal{E}_s(u,D_1): u\in \widehat H^s(D_1,\mathbb{S}^1)\,,\;u(x)=x/|x|\text{ in $\R^2\setminus D_1$}\Big\}\,.$$
Existence of solutions follows easily from the direct method of calculus of variations, and any solution is obviously a minimizing $s$-harmonic map in~$D_1$. 
Since $x/|x|$ does not admit any $\mathbb{S}^1$-valued continuous extension to $D_1$, any solution must have at least one singular point in $\overline{D}_1$. 
\end{remark}

\begin{remark}
For $s=1/2$ and $d\geq 3$ (i.e., for $\mathbb{S}^2$ or higher dimensional target spheres), the size of the singular set of a minimizing $1/2$-harmonic map can be reduced. It has been proved in \cite[Theorem 1.3]{MilPeg} that in this case, ${\rm sing}(u)=\emptyset$ for $n=2$, it is locally finite for $n=3$, and ${\rm dim}_{\mathcal{H}}{\rm sing}(u)\leq n-3$ for $n\geq 4$. It would be interesting to know if this improvement persists for $s\not=1/2$. 
\end{remark}

The proofs of Theorems  \ref{mainthm1},  \ref{mainthm2}, and  \ref{mainthm3} rely on several ingredients that we now briefly describe. The first one consists in applying the so-called {\sl Caffarelli-Silvestre extension}  procedure~\cite{CaffSil} to the open half space $\R^{n+1}_+:=\R^n\times(0,+\infty)$. This extension (which may have originated in the probability literature~\cite{MolOs}) allows us to represent $(-\Delta)^s$ as the Dirichlet-to-Neumann operator associated with the degenerate elliptic operator $L_s:=-{\rm div}(z^{1-2s}\nabla\cdot)$, where $z\in(0,+\infty) $ denotes the extension variable. In this way (after extension), we can reformulate the $s$-harmonic map equation as a degenerate harmonic map equation with {\sl partially} free boundary, very much like in \cite{MS,MSK}. Under the stationarity assumption, the extended map satisfies a fundamental monotonicity formula, which in turn implies local  controls in the space BMO (bounded mean oscillation) of the $s$-harmonic map under consideration by its energy. 
%belongs (locally) to BMO (bounded mean oscillation).  

Probably the main step in the proof is an epsilon-regularity result where we show that under a (standard) smallness assumption on the energy $\mathcal{E}_s$ in a ball, then a (stationary) $s$-harmonic map is H\"older continuous in a smaller ball. The strategy we follow here is quite inspired from the argument of L.C. Evans \cite{Evans} making use of the conservation laws discovered by F.~H\'elein \cite{Hel1} and the duality $\mathcal{H}^1$/BMO. In our fractional setting, we make use of the fractional conservation laws together with the ``fractional div-curl lemma'' of K. Mazowiecka and the third author \cite{MazSchi}. A main difference with \cite{Evans} lies in the fact that an additional ``error term'' appears when rewriting the $s$-harmonic map equation in the suitable form where compensation can be seen. To control this error term in arbitrary dimensions, we make use of a recent embedding result  between Triebel-Lizorkin-Morrey type spaces \cite{Ho} and various characterizations of these spaces~\cite{SaYY,YangYuan}. 
 
Once H\"older continuity  is obtained, we prove Lipschitz continuity in an even smaller ball using an adjustment of the classical ``harmonic replacement'' technique, see \cite{Schoen}. More precisely, using the extension, we adapt an argument due to J. Roberts \cite{Rob} in the case of degenerate harmonic maps with free boundary (i.e., with homogeneous - degenerate - Neumann boundary condition). With Lipschitz continuity in hands, we are then able to derive $C^\infty$-regularity from Schauder estimates for the fractional Laplacian. 

To obtain the bounds on the size of the singular set, we follow somehow the usual dimension reduction argument of Almgren \& Federer for harmonic maps (see \cite{Sim}), which is based on 
the  strong compactness of blow-ups around points. Here compactness (for $s\not=1/2$) is obtained as in \cite{MSK}, and it is a consequence of the monotonicity formula together with Marstrand's Theorem (see e.g. \cite{Matti}). Finally, in the minimizing case and $s\in(0,1/2)$, we obtain an improvement on the size of the singular set (compared to the stationary case) from the triviality of the so-called ``tangent maps'' (i.e. blow-up limits), a consequence of the regularity of minimizing $s$-harmonic in one dimension proved in \cite{MSY}.

\subsection*{{Notation}}
Throughout the paper, $\R^n$ is often identified with $\partial  \mathbb{R}^{n+1}_+=\R^n\times\{0\}$. More generally, sets $A\subset\mathbb{R}^n$ can be identified with $A\times\{0\}\subset\partial  \mathbb{R}^{n+1}_+$. Points in $\mathbb{R}^{n+1}$ are written $\mathbf{x}=(x,z)$ with $x\in\mathbb{R}^n$ and $z\in\mathbb{R}$.  
We shall denote by $B_r(\mathbf{x})$ the open ball in $\mathbb{R}^{n+1}$ of radius $r$ centered at $\mathbf{x}=(x,z)$, while $D_r(x):= B_r(\mathbf{x})\cap\mathbb{R}^{n}$ is the open ball (or disc) in $\R^n$ centered at $x$. For an arbitrary set $G\subset  \mathbb{R}^{n+1}$, we write 
$$G^+:=G\cap \mathbb{R}^{n+1}_+\quad\text{ and }\quad\partial^+ G:=\partial G\cap \mathbb{R}^{n+1}_+\,.$$
If $G\subset\R^{n+1}_+$ is a bounded open set, we shall say that $G$ is  {\bf admissible} whenever 
\begin{itemize}
\item $\partial G$ is Lipschitz regular;  
\vskip2pt
\item the (relative) open set $\partial^0G\subset\partial\R^{n+1}_+$ defined by 
$$\partial^0G:=\Big\{\mathbf{x}\in\partial G\cap\partial\R^{n+1}_+ : B^+_{r}(\mathbf{x})\subset G \text{ for some $r>0$}\Big \}\,,$$
is non empty and has Lipschitz boundary; 
\vskip2pt

\item $\partial G=\partial^+ G\cup\overline{\partial^0G}\,$.
\end{itemize}

Finally, we usually denote by $C$ a generic positive constant which only depends on the dimension $n$ and $s\in(0,1)$, and possibly changing from line to line. If a constant depends on additional given parameters, we shall write those parameters using the subscript notation.

%%%%%%%%%%%%%%%%%%%%%%%%%%%%%%%%%%%%%%%%%%%%%%%%%%%%%%%
%%%%%%%%%%%%%%%%%%%%%%%%%%%%%%%%%%%%%%%%%%%%%%%%%%%%%%%
   								       						%%%%%%%%%%%%%%%%%%%
\section{Functional spaces, fractional operators, and compensated compactness } \label{prelim} %%%%%%%%%
								 						%%%%%%%%%%%%%%%%%%%
%%%%%%%%%%%%%%%%%%%%%%%%%%%%%%%%%%%%%%%%%%%%%%%%%%%%%%%
%%%%%%%%%%%%%%%%%%%%%%%%%%%%%%%%%%%%%%%%%%%%%%%%%%%%%%%

 \subsection{Fractional $H^{s}$-spaces}\label{secHs}
 
For an open set $\Omega\subset \mathbb{R}^n$,  the Sobolev-Slobodeckij space $H^{s}(\Omega)$ is made of all functions $u\in L^2(\Omega)$ such that\footnote{The normalization constant $\gamma_{n,s}$ is chosen in such a way that $\displaystyle [u]^2_{H^{s}(\R^n)}=\int_{\R^n}(2\pi|\xi|)^{2s}|\widehat u|^2\,\de\xi\,$, where $\widehat u$ denotes the (ordinary frequency) Fourier transform of $u$.}  
\begin{equation}\label{defHsandgammans}
[u]^2_{H^{s}(\Omega)}:=\frac{\gamma_{n,s}}{2}\iint_{\Omega\times \Omega} \frac{|u(x)-u(y)|^2}{|x-y|^{n+2s}}\,\de x\de y<\infty\,,\quad \gamma_{n,s}:=s\,2^{2s}\pi^{-\frac{n}{2}}\frac{\Gamma\big(\frac{n+2s}{2}\big)}{\Gamma(1-s)} \,.
\end{equation}
It is a separable Hilbert space normed by $\|\cdot\|^2_{H^{s}(\Omega)}:= \|\cdot\|^2_{L^2(\Omega)}+[\cdot]^2_{H^{s}(\Omega)}$. 
The space $ H^{s}_{\rm loc}(\Omega)$ denotes the class of functions whose restriction to any relatively compact  open subset $\Omega'$ of $\Omega$ belongs to $H^{s}(\Omega')$. 
The linear subspace $H^{s}_{00}(\Omega) \subset H^{s}(\mathbb{R}^n)$ is in turn defined by 
$$H^{s}_{00}(\Omega):=\big\{u\in H^{s}(\mathbb{R}^n) :  u=0 \text{ a.e. in } \R^n\setminus\Omega\big\}\,. $$
Endowed with the  induced norm,   
 $H^{s}_{00}(\Omega)$ is also a Hilbert space, and   
\begin{equation*}
%\label{defErond}
[u]^2_{H^{s}(\mathbb{R}^n)}=\frac{\gamma_{n,s}}{2}\iint_{(\R^n\times\R^n)\setminus(\Omega^c\times \Omega^c)} \frac{|u(x)-u(y)|^2}{|x-y|^{n+2s}}\,\de x\de y
=2\mathcal{E}_s(u,\Omega)\quad\forall u\in H^{s}_{00}(\Omega)\,,
%\nonumber&=\frac{\gamma_{n,s}}{2}\iint_{\Omega\times\Omega}  \frac{|v(x)-v(y)|^2}{|x-y|^{n+2s}}\,\de x\de y 
%+ \gamma_{n,s} \iint_{\Omega\times \Omega^{c}}  \frac{|v(x)|^2}{|x-y|^{n+2s}}\,\de x\de y \\
%\nonumber &
%&=[v]^2_{H^{s}(\Omega)} + \int_{\Omega} \boldsymbol{\rho}_\Omega(x) |v(x)|^2\,\de x\,,
\end{equation*}
where $\mathcal{E}_s(\cdot,\Omega)$ is  the $s$-Dirichlet energy defined in \eqref{defsdirenerg}. 
%and 
%$$ \boldsymbol{\rho}_\Omega(x) := \gamma_{n,s}\int_{\Omega^c}\frac{1}{|x-y|^{n+2s}}\,\de y\,.$$
%Since $s\in(0,1/2)$, if 

If $\Omega$ is bounded and its boundary is smooth enough (e.g. if  $\partial \Omega$ is Lipschitz regular), then 
%$$ \int_{\Omega} \boldsymbol{\rho}_\Omega(x) |v(x)|^2\,\de x\leq C_\Omega \|v\|^2_{H^{s}(\Omega)}\qquad \forall v\in H^s(\Omega)\,,$$
%for a constant $C_\Omega=C_\Omega(s)>0$. As a consequence, if $v\in H^s(\Omega)$ and $\widetilde v$ denotes the extension of $v$ by zero  outside $\Omega$, then  
%$$ \|v\|_{H^{s}(\Omega)}\leq \|\widetilde v\|_{H^{s}(\R^n)}\leq (C_\Omega+1)^{\frac{1}{2}}\|v\|_{H^{s}(\Omega)}\,.$$
%In particular, if $\partial \Omega$ is smooth enough, then $H^{s}_{00}(\Omega)=\big\{\widetilde v : v\in  H^s(\Omega)\big\}$ 
%(see \cite[Corollary~1.4.4.5]{G}), and  

\begin{equation}\label{densitysmoothH1/200}
H^{s}_{00}(\Omega)= \overline{\mathscr{D}(\Omega)}^{\,\|\cdot\|_{H^{s}(\mathbb{R}^n)}}
 \end{equation}
(see \cite[Theorem~1.4.2.2]{G}) . The topological dual space of $H^{s}_{00}(\Omega)$ is denoted by $H^{-s}(\Omega)$.
\vskip5pt

We are mostly interested in the class of functions 
$$\widehat{H}^{s}(\Omega):=\Big\{u\in L^{2}_{\rm loc}(\mathbb{R}^n) : \mathcal{E}_s(u,\Omega)<\infty\Big\} \,.$$
The following properties hold for any open subsets $\Omega$ and $\Omega'$ of $\R^n$: 
\begin{itemize}
\vskip5pt
\item $ \widehat{H}^{s}(\Omega)$ is a linear space; 
\vskip5pt
\item $ \widehat{H}^{s}(\Omega) \subset  \widehat{H}^{s}(\Omega')$ whenever $\Omega'\subset\Omega$, and    
$ \mathcal{E}_s(\cdot,\Omega')\leq \mathcal{E}_s(\cdot,\Omega)\,$;
\vskip5pt
\item  if $\Omega^\prime$ is  bounded, then $\widehat{H}^{s}(\Omega)\cap H^{s}_{\rm loc}(\mathbb{R}^n) \subset  \widehat{H}^{s}(\Omega')\,$;
\vskip5pt
\item if $\Omega$ is bounded, then $H^{s}_{\rm loc}(\mathbb{R}^n)\cap L^\infty(\mathbb{R}^n) \subset  \widehat{H}^{s}(\Omega)\,$.
\end{itemize}
From Lemma \ref{adminHchap} below, it follows that $\widehat{H}^{s}(\Omega)$ is  a Hilbert space for the scalar product induced by the Hilbertian norm $u\mapsto \|u\|_{\widehat H^s(\Omega)}:=\big(\|u\|^2_{L^2(\Omega)}+ \mathcal{E}_s(u,\Omega)\big)^{1/2}$  (see e.g. \cite{MSK} and \cite[proof of Lemma~2.1]{MS}). 

\begin{lemma}\label{adminHchap}
Let $x_0\in\Omega$ and $\rho>0$ be  such that $D_{\rho}(x_0)\subset\Omega$.  There exists a constant $C_\rho=C_\rho(\rho,n,s)>0$ such that 
$$\int_{\R^n}\frac{|u(x)|^2}{(|x-x_0|+1)^{n+2s}}\,\de x\leq C_{\rho}\left(\mathcal{E}_s\big(u,D_{\rho}(x_0)\big)+\|u\|^2_{L^2(D_{\rho}(x_0))}\right)$$
for every $u\in  \widehat{H}^{s}(\Omega)$. 
\end{lemma}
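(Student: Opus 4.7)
My plan is to split the integral at $D_\rho(x_0)$ and treat each region separately. On the interior disc, the weight satisfies $(|x-x_0|+1)^{-(n+2s)} \leq 1$, so that part is immediately bounded by $\|u\|^2_{L^2(D_\rho(x_0))}$. All the work is in controlling the tail on $\R^n\setminus D_\rho(x_0)$, and for that I would use a standard averaging argument that converts the pointwise bound on $|u(x)|^2$ into a combination of a fractional-difference quotient and an $L^2$-mass on a smaller disc.

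Concretely, for any $x\in\R^n\setminus D_\rho(x_0)$ and $y\in D_{\rho/2}(x_0)$ I write $|u(x)|^2\leq 2|u(x)-u(y)|^2+2|u(y)|^2$, average in $y$ over $D_{\rho/2}(x_0)$, then multiply by the weight and integrate in $x$ over $\R^n\setminus D_\rho(x_0)$. The mass piece $2|u(y)|^2$ factors out the tail integral $\int_{\R^n\setminus D_\rho(x_0)}(|x-x_0|+1)^{-(n+2s)}\,\de x$, which is finite (because $n+2s>n$) and depends only on $\rho,n,s$; what remains is $\|u\|^2_{L^2(D_{\rho/2}(x_0))}\leq\|u\|^2_{L^2(D_\rho(x_0))}$.

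The difference term is where one has to be careful. For $|x-x_0|\geq\rho$ and $|y-x_0|\leq\rho/2$ the triangle inequality gives $|x-y|\leq\tfrac{3}{2}|x-x_0|\leq\tfrac{3}{2}(|x-x_0|+1)$, so
\begin{equation*}
\frac{1}{(|x-x_0|+1)^{n+2s}}\leq \left(\frac{3}{2}\right)^{n+2s}\frac{1}{|x-y|^{n+2s}}.
\end{equation*}
This lets me replace the weight by the Gagliardo kernel. The resulting double integral is taken over $(\R^n\setminus D_\rho(x_0))\times D_{\rho/2}(x_0)$, a region contained in $(\R^n\times\R^n)\setminus(D_\rho(x_0)^c\times D_\rho(x_0)^c)$ since $D_{\rho/2}(x_0)\subset D_\rho(x_0)$. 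Hence it is bounded by $\tfrac{4}{\gamma_{n,s}}\mathcal{E}_s(u,D_\rho(x_0))$ up to the multiplicative factor above. Collecting the three contributions produces a constant $C_\rho$ depending only on $\rho$, $n$, and $s$.

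The only nontrivial point is the geometric comparison of the weight $(|x-x_0|+1)^{-(n+2s)}$ with $|x-y|^{-(n+2s)}$; everything else is bookkeeping. I do not expect any real obstacle, since the argument is a routine tail estimate for fractional Sobolev functions and parallels the analogous lemma in \cite{MS,MSK} for $s=1/2$.
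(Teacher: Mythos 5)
Your argument is correct, and it is the standard proof of this type of tail estimate (the paper itself supplies no proof but refers to [MS, Lemma 2.1] and [MSK], where the analogous bound is proved in essentially the same way). Splitting at $D_\rho(x_0)$, averaging $|u(x)|^2\leq 2|u(x)-u(y)|^2+2|u(y)|^2$ over $y\in D_{\rho/2}(x_0)$, and comparing the weight with the Gagliardo kernel via $|x-y|\leq\tfrac{3}{2}(|x-x_0|+1)$ is exactly the right strategy, and your bookkeeping — integrability of the tail weight, inclusion of $(\R^n\setminus D_\rho(x_0))\times D_{\rho/2}(x_0)$ in the cross region, and the resulting dependence of $C_\rho$ only on $\rho,n,s$ — all checks out.
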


\begin{remark}\label{remweakcvHhat}
From the Hilbertian structure of  $\widehat{H}^{s}(\Omega)$, it follows that any bounded sequence $\{u_k\}$ in $\widehat{H}^{s}(\Omega)$ admits a subsequence 
converging weakly in $\widehat{H}^{s}(\Omega)$. In addition, if $u_k\rightharpoonup u$ weakly in $\widehat{H}^{s}(\Omega)$, then $u_k\to u$ strongly in $L^2(\Omega)$ by the compact embedding 
$H^{s}(\Omega)\hookrightarrow L^2(\Omega)$. In particular, $\|u_k\|_{L^2(\Omega)}\to \|u\|_{L^2(\Omega)}$.  Since $\liminf_k\|u_k\|_{\widehat{H}^{s}(\Omega)}\geq \|u\|_{\widehat{H}^{s}(\Omega)}$, it follows that 
$\liminf_k\mathcal{E}_s(u_k,\Omega)\geq \mathcal{E}_s(u,\Omega)$. 
\end{remark}
%
%\begin{remark}\label{remEDB}
%If $v\in \widehat{H}^{s}(\Omega)$, then  
%$v+ H^{s}_{00}(\Omega)\subset  \widehat{H}^{s}(\Omega)$.   
%Conversely,  if 
%$v=g$ a.e. in $\R^n\setminus\Omega$ for some functions $v$ and $g$ in $\widehat{H}^{s}(\Omega)$, then $v-g\in H^{s}_{00}(\Omega)$. 
%As a consequence, for $g\in \widehat{H}^{s}(\Omega)$, 
%$$H^{s}_g(\Omega):= \left\{v\in \widehat{H}^{s}(\Omega;\mathbb{R}^m) : v=g\, \text{ a.e. in $\R^n\setminus\Omega$}\right\}=g+H^{s}_{00}(\Omega)\,.$$ 
%Note that  $H^{s}_g(\Omega)\subset  H^{s}_{\rm loc}(\mathbb{R}^n)$ whenever $g\in \widehat{H}^{s}(\Omega)\cap H^{s}_{\rm loc}(\mathbb{R}^n)$. 
%\end{remark}
%

%%%%%%%%%%%%%%%%%%%%%%%%%%%%%%%%%%%%%%%%%%%%%%%%%%%%%%%

\subsection{Fractional operators and compensated compactness}\label{sectoperandcompcomp}

%%%%%%%%%%%%%%%%%%%%%%%%%%%%%%%%%%%%%%%%%%%%%%%%%%%%%%%

Given  an open set $\Omega\subset \mathbb{R}^n$, the fractional  Laplacian $(-\Delta)^s$ in $\Omega$ is defined as the continuous linear operator  $(-\Delta)^s: \widehat{H}^{s}(\Omega)\to (\widehat{H}^{s}(\Omega))^\prime$ induced by the quadratic form $\mathcal{E}_s(\cdot,\Omega)$. In other words, the weak form of the fractional Laplacian $ (-\Delta)^{s} u$   of a given function $u\in \widehat{H}^{s}(\Omega)$ is defined 
%its {\it distributional fractional Laplacian} $ (-\Delta)^{s} v$ 
through its action on $ \widehat{H}^{s}(\Omega)$ by 
\begin{equation}\label{deffraclap}
\big\langle  (-\Delta)^{s} u, \varphi\big\rangle_\Omega:=\frac{\gamma_{n,s}}{2}\iint_{(\R^n\times\R^n)\setminus(\Omega^c\times\Omega^c)}  \frac{\big(u(x)-u(y)\big)\big(\varphi(x)-\varphi(y)\big)}{|x-y|^{n+2s}}\,\de x\de y\,.
%\\ + \gamma_{n,s} \iint_{\Omega\times \Omega^c}  \frac{\big(v(x)-v(y)\big)\cdot\big(\varphi(x)-\varphi(y)\big)}{|x-y|^{n+2s}}\,\de x\de y \,. 
\end{equation}
%If $v$ is a smooth bounded function, then the distribution $ (-\Delta)^{s} v$ can be rewritten from \eqref{deffraclap} as a pointwise defined function which coincides with the one given by  formula~\eqref{formpvfraclap}. 
Notice that the restriction of the linear form $ (-\Delta)^{s} u$ to the subspace $H^{s}_{00}(\Omega)$ belongs to $H^{-s}(\Omega)$ with the  estimate 
%\begin{equation}\label{estinormH-1/2fraclap}
$\| (-\Delta)^{s} u\|^2_{H^{-s}(\Omega)}\leq 2\mathcal E_s(u,\Omega)$. 
%\end{equation}
%In this way, $(-\Delta)^{s} v $ appears to be the first outer variation of   $\mathcal{E}(\cdot,\Omega)$ at $v$ with respect to pertubations supported in $\Omega$, i.e.,  
%\begin{equation}\label{firstvarcalE}
%\big\langle  (-\Delta)^{s} v, \varphi\big\rangle_\Omega=\left[\frac{\de}{\de t} \mathcal E (v+t \varphi , \Omega)\right]_{t=0} 
%\end{equation}
%for all $\varphi \in H^{s}_{00}(\Omega)$.

\begin{remark}\label{localityLapls}
Notice the operator $(-\Delta)^s$ has  the following local property: if  $ u\in\widehat{H}^{s}(\Omega)$ and $\Omega^\prime\subset\Omega$ is an open subset, then
$$\big\langle(-\Delta)^su,\varphi\big\rangle_\Omega= \big\langle(-\Delta)^su,\varphi\big\rangle_{\Omega^\prime}\quad\forall \varphi\in H^s_{00}(\Omega^\prime)\,.$$ 
\end{remark}
\vskip5pt

Following \cite{MazSchi}, we now relate the fractional Laplacian $ (-\Delta)^{s}$ to suitable notions of {\sl fractional gradient} and {\sl fractional divergence}. To this purpose, we first need to recall  from \cite{MazSchi} the notion of (fractional) {\sl ``$s$-vector field''} over a  domain. The space of $s$-vector fields in $\Omega$, that we shall denote by $L^2_{\rm od}(\Omega)$ (in agreement with \cite{MazSchi}),  is defined as the Lebesgue space of $L^2$-{\sl scalar functions} over the open set $(\R^n\times\R^n)\setminus(\Omega^c\times\Omega^c)\subset \R^{2n}$
with respect to the measure $|x-y|^{-n}\de x\de y$. In other words, 
$$L^2_{\rm od}(\Omega):=\Big\{F:(\R^n\times\R^n)\setminus(\Omega^c\times\Omega^c)\to\R: \|F\|_{L^2_{\rm od}(\Omega)}<\infty\Big\}\,, $$
with  
$$\|F\|^2_{L^2_{\rm od}(\Omega)}:= \iint_{(\R^n\times\R^n)\setminus(\Omega^c\times\Omega^c)}\frac{|F(x,y)|^2}{|x-y|^n}\,\de x\de y \,.$$
We endow  $L^2_{\rm od}(\Omega)$ with the (pointwise)  product operator $\odot: L^2_{\rm od}(\Omega)\times  L^2_{\rm od}(\Omega) \times \to L^1(\Omega)$ given by
$$F\odot G(x):=\int_{\R^n}\frac{F(x,y)G(x,y)}{|x-y|^{n}}\,\de y\,. $$
Note that $\odot$ is a continuous  bilinear operator thanks to Fubini's theorem, and it plays the role of ``pointwise scalar product'' between two $s$-vector fields.  With this respect, we  define the (pointwise) ``squared modulus''  of a $s$-vector field $F\in L^2_{\rm od}(\Omega)$ by 
\begin{equation}\label{modsqsvectfield}
|F|^2:=F\odot F \in L^1(\Omega)\,. 
\end{equation}
The (fractional) $s$-gradient is defined in \cite{MazSchi} as a linear operator from the space of scalar valued functions $\widehat H^s(\Omega)$ into the space of $s$-vector fields over $\Omega$. More precisely, we define it as the continuous linear operator $\de_s:\widehat H^s(\Omega)\to L^2_{\rm od}(\Omega)$ given by 
\begin{equation}\label{defsgrad}
\de_su(x,y):= \frac{\sqrt{\gamma_{n,s}}}{\sqrt{2}}\,\frac{u(x)-u(y)}{|x-y|^s}\,.
\end{equation}
Obviously, one has 
$$\|{\rm d}_su\|^2_{L^2_{\rm od}(\Omega)}= 2\mathcal{E}_s(u,\Omega)\quad\text{and}\quad\big\| |{\rm d}_s u|^2\big\|_{L^1(\Omega)}\leq  2\mathcal{E}_s(u,\Omega)$$ 
for every $u\in \widehat H^s(\Omega)$. 
\vskip3pt

In turn, the (fractional) $s$-divergence, denoted by ${\rm div}_s$,  is defined by duality as the adjoint operator to the  $s$-gradient operator restricted to $H^s_{00}(\Omega)$. To do so, the main observation is that for $F\in L^2_{\rm od}(\Omega)$, we have 
$$F\odot {\rm d}_s\varphi \in L^1(\R^n) \quad\text{for every $\varphi\in H^s_{00}(\Omega)$}\,,$$
with
$$\|F\odot {\rm d}_s\varphi \|_{L^1(\R^n)}\leq \|F\|_{L^2_{\rm od}(\Omega)}[\varphi]_{H^s(\R^n)}\,. $$
In this way, we can indeed define ${\rm div}_s: L^2_{\rm od}(\Omega)\to H^{-s}(\Omega)$ as  the continuous linear operator given by 
$$\big\langle {\rm div}_s F,\varphi\big\rangle_\Omega:=\int_{\R^n} F\odot \de_s\varphi\,\de x \qquad\forall \varphi\in  H^s_{00}(\Omega)\,,$$
which satisfies the estimate 
%Note that, once again,  the restriction of ${\rm div}_s F$ to $H^s_{00}(\Omega)$ belongs to $H^{-s}(\Omega)$ with the estimate 
$\|{\rm div}_s F\|_{H^{-s}(\Omega)}\leq \|F\|_{L^2_{\rm od}(\Omega)}$ for all $F\in L^2_{\rm od}(\Omega)$. 
\vskip3pt

From the definition of ${\rm d}_s$ and ${\rm div}_s$, it  readily follows that 

\begin{proposition}\label{fracintegbypart}
We have $(-\Delta)^s={\rm div}_s({\rm d}_s)$, i.e., 
%Note that for every $u,\varphi \in \widehat H^s(\Omega)$, we have 
$$\big\langle  (-\Delta)^{s} u, \varphi\big\rangle_\Omega=\int_{\R^n} {\rm d}_s u\odot{\rm d}_s\varphi\,\de x 
%+ \frac{\gamma_{n,s}}{2}\iint_{\Omega\times\Omega^c}  \frac{\big(u(x)-u(y)\big)\big(\varphi(x)-\varphi(y)\big)}{|x-y|^{n+2s}}\,\de x\de y\,,
$$
for every $u \in \widehat H^s(\Omega)$ and every $\varphi\in H^s_{00}(\Omega)$. 
%which can be thought as some sort of integration by parts formula. In particular, in the case of the whole space $\Omega=\R^n$, we have  the identity $(-\Delta)^s={\rm div}_s({\rm d}_s)$. 
\end{proposition}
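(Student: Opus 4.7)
The proposition is essentially a bookkeeping identity, so the plan is to expand both sides using the definitions of $\de_s$, $\odot$ and $\rmdiv_s$, then use Fubini's theorem and the boundary vanishing of $\varphi$ to match the two expressions.

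First I will unravel the left-hand side. By the definition of $\odot$ followed by Fubini's theorem,
\[
\int_{\R^n}\de_s u\odot \de_s\varphi\,\de x
=\iint_{\R^n\times\R^n}\frac{\de_s u(x,y)\,\de_s\varphi(x,y)}{|x-y|^n}\,\de x\de y,
\]
which, after inserting the formula \eqref{defsgrad} for $\de_s u$ and $\de_s\varphi$, becomes
\[
\frac{\gamma_{n,s}}{2}\iint_{\R^n\times\R^n}\frac{\bigl(u(x)-u(y)\bigr)\bigl(\varphi(x)-\varphi(y)\bigr)}{|x-y|^{n+2s}}\,\de x\de y.
\]
The interchange of integration order is legitimate since the integrand is absolutely integrable: by Cauchy–Schwarz and the identity $\|\de_s w\|^2_{L^2_{\rm od}(\Omega)}=2\mathcal{E}_s(w,\Omega)$,
\[
\iint\frac{|\de_s u\,\de_s\varphi|}{|x-y|^n}\,\de x\de y \leq \|\de_s u\|_{L^2_{\rm od}(\Omega)}\|\de_s\varphi\|_{L^2_{\rm od}(\Omega)}\leq 2\sqrt{\mathcal{E}_s(u,\Omega)\mathcal{E}_s(\varphi,\Omega)}<\infty,
\]
using that $\varphi\in H^s_{00}(\Omega)\subset \widehat H^s(\Omega)$ with $\mathcal{E}_s(\varphi,\Omega)=\tfrac12[\varphi]^2_{H^s(\R^n)}$.

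The second step is to reconcile the domains of integration. On the set $\Omega^c\times\Omega^c$, both $\varphi(x)$ and $\varphi(y)$ vanish because $\varphi\in H^s_{00}(\Omega)$, so $\varphi(x)-\varphi(y)=0$ almost everywhere there; hence the integrand above is zero on $\Omega^c\times\Omega^c$ and the integral over $\R^n\times\R^n$ coincides with the integral over $(\R^n\times\R^n)\setminus(\Omega^c\times\Omega^c)$. Comparing with the definition \eqref{deffraclap} of $\langle(-\Delta)^s u,\varphi\rangle_\Omega$, we obtain
\[
\int_{\R^n}\de_s u\odot \de_s\varphi\,\de x=\bigl\langle(-\Delta)^s u,\varphi\bigr\rangle_\Omega,
\]
which is exactly the asserted identity.

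I do not anticipate a genuine obstacle here, the only subtlety is the justification of Fubini's theorem and the fact that the pointwise product $\de_s u\,\de_s\varphi$ is well defined on all of $\R^n\times\R^n$ (away from the diagonal) whenever $u\in\widehat H^s(\Omega)$ and $\varphi\in H^s_{00}(\Omega)$, together with the remark that the $\odot$-product effectively ignores the set $\Omega^c\times\Omega^c$ as soon as one of its factors comes from $H^s_{00}(\Omega)$. Once these points are acknowledged, the identity follows directly by comparison of the two integral expressions.
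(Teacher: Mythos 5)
Your proof is correct and takes essentially the same approach as the paper: the paper gives no explicit proof, merely remarking that the identity ``readily follows'' from the definitions of ${\rm d}_s$ and ${\rm div}_s$, and your argument is precisely the expected unpacking of those definitions together with Fubini and the observation that $\de_s\varphi$ vanishes on $\Omega^c\times\Omega^c$ because $\varphi\in H^s_{00}(\Omega)$.
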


One of the main results in \cite{MazSchi} is a compensated compactness result relative to the $s$-gradient and $s$-divergence operators in the spirit of the classical ``div-curl'' lemma \cite{CLMS}. To present this result, let us recall that the space ${\rm BMO}(\R^n)$ is defined as the set of all $u\in L^1_{\rm loc}(\R^n)$ such that
$$[u]_{{\rm BMO}(\R^n)}:= \sup_{D_{r}(y)}\, \dashint_{D_r(y)}|u-(u)_{y,r}|\,\de x<+\infty\,, $$
where $(u)_{y,r}$ denotes the average of $u$ over the ball $D_r(y)$. The following theorem corresponds to  \cite[Proposition 2.4]{MazSchi}. 

\begin{theorem}\label{divcurlthm}
Let $F\in L^2_{\rm od}(\Omega)$ be such that 
$${\rm div}_s F=0 \quad \text{in $H^{-s}(\Omega)$}\,.$$
There exist a universal $\Lambda>1$ such that for every ball $D_r(x_0)$ satisfying $D_{\Lambda r}(x_0)\subset \Omega$,  
$$\left|\int_{\R^n}\big(F\odot{\rm d}_s u\big)\varphi\,\de x\right|\leq C\|F\|_{L^2_{\rm od}(\Omega)}\sqrt{\mathcal{E}_s(u,\Omega)} \Big([\varphi]_{\rm BMO(\R^n)} +r^{-n}\|\varphi\|_{L^1(\R^n)} \Big)$$
for every $u\in  \widehat H^s(\Omega)$ and $\varphi \in\mathscr{D}(D_r(x_0))$, and a constant  $C=C(n,s)$. 
\end{theorem}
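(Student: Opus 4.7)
The plan is to adapt the classical div-curl lemma of Coifman-Lions-Meyer-Semmes to the nonlocal $s$-setting. The vanishing of $\mathrm{div}_s F$ will play the role of integration by parts, transferring $d_s u$ onto $d_s\varphi$, after which a fractional Hardy/paraproduct estimate recovers the BMO seminorm of $\varphi$.

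First, I would verify that $u\varphi \in H^s_{00}(D_r(x_0))\subset H^s_{00}(\Omega)$: the compact support of $\varphi$ together with the local $H^s$-regularity of $u$ on $D_r(x_0)$ and the decay of the fractional kernel make this routine. The hypothesis $\mathrm{div}_s F=0$ then gives $\int_{\R^n} F\odot d_s(u\varphi)\,dx=0$. Using the symmetric nonlocal Leibniz identity
$$d_s(fg)(x,y)=A_f(x,y)\,d_sg(x,y)+A_g(x,y)\,d_sf(x,y),\qquad A_h(x,y):=\frac{h(x)+h(y)}{2},$$
and exploiting the natural antisymmetry of the relevant part of $F$ (the symmetric part of $F$ is invisible to $\mathrm{div}_s$ and is absorbed by a direct Cauchy-Schwarz), this identity can be rewritten as
$$\int_{\R^n}\varphi\,(F\odot d_s u)\,dx=-\iint\frac{A_u(x,y)\,F(x,y)\,d_s\varphi(x,y)}{|x-y|^n}\,dx\,dy.$$

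Next I would exploit the freedom to replace $u$ by $u-k$ on the right-hand side for any constant $k$: indeed $\varphi\in\mathscr{D}(D_r(x_0))\subset H^s_{00}(\Omega)$, so $k\iint F\,d_s\varphi/|x-y|^n\,dx\,dy=k\langle\mathrm{div}_s F,\varphi\rangle_\Omega=0$. The natural choice is $k=(u)_{D_{\Lambda r}(x_0)}$ for a universal $\Lambda>1$ large enough that the fractional Poincaré inequality can be applied on a ball containing all relevant scales (which forces the geometric assumption $D_{\Lambda r}(x_0)\subset\Omega$ in the statement). After Cauchy-Schwarz with the weight $|x-y|^{-n}dx\,dy$, the estimate reduces to proving the fractional paraproduct/Hardy-type inequality
$$\iint\frac{|A_{u-k}(x,y)|^2\,|d_s\varphi(x,y)|^2}{|x-y|^n}\,dx\,dy \leq C\,\mathcal{E}_s(u,\Omega)\bigl([\varphi]_{\mathrm{BMO}(\R^n)}+r^{-n}\|\varphi\|_{L^1(\R^n)}\bigr)^2.$$

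This last step is the principal obstacle and is the real content of the fractional div-curl lemma. I would attack it by a dyadic annular decomposition of the $(x,y)$-integration, splitting along $\{2^jr\leq |x-y|<2^{j+1}r\}$. On scales $\lesssim r$, the quantity $|d_s\varphi|$ captures the oscillation of $\varphi$ at the corresponding scale and is controlled by $[\varphi]_{\mathrm{BMO}}$ via John-Nirenberg, while the $L^2$-norm of $u-k$ on comparable enlargements of $D_r(x_0)$ is bounded by $\mathcal{E}_s(u,\Omega)$ through a standard fractional Poincaré inequality (here it is essential that $k$ is the average on a sufficiently large ball contained in $\Omega$). On scales $\gg r$, the localization $\mathrm{supp}\,\varphi\subset D_r(x_0)$ gives $|d_s\varphi(x,y)|\lesssim |\varphi(x)|\,|x-y|^{-s}$ when $y$ is far from the support; combining this with the kernel decay and the $L^1$-mass of $\varphi$ yields the residual term $r^{-n}\|\varphi\|_{L^1}$. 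The interplay between BMO scales and the nonlocal kernel, together with the correct harmonization of $A_{u-k}$ and $d_s\varphi$ on each annulus, is the technical heart of the argument.
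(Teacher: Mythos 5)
The paper does not prove this theorem at all: it simply cites \cite[Proposition~2.4]{MazSchi} and adds the remark that, upon rereading the proof there, only the localized hypotheses on $F$ and $u$ are actually used. So there is no proof in the paper to compare yours against; what I can do is assess your sketch on its own terms.

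Your opening moves are correct and match the standard nonlocal div-curl strategy: $u\varphi\in H^s_{00}(\Omega)$, the Leibniz identity ${\rm d}_s(fg)=A_f\,{\rm d}_sg+A_g\,{\rm d}_sf$ is valid, and ${\rm div}_s F=0$ transfers the estimate onto $\iint A_u\,F\,{\rm d}_s\varphi\,|x-y|^{-n}$. Subtracting the average $k=(u)_{D_{\Lambda r}}$ is also legitimate since $\langle{\rm div}_s F,\varphi\rangle_\Omega=0$, and this is indeed where the geometric constant $\Lambda$ enters (one needs Poincar\'e on a ball containing the support of $\varphi$ and lying inside $\Omega$). Two issues, however. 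First, the symmetric part of $F$ is not simply ``absorbed by a direct Cauchy--Schwarz'': after antisymmetrization one gets the extra term $\iint\varphi\,F^{\rm sym}\,{\rm d}_su\,|x-y|^{-n}=\tfrac12\iint(\varphi(x)-\varphi(y))F^{\rm sym}\,{\rm d}_su\,|x-y|^{-n}$, in which $F^{\rm sym}$ is paired with $(\varphi(x)-\varphi(y))\,{\rm d}_su$; a direct Cauchy--Schwarz on this returns $\|\varphi\|_{L^\infty}$, not $[\varphi]_{\rm BMO}+r^{-n}\|\varphi\|_{L^1}$, so this term too needs the compensation machinery and cannot be dismissed.

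Second, and more centrally, the Cauchy--Schwarz factorization that leaves you with the inequality
\[
\iint\frac{|A_{u-k}|^2\,|{\rm d}_s\varphi|^2}{|x-y|^n}\,\de x\de y\lesssim \mathcal{E}_s(u,\Omega)\bigl([\varphi]_{\rm BMO}+r^{-n}\|\varphi\|_{L^1}\bigr)^2
\]
is a reformulation of the problem, not a proof of it: you say explicitly that this is ``the principal obstacle'' and ``the real content of the fractional div-curl lemma,'' and the dyadic decomposition you outline is a plan of attack rather than an argument. Moreover, it is not obvious that this particular factorization is the right reduction --- the usual route to a CLMS-type bound goes through $\mathcal{H}^1$--BMO duality (showing that the relevant product has an atomic/commutator structure), and applying Cauchy--Schwarz on $F$ at an early stage risks discarding the very cancellation one is trying to exploit. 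You would need to prove that your reduced inequality actually holds (note it mixes an $H^s$ quantity in $u$ with a BMO-type quantity in $\varphi$, with the $r^{-n}\|\varphi\|_{L^1}$ tail arising from the off-support part of ${\rm d}_s\varphi$; whether the near-diagonal/near-support contribution can be closed with only $[\varphi]_{\rm BMO}$ and not a stronger oscillation control is precisely what a complete proof must settle). As it stands, the proposal correctly reproduces the algebraic skeleton of the known argument and identifies the correct obstruction, but it does not establish the theorem, and the one step that matters is left as a sketch.
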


\begin{remark}
In the statement of \cite[Proposition 2.4]{MazSchi}, the $s$-vector field $F$ is assumed to be $s$-divergence free in the whole $\R^n$ and $u\in H^s(\R^n)$. However, a careful reading of the proof  reveals that only the assumptions in Theorem \ref{divcurlthm} on $F$ and $u$ are used. 
\end{remark}

%%%%%%%%%%%%%%%%%%%%%%%%%%%%%%%%%%%%%%%%%%%%%%%%%%%%%%%

\subsection{Weighted Sobolev spaces}\label{secWeightSob}

%%%%%%%%%%%%%%%%%%%%%%%%%%%%%%%%%%%%%%%%%%%%%%%%%%%%%%%

For an open set $G\subset \R^{n+1}$, we consider the weighted $L^2$-space  
$$L^2(G,|z|^a\de \mathbf{x}):= \Big\{v\in L^1_{\rm loc}(G) :  |z|^{\frac{a}{2}} v \in L^2(G)\Big\} \quad \text{with }a:=1-2s\,,$$
normed by 
$$\|v\|^2_{L^2(G,|z|^a\de \mathbf{x})}:=\int_G |z|^{a}|v|^2\,\de \mathbf{x}\,. $$
Accordingly, we introduce the weighted Sobolev space 
$$H^1(G,|z|^a\de \mathbf{x}):= \Big\{v\in L^2(G,|z|^a\de \mathbf{x}) : \nabla v \in L^2(G,|z|^a\de \mathbf{x})\Big\} \,,$$ 
normed by 
$$\|v\|_{H^1(G,|z|^a\de \mathbf{x})}:=\|v\|_{L^2(G,|z|^a\de \mathbf{x})}+\|\nabla v\|_{L^2(G,|z|^a\de \mathbf{x})}\,. $$
Both $L^2(G,|z|^a\de \mathbf{x})$ and $H^1(G,|z|^a\de \mathbf{x})$ are separable Hilbert spaces when equipped with the scalar product induced by their respective Hilbertian norms. 

On $H^1(G,|z|^a\de \mathbf{x})$, we define {\sl the weighted Dirichlet energy ${\bf E}_s(\cdot,G)$} by setting 
\begin{equation}\label{defweightDir}
{\bf E}_s(v,G):=\frac{\boldsymbol{\delta}_s}{2}\int_G|z|^a|\nabla v|^2\,\de{\bf x}\quad\text{with } \boldsymbol{\delta}_s:=2^{2s-1}\frac{\Gamma(s)}{\Gamma(1-s)}\,.
\end{equation}
The relevance of the normalisation constant  $\boldsymbol{\delta}_s>0$ will be revealed in Section \ref{fracharmext} (see \eqref{normexth1/2}). 
\vskip3pt

%{\bf Useful ??} If  $\Omega$ denotes a (relatively) open subset of $ \partial\R^{n+1}_+\simeq \R^n$ such that  $\Omega\subset\partial G$, we also set 
%$$L^2_{{\rm loc}}(G\cup\Omega;\R^d, |z|^a\de \mathbf{x}):= \Big\{v\in L^1_{\rm loc}(G;\R^d) :  |z|^{\frac{a}{2}} v \in L^2_{\rm loc}(G\cup\Omega;\R^d)\Big\} \,,$$
%and
%$$ H^1_{{\rm loc}}(G\cup\Omega;\R^d,|z|^a\de \mathbf{x}) := \Big\{v\in L^2_{{\rm loc}}(G\cup\Omega;\R^d,|z|^a\de \mathbf{x}) : \nabla v \in L^2_{{\rm loc}}(G\cup\Omega;\R^{d\times n},|z|^a\de \mathbf{x})\Big\}\,.$$
%\vskip3pt

%\begin{remark}\label{trace}

Some relevant remarks about $H^1(G,|z|^a\de \mathbf{x})$ are in order. For a bounded admissible open set $G\subset \R^{n+1}_+$, the space $L^2(G,|z|^a\de \mathbf{x})$ embeds continuously into $L^\gamma(G)$  for every $1\leq \gamma<\frac{1}{1-s}$ whenever $s\in(0,1/2)$ by H\"older's inequality. For $s\in[1/2,1)$, we have 
$L^2(G,|z|^a\de \mathbf{x})\hookrightarrow L^2(G)$ continuously since $a\leq 0$.   In any case, it implies that 
\begin{equation}\label{contembedd}
H^1(G,|z|^a\de \mathbf{x})\hookrightarrow W^{1,\gamma}(G) 
\end{equation}
continuously for every $1< \gamma<\min\{\frac{1}{1-s},2\}$. As a first consequence,  $H^1(G,|z|^a\de \mathbf{x})\hookrightarrow L^{1}(G)$ with compact embedding. Secondly, for such $\gamma$'s,  the compact  linear trace operator 
\begin{equation}\label{conttracW1}
v\in W^{1,\gamma}(G)\mapsto v_{|\partial^0 G}\in L^1(\partial^0G)
\end{equation}
induces a compact linear trace operator from $H^1(G,|z|^a\de \mathbf{x})$ into $L^1(\partial^0G)$,  extending the usual trace of smooth functions. We shall denote by $v_{|\partial^0G}$ the trace of $v\in H^1(G,|z|^a\de \mathbf{x})$ on $\partial^0G$, or simply by $v$ if it is clear from the context. We may now recall the following Poincar\'e's inequality, see e.g. \cite[Lemma 2.5]{MSK}. 

\begin{lemma}\label{poincarebdry}
If $v\in H^1(B_r^+,|z|^a\de{\bf x})$, then 
$$ \big\| v - (v)_r  \big\|_{L^1(D_r)} \leq C r^{\frac{n+2s}{2}}\|\nabla v\|_{L^2(B_r^+,|z|^a\de{\bf x})}\,,$$
for a constant $C=C(n,s)$, where $(v)_r$ denotes the average of $v$ over $D_r$.  
\end{lemma}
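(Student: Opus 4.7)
My plan is to first reduce to the unit-ball case by scaling, and then combine a weighted Poincaré inequality on the half-ball with the continuity of the trace operator that has already been recorded in \eqref{conttracW1}.

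\emph{Step 1: Scaling.} Given $v\in H^1(B_r^+,|z|^a\,\de\mathbf{x})$, I would introduce $w(\mathbf{x}):=v(r\mathbf{x})$ for $\mathbf{x}\in B_1^+$. A direct change of variables, using that $|z|^a$ is homogeneous of degree $a=1-2s$, gives
\[
\int_{B_1^+}|z|^a|\nabla w|^2\,\de\mathbf{x}=r^{2s-n}\int_{B_r^+}|z|^a|\nabla v|^2\,\de\mathbf{x},\qquad\|w-(w)_1\|_{L^1(D_1)}=r^{-n}\|v-(v)_r\|_{L^1(D_r)}.
\]
Matching the exponents shows that the inequality at radius $r$ is equivalent, with the same constant, to the inequality at radius $1$. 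Hence it suffices to treat the case $r=1$.

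\emph{Step 2: Weighted Poincaré on $B_1^+$.} Let
\[
\bar v:=\frac{1}{\int_{B_1^+}|z|^a\,\de\mathbf{x}}\int_{B_1^+}v(\mathbf{x})|z|^a\,\de\mathbf{x}
\]
be the weighted mean of $v$ over $B_1^+$. Since $|z|^{1-2s}$ is a Muckenhoupt $A_2$-weight on $\R^{n+1}$ for every $s\in(0,1)$, the Fabes--Kenig--Serapioni Poincaré inequality applies on the Lipschitz domain $B_1^+$ and yields
\[
\|v-\bar v\|_{L^2(B_1^+,|z|^a\de\mathbf{x})}\leq C\,\|\nabla v\|_{L^2(B_1^+,|z|^a\de\mathbf{x})}.
\]
Consequently $\|v-\bar v\|_{H^1(B_1^+,|z|^a\de\mathbf{x})}\leq C\,\|\nabla v\|_{L^2(B_1^+,|z|^a\de\mathbf{x})}$.

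\emph{Step 3: Passing to the trace and replacing $\bar v$ by $(v)_1$.} Using the continuity of the trace operator $H^1(B_1^+,|z|^a\de\mathbf{x})\to L^1(\partial^0 B_1^+)=L^1(D_1)$ recorded in \eqref{conttracW1} (which comes from the embedding $H^1(B_1^+,|z|^a\de\mathbf{x})\hookrightarrow W^{1,\gamma}(B_1^+)$ in \eqref{contembedd} followed by the usual $W^{1,\gamma}$-trace), I obtain
\[
\|v-\bar v\|_{L^1(D_1)}\leq C\,\|v-\bar v\|_{H^1(B_1^+,|z|^a\de\mathbf{x})}\leq C\,\|\nabla v\|_{L^2(B_1^+,|z|^a\de\mathbf{x})}.
\]
Finally, by definition of $(v)_1$,
\[
|(v)_1-\bar v|=\frac{1}{|D_1|}\left|\int_{D_1}(v-\bar v)\,\de x\right|\leq C\,\|v-\bar v\|_{L^1(D_1)},
\]
so the triangle inequality yields $\|v-(v)_1\|_{L^1(D_1)}\leq 2\|v-\bar v\|_{L^1(D_1)}\leq C\,\|\nabla v\|_{L^2(B_1^+,|z|^a\de\mathbf{x})}$. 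Undoing the scaling of Step 1 gives the stated inequality.

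\emph{Main obstacle.} The only non-trivial ingredient is the weighted Poincaré inequality of Step 2; the rest is a routine scaling and trace argument. If one wishes to avoid citing the Muckenhoupt theory, one can bypass it by applying the classical Poincaré inequality in $L^\gamma(B_1^+)$ with $\gamma\in(1,\min\{2,1/(1-s)\})$, then dominating $\|\nabla v\|_{L^\gamma(B_1^+)}$ by $\|\nabla v\|_{L^2(B_1^+,|z|^a\de\mathbf{x})}$ via Hölder's inequality together with the local integrability of $|z|^{-a\gamma/(2-\gamma)}$, which holds precisely in the admissible range of $\gamma$.
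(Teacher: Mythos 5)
The paper does not actually prove this lemma; it simply cites \cite[Lemma 2.5]{MSK}, so there is no in-paper proof to compare against. That said, your argument is correct: the scaling computation in Step~1 is right (the exponent $(n+2s)/2$ comes out of $2-a-(n+1)=2s-n$ together with the $r^{-n}$ from the $L^1(D_r)$ normalization), the weighted Poincar\'e inequality on $B_1^+$ is valid because $|z|^{1-2s}$ is an $A_2$ weight for all $s\in(0,1)$ and $B_1^+$ is a Lipschitz domain, the passage to the trace uses exactly the continuity recorded in \eqref{contembedd}--\eqref{conttracW1}, and the replacement of the interior weighted mean $\bar v$ by the boundary mean $(v)_1$ via the triangle inequality is standard and correctly executed. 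Your ``alternative route'' in the final paragraph (classical Poincar\'e in $L^\gamma$ followed by H\"older against the integrability of $|z|^{-a\gamma/(2-\gamma)}$ for $\gamma<\min\{2,1/(1-s)\}$) is in fact the same mechanism the paper already exploits to obtain \eqref{contembedd}, so both routes are consistent with the paper's toolbox and both give a constant depending only on $n$ and $s$.
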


The next lemma states that the trace $v_{|\partial^0G}$ has actually $H^s$-regularity, at least locally.  

\begin{lemma}\label{HsregtraceH1weight}
If $v\in H^1(B^+_{2r},|z|^a\de \mathbf{x})$, then the trace of $v$ on $\partial^0B_r^+\simeq D_r$ belongs to  $H^s(D_r)$, and  
%$$\iint_{D_r\times D_r} \frac{|v(x)-v(y)|^2}{|x-y|^2}\,\de x\de y
$$[v]^2_{H^s(D_r)}
\leq C \,{\bf E}_s(v,B^+_{2r})\,,$$
%\int_{B^+_{2r}}z^a|\nabla v|^2\,\de{\bf x}\,,$$
for a constant $C=C(n,s)$. 
\end{lemma}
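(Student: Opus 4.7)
My plan is to prove the trace estimate by combining even reflection, a cutoff procedure, a weighted Poincar\'e inequality, and the global trace theorem for the Caffarelli–Silvestre extension that maps $H^1(\R^{n+1},|z|^a\de\mathbf{x})$ continuously into $H^s(\R^n)$ (with constants subtracted).

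\textbf{Step 1 (Reflection).} Given $v \in H^1(B^+_{2r},|z|^a\de\mathbf{x})$, define its even reflection across $\{z=0\}$ by $\bar v(x,z):=v(x,|z|)$. A standard argument using the Lipschitz nature of $\mathbf{x}\mapsto|z|$ and the local integrability of $|z|^a$ shows that $\bar v\in H^1(B_{2r},|z|^a\de\mathbf{x})$, with
\begin{equation*}
\int_{B_{2r}}|z|^a|\nabla\bar v|^2\,\de\mathbf{x}\;=\;2\int_{B^+_{2r}}z^a|\nabla v|^2\,\de\mathbf{x}\;=\;\tfrac{4}{\boldsymbol{\delta}_s}\,\mathbf{E}_s(v,B^+_{2r}).
\end{equation*}

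\textbf{Step 2 (Cutoff with mean subtraction).} Since $a=1-2s\in(-1,1)$, the weight $|z|^a$ is a Muckenhoupt $A_2$ weight on $\R^{n+1}$, so a weighted Poincar\'e inequality holds on balls: denoting by $m:=\dashint_{B_{2r}}\bar v\,|z|^a\de\mathbf{x}\big/\dashint_{B_{2r}}|z|^a\de\mathbf{x}$ the weighted mean of $\bar v$ on $B_{2r}$, one has
\begin{equation*}
\int_{B_{2r}}|z|^a|\bar v-m|^2\,\de\mathbf{x}\;\leq\;Cr^2\int_{B_{2r}}|z|^a|\nabla\bar v|^2\,\de\mathbf{x}.
\end{equation*}
Pick $\eta\in C^\infty_c(B_{2r})$ with $\eta\equiv 1$ on $B_r$ and $|\nabla\eta|\leq C/r$, and set $w:=\eta(\bar v-m)$, extended by $0$ to all of $\R^{n+1}$. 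Then $w\in H^1(\R^{n+1},|z|^a\de\mathbf{x})$, and combining $|\nabla w|\leq |\nabla\eta|\,|\bar v-m|+|\nabla\bar v|$ with the Poincar\'e bound gives
\begin{equation*}
\int_{\R^{n+1}}|z|^a|\nabla w|^2\,\de\mathbf{x}\;\leq\;C\int_{B_{2r}}|z|^a|\nabla\bar v|^2\,\de\mathbf{x}\;\leq\;C'\,\mathbf{E}_s(v,B^+_{2r}).
\end{equation*}

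\textbf{Step 3 (Global trace and conclusion).} The Caffarelli–Silvestre construction (see Section~\ref{fracharmext}) yields a bounded trace operator $H^1(\R^{n+1}_+,z^a\de\mathbf{x})\to H^s(\R^n)$ modulo constants; restricting $w$ to $\R^{n+1}_+$ and applying this to the even function $w$ gives
\begin{equation*}
[w(\cdot,0)]^2_{H^s(\R^n)}\;\leq\;C\int_{\R^{n+1}_+}z^a|\nabla w|^2\,\de\mathbf{x}\;\leq\;C\,\mathbf{E}_s(v,B^+_{2r}).
\end{equation*}
Since $w(\cdot,0)=v-m$ on $D_r$, the constant $m$ drops out of the $H^s(D_r)$-seminorm, and we obtain
\begin{equation*}
[v]^2_{H^s(D_r)}\;\leq\;[w(\cdot,0)]^2_{H^s(\R^n)}\;\leq\;C\,\mathbf{E}_s(v,B^+_{2r}).
\end{equation*}

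\textbf{Main obstacle.} The subtle step is Step~2: one needs the weighted Poincar\'e inequality for $|z|^a$ on the full ball $B_{2r}$ (not just the half-ball), which requires verifying that $|z|^a$ is $A_2$ on $\R^{n+1}$ — routine but crucial — so that the cutoff term $(\bar v-m)\nabla\eta$ is controlled in $L^2(|z|^a\de\mathbf{x})$ by the weighted gradient of $v$. Once this is in place, the remainder is just the standard trace theorem for the degenerate extension.
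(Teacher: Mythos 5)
Your proof is correct and follows essentially the same route as the paper's: cut off to a compactly supported function in $H^1(\R^{n+1}_+,z^a\de\mathbf{x})$, control the cutoff error via the weighted Poincar\'e inequality for the $A_2$ weight $|z|^a$, and conclude by the extension/trace identity \eqref{normexth1/2}, which gives $[w(\cdot,0)]^2_{H^s(\R^n)}\leq C\,\mathbf{E}_s(w,\R^{n+1}_+)$ for compactly supported $w$. Your even reflection and weighted-mean subtraction are cosmetic variants of the paper's choices (the paper works directly in the half-ball after normalizing $v$ to have zero average over $B_2^+$); the underlying ingredients and estimates are the same.
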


\begin{proof}
The proof follows exactly the one in \cite[Lemma 2.3]{MSY} which is stated only in dimension $n=1$. We reproduce the proof (in arbitrary dimension) for convenience of the reader, slightly anticipating a well-known  identity presented in Section \ref{fracharmext} (see \eqref{normexth1/2}). 

Rescaling variables, we can assume that $r=1$. Moreover, we may assume without loss of generality that $v$ has a vanishing average over the half ball $B^+_{2}$. Let $\zeta\in C^\infty(B_{2};[0,1])$ be a cut-off function such that $\zeta({\bf x})=1$ for $|{\bf x}|\leq 1$,  $\zeta({\bf x})=0$ for $|{\bf x}|\geq 3/2$. The function $v_*:=\zeta v$ belongs to $H^1(\R^{n+1}_+, |z|^a\de{\bf x})$, and Poincar\'e's inequality in $H^1(\R^{n+1}_+, |z|^a\de{\bf x})$ (see e.g.~\cite{FKS}) yields 
\begin{equation}\label{esticpa1}
\int_{\R^{n+1}_+}z^a|\nabla v_*|^2\,\de {\bf x}\leq 2{\bf E}_s(v,B^+_{2})+ C\int_{B^+_{2}}z^a|v|^2\,\de{\bf x}\leq C{\bf E}_s(v,B^+_{2})\,,
\end{equation}
for a constant $C=C(\zeta,n,s)$.  On the other hand, it follows from \eqref{normexth1/2} in Section \ref{fracharmext} below that
\begin{equation}\label{esticpa2}
\iint_{D_1\times D_1}\frac{|v(x)-v(y)|^2}{|x-y|^{1+2s}}\,\de x\de y\leq \iint_{\R^n\times\R^n}\frac{|v_*(x)-v_*(y)|^2}{|x-y|^{n+2s}}\,\de x\de y
\leq C {\bf E}_s(v_*,\R^{n+1}_+)\,.
%\int_{\R^{n+1}_+}z^a|\nabla v_*|^2\,\de {\bf x}\,.
\end{equation}
Gathering \eqref{esticpa1} and \eqref{esticpa2} leads to the announced estimate.
\end{proof}

\subsection{Fractional harmonic extension and the Dirichlet-to-Neumann operator}\label{fracharmext}

%%%%%%%%%%%%%%%%%%%%%%%%%%%%%%%%%%%%%%%%%%%%%%%%%%%%%%%

Let us consider the so-called fractional Poisson kernel  $\mathbf{P}_{n,s}:\R^{n+1}_+\to [0,\infty)$ defined by 
\begin{equation}\label{defpoisskern}
\mathbf{P}_{n,s}(\mathbf{x}):=\sigma_{n,s}\,\frac{z^{2s}}{|\mathbf{x}|^{n+2s}}\qquad \text{with } \sigma_{n,s}:=\pi^{-\frac{n}{2}}\frac{\Gamma(\frac{n+2s}{2})}{\Gamma(s)}\,,
\end{equation}
where $\mathbf{x}:=(x,z)\in\mathbb{R}^{n+1}_+:=\R^n\times(0,\infty)$. The choice of the constant $\sigma_{n,s}$ is made in such a way that
%\footnote{Indeed, changing variables one obtains
%\begin{multline*}
%\int_{\R^n}\frac{1}{(|x|^2+1)^{\frac{n+2s}{2}}}\,\de x=|\mathbb{S}^{n-1}|\int_0^\infty\frac{r^{n-1}}{(r^2+1)^{\frac{n+2s}{2}}}\,\de r\\
%=\frac{|\mathbb{S}^{n-1}|}{2}\int_0^\infty\frac{t^{\frac{n}{2}-1}}{(t+1)^\frac{n+2s}{2}}\,\de t= \frac{|\mathbb{S}^{n-1}|}{2}\,{\rm B}(n/2,s)\,,
%\end{multline*}
%where ${\rm B}(\cdot,\cdot)$ denotes the Euler Beta function.}   
$\int_{\R^n}\mathbf{P}_{n,s}(x,z)\,\de x=1$ for every $z>0$ (see e.g. the computation in Remark \ref{remcomputmasspoisskern}).  As shown in  \cite{CaffSil} (see also \cite{MolOs}), the function $\mathbf{P}_{n,s}$ solves 
$$\begin{cases}
{\rm div}(z^{a}\nabla \mathbf{P}_{n,s})= 0 & \text{in $\R^{n+1}_+$}\,,\\
\mathbf{P}_{n,s}=\delta_0 & \text{on $\partial\R^{n+1}_+$}\,,
\end{cases}$$
where $\delta_0$ denotes the Dirac distribution at the origin. 
%In other words, the function $\mathbf{P}_{n,s}$ can be interpreted as the {\it ``fractional Poisson kernel''} by analogy with the standard case $s=1/2$. 

From now on, for a measurable function $u$ defined over $\mathbb{R}^n$, we shall denote by 
$u^\e$ its extension to the half-space $\mathbb{R}^{n+1}_+$ given by the convolution (in the $x$-variable) of $u$ with 
 $\mathbf{P}_{n,s}$,  i.e., 
\begin{equation}\label{poisson}
u^\e(x,z):= \sigma_{n,s}\int_{\mathbb{R}^n}\frac{z^{2s} u(y)}{(|x-y|^2+z^2)^{\frac{n+2s}{2}}}\,\de y\,.
\end{equation}
Notice that $u^\e$ is well defined if $u$ belongs to the Lebesgue space $L^1$ over $\R^n$ with respect to the probability measure 
\begin{equation}\label{defmeasm}
\mathfrak{m}_s:=\sigma_{n,s}(1+|y|^2)^{-\frac{n+2s}{2}}\,\de y\,.
\end{equation}
In particular,  $u^\e$ can be defined  whenever 
$u\in \widehat{H}^{s}(\Omega)$ for some  open set $\Omega\subset\R^n$ by Lemma~\ref{adminHchap}. 
Moreover, if  $u\in L^\infty(\R^n)$, then $u^\e\in L^\infty(\R_+^{n+1})$ and  
\begin{equation}\label{bdlinftyext}
\|u^\e\|_{L^\infty(\R_+^{n+1})} \leq \|u\|_{L^\infty(\R^n)}\,.
\end{equation}
For a function $u\in L^1(\R^n,\mathfrak{m}_s)$, the extension  $u^\e$ has a pointwise trace on $\partial\R^{n+1}_+\simeq\R^n$ which is equal to $u$ at every Lebesgue point. 
 In addition, $u^\e$ solves the equation 
\begin{equation}\label{eqextharm}
\begin{cases} 
{\rm div}(z^{a}\nabla u^\e) = 0 & \text{in $\mathbb{R}_+^{n+1}$}\,,\\
u^\e = u  & \text{on $\partial\R^{n+1}_+$}\,. 
\end{cases}
\end{equation}
By analogy with the standard case $s=1/2$ (for which \eqref{eqextharm} reduces to the Laplace equation), the map $u^\e$ is referred to as the {\it fractional harmonic extension} of $u$. 
\vskip3pt

%The following continuity property is elementary and can be obtained exactly as in \cite[Lemma~2.5]{MilSir}. 
%
%\begin{lemma}\label{context}
%For every $R>0$, the restriction operator $\mathfrak{R}_R:L^2(\R^n,\mathfrak{m}_s)\to L^2(B_R^+,|z|^a\de\mathbf{x})$ defined by 
%\begin{equation}\label{Pfrak}
%\mathfrak{R}_R(u):={u^\e}_{|B_R^+} \,,
%\end{equation}
%is continuous. 
%\end{lemma}

It has been proved in \cite{CaffSil} that $u^\e$  belongs to the weighted space $H^1(\mathbb{R}_+^{n+1},|z|^a\de\mathbf{x})$ whenever 
$u\in H^{s}(\mathbb{R}^n)$. Extending a well-known identity for $s=1/2$, the $H^{s}$-seminorm of $u$ coincides up to a multiplicative constant with the weighted $L^2$-norm of $\nabla u^\e$, and $u^\e$ turns out to minimize the weighted Dirichlet energy among all possible extensions. In other words, 
\begin{equation}\label{normexth1/2}
[u]^2_{H^{s}(\mathbb{R}^n)}={\bf E}_s(u^\e,\R^{n+1}_+)=\inf\Big\{{\bf E}_s(v,\R^{n+1}_+): v\in H^1(\R^{n+1}_+,|z|^a\de{\bf x})\,,\; v=u\text{ on }\R^n \Big\}
\end{equation}
for every $u\in H^s(\R^n)$ (thanks to the choice of the normalisation  factor $\boldsymbol{\delta}_s$ in \eqref{defweightDir}). 
%As in the case $s=1/2$ (which reduces to the usual harmonic extension), the map $u^\e$ turns out to be energy minimizing  
%
%
%\begin{lemma}[\cite{CaffSil}]\label{normexth1/2}
%Let $v\in H^{s}(\mathbb{R}^n)$, and let $v^\e$ be its fractional harmonic extension to~$\mathbb{R}^{n+1}_+$ given by \eqref{poisson}. Then 
%$v^\e$ belongs to $H^1(\mathbb{R}_+^{n+1},|z|^a\de \mathbf{x})$ and
%\begin{align}
%\nonumber [v]^2_{H^{s}(\mathbb{R}^n)} &= d_s\|\nabla v^\e\|^2_{L^2(\R_+^{n+1},|z|^a\de \mathbf{x})} \\
%\label{isomtry}&=\inf\left\{ d_s\|\nabla u\|^2_{L^2(\R_+^{n+1},|z|^a\de \mathbf{x})} : u\in H^1(\mathbb{R}^{n+1}_+,|z|^a\de \mathbf{x})\,, \ u=v \text{ on $\mathbb{R}^n$} \right\}\,,
%\end{align}
%where $d_s:=2^{2s-1}\frac{\Gamma(s)}{\Gamma(1-s)}$.
%\end{lemma}

%\begin{remark}\label{remtrace}
%Let  $G\subset \R^{n+1}_+$ be an admissible bounded  open set. For any function $u\in H^1(\mathbb{R}^{n+1}_+,|z|^a\de\mathbf{x})$ compactly supported in $G\cup\partial^0G$, 
%the trace $u_{|\mathbb{R}^n}$  belongs to $H^s_{00}(\partial^0G)$. Indeed, if $u$ is smooth in $\overline{\mathbb{R}^{n+1}_+}$, then we can apply identity  \eqref{isomtry}. In the general case, it suffices to apply the approximation procedure in Remark \ref{smoothapprox} to reach the conclusion. 
%\end{remark}
\vskip3pt

If $u\in  \widehat{H}^{s}(\Omega)$ for some open set $\Omega\subset\R^n$,  
we have the following estimates on $u^\e$, somehow extending the first equality in \eqref{normexth1/2} to the localized setting. 
%The proof follows closely the arguments in \cite[Lemma 2.7]{MilSir}, and we shall omit it. 

\begin{lemma}\label{hatH1/2toH1}
Let $\Omega\subset \mathbb{R}^n$ be an open set. 
For every $u\in \widehat{H}^{s}(\Omega)$, the  extension $u^\e$ 
given by \eqref{poisson} belongs to $H^1(G,|z|^a\de \mathbf{x})\cap L^2_{{\rm loc}}\big(\overline{\R^{n+1}_+},|z|^a\de \mathbf{x}\big)$ for every bounded admissible open set $G\subset\R^{n+1}_+$ satisfying $\overline{\partial^0G}\subset\Omega$. In addition, for every point ${\bf x}_0=(x_0,0)\in\Omega\times\{0\}$  and  $r>0$  such that $D_{3r}(x_0)\subset\Omega$, 
%there exist constants $C_{s,R,\rho}>0$  and $C_{s,\rho}>0$, independent of $v$ and $x_0$, such that  
\begin{equation}\label{contruextL2}
\|u^\e\|^2_{L^2(B_r^+({\bf x}_0),|z|^a\de \mathbf{x})}\leq C\left(r^{2}\mathcal{E}_s\big(u,D_{2r}(x_0)\big)+r^{2-2s}\|u\|^2_{L^2(D_{2r}(x_0))}\right)\,, 
\end{equation}
and 
\begin{equation}\label{contruextH1weight}
 {\bf E}_s\big(u^\e,B_r^+({\bf x}_0)\big)\leq C \mathcal{E}_s\big(u,D_{2r}(x_0)\big)\,, 
 \end{equation}
for a constant $C=C(n,s)$. 
\end{lemma}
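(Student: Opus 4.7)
The plan is to localize $u^\e$ by splitting $u$ with a horizontal cutoff, then to control the two pieces separately: the compactly supported piece through the global identity \eqref{normexth1/2}, and the tail piece through pointwise Poisson-kernel estimates. A key preliminary reduction is that both $\nabla u^\e$ and $\mathcal{E}_s(u, D_{2r}(x_0))$ are invariant under replacing $u$ by $u-\bar u$, where $\bar u := \dashint_{D_{2r}(x_0)} u$ is the average on the inner disc. The fractional Poincar\'e inequality on $D_{2r}(x_0)$ then yields $\|u-\bar u\|^2_{L^2(D_{2r}(x_0))} \leq C r^{2s}\mathcal{E}_s(u, D_{2r}(x_0))$, which is the mechanism by which the $L^2$-free form of \eqref{contruextH1weight} is ultimately achieved. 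Writing $\tilde u := u-\bar u$, I pick a cutoff $\eta \in C^\infty_c(D_{2r}(x_0))$ with $\eta \equiv 1$ on $D_{3r/2}(x_0)$ and $|\nabla\eta| \leq C/r$, and decompose $\tilde u = v + w$ with $v := \eta\tilde u$ and $w := (1-\eta)\tilde u$; by linearity of the Poisson extension, $\nabla u^\e = \nabla v^\e + \nabla w^\e$.

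For the local piece $v^\e$, a Leibniz-type estimate based on $|v(x)-v(y)|^2 \leq 2\eta(x)^2|\tilde u(x)-\tilde u(y)|^2 + 2\tilde u(y)^2|\eta(x)-\eta(y)|^2$ and $|\eta(x)-\eta(y)| \leq C\min(1,|x-y|/r)$ gives $[v]^2_{H^s(\R^n)} \leq C\mathcal{E}_s(u, D_{2r}(x_0)) + Cr^{-2s}\|\tilde u\|^2_{L^2(D_{2r}(x_0))}$, and the Poincar\'e bound absorbs the second summand into the first. The identity \eqref{normexth1/2} then produces $\mathbf{E}_s(v^\e, B_r^+(\mathbf{x}_0)) \leq \mathbf{E}_s(v^\e, \R^{n+1}_+) = [v]^2_{H^s(\R^n)} \leq C\mathcal{E}_s(u, D_{2r}(x_0))$. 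The $L^2$-bound is easier: Young's inequality applied to the convolution $v^\e(\cdot, z) = \mathbf{P}_{n,s}(\cdot, z)\ast v$ (the kernel having $L^1$-mass $1$ horizontally) gives $\|v^\e(\cdot, z)\|_{L^2(\R^n)} \leq \|u\|_{L^2(D_{2r}(x_0))}$, and integrating $z^a$ over $(0, r)$ contributes the $r^{2-2s}\|u\|^2_{L^2(D_{2r}(x_0))}$ term of \eqref{contruextL2}.

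For the tail piece $w^\e$, since $w$ vanishes on $D_{3r/2}(x_0)$, for every $\mathbf{x} \in B_r^+(\mathbf{x}_0)$ and every $y$ in the support of $w$ one has $|x-y| \geq |y-x_0|/4$. Suitable Cauchy--Schwarz splits applied to the Poisson integrals defining $w^\e$, $\partial_{x_i}w^\e$ and $\partial_z w^\e$ (using the explicit form of $\partial_z w^\e$ to absorb the $z^{2s-1}$ prefactor and comparing $(|x-y|^2+z^2)$ with $(|y-x_0|+z)^2$) reduce the estimates to controlling the single tail quantity
$$J := \int_{|y-x_0|\geq 3r/2}\frac{|\tilde u(y)|^2}{|y-x_0|^{n+2s}}\,\de y\,.$$
The key bound $J \leq C r^{-n}\mathcal{E}_s(u, D_{2r}(x_0))$ exploits the vanishing mean of $\tilde u$ on $D_{2r}(x_0)$: from $\tilde u(y) = \dashint_{D_{2r}(x_0)}(u(y)-u(x))\,\de x$, Jensen yields $|\tilde u(y)|^2 \leq \dashint_{D_{2r}(x_0)}|u(y)-u(x)|^2\,\de x$, and $|y-x| \leq (7/3)|y-x_0|$ for $x \in D_{2r}(x_0)$ and $|y-x_0| \geq 3r/2$ lets one replace $|y-x_0|$ by $|y-x|$ in the denominator, turning $J$ into a piece of $\mathcal{E}_s(u, D_{2r}(x_0))$ (one variable in $D_{2r}(x_0)$). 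Power counting for the weights $z^a$, $z^{2s}$, $z^{2s-1}$ then closes both \eqref{contruextL2} and \eqref{contruextH1weight}.

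Membership of $u^\e$ in $H^1(G, z^a\de\mathbf{x})$ for admissible $G$ with $\overline{\partial^0 G} \subset \Omega$ follows by covering $\overline{\partial^0 G}$ by finitely many discs to which the boundary estimates above apply, combined with the $C^\infty$-smoothness of $u^\e$ on $\R^{n+1}_+$ away from $\partial\R^{n+1}_+$; the claim $u^\e \in L^2_{\rm loc}(\overline{\R^{n+1}_+}, z^a\de\mathbf{x})$ uses the pointwise bound on $u^\e$ coming from $u \in L^1(\R^n, \mathfrak{m}_s)$ (Lemma~\ref{adminHchap}) together with the integrability of $z^a$ near $z=0$. The main technical hurdle is the $L^2$-free form of \eqref{contruextH1weight}: a naive cutoff of $u$ itself produces a spurious $r^{-2s}\|u\|^2_{L^2}$ term, which is only absorbed by the combined use of the Poincar\'e shift $u \leadsto u-\bar u$ and the mean-zero exploitation in bounding $J$.
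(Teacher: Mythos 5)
Your argument is correct in substance, but it takes a noticeably more self-contained route than the paper: the paper simply cites \cite[Lemma 2.10]{MSK} for both estimates applied to $u-\bar u$, then uses Poincar\'e's inequality in $H^s(D_{2r})$ to absorb the $\|u-\bar u\|^2_{L^2(D_{2r})}$ term into $\mathcal{E}_s(u,D_{2r})$ --- the whole proof is four lines. Your cutoff decomposition $\tilde u = \eta\tilde u + (1-\eta)\tilde u$, the reduction of the tail contribution to the quantity $J$ via Poisson-kernel pointwise bounds, and the control of $J$ by Jensen and the mean-zero structure of $\tilde u$ is essentially a reconstruction of what \cite[Lemma 2.10]{MSK} contains. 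What your version buys is exactly that self-containedness; what it costs is length and some care in the bookkeeping.

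One step in the write-up is imprecise, though not fatally so. After the Leibniz-type inequality you assert
$[v]^2_{H^s(\R^n)} \leq C\mathcal{E}_s(u, D_{2r}(x_0)) + Cr^{-2s}\|\tilde u\|^2_{L^2(D_{2r}(x_0))}$, but the cross-term
\begin{equation*}
\iint_{\R^n\times\R^n}\frac{\tilde u(y)^2\,|\eta(x)-\eta(y)|^2}{|x-y|^{n+2s}}\,\de x\,\de y
\end{equation*}
does not localize onto $y\in D_{2r}(x_0)$: for $y\notin D_{2r}(x_0)$ one has $|\eta(x)-\eta(y)|^2=\eta(x)^2$, which is still nonzero for $x\in D_{2r}(x_0)$. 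A direct computation shows this outer part contributes $Cr^{-2s}\|\tilde u\|^2_{L^2(D_{3r}\setminus D_{2r})}$ plus a far-field tail of exactly the type captured by $J$. The fix requires no new idea: the annulus piece is dominated through the same Jensen step you already use for $J$ (write $\tilde u(y)=\dashint_{D_{2r}}(u(y)-u(x))\,\de x$ and use $|x-y|\leq 5r$ for $x\in D_{2r}$, $y\in D_{3r}$, so $\|\tilde u\|^2_{L^2(D_{3r})}\leq Cr^{2s}\mathcal{E}_s(u,D_{2r})$), and the far-field piece is $\leq Cr^nJ\leq C\mathcal{E}_s(u,D_{2r})$. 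You should also observe that this same Jensen step, rather than the plain Poincar\'e inequality on $D_{2r}$, is what you actually need to close the estimate, since the effective domain of the cutoff error is $D_{3r}$, not $D_{2r}$. With that corrected, the power counting, the $L^2$ bound for $v^\e$ via Young's inequality plus the constant $\bar u$, and the passage to general admissible $G$ by covering all check out.
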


\begin{proof}
Translating and rescaling variables, we can assume that $x_0=0$ and $r=1$. Then \eqref{contruextL2} follows from \cite[Lemma 2.10]{MSK} (which is stated for $s\in(0,1/2)$, but the proof is in fact valid for any $s\in(0,1)$). Denote by $\bar u$ the average of $u$ over $D_2$. 
Noticing that $(u-\bar u)^\e=u^\e-\bar u$, and applying \cite[Lemma 2.10]{MSK} to $u-\bar u$ yields 
$${\bf E}_s(u^\e,B_1^+)\leq C\big( \mathcal{E}_s(u,D_{2})+ \|u-\bar u\|^2_{L^2(D_2)}\big)\,.$$
On the other hand, by Poincar\'e's inequality in $H^s(D_2)$, we have
$$ \|u-\bar u\|^2_{L^2(D_2)}\leq C [u]^2_{H^s(D_2)}\leq  C  \mathcal{E}_s(u,D_{2})\,,$$
and \eqref{contruextH1weight} follows. 
\end{proof}

\begin{corollary}\label{contextHsH1}
Let $\Omega\subset \mathbb{R}^n$ be an open set, and $G\subset \R^{n+1}_+$ a bounded admissible open set such that $\overline{\partial^0 G}\subset\Omega$.  The extension operator $u\mapsto u^\e$ defines a continuous linear operator from $\widehat H^s(\Omega)$ into $H^1(G,|z|^a\de{\bf x})$. 
\end{corollary}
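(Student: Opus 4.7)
The plan is to establish the continuity estimate via a finite-cover argument, combining the boundary estimates of Lemma \ref{hatH1/2toH1} with elementary pointwise Poisson-kernel bounds away from $\partial\R^{n+1}_+$. Linearity of $u \mapsto u^\e$ is immediate from the convolution formula \eqref{poisson}.

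First I will exploit the admissibility of $G$. Since $\overline{\partial^0 G}$ is compact and contained in the open set $\Omega$, I can choose $\eta > 0$ such that $D_{3\eta}(x) \subset \Omega$ for every $x \in \overline{\partial^0 G}$. I then cover $\overline{G}$ by finitely many sets of two kinds: (i) half-balls $B^+_\eta(\mathbf{x}_i)$ centered at points $\mathbf{x}_i = (x_i,0)$ with $x_i \in \overline{\partial^0 G}$; and (ii) open balls $B_{\rho_j}(\mathbf{y}_j)$ at positive distance from $\partial \R^{n+1}_+$. The admissibility condition $\partial G = \partial^+G \cup \overline{\partial^0G}$ ensures that every point of $\overline{G} \cap \partial\R^{n+1}_+$ lies in $\overline{\partial^0 G}$, so a finite subcover of $\overline{G}$ of the announced form can be extracted by compactness.

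On each boundary half-ball, Lemma \ref{hatH1/2toH1} directly gives
$$\|u^\e\|^2_{H^1(B^+_\eta(\mathbf{x}_i),|z|^a\de\mathbf{x})} \leq C\big(\mathcal{E}_s(u,D_{2\eta}(x_i)) + \|u\|^2_{L^2(D_{2\eta}(x_i))}\big) \leq C_\eta \|u\|^2_{\widehat H^s(\Omega)},$$
since $D_{2\eta}(x_i) \subset \Omega$. On each interior ball $B_{\rho_j}(\mathbf{y}_j)$ the weight $z^a$ is bounded from above and below by positive constants, the kernel $(y,\mathbf{x}) \mapsto \mathbf{P}_{n,s}(x-y,z)$ and its first partial derivatives in $\mathbf{x}$ are uniformly dominated by $C_j(1+|y|)^{-(n+2s)}$ for $\mathbf{x}$ in the ball, so differentiating under the integral in \eqref{poisson} and applying Cauchy--Schwarz yields
$$\|u^\e\|_{L^\infty(B_{\rho_j}(\mathbf{y}_j))} + \|\nabla u^\e\|_{L^\infty(B_{\rho_j}(\mathbf{y}_j))} \leq C_j\left(\int_{\R^n}\frac{|u(y)|^2}{(1+|y|)^{n+2s}}\,\de y\right)^{1/2} \leq C_j \|u\|_{\widehat H^s(\Omega)},$$
where the last inequality uses Lemma \ref{adminHchap} at any fixed $x_0 \in \overline{\partial^0 G}$, together with the fact that $(|y-x_0|+1)^{-(n+2s)}$ and $(1+|y|)^{-(n+2s)}$ are comparable up to a constant depending on $|x_0|$.

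Summing the squared estimates over the finite cover produces the claimed bound $\|u^\e\|_{H^1(G,|z|^a\de\mathbf{x})} \leq C_{G,\Omega} \|u\|_{\widehat H^s(\Omega)}$. The proof is essentially routine localization; the only mildly delicate point is the use of the admissibility property to guarantee that the proposed cover really does capture all of $\overline{G}$, since $G$ may extend far into $\R^{n+1}_+$ and cannot in general be engulfed by a single half-ball satisfying the hypothesis of Lemma \ref{hatH1/2toH1}.
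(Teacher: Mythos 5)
Your proof is correct and takes essentially the same approach as the paper's: split $\overline G$ into a near-boundary region covered by finitely many half-balls where Lemma~\ref{hatH1/2toH1} applies, and a complementary region at positive distance from $\partial\R^{n+1}_+$ where the Poisson kernel gives pointwise bounds, then invoke Lemma~\ref{adminHchap}. The paper packages the covering step by explicitly replacing $G$ with a larger admissible set $G_* = (\omega\times(0,h_1])\cup(D_R\times(h_1,h_2))$, whereas you argue directly that the finite cover exists via the admissibility identity $\partial G\cap\partial\R^{n+1}_+=\overline{\partial^0 G}$; these are cosmetically different realizations of the same idea.
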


\begin{proof}
Set $\delta:={\rm dist}(\partial^0G,\Omega^c)$, and 
$$h_1:=\min\Big\{\frac{\delta}{12}\,,\, \inf\big\{{\rm dist}({\bf x},\partial\R^{n+1}_+): {\bf x}=(x,z)\in G\,,\;{\rm dist}((x,0),\partial^0G)\geq \delta/2\big\}\Big\}>0\,,$$
$$h_2:=\sup\Big\{{\rm dist}({\bf x},\partial\R^{n+1}_+): {\bf x}=(x,z)\in G\Big\}<+\infty\,.$$
We also consider a large radius $R>0$ in such a way that $G\subset D_R\times\R$, and we define 
$$\omega:=\Big\{x\in\R^n :{\rm dist}((x,0),\partial^0G)< \delta/2\  \Big\}\,,$$
and
$$G_*:=\big(\omega\times(0,h_1]\big)\cup \big(D_R\times(h_1,h_2\big)\big)\,.$$
By construction, $G_*$ is a bounded admissible open set satisfying $\overline{\partial^0 G_*}\subset\Omega$ and $G\subset G_*$. Therefore, it is enough to show that the extension operator is continuous from  
$\widehat H^s(\Omega)$ into $H^1(G_*,|z|^a\de{\bf x})$. In other words, we can assume without loss of generality that $G=G_*$. 
\vskip3pt

Covering $\omega\times(0,h_1]$ by finitely many half balls $B^+_{\delta/6}({\bf x_i})$ with ${\bf x}_i\in \omega\times\{0\}$, and applying Lemma \ref{hatH1/2toH1} in those balls, we infer that 
$u^\e\in H^1(\omega\times(0,h_1),|z|^a\de{\bf x})$, and 
$$\|u^\e\|^2_{H^1(\omega\times(0,h_1),|z|^a\de{\bf x})} \leq C_G\big(\mathcal{E}_s(u,\Omega)+\|u\|^2_{L^2(\Omega)}\big)\,, $$
for a constant $C_G=C_G(G,n,s)$. 

On the other hand, one may derive from formula \eqref{poisson} and Jensen's inequality that 
$$|u^\e({\bf x})|^2+ |\nabla u^\e({\bf x})|^2\leq C_G\int_{\R^n}\frac{|u(y)|^2}{(|x-y|^2+h_1^2)}\,\de y\quad \forall {\bf x}=(x,z)\in D_R\times(h_1,h_2)\,.$$
It then follows from Lemma \ref{adminHchap} that $u^\e\in H^1( D_R\times(h_1,h_2),|z|^a\de {\bf x})$ with 
$$\|u^\e\|^2_{H^1(D_R\times(h_1,h_2),|z|^a\de{\bf x})} \leq C_G\big(\mathcal{E}_s(u,\Omega)+\|u\|^2_{L^2(\Omega)}\big)\,, $$
which completes the proof. 
\end{proof}

%\begin{remark}\label{H1/2loctoH1loc}
%By the previous lemma, for any $v\in \widehat H^{s}(\Omega)\cap H^{s}_{\rm loc}(\mathbb{R}^n)$, the fractional harmonic extension $v^\e$ 
%belongs to $H^1_{{{\rm loc}}}(\overline{\R^{n+1}_+},|z|^a\de \mathbf{x})$, and for any $R>0$, 
%$$\big\| v^\e\big\|^2_{H^1(B_R^+,|z|^a\de \mathbf{x})}\leq C_{s,R} \left(\mathcal{E}\big(v,D_{2R}\big)+\|v\|^2_{L^2(D_{2R})}\right)\,. $$
%\end{remark}
%\vskip5pt

Another useful fact about the extension by convolution with  $\mathbf{P}_{n,s}$, is that it preserves some local H\"older continuity. It is very classical and follows from the explicit formula (and regularity) of $\mathbf{P}_{n,s}$. Details are left to the reader. 

\begin{lemma}\label{HoldTransf}
If $u\in L^\infty(\R^n)\cap C^{0,\beta}(D_R)$ for some $\beta\in(0,\min(1,2s))$, then  $u^\e\in C^{0,\beta}(B^+_{R/4})$, and 
\begin{equation}\label{Holdtransfesti}
R^{\beta}[u^\e]_{C^{0,\beta}(B^+_{R/4})}\leq C_\beta\big(R^{\beta}[u]_{C^{0,\beta}(D_R)}+\|u\|_{L^\infty(\R^n)}\big)\,, 
\end{equation}
for a constant $C_\beta=C_\beta(\beta,n,s)$. 
\end{lemma}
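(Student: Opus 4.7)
By the dilation ${\bf x}\mapsto R{\bf x}$ (which preserves the Poisson kernel up to the appropriate rescaling of $[u]_{C^{0,\beta}}$ and $\|u\|_{L^\infty}$), we may assume $R=1$. Since $u^\e$ has trace $u$ on $\partial\R^{n+1}_+$, a triangle inequality through the intermediate point $(x_2,z_1)$ reduces the proof to the two separate estimates
$|u^\e(x_1,z)-u^\e(x_2,z)|\leq C_\beta|x_1-x_2|^\beta\bigl([u]_{C^{0,\beta}(D_1)}+\|u\|_{L^\infty(\R^n)}\bigr)$ and $|u^\e(x,z_1)-u^\e(x,z_2)|\leq C_\beta|z_1-z_2|^\beta\bigl([u]_{C^{0,\beta}(D_1)}+\|u\|_{L^\infty(\R^n)}\bigr)$ for ${\bf x}_i\in B_{1/4}^+$.

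For the tangential estimate, using $\int_{\R^n}\mathbf{P}_{n,s}(x-y,z)\,\de y=1$, rewrite
$$u^\e(x_1,z)-u^\e(x_2,z)=\int_{\R^n}\bigl[\mathbf{P}_{n,s}(x_1-y,z)-\mathbf{P}_{n,s}(x_2-y,z)\bigr]\bigl(u(y)-u(x_1)\bigr)\,\de y.$$
Set $r:=|x_1-x_2|$ and split the integration at $|y-x_1|=2r$. On the near piece bound $|u(y)-u(x_1)|$ by $[u]_{C^{0,\beta}(D_1)}(2r)^\beta$ (provided $r$ is small enough that $y\in D_1$) and control the kernels by $\int\mathbf{P}_{n,s}\leq 1$. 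On the far piece apply the mean value theorem in $x$ together with the explicit bound $|\nabla_x\mathbf{P}_{n,s}(x,z)|\leq C\,z^{2s}|x|(|x|^2+z^2)^{-(n+2s+2)/2}$; split this piece further at $|y-x_1|=3/4$ between the Hölder zone and the $L^\infty$ zone, and reduce the resulting integrals via the rescaling $w=(x_1-y)/z$ to ones of the form $\int|w|^\alpha(1+|w|^2)^{-\mu/2}\,\de w$, convergent precisely because $\beta<2s$. The case when $r$ exceeds a fixed absolute constant (so that $y$ may leave $D_1$) is trivial by $\|u^\e\|_{L^\infty}\leq\|u\|_{L^\infty}$.

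For the normal estimate, write $u^\e(x,z_2)-u^\e(x,z_1)=\int_{z_1}^{z_2}\partial_z u^\e(x,\zeta)\,\de\zeta$; since $\int\partial_z\mathbf{P}_{n,s}(x-y,\zeta)\,\de y=0$, we have $\partial_z u^\e(x,\zeta)=\int\partial_z\mathbf{P}_{n,s}(x-y,\zeta)(u(y)-u(x))\,\de y$. From the explicit formula, $|\partial_z\mathbf{P}_{n,s}(x,\zeta)|\leq C\,\zeta^{2s-1}(|x|^2+\zeta^2)^{-(n+2s)/2}$, and the same near/far split at $|y-x|=3/4$, combined with the rescaling $w=(y-x)/\zeta$ (again convergent since $\beta<2s$), yields
$$|\partial_z u^\e(x,\zeta)|\leq C\bigl([u]_{C^{0,\beta}(D_1)}\zeta^{\beta-1}+\|u\|_{L^\infty(\R^n)}\zeta^{2s-1}\bigr).$$
Integrating in $\zeta\in[z_1,z_2]$, the subadditivity $z_2^\gamma-z_1^\gamma\leq(z_2-z_1)^\gamma$ for $\gamma\in(0,1]$ handles the $\zeta^{\beta-1}$ term, while the $\zeta^{2s-1}$ term is controlled via $(z_2-z_1)^{2s}\leq C_\beta(z_2-z_1)^\beta$ (using $\beta<2s$ and $z_i\leq 1/4$), giving the asserted bound.

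The main subtlety is that $\partial_z u^\e$ is not bounded up to $\{z=0\}$: the pointwise estimate degenerates like $\zeta^{\beta-1}$ as $\zeta\to 0^+$ (when $\beta<1$), so a mere Lipschitz-in-$z$ argument cannot work. The singularity is integrable thanks to $\beta>0$, and it is the concavity of $t\mapsto t^\beta$ that produces the sharp power $|z_1-z_2|^\beta$ (rather than the weaker $\max(z_1,z_2)^\beta$ one would obtain from a crude triangle inequality with $(u^\e - u)$). The rest amounts to bookkeeping of constants and changes of variables in the Poisson kernel, which is why the excerpt leaves the details to the reader.
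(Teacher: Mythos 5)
The paper offers no proof of this lemma (it is flagged as classical, with details left to the reader), so your sketch must stand on its own. The scaffolding is the right one: reduce to a tangential and a normal increment via the triangle inequality, insert the constant $u(x_1)$ using $\int_{\R^n}\mathbf{P}_{n,s}(\cdot,z)\,\de y=1$ and $\int_{\R^n}\partial_z\mathbf{P}_{n,s}(\cdot,z)\,\de y=0$, split near/far, rescale by $z$. The normal increment is handled correctly; in particular, the observation that $\partial_z u^\e$ degenerates like $\zeta^{\beta-1}$ while concavity of $t\mapsto t^\beta$ still recovers the full $|z_1-z_2|^\beta$ is exactly the point.

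There is, however, a gap in the tangential increment. After the mean value theorem, the H\"older zone of the far piece ($2r<|y-x_1|<3/4$) is controlled by
$C\,r\,[u]_{C^{0,\beta}}\int_{2r<|t|<3/4}z^{2s}|t|^{1+\beta}(|t|^2+z^2)^{-(n+2s+2)/2}\,\de t$,
which under $w=t/z$ becomes $C\,r\,z^{\beta-1}[u]_{C^{0,\beta}}\int_{2r/z}^{3/(4z)}|w|^{n+\beta}(1+|w|^2)^{-(n+2s+2)/2}\,\de w$. If you merely observe that the untruncated integral converges and treat it as a constant — as the wording ``convergent precisely because $\beta<2s$'' suggests — you are left with $C\,r\,z^{\beta-1}$, which is $\leq C r^\beta$ only when $z\geq r$; when $z<r$ it blows up as $z\to 0$ (recall $\beta<1$). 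To recover $r^\beta$ uniformly you must keep the lower cutoff: the integrand decays like $|w|^{\beta-2s-2}$, so the truncated integral is $\leq C\min\{1,(r/z)^{\beta-2s-1}\}$, and for $z\leq r$ the product becomes $C\,r^{\beta-2s}z^{2s}\leq Cr^\beta$. Alternatively, for $z\leq r$ drop the MVT and pass through the trace,
$|u^\e(x_1,z)-u^\e(x_2,z)|\leq|u^\e(x_1,z)-u(x_1)|+|u(x_1)-u(x_2)|+|u(x_2)-u^\e(x_2,z)|$,
bounding the end terms by $C\big([u]_{C^{0,\beta}}z^\beta+\|u\|_{L^\infty}z^{2s}\big)\leq C r^\beta\big([u]_{C^{0,\beta}}+\|u\|_{L^\infty}\big)$ since $z\leq r\leq 1$ and $\beta<2s$. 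Relatedly, $\beta<2s$ is \emph{not} the convergence threshold for the tangential MVT integrals (those converge for all $\beta<2s+1$); that hypothesis is genuinely needed only in the normal estimate and in the $z\leq r$ trace comparison, and the misattribution is a symptom that the small-$z$ regime of the tangential bound was not actually checked.
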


Let us now assume that $\Omega\subset\R^n$ is a bounded open set with Lipschitz boundary. If $u\in \widehat H^{s}(\Omega)$, the divergence free vector field $z^{a}\nabla u^\e$ admits a distributional normal trace on $\Omega$, that we denote by $\mathbf{\Lambda}^{(2s)}u$.  
More precisely, we define $\mathbf{\Lambda}^{(2s)} u$  through its action on a test function $\varphi\in \mathscr{D}(\Omega)$ by setting
\begin{equation}\label{defNeumOp}
\left\langle \mathbf{\Lambda}^{(2s)} u, \varphi\right\rangle_\Omega := \int_{\mathbb{R}^{n+1}_+}z^{a}\nabla u^\e\cdot\nabla\Phi\,\de \mathbf{x}\,,
\end{equation}
where $\Phi$ is any smooth extension of $\varphi$ compactly supported in  $\mathbb{R}_+^{n+1}\cup\Omega$. Note that the right-hand side of \eqref{defNeumOp} 
is well defined by Lemma~\ref{hatH1/2toH1}. By the divergence theorem, it is routine to check that 
the integral in \eqref{defNeumOp} does not depend on the choice of the extension $\Phi$. 
%In the light of \eqref{densitysmoothH1/200} and  \eqref{normexth1/2}, we infer that $\mathbf{\Lambda}^{(2s)}:\widehat H^{s}(\Omega)\to H^{-s}(\Omega)$ defines a continuous linear operator. 
It can be thought of as a {\it fractional  Dirichlet-to-Neumann operator}. Indeed, whenever $u$ is smooth, the distribution $\mathbf{\Lambda}^{(2s)}u$ is the pointwise-defined function given by 
$$\mathbf{\Lambda}^{(2s)} u(x)=-\lim_{z\downarrow0}z^{a}\partial_z u^\e(x,z)=2s\, \lim_{z\downarrow0} \frac{u^\e(x,0)-u^\e(x,z)}{z^{2s}}$$  
at  each point $x\in \Omega$.  
\vskip3pt

In the case $\Omega=\R^n$, it has been proved in \cite{CaffSil} that $\mathbf{\Lambda}^{(2s)}$ coincides with $ (-\Delta)^{s} $, up to the multiplicative factor $\boldsymbol{\delta}_s$. In the localized  setting, this identity still holds, see e.g. \cite[Lemma~2.12]{MSK} and \cite[Lemma 2.9]{MS}.

\begin{lemma}\label{repnormderfraclap}
If $\Omega\subset \mathbb{R}^n$ is a bounded open set with  Lipschitz boundary, then 
$$ (-\Delta)^{s} = \boldsymbol{\delta}_s \mathbf{\Lambda}^{(2s)} \text{ on $\widehat H^{s}(\Omega)$}\,.$$
\end{lemma}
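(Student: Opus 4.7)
The plan is to exploit the bilinearity of both sides to split $u$ into a piece for which the global Caffarelli--Silvestre identity \eqref{normexth1/2} applies directly and a remainder supported away from the test function. Fix $\varphi\in\mathscr{D}(\Omega)$ with $K:=\mathrm{supp}\,\varphi$, pick a cutoff $\eta\in\mathscr{D}(\Omega)$ with $\eta\equiv 1$ on an open neighborhood $V$ of $K$, and write $u=u_1+u_2$ with $u_1:=\eta u$ and $u_2:=(1-\eta)u$. Standard multiplication-by-cutoff estimates (using $u\in L^2_{\mathrm{loc}}(\R^n)$ and the bound $[u]^2_{H^s(\mathrm{supp}\,\eta)}\leq 2\mathcal{E}_s(u,\Omega)<\infty$) show that $u_1\in H^s_{00}(\Omega)\subset H^s(\R^n)$, while $u_2\in\widehat H^s(\Omega)$ vanishes identically on $V$.

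For the $u_1$-piece, both $u_1^\e$ and $\varphi^\e$ belong to $H^1(\R^{n+1}_+,z^a\de\mathbf{x})$, and polarizing \eqref{normexth1/2} yields
\[
\langle(-\Delta)^s u_1,\varphi\rangle_\Omega=\boldsymbol{\delta}_s\int_{\R^{n+1}_+}z^a\nabla u_1^\e\cdot\nabla\varphi^\e\,\de\mathbf{x}.
\]
For any admissible compactly supported extension $\Phi$ of $\varphi$, the difference $\Phi-\varphi^\e$ lies in $H^1(\R^{n+1}_+,z^a\de\mathbf{x})$ with vanishing trace on $\R^n$, so the minimizing ($L_s$-harmonic) property of $u_1^\e$ forces $\int z^a\nabla u_1^\e\cdot\nabla(\Phi-\varphi^\e)\,\de\mathbf{x}=0$. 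This confirms the identity for $u_1$.

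For the $u_2$-piece, the cancellations produced by $u_2|_V=0$ and $\varphi|_{\R^n\setminus K}=0$ collapse the distributional pairing into the absolutely convergent integral
\[
\langle(-\Delta)^s u_2,\varphi\rangle_\Omega=-\gamma_{n,s}\int_K\varphi(x)\int_{\R^n\setminus V}\frac{u_2(y)}{|x-y|^{n+2s}}\,\de y\,\de x,
\]
where Lemma~\ref{adminHchap} ensures convergence. Choose the admissible extension $\Phi(x,z):=\varphi(x)\rho(z)$ with $\rho\in\mathscr{D}([0,\infty))$ and $\rho(0)=1$; the Poisson formula \eqref{poisson} makes $u_2^\e$ of class $C^\infty$ up to the boundary on $V\times[0,\infty)$. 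A classical divergence-theorem computation on $\R^n\times(\eps,\infty)$ using $L_s u_2^\e=0$ leaves only the conormal boundary term $-\int_{\R^n}\eps^a\partial_z u_2^\e(x,\eps)\varphi(x)\,\de x$, whose pointwise limit as $\eps\to 0^+$ is computed by differentiating the Poisson kernel in $z$ and applying dominated convergence; this yields $-2s\sigma_{n,s}\int_K\varphi(x)\int_{\R^n\setminus V}u_2(y)|x-y|^{-n-2s}\,\de y\,\de x$.

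Combined with the algebraic identity $\gamma_{n,s}=2s\sigma_{n,s}\boldsymbol{\delta}_s$ (immediate from \eqref{defHsandgammans}, \eqref{defweightDir}, and \eqref{defpoisskern}), the contributions from $u_1$ and $u_2$ add up to the claimed equality $\langle(-\Delta)^s u,\varphi\rangle_\Omega=\boldsymbol{\delta}_s\langle\mathbf{\Lambda}^{(2s)}u,\varphi\rangle_\Omega$. The main technical hurdle is justifying the passage to the limit $\eps\to 0^+$ in the conormal boundary term for the $u_2$-piece: this relies crucially on $u_2^\e$ being $C^\infty$ up to the boundary on $V\times[0,\infty)$ (thanks to $u_2|_V=0$) and on the strict separation $\mathrm{dist}(K,\R^n\setminus V)>0$, which provides a uniform positive lower bound on $|x-y|$ for $x\in K$, $y\in\R^n\setminus V$, legitimizing dominated convergence against the integrability of $u$ given by Lemma~\ref{adminHchap}.
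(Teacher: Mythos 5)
Your decomposition $u=u_1+u_2=\eta u+(1-\eta)u$ is a clean and valid way to reduce the statement to two conceptually transparent cases: the localized piece $u_1\in H^s_{00}(\Omega)\subset H^s(\R^n)$ where the global Caffarelli--Silvestre identity applies, and the far piece $u_2$ (vanishing on a neighborhood $V\supset\supset K=\mathrm{supp}\,\varphi$) where both $\langle(-\Delta)^s u_2,\varphi\rangle_\Omega$ and $\boldsymbol{\Lambda}^{(2s)}u_2$ become pointwise, absolutely convergent expressions thanks to the strict separation $\mathrm{dist}(K,\R^n\setminus V)>0$ together with Lemma~\ref{adminHchap}. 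The divergence-theorem computation on $\R^n\times(\eps,\infty)$ for $u_2$ is justified (the Poisson kernel is $C^\infty$ on $K\times[0,\infty)$ once $u_2$ vanishes near $K$), and the algebraic identity $\gamma_{n,s}=2s\,\sigma_{n,s}\,\boldsymbol{\delta}_s$ does hold by a one-line $\Gamma$-function check. The paper itself only cites \cite{MSK,MS} for this lemma, so there is no in-text proof to compare against, but your cutoff argument is a natural and self-contained route.

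One caveat on constants: polarizing \eqref{normexth1/2} \emph{exactly as written} gives $\langle(-\Delta)^s u_1,\varphi\rangle_\Omega=\tfrac{\boldsymbol{\delta}_s}{2}\int_{\R^{n+1}_+}z^a\nabla u_1^\e\cdot\nabla\varphi^\e\,\de\mathbf{x}$, not the factor $\boldsymbol{\delta}_s$ you state. The source of the mismatch is a factor-of-$2$ slip in \eqref{normexth1/2} itself, which should read $[u]^2_{H^s(\R^n)}=2\,\mathbf{E}_s(u^\e,\R^{n+1}_+)=\boldsymbol{\delta}_s\int_{\R^{n+1}_+}z^a|\nabla u^\e|^2\,\de\mathbf{x}$ (this is immediate to verify at $s=1/2$ on the Fourier side, where $\int_{\R^{n+1}_+}|\nabla u^\e|^2\,\de\mathbf{x}=[u]^2_{H^{1/2}(\R^n)}$ and $\boldsymbol{\delta}_{1/2}=1$, whereas \eqref{normexth1/2} as printed would force $[u]^2_{H^{1/2}}=\tfrac12[u]^2_{H^{1/2}}$; it is also what is needed for Corollary~\ref{minenergdirchfrac} to reduce to an equality when $v=u^\e$). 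With the corrected identity your $u_1$-step yields the factor $\boldsymbol{\delta}_s$, which then matches your $u_2$-computation and the lemma. Worth flagging, because if one takes \eqref{normexth1/2} literally, your two halves would appear to produce inconsistent constants ($\boldsymbol{\delta}_s/2$ from $u_1$ versus $\boldsymbol{\delta}_s$ from $u_2$), and one should recognize this as a typo in the paper rather than a contradiction in your argument.

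Two small points of rigor you might want to make explicit. First, the multiplication estimate showing $\eta u\in H^s(\R^n)$ uses not only $u\in L^2_{\mathrm{loc}}$ but also the weighted tail integrability from Lemma~\ref{adminHchap}, since the cross term $\iint |\eta(x)-\eta(y)|^2|u(y)|^2|x-y|^{-n-2s}$ requires the decay of $\int|u(y)|^2(1+|y|)^{-n-2s}\,\de y$. Second, when you invoke the minimality of $u_1^\e$ to conclude $\int z^a\nabla u_1^\e\cdot\nabla(\Phi-\varphi^\e)=0$, note that $\Phi-\varphi^\e$ is in $H^1(\R^{n+1}_+,z^a\de\mathbf{x})$ with zero trace but is not compactly supported; this is fine because the Euler--Lagrange equation of the Dirichlet minimization holds for all zero-trace $H^1$ competitors, not just compactly supported ones, but it deserves a word. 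Neither point affects the validity of the argument.
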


One of the main consequences of Lemma \ref{repnormderfraclap} is a local counterpart of \eqref{normexth1/2} concerning the minimality of $u^\e$. 
This is the purpose of Corollary \ref{minenergdirchfrac} below, inspired from \cite[Lemma 7.2]{CRS}, and taken from \cite[Corollary 2.13]{MSK}.
% (note that \cite{MSK} focuses on the case $s\in(0,1/2)$, but the statement and proof of \cite[Corollary 2.13]{MSK} is in fact valid for every $s\in(0,1)$). 

\begin{corollary}\label{minenergdirchfrac}
Let $\Omega\subset \mathbb{R}^n$ be a bounded open set, and $G\subset \R^{n+1}_+$ an admissible bounded  open set  
such that $\overline{\partial^0 G}\subset \Omega$. 
Let $u\in \widehat H^{s}(\Omega;\R^d)$, and let $u^\e$ be its fractional harmonic extension to $\mathbb{R}^{n+1}_+$ given by~\eqref{poisson}. Then, 
\begin{equation}\label{ineqenergDfrac}
{\bf E}_s(v,G)-{\bf E}_s(u^\e,G)
\geq  \mathcal{E}_s(v,\Omega) -\mathcal{E}_s(u,\Omega)
\end{equation}
for all $v\in H^1(G;\R^d,|z|^a\de\mathbf{x})$ such that $v-u^\e$ is compactly supported in $G\cup \partial^0G$. 
In the right-hand side of \eqref{ineqenergDfrac}, the trace of $v$ on $\partial^0G$ is extended by $u$ outside $\partial^0G$. 
\end{corollary}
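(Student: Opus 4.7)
The plan is to expand both sides of \eqref{ineqenergDfrac} by polarization around the fractional harmonic extension $u^\e$, identify the resulting linear cross-terms via the Dirichlet-to-Neumann identification (Lemma \ref{repnormderfraclap}), and close the argument with the minimality property \eqref{normexth1/2} of the extension.

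Set $w := v - u^\e$ and $\varphi := V - u$, where $V$ denotes the trace of $v$ on $\partial^0 G$ extended by $u$ outside $\partial^0 G$. Since by hypothesis $w$ has compact support in $G \cup \partial^0 G$, extending it by zero yields $\tilde w \in H^1(\R^{n+1}_+, |z|^a \de\mathbf{x})$ with trace $\varphi$ on $\R^n$. The function $\varphi$ is compactly supported in $\partial^0 G \subset \Omega$ and belongs to $H^s(\R^n)$ by Lemma \ref{HsregtraceH1weight} applied on a finite cover of $\overline{\partial^0 G}$ by half-balls contained in $G$; in particular, $\varphi \in H^s_{00}(\Omega')$ for any bounded Lipschitz open set $\Omega'$ with $\overline{\partial^0 G} \subset \Omega' \subset\subset \Omega$.

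Polarizing the weighted and fractional energies around $u^\e$ and $u$ respectively, and using \eqref{deffraclap}, I obtain
\begin{equation*}
\mathbf{E}_s(v,G) - \mathbf{E}_s(u^\e,G) = \mathbf{E}_s(w,G) + \boldsymbol{\delta}_s \int_G z^a \nabla u^\e \cdot \nabla w\, \de\mathbf{x},
\end{equation*}
\begin{equation*}
\mathcal{E}_s(V,\Omega) - \mathcal{E}_s(u,\Omega) = \mathcal{E}_s(\varphi,\Omega) + \langle (-\Delta)^s u,\varphi\rangle_\Omega.
\end{equation*}
The two ``linear'' cross-terms match: extending $w$ by zero to $\R^{n+1}_+$ and invoking the definition \eqref{defNeumOp} (extended from smooth to $H^1(|z|^a)$-extensions by density, which is legitimate since the integral is independent of the extension), together with Lemma \ref{repnormderfraclap} and locality (Remark \ref{localityLapls}), one has
\begin{align*}
\boldsymbol{\delta}_s \int_G z^a \nabla u^\e \cdot \nabla w\, \de\mathbf{x} &= \boldsymbol{\delta}_s \int_{\R^{n+1}_+} z^a \nabla u^\e \cdot \nabla \tilde w\, \de\mathbf{x} \\
&= \boldsymbol{\delta}_s \langle \mathbf{\Lambda}^{(2s)} u,\varphi\rangle_{\Omega'} = \langle (-\Delta)^s u,\varphi\rangle_\Omega.
\end{align*}

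Subtracting reduces \eqref{ineqenergDfrac} to the bulk estimate $\mathbf{E}_s(w,G) \geq \mathcal{E}_s(\varphi,\Omega)$. Since $\tilde w$ is an admissible extension of $\varphi \in H^s(\R^n)$, the Caffarelli--Silvestre minimality property \eqref{normexth1/2} applied to $\tilde w$ gives
\begin{equation*}
\mathbf{E}_s(w,G) = \mathbf{E}_s(\tilde w,\R^{n+1}_+) \geq \mathbf{E}_s(\varphi^\e,\R^{n+1}_+) = [\varphi]^2_{H^s(\R^n)} = 2\mathcal{E}_s(\varphi,\Omega) \geq \mathcal{E}_s(\varphi,\Omega),
\end{equation*}
using $\varphi \in H^s_{00}(\Omega')$ for the penultimate equality and $\mathcal{E}_s(\varphi,\Omega)\geq 0$ for the last inequality. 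The main technical point is the density argument plugging $\tilde w$ into \eqref{defNeumOp}, which relies on the continuity of both sides of that definition in the $H^1(|z|^a)$-topology on compactly supported extensions.
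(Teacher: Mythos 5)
Your proof is correct, and since the paper does not reproduce an argument here (Corollary~\ref{minenergdirchfrac} is imported verbatim from the cited reference), your polarization scheme---expanding $\mathbf{E}_s$ around $u^\e$ and $\mathcal{E}_s$ around $u$, identifying the two linear cross-terms through the extension-independence of \eqref{defNeumOp} together with Lemma~\ref{repnormderfraclap} and Remark~\ref{localityLapls}, and closing with the Caffarelli--Silvestre minimality \eqref{normexth1/2}---is the natural derivation and the one the paper implicitly relies on. One may add that your argument in fact yields the strictly stronger quadratic remainder $\mathbf{E}_s(w,G)\geq 2\,\mathcal{E}_s(\varphi,\Omega)$, whereas the corollary only needs $\geq\mathcal{E}_s(\varphi,\Omega)$ after the cross-terms cancel, and that $\varphi\in H^s(\R^n)$ can alternatively be read off in one stroke from the global trace estimate \eqref{esticpa2} applied to the compactly supported extension $\tilde w$, without the covering-and-patching step.
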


%%%%%%%%%%%%%%%%%%%%%%%%%%%%%%%%%%%%%%%%%%%%%%%%%%%%%%%

\subsection{Inner variations, monotonicity formula, and density functions}

%%%%%%%%%%%%%%%%%%%%%%%%%%%%%%%%%%%%%%%%%%%%%%%%%%%%%%%

In this section, our main goal is to present the {\sl monotonicity formula} satisfied by critical points of $\mathcal{E}_s(\cdot,\Omega)$ under {\sl inner variations}, i.e., by stationary points. 
We start recalling the notion of first inner variation, and then give an explicit formula to represent it. 

\begin{definition}
Let $\Omega\subset \mathbb{R}^n$ be a bounded open set. Given a map $u\in \widehat H^s(\Omega;\R^d)$ and a vector field $X\in C^1(\R^n;\R^n)$ compactly supported in $\Omega$, the first (inner) variation of $\mathcal{E}_s(\cdot,\Omega)$ at $u$ and evaluated at $X$ is defined as 
$$\delta\mathcal{E}_s(u,\Omega)[X]:= \left[\frac{\de}{\de t}\mathcal{E}_s(u\circ\phi_{-t},\Omega)\right]_{t=0}\,,$$
where $\{\phi_t\}_{t\in\R}$ denotes the integral flow on $\R^n$ generated by $X$, i.e., for every $x\in\R^n$, the map $t\mapsto\phi_t(x)$ is defined as the unique solution of the ordinary differential equation
$$ 
\begin{cases}
\displaystyle \frac{\de}{\de t}\phi_t(x)=X\big(\phi_t(x)\big)\,,\\[5pt]
\phi_0(x)=x\,.
\end{cases}
$$
\end{definition}

The following representation result for $\delta\mathcal{E}_s$ was obtained in \cite[Corollary 2.14]{MSK} as a direct consequence of Corollary \ref{minenergdirchfrac}. We reproduce here the proof for completeness. 

\begin{proposition}\label{represfirstvar}
Let $\Omega\subset \mathbb{R}^n$ be a bounded open set, and $G\subset \R^{n+1}_+$ an admissible bounded  open set  
such that $\overline{\partial^0 G}\subset \Omega$. For each $u\in \widehat H^{s}(\Omega;\R^d)$, and each $X\in C^1(\R^n;\R^n)$ compactly supported in $\partial^0 G$, we have 
\begin{multline}\label{calcfirstvar}
\delta\mathcal{E}_s(u,\Omega)[X]=\frac{\boldsymbol{\delta}_s}{2}\int_Gz^a\Big(|\nabla u^\e|^2{\rm div}{\bf X}-2\sum_{i,j=1}^{n+1}(\partial_iu^\e\cdot\partial_ju^\e)\partial_j{\bf X}_i\Big)\,\de{\bf x}\\
+\frac{\boldsymbol{\delta}_s a}{2}\int_Gz^{a-1}|\nabla u^\e|^2{\bf X}_{n+1}\,\de{\bf x}\,,
\end{multline}
where ${\bf X}=({\bf X}_1,\ldots,{\bf X}_{n+1})\in C^1(\overline G;\R^{n+1})$ is any vector field compactly supported in $G\cup\partial^0G$, and satisfying ${\bf X}=(X,0)$ on $\partial^0G$. 
\end{proposition}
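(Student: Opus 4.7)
The plan is to view the proposition as a direct application of Corollary \ref{minenergdirchfrac}. I first lift the tangent variation to the half-space: extend $X$ to a $C^1$ vector field $\mathbf{X}$ on $\overline{\R^{n+1}_+}$ compactly supported in $G\cup\partial^0G$ with $\mathbf{X}=(X,0)$ on $\partial\R^{n+1}_+$, as in the statement. Since $\mathbf{X}_{n+1}\equiv 0$ on $\partial\R^{n+1}_+$, uniqueness of solutions of ODEs implies that the integral flow $\{\Phi_t\}_{t\in\R}$ of $\mathbf{X}$ preserves both $\R^{n+1}_+$ and $\partial\R^{n+1}_+$, restricts on the latter to $\phi_t$, and is the identity outside a compact subset of $G\cup\partial^0G$. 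For $|t|$ small, set the competitor $v_t:=u^\e\circ\Phi_{-t}\in H^1(G;\R^d,|z|^a\de\mathbf{x})$; its difference with $u^\e$ is compactly supported in $G\cup\partial^0G$, and its trace on $\partial^0G$ is $u\circ\phi_{-t}$. Extending this trace by $u$ outside $\partial^0G$, as prescribed by Corollary \ref{minenergdirchfrac}, yields $u\circ\phi_{-t}$ on all of $\R^n$, because $\phi_{-t}$ is the identity off $\partial^0G$. Corollary \ref{minenergdirchfrac} then gives
\begin{equation}\label{planineqsketch}
\mathbf{E}_s(v_t,G)-\mathbf{E}_s(u^\e,G)\geq\mathcal{E}_s(u\circ\phi_{-t},\Omega)-\mathcal{E}_s(u,\Omega)\qquad\text{for all $|t|$ small.}
\end{equation}

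The core computation is to evaluate $\tfrac{\de}{\de t}\big|_{t=0}\mathbf{E}_s(v_t,G)$ explicitly. Performing the change of variable $\mathbf{y}=\Phi_{-t}(\mathbf{x})$ and using $\nabla v_t(\Phi_t(\mathbf{y}))=\nabla u^\e(\mathbf{y})(D\Phi_t(\mathbf{y}))^{-1}$, one obtains
$$\mathbf{E}_s(v_t,G)=\frac{\boldsymbol{\delta}_s}{2}\int_G\big(\Phi_t(\mathbf{y})_{n+1}\big)^{a}\,\mathrm{tr}\!\left[\nabla u^\e(\mathbf{y})A_t(\mathbf{y})A_t^T(\mathbf{y})(\nabla u^\e(\mathbf{y}))^T\right]J_t(\mathbf{y})\,\de\mathbf{y},$$
where $A_t:=(D\Phi_t)^{-1}$ and $J_t:=|\det D\Phi_t|$. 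Inserting the Taylor expansions $D\Phi_t=\mathrm{Id}+tD\mathbf{X}+O(t^2)$, $J_t=1+t\,\mathrm{div}\,\mathbf{X}+O(t^2)$, $A_tA_t^T=\mathrm{Id}-t(D\mathbf{X}+D\mathbf{X}^T)+O(t^2)$, and $\big(\Phi_t(\mathbf{y})_{n+1}\big)^{a}=z^{a}+atz^{a-1}\mathbf{X}_{n+1}(\mathbf{y})+O(t^2)$, together with the elementary identity $\mathrm{tr}[\nabla u^\e\, D\mathbf{X}\,(\nabla u^\e)^T]=\sum_{i,j}(\partial_iu^\e\cdot\partial_ju^\e)\partial_j\mathbf{X}_i$, differentiation at $t=0$ reproduces exactly the right-hand side of \eqref{calcfirstvar}.

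To conclude, I divide \eqref{planineqsketch} by $t>0$ and let $t\to 0^+$ to get $\tfrac{\de}{\de t}\big|_0\mathbf{E}_s(v_t,G)\geq\liminf_{t\to 0^+}t^{-1}\big(\mathcal{E}_s(u\circ\phi_{-t},\Omega)-\mathcal{E}_s(u,\Omega)\big)$; dividing by $t<0$ reverses the inequality and yields the opposite bound with $\limsup$ as $t\to 0^-$. Since the left-hand side is a genuine two-sided derivative by the explicit computation above, both one-sided limits on the right coincide with it, proving simultaneously the existence of $\delta\mathcal{E}_s(u,\Omega)[X]$ and the identity \eqref{calcfirstvar}. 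The main obstacle lies in justifying differentiation under the integral sign in the intermediate computation: the weight $z^a$ is singular at $\partial^0G$ when $s\in(0,1/2)$, and the first-order term involves the singular factor $z^{a-1}\mathbf{X}_{n+1}$. This is overcome by the pointwise bound $|\mathbf{X}_{n+1}(x,z)|\leq\|\partial_z\mathbf{X}_{n+1}\|_\infty z$, valid since $\mathbf{X}$ is $C^1$ and vanishes on $\{z=0\}$, which yields $z^{a-1}|\mathbf{X}_{n+1}|\,|\nabla u^\e|^2\leq Cz^a|\nabla u^\e|^2\in L^1(G)$; together with a uniform Taylor remainder in $\mathbf{y}$, this allows dominated convergence to apply throughout the derivation.
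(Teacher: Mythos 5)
Your proof follows essentially the same route as the paper's: both apply Corollary \ref{minenergdirchfrac} to the competitor $v_t = u^\e\circ\boldsymbol{\Phi}_{-t}$, divide the resulting inequality by $t\neq 0$, and let $t\uparrow 0$ and $t\downarrow 0$ to identify $\delta\mathcal{E}_s(u,\Omega)[X]$ with $\left[\frac{\de}{\de t}\mathbf{E}_s(u^\e\circ\boldsymbol{\Phi}_{-t},G)\right]_{t=0}$, which is then evaluated. Where the paper simply cites Simon's lecture notes for that last evaluation, you carry out the change-of-variable and Taylor expansion explicitly (including the weight factor $\big(\Phi_t(\mathbf{y})_{n+1}\big)^a$) and supply the dominated-convergence bound $|\mathbf{X}_{n+1}|\leq Cz$ near $\partial^0G$; this is a useful elaboration of the same argument, not a different strategy.
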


\begin{proof}
Let ${\bf X}\in C^1(\overline G,\R^{n+1})$ be an arbitrary vector field compactly supported in $G\cup\partial^0G$ and satisfying ${\bf X}=(X,0)$ on $\partial^0G$. We consider a compactly supported $C^1$-extension of ${\bf X}$ to the whole space $\R^{n+1}$, still denoted by ${\bf X}$, such that ${\bf X}=(X,0)$ on $\R^n\times\{0\}\simeq\R^n$. We define $\{\boldsymbol{\Phi}_t\}_{t\in\R}$ as the integral flow on $\R^{n+1}$ generated by ${\bf X}$. Observe that $\boldsymbol{\Phi}_t=(\phi_t,0)$ on~$\R^n$, and ${\rm spt}(\boldsymbol{\Phi}_t-{\rm id}_{\R^{n+1}})\cap\overline{\R^{n+1}_+}\subset G\cup\partial^0G$. Then, $v_t:=u^\e\circ\boldsymbol{\Phi}_{-t}\in H^1(G;\R^d,|z|^a\de\mathbf{x})$ and ${\rm spt}(v_t-u^\e)\subset G\cup\partial^0G$. By Corollary \ref{minenergdirchfrac}, we have 
\begin{equation}\label{calcfirstvartrick}
{\bf E}_s(v_t,G)-{\bf E}_s(u^\e,G) \geq  \mathcal{E}_s(v_t,\Omega) -\mathcal{E}_s(u,\Omega)\quad\forall t\in\R\,.
\end{equation}
Since $v_t=u\circ\phi_{-t}$ on $\R^n$, dividing both sides of \eqref{calcfirstvartrick} by $t\not=0$, and letting $t\uparrow0$ and $t\downarrow0$ leads to 
\begin{equation}\label{equalitfirstvars}
\delta\mathcal{E}_s(u,\Omega)[X]= \left[\frac{\de}{\de t}\mathbf{E}_s(u^\e\circ\boldsymbol{\Phi}_{-t},G)\right]_{t=0}\,.
\end{equation}
On the other hand, standard computations (see e.g. \cite[Chapter 2.2]{Sim}) show that the right-hand side of \eqref{equalitfirstvars} is equal to the right-hand side of \eqref{calcfirstvar}. 
\end{proof}

\begin{definition}\label{defstatmap}
Let $\Omega\subset \mathbb{R}^n$ be a bounded open set. A map $u\in\widehat H^s(\Omega;\R^d)$ is said to be {\sl stationary} in $\Omega$ if $\delta\mathcal{E}_s(u,\Omega)=0$. 
\end{definition}

As we shall see in the next sections, stationarity is a crucial ingredient in the partial regularity theory since it implies the aforementioned monotonicity formula. This is the purpose of the following proposition  
whose proof follows exactly \cite[Proof of Lemma 4.2]{MSK} using vector fields in \eqref{calcfirstvar} of the form ${\bf X}=\eta(|{\bf x}-{\bf x}_0|)({\bf x}-{\bf x}_0)$ with $\eta(t)\sim \chi_{[0,r]}(t)$. 

\begin{proposition}\label{monotformula}
Let $\Omega\subset \mathbb{R}^n$ be a bounded open set. If  $u\in\widehat H^s(\Omega;\R^d)$ is stationary in $\Omega$,  then for every ${\bf x}_0=(x_0,0)\in\Omega\times\{0\}$, the ``density function'' 
$$r\in(0,{\rm dist}(x_0,\Omega^c))\mapsto \boldsymbol{\Theta}_s(u^\e,{\bf x}_0,r):=\frac{1}{r^{n-2s}}{\bf E}_s(u^\e,B^+_r({\bf x}_0))$$
is nondecreasing. Moreover, 
$$\boldsymbol{\Theta}_s(u^\e,{\bf x}_0,r)-\boldsymbol{\Theta}_s(u^\e,{\bf x}_0,\rho) = \boldsymbol{\delta}_s\int_{B^+_r({\bf x}_0)\setminus B^+_\rho({\bf x}_0)}z^a\frac{|({\bf x}-{\bf x}_0)\cdot\nabla u^\e|^2}{|{\bf x}-{\bf x}_0|^{n+2-2s}}\,\de {\bf x}$$
for every $0<\rho<r<{\rm dist}(x_0,\Omega^c)$. 
\end{proposition}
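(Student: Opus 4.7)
The plan is to apply the first variation formula of Proposition \ref{represfirstvar} with a radial cutoff vector field centered at ${\bf x}_0$, and to use stationarity to derive a differential identity for the energy density.

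Fix $r_0\in(0,{\rm dist}(x_0,\Omega^c))$ and choose an admissible bounded open set $G\subset\R^{n+1}_+$ for which $D_{r_0}(x_0)$ is relatively compactly contained in $\partial^0 G$ (for instance $G=B_R^+({\bf x}_0)$ with $R>r_0$ large enough). For $\eta\in C^\infty_c([0,r_0))$, I would consider the vector field ${\bf X}({\bf x}):=\eta(|{\bf x}-{\bf x}_0|)({\bf x}-{\bf x}_0)$ on $\overline G$. Since ${\bf x}_0=(x_0,0)$, its trace on $\partial^0 G$ is $X(x)=\eta(|x-x_0|)(x-x_0)$, compactly supported in $\partial^0 G$ and admissible for Proposition \ref{represfirstvar}. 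Writing $\rho:=|{\bf x}-{\bf x}_0|$, elementary computations give
$${\rm div}\,{\bf X}=(n+1)\eta(\rho)+\rho\eta'(\rho),\quad \sum_{i,j}(\partial_i u^\e\cdot\partial_j u^\e)\,\partial_j{\bf X}_i=\eta(\rho)|\nabla u^\e|^2+\frac{\eta'(\rho)}{\rho}\bigl|({\bf x}-{\bf x}_0)\cdot\nabla u^\e\bigr|^2,$$
together with ${\bf X}_{n+1}=\eta(\rho)z$. Plugging these into \eqref{calcfirstvar}, the coefficient of $z^a\eta(\rho)|\nabla u^\e|^2$ collapses to $(n+1)-2+a=n-2s$ (using $a=1-2s$), and the stationarity condition $\delta\mathcal{E}_s(u,\Omega)[X]=0$ reduces to
\begin{equation*}
(n-2s)\int_G z^a\eta(\rho)|\nabla u^\e|^2\,\de{\bf x}+\int_G z^a\rho\eta'(\rho)|\nabla u^\e|^2\,\de{\bf x}-2\int_G z^a\frac{\eta'(\rho)}{\rho}\bigl|({\bf x}-{\bf x}_0)\cdot\nabla u^\e\bigr|^2\,\de{\bf x}=0.
\end{equation*}

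To conclude, I would approximate $\chi_{[0,r]}$ by smooth profiles $\eta_\veps$ and pass to the limit. Introducing $f(r):=\int_{B_r^+({\bf x}_0)}z^a|\nabla u^\e|^2\,\de{\bf x}$ and $h(r):=\int_{B_r^+({\bf x}_0)}z^a|({\bf x}-{\bf x}_0)\cdot\nabla u^\e|^2/|{\bf x}-{\bf x}_0|^2\,\de{\bf x}$, which are absolutely continuous by the coarea formula and Lemma \ref{hatH1/2toH1}, the above identity rewrites, for a.e.\ $r\in(0,r_0)$, as
$$rf'(r)-(n-2s)f(r)=2rh'(r).$$
Dividing by $r^{n-2s+1}$ turns the left-hand side into $\tfrac{\de}{\de r}\bigl(r^{-(n-2s)}f(r)\bigr)$; combining with the coarea identity $h'(r)=r^{-2}\int_{\partial^+B_r^+({\bf x}_0)}z^a|({\bf x}-{\bf x}_0)\cdot\nabla u^\e|^2\,\de\mathcal{H}^n$ and the definition $\boldsymbol{\Theta}_s(u^\e,{\bf x}_0,r)=(\boldsymbol{\delta}_s/2)r^{-(n-2s)}f(r)$ yields
$$\frac{\de}{\de r}\boldsymbol{\Theta}_s(u^\e,{\bf x}_0,r)=\frac{\boldsymbol{\delta}_s}{r^{n-2s+2}}\int_{\partial^+B_r^+({\bf x}_0)}z^a\bigl|({\bf x}-{\bf x}_0)\cdot\nabla u^\e\bigr|^2\,\de\mathcal{H}^n\geq 0.$$
Monotonicity follows from the nonnegativity of the right-hand side; integrating from $\rho$ to $r$ and reassembling into a volume integral via coarea produces the stated annular formula, extended to every pair $0<\rho<r$ by continuity in $(\rho,r)$ of both sides.

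The main technical point will be justifying the limit $\eta_\veps\to\chi_{[0,r]}$ at a.e.\ $r$: one must show that the concentrating factors $\rho\eta_\veps'(\rho)$ and $\eta_\veps'(\rho)/\rho$ reproduce sphere integrals on $\partial^+B_r^+({\bf x}_0)$ at every Lebesgue point of the coarea slices of $z^a|\nabla u^\e|^2$ and $z^a|({\bf x}-{\bf x}_0)\cdot\nabla u^\e|^2/\rho^2$. This is a standard Fubini and Lebesgue differentiation argument, carried out in detail in \cite[Proof of Lemma 4.2]{MSK}, from which I would borrow the technical justification.
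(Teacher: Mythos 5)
Your proof is correct and follows exactly the approach the paper indicates: applying the first-variation formula of Proposition \ref{represfirstvar} with the radial field ${\bf X}=\eta(|{\bf x}-{\bf x}_0|)({\bf x}-{\bf x}_0)$, letting $\eta$ approximate $\chi_{[0,r]}$, and integrating the resulting ODE for the density. The paper's own proof simply delegates these computations to \cite[Lemma 4.2]{MSK}, which is the same reference you invoke for the limiting step, so there is no substantive difference.
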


As a straightforward consequence, we have 

\begin{corollary}\label{corolmonotform}
Let $\Omega\subset \mathbb{R}^n$ be a bounded open set. If  $u\in\widehat H^s(\Omega;\R^d)$ is stationary in $\Omega$,  then for every $x_0\in\Omega$, the limit 
\begin{equation}\label{deflimitdens}
\boldsymbol{\Xi}_s(u,x_0):=\lim_{r\to 0}  \boldsymbol{\Theta}_s\big(u^\e, (x_0,0),r\big)
\end{equation}
exists, and the function $\boldsymbol{\Xi}_s(u,\cdot):\Omega\to[0,\infty)$ is upper semicontinuous. In addition, for every ${\bf x}_0=(x_0,0)\in\Omega\times\{0\}$, 
\begin{equation}\label{monotformCor}
\boldsymbol{\Theta}_s(u^\e,{\bf x}_0,r)-\boldsymbol{\Xi}_s(u,x_0) = \boldsymbol{\delta}_s\int_{B^+_r({\bf x}_0)}z^a\frac{|({\bf x}-{\bf x}_0)\cdot\nabla u^\e|^2}{|{\bf x}-{\bf x}_0|^{n+2-2s}}\,\de {\bf x}
\end{equation}
for every $0<r<{\rm dist}(x_0,\Omega^c)$. 
\end{corollary}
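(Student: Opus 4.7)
The plan is to deduce all three statements as essentially immediate consequences of the monotonicity formula in Proposition \ref{monotformula}.

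First, since $r \mapsto \boldsymbol{\Theta}_s(u^\e,(x_0,0),r)$ is nondecreasing and nonnegative on the interval $(0,\mathrm{dist}(x_0,\Omega^c))$, the limit $\boldsymbol{\Xi}_s(u,x_0) := \lim_{r \downarrow 0}\boldsymbol{\Theta}_s(u^\e,(x_0,0),r)$ exists in $[0,\infty)$, giving \eqref{deflimitdens}.

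Next, to obtain \eqref{monotformCor}, I would pass to the limit $\rho \downarrow 0$ in the identity of Proposition \ref{monotformula}. The domains $B^+_r({\bf x}_0) \setminus B^+_\rho({\bf x}_0)$ increase to $B^+_r({\bf x}_0)$ as $\rho \downarrow 0$ and the integrand is nonnegative, so monotone convergence yields the full integral on the right, while the left-hand side converges to $\boldsymbol{\Theta}_s(u^\e,{\bf x}_0,r)-\boldsymbol{\Xi}_s(u,x_0)$ by the first step. As a by-product, the integral in \eqref{monotformCor} is automatically finite.

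For upper semicontinuity, let $x_k \to x_0$ in $\Omega$. Fix any $r$ with $0<2r<\mathrm{dist}(x_0,\Omega^c)$; then for $k$ large enough one has $B^+_r((x_k,0)) \subset B^+_{2r}((x_0,0))$ and $r<\mathrm{dist}(x_k,\Omega^c)$. Monotonicity gives $\boldsymbol{\Xi}_s(u,x_k) \leq \boldsymbol{\Theta}_s(u^\e,(x_k,0),r)$. By Lemma \ref{hatH1/2toH1}, $z^a|\nabla u^\e|^2$ is integrable on $B^+_{2r}((x_0,0))$, so the truncations $\mathbf{1}_{B^+_r((x_k,0))}\, z^a|\nabla u^\e|^2$ converge almost everywhere to $\mathbf{1}_{B^+_r((x_0,0))}\, z^a|\nabla u^\e|^2$ and, for $k$ large, are dominated by the integrable function $\mathbf{1}_{B^+_{2r}((x_0,0))}\, z^a|\nabla u^\e|^2$. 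Dominated convergence then gives $\boldsymbol{\Theta}_s(u^\e,(x_k,0),r) \to \boldsymbol{\Theta}_s(u^\e,(x_0,0),r)$, so $\limsup_k \boldsymbol{\Xi}_s(u,x_k) \leq \boldsymbol{\Theta}_s(u^\e,(x_0,0),r)$; letting $r \downarrow 0$ yields $\limsup_k \boldsymbol{\Xi}_s(u,x_k) \leq \boldsymbol{\Xi}_s(u,x_0)$, which is the desired upper semicontinuity.

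The only mildly delicate ingredient is the local integrability of $z^a|\nabla u^\e|^2$ up to the flat boundary needed for the dominated convergence argument, but this is supplied verbatim by Lemma \ref{hatH1/2toH1}. I do not expect any genuine obstacle beyond carefully tracking the half-balls stay inside a region where the extension has finite weighted energy.
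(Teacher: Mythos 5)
Your proof is correct and follows the same underlying logic as the paper's: the paper simply notes that $\boldsymbol{\Xi}_s(u,\cdot)$ is the pointwise limit of a decreasing family of continuous functions $x\mapsto\boldsymbol{\Theta}_s(u^\e,(x,0),r)$, which is a standard criterion for upper semicontinuity, whereas you spell out the underlying continuity via dominated convergence and then derive the $\limsup$ bound directly. The handling of \eqref{deflimitdens} and \eqref{monotformCor} by monotonicity and monotone convergence also matches the paper.
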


\begin{proof}
The existence of the limit in \eqref{deflimitdens} and \eqref{monotformCor} are direct consequences of the monotonicity formula established in Proposition \ref{monotformula}. Then the function $\boldsymbol{\Xi}_s(u,\cdot)$ is upper semicontinuous as a pointwise limit of a decreasing family of continuous functions. 
\end{proof}

As we previously said, the monotonicity of the density function  $r\mapsto\boldsymbol{\Theta}_s(u^\e,{\bf x}_0,r)$ is one of the most important ingredients to obtain partial regularity. We shall see in the next sections that the density function relative to the nonlocal energy $\mathcal{E}_s$ also plays a role. For $u\in \widehat H^s(\Omega;\R^d)$ and a point $x\in\Omega$, we define the density 
function $r\in(0,{\rm dist}(x,\Omega^c))\mapsto \boldsymbol{\theta}_s(u,x,r)$ by setting 
 \begin{equation}\label{defnonlocdensit}
 \boldsymbol{\theta}_s(u,x_0,r):=\frac{1}{r^{n-2s}}\mathcal{E}_s\big(u,D_r(x_0)\big)\,.
 \end{equation}
Now we aim to show that one density function is small if and only the other one is also small at a comparable scale. This is the purpose of  the following lemma.

\begin{lemma}\label{compardensities}
Let $\Omega\subset\R^n$ be an open set, and $u\in\widehat H^s(\Omega;\R^d)\cap L^\infty(\R^n)$ be such that  $\|u\|_{L^\infty(\R^n)}\leq M$. For every $\varepsilon>0$, there exists $\delta=\delta(n,s,M,\varepsilon)>0$ and $\alpha=\alpha(n,s,M,\varepsilon)\in(0,1/4]$ such that 
$$\boldsymbol{\Theta}_s(u^\e,{\bf x}_0,r)\leq\delta\quad\Longrightarrow\quad \boldsymbol{\theta}_s(u,x_0,\alpha r)\leq\varepsilon$$
for every ${\bf x}_0=(x_0,0)\in\Omega\times\{0\}$ and $r>0$ satisfying $\overline D_{r}(x_0)\subset\Omega$. 
\end{lemma}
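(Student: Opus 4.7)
The idea is to control the nonlocal energy $\mathcal{E}_s(u,D_{\alpha r}(x_0))$ by the extended weighted Dirichlet energy ${\bf E}_s(u^\e,B_r^+({\bf x}_0))$, up to an error of the form $CM^2\alpha^{2s}r^{n-2s}$ coming from long-range interactions, and then choose $\alpha$ and $\delta$ accordingly. Translating coordinates, we may assume $x_0=0$. Using symmetry of the integrand, write
$$\mathcal{E}_s\big(u,D_{\alpha r}\big)= \tfrac{1}{2}[u]^2_{H^s(D_{\alpha r})}+\tfrac{\gamma_{n,s}}{2}\iint_{D_{\alpha r}\times D_{\alpha r}^c}\frac{|u(x)-u(y)|^2}{|x-y|^{n+2s}}\,\de x\de y\,,$$
and split the cross-term according to $D_{\alpha r}^c=(D_{r/2}\setminus D_{\alpha r})\cup D_{r/2}^c$.

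The ``close'' contribution, where $y\in D_{r/2}\setminus D_{\alpha r}$, is bounded by $\frac{2}{\gamma_{n,s}}[u]^2_{H^s(D_{r/2})}$, and together with the $H^s$-seminorm on $D_{\alpha r}\subset D_{r/2}$ it is controlled by (a translated version of) Lemma~\ref{HsregtraceH1weight}, which yields
$$[u]^2_{H^s(D_{r/2})}\leq C\,{\bf E}_s\big(u^\e,B_r^+({\bf x}_0)\big)\,.$$
For the ``far'' contribution, where $y\in D_{r/2}^c$ and $\alpha\leq 1/4$, the triangle inequality gives $|x-y|\geq \tfrac{1}{2}|y|$ for $x\in D_{\alpha r}$; combining this with $|u(x)-u(y)|^2\leq 4M^2$ and the identity $\int_{D_{r/2}^c}|y|^{-n-2s}\,\de y=C_n r^{-2s}$, we obtain
$$\iint_{D_{\alpha r}\times D_{r/2}^c}\frac{|u(x)-u(y)|^2}{|x-y|^{n+2s}}\,\de x\de y\leq C M^2\alpha^n r^{n-2s}\,.$$

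Summing these estimates and dividing by $(\alpha r)^{n-2s}$ yields the master inequality
$$\boldsymbol{\theta}_s(u,x_0,\alpha r)\leq \frac{C}{\alpha^{n-2s}}\,\boldsymbol{\Theta}_s(u^\e,{\bf x}_0,r)+C M^2\alpha^{2s}\,,$$
valid for every $\alpha\in(0,1/4]$. Given $\varepsilon>0$, we first fix $\alpha=\alpha(n,s,M,\varepsilon)\in(0,1/4]$ so that $CM^2\alpha^{2s}\leq \varepsilon/2$, and then choose $\delta=\delta(n,s,M,\varepsilon)>0$ so that $C\delta/\alpha^{n-2s}\leq \varepsilon/2$, which gives the claim.

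The only real subtlety is the far-field interaction, which would be uncontrolled without the $L^\infty$ bound on $u$; the $L^\infty$ hypothesis is exactly what transforms the tail integral into the $\alpha^n$-small error term, allowing the two scales $\alpha r$ and $r$ to be used with a quantitative loss of order $\alpha^{-(n-2s)}$ on the main term.
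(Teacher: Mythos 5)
Your proposal is correct and follows essentially the same route as the paper's proof: split the energy over $D_{\alpha r}$ into a near-field part controlled by $[u]^2_{H^s(D_{r/2})}$ (and hence by ${\bf E}_s(u^\e,B_r^+({\bf x}_0))$ via Lemma~\ref{HsregtraceH1weight}) and a far-field part controlled by $CM^2\alpha^n r^{n-2s}$ using $|x-y|\geq\tfrac12|y|$, then tune $\alpha$ first and $\delta$ second. (One minor slip: the ``close'' cross-term is bounded by $[u]^2_{H^s(D_{r/2})}$, not $\tfrac{2}{\gamma_{n,s}}[u]^2_{H^s(D_{r/2})}$, but this does not affect the argument.)
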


\begin{proof}
Without loss of generality, we can assume that $x_0=0$. We give ourselves $\varepsilon>0$, and we shall choose the parameter $\alpha\in(0,1/4]$ later on. Using Lemma \ref{HsregtraceH1weight}, we first estimate
 \begin{align*}
 \mathcal{E}_s(u,D_{\alpha r})& \leq \frac{\gamma_{n,s}}{4}\iint_{D_{r/2}\times D_{r/2}}\frac{|u(x)-u(y)|^2}{|x-y|^{n+2s}}\,\de x\de y+ \frac{\gamma_{n,s}}{2}\iint_{D_{\alpha r}\times D^c_{r/2}}\frac{|u(x)-u(y)|^2}{|x-y|^{n+2s}}\,\de x\de y  \\
 &\leq C_1 {\bf E}_s(u^\e,B^+_{r}) + 2M^2\gamma_{n,s}\iint_{D_{\alpha r}\times D^c_{r/2}}\frac{\de x\de y}{|x-y|^{n+2s}}\,,
 \end{align*}
 where $C_1=C_1(n,s)>0$. Observe that for $(x,y)\in D_{\alpha r}\times D^c_{r/2}$, we have $|x-y|\geq |y|-\alpha r\geq \frac{1}{2}|y|$, so that 
 $$2\gamma_{n,s}\iint_{D_{\alpha r}\times D^c_{r/2}}\frac{\de x\de y}{|x-y|^{n+2s}}\leq 2^{n+2s+1}\gamma_{n,s}\iint_{D_{\alpha r}\times D^c_{r/2}}\frac{\de x\de y}{|y|^{n+2s}}=C_2\alpha^nr^{n-2s}\,,$$
 where $C_2=C_2(n,s)>0$. Consequently, 
 $$ \boldsymbol{\theta}_s(u,0,\alpha r)\leq \frac{C_1}{\alpha^{n-2s}} \boldsymbol{\Theta}_s(u^\e,0,r)+ C_2M^2\alpha^{2s}\,.  $$
 Choosing
 $$\alpha=\min\Big\{1/4,  \Big(\frac{\varepsilon}{2C_2M^2}\Big)^{1/2s}\Big\} \quad\text{and}\quad \delta:=\frac{\alpha^{n-2s}\varepsilon}{2C_1}\,, $$
 provides the desired conclusion.
\end{proof}

\begin{corollary}\label{corequivvanishdensities}
Let $\Omega\subset\R^n$ be an open set. If $u\in\widehat H^s(\Omega;\R^d)\cap L^\infty(\R^n)$, then 
$$\lim_{r\to 0} \boldsymbol{\theta}_s(u,x_0,r)=0\quad \Longleftrightarrow \quad \lim_{r\to 0} \boldsymbol{\Theta}_s(u^\e,{\bf x}_0,r)=0$$
for every ${\bf x}_0=(x_0,0)\in\Omega\times\{0\}$. 
\end{corollary}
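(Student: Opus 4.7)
The plan is to derive the two implications directly from the two estimates already established in the paper, so this corollary is essentially a repackaging of Lemma \ref{hatH1/2toH1} and Lemma \ref{compardensities}, with no serious obstacle.

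For the implication $\lim_{r\to 0} \boldsymbol{\Theta}_s(u^\e,{\bf x}_0,r)=0 \Rightarrow \lim_{r\to 0} \boldsymbol{\theta}_s(u,x_0,r)=0$, I would fix an arbitrary $\varepsilon>0$ and apply Lemma \ref{compardensities} to obtain the corresponding parameters $\delta=\delta(n,s,\|u\|_{L^\infty},\varepsilon)>0$ and $\alpha=\alpha(n,s,\|u\|_{L^\infty},\varepsilon)\in(0,1/4]$. By assumption there exists $r_0>0$ (with $\overline D_{r_0}(x_0)\subset\Omega$) such that $\boldsymbol{\Theta}_s(u^\e,{\bf x}_0,r)\leq \delta$ whenever $r<r_0$. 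The conclusion of Lemma \ref{compardensities} then gives $\boldsymbol{\theta}_s(u,x_0,\alpha r)\leq\varepsilon$ for every $r<r_0$, i.e., $\boldsymbol{\theta}_s(u,x_0,\rho)\leq \varepsilon$ for every $\rho<\alpha r_0$. Since $\varepsilon$ was arbitrary, this yields $\boldsymbol{\theta}_s(u,x_0,\rho)\to 0$ as $\rho\to 0$.

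The converse implication $\lim_{r\to 0} \boldsymbol{\theta}_s(u,x_0,r)=0 \Rightarrow \lim_{r\to 0} \boldsymbol{\Theta}_s(u^\e,{\bf x}_0,r)=0$ is even more direct: for every sufficiently small $r>0$ (so that $D_{3r}(x_0)\subset\Omega$), estimate \eqref{contruextH1weight} from Lemma \ref{hatH1/2toH1} gives
\begin{equation*}
\boldsymbol{\Theta}_s(u^\e,{\bf x}_0,r) \;=\; \frac{1}{r^{n-2s}}\,{\bf E}_s\bigl(u^\e,B_r^+({\bf x}_0)\bigr) \;\leq\; \frac{C}{r^{n-2s}}\,\mathcal{E}_s\bigl(u,D_{2r}(x_0)\bigr) \;=\; C\,2^{n-2s}\,\boldsymbol{\theta}_s(u,x_0,2r),
\end{equation*}
for a constant $C=C(n,s)$. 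Letting $r\to 0$ and using the hypothesis concludes. Note that no $L^\infty$-hypothesis is needed for this second direction; the boundedness of $u$ is only invoked through Lemma \ref{compardensities} in the first direction, to control the tail contribution $\iint_{D_{\alpha r}\times D_{r/2}^c}|u(x)-u(y)|^2|x-y|^{-n-2s}\,dx\,dy$ that cannot be absorbed into the local energy on $D_r$.
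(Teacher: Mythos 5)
Your proposal is correct and follows exactly the paper's proof: the implication $\boldsymbol{\theta}_s\to 0 \Rightarrow \boldsymbol{\Theta}_s\to 0$ via estimate \eqref{contruextH1weight} of Lemma \ref{hatH1/2toH1}, and the converse via Lemma \ref{compardensities}. Your additional remark that the $L^\infty$-hypothesis is only used in the second implication is accurate and a nice clarification.
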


\begin{proof}
By Lemma \ref{hatH1/2toH1}, we have 
$$  \boldsymbol{\Theta}_s(u^\e,{\bf x}_0,r)\leq C  \boldsymbol{\theta}_s(u,x_0,2r)\,,$$
for a constant $C>0$ depending only on $n$ and $s$, and implication $\Longrightarrow$ follows. The reverse implication is a straightforward application of Lemma  \ref{compardensities}. 
\end{proof}

%%%%%%%%%%%%%%%%%%%%%%%%%%%%%%%%%%%%%%%%%%%%%%%%%%%%%%%

\subsection{Energy monotonicity and mean oscillation estimates}

%%%%%%%%%%%%%%%%%%%%%%%%%%%%%%%%%%%%%%%%%%%%%%%%%%%%%%%

In the light of Proposition~\ref{monotformula}, the purpose of this section is to show a mean oscillation estimate for maps having a nondecreasing density function at every point. 
For $v\in H^1(B^+_R;\R^d,|z|^a\de{\bf x})$, a point ${\bf x}\in\partial^0B^+_R$, and $r\in(0,R-|{\bf x}|)$, we keep the notation      
$$\boldsymbol{\Theta}_s(v,{\bf x}_0,r):=\frac{1}{r^{n-2s}}\mathbf{E}_s\big(v,B^+_r({\bf x}_0)\big)\,.$$
The main estimate is the following. 

\begin{lemma}\label{cutoffbmo1}
Let $v\in H^1(B^+_{R};\R^d,|z|^a\de \mathbf{x})$ and $\zeta\in \mathscr{D}(D_{5R/8})$ be such that $0\leq \zeta\leq 1$, $\zeta\equiv 1$ in $D_{R/2}$, and $|\nabla \zeta|\leq L R^{-1}$ for some constant $L>0$.   
Assume that for every ${\bf x}\in\partial^0 B^+_{R}$, the density function 
$r\in(0,R-|{\bf x|})\mapsto  \boldsymbol{\Theta}_s(v,{\bf x},r)$ is non decreasing. Then $(\zeta v)_{|\R^n}$ belongs to ${\rm BMO}(\R^n)$ and 
% If the function 
%$r\in(0,R]\mapsto  \boldsymbol{\Theta}_s(v,{\bf x},r)$ is non decreasing for every ${\bf x}\in\partial^0 B^+_R$, then $v_{|D_R(x_0)}$ belongs to ${\rm BMO}(D_R(x_0))$ for every $x_0\in D_R$, and 
$$[\zeta v]^2_{{\rm BMO}(\R^n)}\leq C_L \big(\boldsymbol{\Theta}_s(v,0,R)+R^{2s-2-n}\|v\|^2_{L^2(B_R^+,|z|^a\de \mathbf{x})}\big) $$
for a constant $C_L=C(L,n,s)$. 
\end{lemma}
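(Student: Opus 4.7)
The plan is to establish, for every ball $D_r(y)\subset\R^n$, a constant $c_{y,r}$ with
\[\dashint_{D_r(y)}|\zeta v-c_{y,r}|\,\de x\leq C_LM,\quad M:=\Big(\boldsymbol{\Theta}_s(v,0,R)+R^{2s-2-n}\|v\|^2_{L^2(B^+_R,|z|^a\de\mathbf{x})}\Big)^{1/2},\]
since this forces $[\zeta v]_{{\rm BMO}(\R^n)}\leq 2C_LM$ via the standard characterization of the BMO seminorm. The argument splits according to whether $r\geq R/16$ (large balls) or $r<R/16$ (small balls). I would first derive the auxiliary weighted $L^2$-trace estimate $\|v\|^2_{L^2(D_{3R/4})}\leq CR^nM^2$ as follows: pick a cutoff $\chi$ on $\overline{\R^{n+1}_+}$ with $\chi\equiv 1$ on $\overline{B^+_{3R/4}}$, $\rmspt\,\chi\subset\overline{B^+_R}$, and $|\nabla\chi|\leq C/R$. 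The extension of $\chi v$ by zero belongs to $H^1(\R^{n+1}_+,|z|^a\de\mathbf{x})$ with trace on $\R^n$ coinciding with $v$ on $D_{3R/4}$ and compactly supported in $\overline{D_R}$. Using \eqref{normexth1/2} combined with the minimality of the harmonic extension one gets $[(\chi v)_{|\R^n}]^2_{H^s(\R^n)}\leq\mathbf{E}_s(\chi v,B^+_R)\leq C(\mathbf{E}_s(v,B^+_R)+R^{-2}\|v\|^2_{L^2(B^+_R,|z|^a\de\mathbf{x})})$, and the fractional Sobolev embedding $H^s(\R^n)\hookrightarrow L^{2n/(n-2s)}(\R^n)$ together with H\"older's inequality on the compact support yields the claim.

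For large balls, $\zeta v$ is supported in $D_{5R/8}\subset D_{3R/4}$ with $|\zeta|\leq 1$, so the trace estimate gives $\|\zeta v\|^2_{L^2(\R^n)}\leq CR^nM^2$. Choosing $c_{y,r}=0$ and applying Cauchy--Schwarz then yields $\dashint_{D_r(y)}|\zeta v|\,\de x\leq CR^{-n/2}\|\zeta v\|_{L^2(\R^n)}\leq CM$.

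For small balls, if $D_r(y)\cap\rmspt\,\zeta=\emptyset$ the oscillation is automatically zero, so I focus on $|y|<5R/8+r<11R/16$. Setting ${\bf y}:=(y,0)$, one has $R-|y|>5R/16>r$, and since $B^+_{R-|y|}({\bf y})\subset B^+_R$, the monotonicity hypothesis at ${\bf y}$ forces $\boldsymbol{\Theta}_s(v,{\bf y},\rho)\leq(R/(R-|y|))^{n-2s}\boldsymbol{\Theta}_s(v,0,R)\leq C\boldsymbol{\Theta}_s(v,0,R)$ for every $\rho\in(0,R/16]$. Combining with Lemma \ref{poincarebdry} centered at ${\bf y}$ at scale $\rho$ delivers
\[\dashint_{D_\rho(y)}|v-(v)_{y,\rho}|\,\de x\leq C\sqrt{\boldsymbol{\Theta}_s(v,0,R)}\qquad\forall\rho\in(0,R/16]. \]
I would then take $c_{y,r}=\zeta(y)(v)_{y,r}$ and use the pointwise splitting
\[|\zeta(x)v(x)-\zeta(y)(v)_{y,r}|\leq|v(x)-(v)_{y,r}|+(L/R)r\,|(v)_{y,r}|, \]
whose first term averages to $\leq CM$. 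To control $|(v)_{y,r}|$, I plan to telescope dyadically: the base estimate $|(v)_{y,R/16}|\leq CR^{-n/2}\|v\|_{L^2(D_{R/16}(y))}\leq CM$ (using $D_{R/16}(y)\subset D_{3R/4}$ and the trace estimate) combined with the step-by-step bound $|(v)_{y,2^kr}-(v)_{y,2^{k+1}r}|\leq 2^n\dashint_{D_{2^{k+1}r}(y)}|v-(v)_{y,2^{k+1}r}|\,\de x\leq C\sqrt{\boldsymbol{\Theta}_s(v,0,R)}$, summed across the $O(\log(R/r))$ dyadic scales between $r$ and $R/16$, yields $|(v)_{y,r}|\leq CM+C\log(R/r)\sqrt{\boldsymbol{\Theta}_s(v,0,R)}$. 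Hence $(L/R)r|(v)_{y,r}|\leq CL(r/R)M+CL(r/R)\log(R/r)\sqrt{\boldsymbol{\Theta}_s(v,0,R)}\leq C_LM$, using $(r/R)\log(R/r)\leq 1/e$ on $(0,R]$ and $r/R\leq 1/16$.

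The hard part will be this transition region within the small-ball case, where $D_r(y)$ is small yet intersects the annulus $D_{5R/8}\setminus D_{R/2}$ on which $\zeta$ has nontrivial gradient. A direct application of Lemma \ref{poincarebdry} to the product $\zeta v$ would generate an error term of the form $r^{s-n/2}(L/R)\|v\|_{L^2(B^+_r({\bf y}),|z|^a\de\mathbf{x})}$ that blows up as $r\downarrow 0$ whenever $n\geq 2$. The remedy, central to the plan, is to never differentiate $\zeta$: I control its variation pointwise via the Lipschitz estimate and absorb the resulting mean $|(v)_{y,r}|$ through the dyadic telescoping above, whose logarithmic growth is precisely compensated by the $r/R$ factor from the Lipschitz estimate.
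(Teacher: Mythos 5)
Your proof is correct for $n>2s$ and takes a genuinely different route from the paper's. Both arguments start from the same core estimate obtained from Lemma~\ref{poincarebdry} plus the monotonicity hypothesis,
\[
\dashint_{D_\rho(y)}|v-(v)_{y,\rho}|\,\de x\;\leq\; C\sqrt{\boldsymbol{\Theta}_s(v,0,R)}\,,\qquad \rho\in(0,R/16]\,,\ |y|<11R/16\,,
\]
and both face the same difficulty: the extra term $\frac{Lr}{R}|(v)_{y,r}|$ produced by the cutoff. You handle it by telescoping the averages $(v)_{y,2^kr}$ dyadically up to scale $\sim R/16$; each step contributes $O(\sqrt{\boldsymbol{\Theta}_s(v,0,R)})$, yielding a factor $\log(R/r)$ that is exactly compensated by $r/R$. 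The paper instead bounds $r^{1-n}\|v\|_{L^1(D_r(y))}\leq C\|v\|_{L^n(D_{7R/8})}$ via H\"older and then controls the $L^n$ norm by the BMO seminorm through the John--Nirenberg inequality, plus the $L^1$ trace estimate. The two routes are morally equivalent --- your dyadic iteration is essentially a hand-built proof of the logarithmic-growth property that John--Nirenberg delivers as a black box --- but the paper's version is shorter, while yours is more elementary and self-contained.

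There is one genuine gap. Your auxiliary trace estimate $\|v\|^2_{L^2(D_{3R/4})}\leq CR^n M^2$ relies on the homogeneous fractional Sobolev embedding $\dot H^s(\R^n)\hookrightarrow L^{2n/(n-2s)}(\R^n)$, which is available only when $n>2s$. At the critical threshold $n=2s$ (i.e.\ $n=1$, $s=1/2$, which is one of the cases the lemma is actually invoked for), the embedding fails and the inhomogeneous version introduces the very quantity $\|v\|_{L^2}$ you are trying to bound. The fix is simple and makes your argument uniformly cleaner: instead of the $L^2$ trace estimate, use the continuity of the trace operator $H^1(B^+_R,|z|^a\de\mathbf{x})\to L^1(\partial^0 B^+_R)$ already recalled in Section~\ref{secWeightSob}. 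After rescaling to $R=1$, this gives $\|v\|_{L^1(D_{3/4})}\leq CM$, hence $|(v)_{y,1/16}|\leq C\|v\|_{L^1(D_{3/4})}\leq CM$ by Cauchy--Schwarz replaced by the trivial average bound; the same $L^1$ estimate also disposes of the large-ball case $r\geq R/16$, making the whole $L^2$-trace detour and the Sobolev-embedding step unnecessary.
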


Before proving this lemma, let us recall that $u\in L^1(D_R)$ belongs to ${\rm BMO}(D_R)$ if 
$$[u]_{{\rm BMO}(D_R)}:= \sup_{D_r(y)\subset D_R}\dashint_{D_r(y)}|u-(u)_{y,r}|\,\de x<+\infty \,,$$
where $(u)_{y,r}$ denotes the average of $u$ over the ball $D_r(y)$. To prove Lemma \ref{cutoffbmo1}, we shall make use of 
the well-known John-Nirenberg inequality, see e.g. \cite[Section 6.3]{GiaMa}.

\begin{lemma}\label{JohnNir}
Let $u\in {\rm BMO}(D_R)$. For every $p\in[1,\infty)$, there exists a constant $C_p=C_p(n,p)$ such that 
$$ [u]^p_{{\rm BMO}(D_R)}\leq \sup_{D_r(y)\subset D_R}\, \dashint_{D_r(y)}|u-(u)_{y,r}|^p\,\de x\leq C_p [u]^p_{{\rm BMO}(D_R)}\,.$$
%where $(u)_{y,r}$ denotes the average of $u$ over the ball $D_r(y)$.
\end{lemma}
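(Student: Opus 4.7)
\smallskip

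\textbf{Plan of proof.} The left inequality is immediate and purely formal. For any ball $D_r(y)\subset D_R$, Jensen's inequality applied to the convex map $t\mapsto t^p$ gives
\[
\left(\dashint_{D_r(y)}|u-(u)_{y,r}|\,\de x\right)^{p}\leq \dashint_{D_r(y)}|u-(u)_{y,r}|^p\,\de x,
\]
and taking the supremum over all $D_r(y)\subset D_R$ on each side yields $[u]^p_{\mathrm{BMO}(D_R)}$ on the left and the middle quantity of the statement on the right. So only the upper bound is substantive. The plan is to reduce it to an exponential decay estimate on the distribution function of $u-(u)_{y,r}$, then prove that decay by a Calder\'on-Zygmund stopping-time argument.

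\smallskip

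\textbf{Reduction to an exponential distribution inequality.} I would show the following claim: there exist $c_1=c_1(n)>0$ and $c_2=c_2(n)>0$ such that for every $u\in \mathrm{BMO}(D_R)$, every ball $D_r(y)\subset D_R$, and every $\lambda>0$,
\begin{equation}\label{JNdistplan}
\big|\{x\in D_r(y):|u(x)-(u)_{y,r}|>\lambda\}\big|\leq c_1|D_r(y)|\exp\!\left(-\frac{c_2\lambda}{[u]_{\mathrm{BMO}(D_R)}}\right).
\end{equation}
Given \eqref{JNdistplan}, one writes
\[
\dashint_{D_r(y)}|u-(u)_{y,r}|^p\,\de x = \frac{p}{|D_r(y)|}\int_0^\infty \lambda^{p-1}\big|\{x\in D_r(y):|u(x)-(u)_{y,r}|>\lambda\}\big|\,\de\lambda
\]
and inserts \eqref{JNdistplan}; integrating the resulting $\lambda^{p-1}e^{-c_2\lambda/[u]_{\mathrm{BMO}(D_R)}}$ against $\de\lambda$ produces exactly $C_p\,[u]^p_{\mathrm{BMO}(D_R)}$ with $C_p=c_1\,p\,\Gamma(p)\,c_2^{-p}$, which depends only on $n$ and $p$.

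\smallskip

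\textbf{Proof of \eqref{JNdistplan} by Calder\'on-Zygmund stopping time.} By homogeneity one may rescale so that $[u]_{\mathrm{BMO}(D_R)}=1$. Fix $D_r(y)\subset D_R$ and, to avoid geometric issues with balls versus cubes, embed $D_r(y)$ in the smallest closed cube $Q^0$ of side $\sim r$ containing it and work with dyadic sub-cubes of $Q^0$; a routine comparison loses only a dimensional constant between averages on $D_r(y)$ and on $Q^0$. Set $f:=u-(u)_{y,r}$ and choose a threshold $t>2^n$. Apply the dyadic Calder\'on-Zygmund decomposition of $|f|$ on $Q^0$ at level $t$ (which is admissible because $\dashint_{Q^0}|f|$ is $\leq$ a dimensional constant by the BMO hypothesis, hence less than $t$ after enlarging $t$ once and for all); this yields a disjoint family $\{Q^1_j\}$ of maximal dyadic sub-cubes with
\[
t<\dashint_{Q^1_j}|f|\,\de x\leq 2^n t,\qquad |f|\leq t \text{ a.e. on } Q^0\setminus\bigcup_j Q^1_j,
\]
and $\sum_j|Q^1_j|\leq t^{-1}\|f\|_{L^1(Q^0)}\leq C(n)\,t^{-1}|Q^0|$. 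In particular, on each $Q^1_j$ the average $(f)_{Q^1_j}$ satisfies $|(f)_{Q^1_j}|\leq 2^n t$. Now, on each $Q^1_j$, apply the same procedure to $f-(f)_{Q^1_j}$ (whose BMO norm is still $\leq 1$), obtaining a family $\{Q^2_j\}$ with total measure at most $C(n)t^{-1}\sum|Q^1_j|\leq (C(n)t^{-1})^2|Q^0|$. Iterating $k$ times produces $\{Q^k_j\}$ with $\sum_j|Q^k_j|\leq (C(n)t^{-1})^k|Q^0|$, and a telescoping argument shows $|f|\leq k\cdot 2^n t$ a.e.\ outside $\bigcup_j Q^k_j$. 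Hence $\{|f|>k\cdot 2^n t\}\subset \bigcup_j Q^k_j$ has measure $\leq (C(n)t^{-1})^k|Q^0|$. Choosing $t=2eC(n)$ (so that $C(n)t^{-1}=e^{-1}/2<1$) and interpolating between consecutive integer multiples of $2^n t$ gives \eqref{JNdistplan} after returning from $Q^0$ to $D_r(y)$.

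\smallskip

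\textbf{Expected obstacle.} The conceptual core is the stopping-time inequality $\sum|Q^{k+1}_j|\leq C(n)t^{-1}\sum|Q^k_j|$, which relies on the BMO hypothesis propagating to the rescaled function $f-(f)_{Q^k_j}$ on each subcube. Making this rigorous requires care about the ambient ball $D_R$: the dyadic cubes $Q^k_j$ must remain inside the region where BMO oscillation control is available, which is handled by choosing $Q^0$ inside $D_R$ (possibly after scaling $r$ down by a dimensional factor) and noting that the Calder\'on-Zygmund sub-cubes are always contained in $Q^0$. The transition between cube-averages and ball-averages also costs dimensional constants that must be tracked, but produces no new difficulty.
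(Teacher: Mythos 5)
The paper gives no proof of this lemma at all -- it is quoted from \cite[Section 6.3]{GiaMa} -- so the relevant comparison is with the standard textbook argument, which is precisely the route you follow: Jensen's inequality for the left-hand bound, the exponential John--Nirenberg distribution estimate plus the layer-cake formula for the right-hand bound, and a dyadic Calder\'on--Zygmund stopping time for the distribution estimate. That skeleton is sound, and your iteration (the measure of the bad set shrinks by a factor $C(n)t^{-1}$ per generation while the recentered function grows by at most $2^n t$) is the classical proof on a cube with cube-based BMO control.

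The genuine gap is in the reduction from balls $D_r(y)\subset D_R$ to that cube setting. The seminorm $[u]_{{\rm BMO}(D_R)}$ only controls mean oscillations over balls \emph{contained in} $D_R$, and $u$ need only be defined on $D_R$. The smallest cube $Q^0$ containing $D_r(y)$ has side $2r$ and in general is not contained in $D_R$ (take $D_r(y)=D_R$ itself, or any ball internally tangent to $\partial D_R$); on $Q^0\setminus D_R$ the function is undefined and no oscillation control is available, so the claim that ``a routine comparison loses only a dimensional constant between averages on $D_r(y)$ and on $Q^0$'' breaks down, and the dyadic subcubes produced by the stopping time need not lie in $D_R$ either. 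Your fallback -- shrinking $Q^0$ so that it sits inside $D_R$ -- removes that problem but creates another: $Q^0$ then no longer contains $D_r(y)$, so the distribution inequality on $Q^0$ does not control $\dashint_{D_r(y)}|u-(u)_{y,r}|^p\,\de x$, and one cannot cover $D_r(y)$ by finitely many interior cubes of comparable scale when $D_r(y)$ hugs $\partial D_R$. What is actually needed is a John--Nirenberg inequality on the ball $D_r(y)$ stated in terms of its own sub-balls (all of which lie in $D_R$, so their oscillations are bounded by $[u]_{{\rm BMO}(D_R)}$); this is true, but it requires either a Calder\'on--Zygmund decomposition adapted to balls (a Vitali/Besicovitch selection, or dyadic ``cubes'' for the ball regarded as a doubling metric measure space) or the cube version combined with a genuine chaining (Whitney/Boman) argument comparing averages along chains of sub-balls -- not a two-line dimensional-constant comparison. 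With that ingredient supplied (or by simply invoking the John--Nirenberg inequality for balls, as the paper does via its citation), the rest of your argument goes through.
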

\vskip5pt

\begin{proof}[Proof of Lemma \ref{cutoffbmo1}]
{\it Step 1.} Rescaling variables, we may assume that $R=1$. Let us fix  an arbitrary  ball $D_r(y)\subset D_{1}$ with $y\in D_{7/8}$ and $0<r\leq 1/8$. Using  the Poincar\'e inequality in Lemma \ref{poincarebdry} and the monotonicity assumption on $\boldsymbol{\Theta}_s(v,{\bf x},\cdot)$, we estimate
$$\frac{1}{r^n}\int_{D_r(y)}\big|v-(v)_{y,r}\big|\,\de x\leq C \sqrt{\boldsymbol{\Theta}_s(v,{\bf y},r)}\leq C \sqrt{\boldsymbol{\Theta}_s(v,{\bf y},1/8)}\leq C  \sqrt{\boldsymbol{\Theta}_s(v,0,1)}\,,$$
where  ${\bf y}=(y,0)$ and $C=C(n,s)$.  In particular, $v_{|D_{7/8}}$ belongs to ${\rm BMO}(D_{7/8})$, and 
\begin{equation}\label{preestbmo}
[v]_{{\rm BMO}(D_{7/8})}\leq  C  \sqrt{\boldsymbol{\Theta}_s(v,0,1)}\,.
\end{equation}
By the John-Nirenberg inequality in Lemma \ref{JohnNir}, inequality \eqref{preestbmo}, the continuity of the trace operator (see Section \ref{secWeightSob}), and H\"older's inequality, it follows that 
\begin{multline}\label{Lnthrubmo}
\|v\|_{L^n(D_{7/8})}\leq \big\|v-(v)_{0,7/8}\big\|_{L^n(D_{7/8})}+ C \|v\|_{L^1(D_{7/8})}\\
\leq C\Big(  [v]_{{\rm BMO}(D_{7/8})}+ \|v\|_{L^1(D_{1})}\Big)\leq C\Big( \sqrt{\boldsymbol{\Theta}_s(v,0,1)}+  \|v\|_{L^2(B_1^+,|z|^a\de \mathbf{x})}\Big)\,.
\end{multline}

\noindent{\it Step 2.} Let us now consider a ball $D_r(y)\subset D_{7/8}$ with $y\in D_{3/4}$ and $0<r\leq 1/8$.  Since 
$$|\zeta v - (\zeta v)_{y,r}|\leq |\zeta v - \zeta (v)_{y,r}|+|\zeta (v)_{y,r} - (\zeta v)_{y,r}|\leq  | v - (v)_{y,r}|+ L r\, \dashint_{D_r(y)}|v|\,\de x\quad\text{on $D_{7/8}$}\,,$$
we can deduce from \eqref{preestbmo} and \eqref{Lnthrubmo} that 
\begin{multline*}
\frac{1}{r^n} \int_{D_r(y)}\big|\zeta v-(\zeta v)_{y,r}\big|\,\de x  \leq C_L\Big( \sqrt{\boldsymbol{\Theta}_s(v,0,1)} +r^{1-n}\|v\|_{L^1(D_r(y))} \Big)\\
\leq C_L\Big( \sqrt{\boldsymbol{\Theta}_s(v,0,1)} +\|v\|_{L^n(D_{7/8})} \Big)
\leq C_L\Big( \sqrt{\boldsymbol{\Theta}_s(v,0,1)} +\|v\|_{L^2(B_1^+,|z|^a\de \mathbf{x})} \Big)\,,
\end{multline*}
for a constant $C_L=C(L,n,s)$. 

Next, for a ball $D_r(y)$ with $y\not\in D_{3/4}$ and $0<r\leq 1/8$, we have 
$$ \frac{1}{r^n} \int_{D_r(y)}\big|\zeta v-(\zeta v)_{y,r}\big|\,\de x =0\,,$$
since $\zeta$ is supported in $D_{5/8}$.

Finally, for a ball $D_r(y)$ with $r>1/8$, we estimate 
$$\frac{1}{r^n} \int_{D_r(y)}\big|\zeta v-(\zeta v)_{y,r}\big|\,\de x  \leq C\int_{D_1}|\zeta v|\,\de x  \leq C\|v\|_{L^1(D_1)}\leq C\|v\|_{L^2(B_1^+,|z|^a\de \mathbf{x})}\,,$$
which completes the proof. 
\end{proof}

%\begin{remark}
%It is clear from the proof of Lemma ?? that the non decreasing assumption on the density function can be weakened. For instance, it is enough to assume that  
%the slope of $r\mapsto  \boldsymbol{\Theta}_s(v,{\bf x}_0,r)$ is uniformly bounded from below. 
%\end{remark}

\begin{corollary}\label{coroBMO}
Let $u\in \widehat H^s(D_{2R};\R^d)$ and $\zeta\in \mathscr{D}(D_{5R/8})$ be as in Lemma \ref{cutoffbmo1}. Assume that for every ${\bf x}\in\partial^0 B^+_{R}$, the density function 
$r\in(0,2R-|{\bf x|})\mapsto  \boldsymbol{\Theta}_s(u^\e,{\bf x},r)$ is non decreasing. Then $\zeta u$ belongs to ${\rm BMO}(\R^n)$ and 
% If the function 
%$r\in(0,R]\mapsto  \boldsymbol{\Theta}_s(v,{\bf x},r)$ is non decreasing for every ${\bf x}\in\partial^0 B^+_R$, then $v_{|D_R(x_0)}$ belongs to ${\rm BMO}(D_R(x_0))$ for every $x_0\in D_R$, and 
$$[\zeta u]^2_{{\rm BMO}(\R^n)}\leq C_L \big(\boldsymbol{\theta}_s(u,0,2R)+R^{-n}\|u\|^2_{L^2(D_{2R})}\big)\,,$$
for a constant $C_L=C(L,n,s)>0$. 
\end{corollary}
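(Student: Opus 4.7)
The plan is to apply Lemma~\ref{cutoffbmo1} to the Caffarelli--Silvestre extension $v := u^\e$ on the half-ball $B_R^+$. Three things need to be checked. First, $u^\e$ belongs to $H^1(B_R^+,|z|^a\de\mathbf{x})$, which is provided by Corollary~\ref{contextHsH1} since $u \in \widehat H^s(D_{2R};\R^d)$. Second, the pointwise trace of $u^\e$ on $\R^n$ equals $u$, so $(\zeta u^\e)_{|\R^n} = \zeta u$, and the left-hand side of the conclusion of Lemma~\ref{cutoffbmo1} is exactly $[\zeta u]^2_{{\rm BMO}(\R^n)}$. Third, the monotonicity hypothesis of Lemma~\ref{cutoffbmo1} asks that $r \mapsto \boldsymbol{\Theta}_s(u^\e, \mathbf{x}, r)$ be nondecreasing on $(0, R-|\mathbf{x}|)$ for every $\mathbf{x} \in \partial^0 B_R^+$, which is weaker than our standing assumption of monotonicity on $(0, 2R-|\mathbf{x}|)$. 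Lemma~\ref{cutoffbmo1} then yields directly
$$[\zeta u]^2_{{\rm BMO}(\R^n)} \leq C_L \Big(\boldsymbol{\Theta}_s(u^\e, 0, R) + R^{2s-2-n}\|u^\e\|^2_{L^2(B_R^+,|z|^a\de\mathbf{x})}\Big).$$

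It remains to dominate the right-hand side above by a universal multiple of $\boldsymbol{\theta}_s(u, 0, 2R) + R^{-n}\|u\|^2_{L^2(D_{2R})}$. Lemma~\ref{hatH1/2toH1} is not directly applicable at the centre $x_0 = 0$ with radius $R$, since that would require $D_{3R}(0) \subset D_{2R}$, which fails. I will bypass this by rescaling: setting $\tilde u(x) := u(Rx) \in \widehat H^s(D_2;\R^d)$, a direct computation from formula~\eqref{poisson} gives $\tilde u^\e(\mathbf{x}) = u^\e(R\mathbf{x})$, and the change of variable $\mathbf{x}' = R\mathbf{x}$ yields the scale identities $\mathbf{E}_s(\tilde u^\e, B_1^+) = \boldsymbol{\Theta}_s(u^\e, 0, R)$, $\|\tilde u^\e\|^2_{L^2(B_1^+,|z|^a\de\mathbf{x})} = R^{2s-2-n}\|u^\e\|^2_{L^2(B_R^+,|z|^a\de\mathbf{x})}$, $\mathcal{E}_s(\tilde u, D_2) = 2^{n-2s}\boldsymbol{\theta}_s(u, 0, 2R)$, and $\|\tilde u\|^2_{L^2(D_2)} = R^{-n}\|u\|^2_{L^2(D_{2R})}$. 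It therefore suffices to establish the estimate in the case $R = 1$, namely
$$\mathbf{E}_s(\tilde u^\e, B_1^+) + \|\tilde u^\e\|^2_{L^2(B_1^+,|z|^a\de\mathbf{x})} \leq C\big(\mathcal{E}_s(\tilde u, D_2) + \|\tilde u\|^2_{L^2(D_2)}\big),$$
for some constant $C = C(n,s)$. But this is precisely the content of Corollary~\ref{contextHsH1} applied with $\Omega := D_2$ and any fixed admissible set $G$ satisfying $\overline{B_1^+} \subset G$ and $\overline{\partial^0 G} \subset D_2$, for instance $G := D_{3/2} \times (0, 3/2)$.

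No genuine obstacle is expected: the proof is essentially a one-line application of Lemma~\ref{cutoffbmo1} to the fractional harmonic extension, the only mildly delicate point being the scaling bookkeeping used to circumvent the radius restriction built into Lemma~\ref{hatH1/2toH1}.
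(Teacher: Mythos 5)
Your proof is correct and follows the same strategy as the paper: apply Lemma~\ref{cutoffbmo1} to $v=u^\e$ on $B_R^+$, then control the extension quantities $\boldsymbol{\Theta}_s(u^\e,0,R)$ and $R^{2s-2-n}\|u^\e\|^2_{L^2(B_R^+,|z|^a\de\mathbf{x})}$ by the corresponding quantities for $u$ on $D_{2R}$. The paper's own proof is the one-liner \emph{``Apply Lemma~\ref{cutoffbmo1} to $u^\e$ in $B_R^+$, and then conclude with the help of Lemma~\ref{hatH1/2toH1}''}; you are right that this citation is, taken literally, slightly short: the quantitative estimates \eqref{contruextL2}--\eqref{contruextH1weight} require $D_{3r}(x_0)\subset\Omega$, and with $x_0=0$, $r=R$, $\Omega=D_{2R}$ this fails. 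Your workaround — rescale to $R=1$ and invoke Corollary~\ref{contextHsH1} with a fixed admissible $G\supset B_1^+$ satisfying $\overline{\partial^0G}\subset D_2$ — is a clean fix; since $G$ is fixed once $R=1$, the resulting constant depends only on $n$ and $s$, and your scaling identities correctly transport the estimate back to general $R$. (In effect this re-expands the covering argument hiding inside the proof of Corollary~\ref{contextHsH1}: one could equivalently cover $\partial^0B_R^+$ by balls $D_{R/4}(y_i)$ with $y_i\in D_R$, so that $D_{3R/4}(y_i)\subset D_{7R/4}\subset D_{2R}$ and Lemma~\ref{hatH1/2toH1} applies at scale $R/4$, and handle the part of $B_R^+$ bounded away from $\{z=0\}$ by a direct kernel estimate.)

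One small slip of notation: you write $\overline{B_1^+}\subset G$ with $G:=D_{3/2}\times(0,3/2)$, but $\overline{B_1^+}$ contains $D_1\times\{0\}$, which is not in $G$ since $z>0$ there; what you actually use — and what is true — is only $B_1^+\subset G$, which suffices to compare the $H^1(\cdot,|z|^a\de\mathbf{x})$ norms. This does not affect the validity of the argument.
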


\begin{proof}
Apply Lemma \ref{cutoffbmo1} to $u^\e$ in $B_R^+$, and then conclude with  the help of Lemma \ref{hatH1/2toH1}. 
\end{proof}

%%%%%%%%%%%%%%%%%%%%%%%%%%%%%%%%%%%%%%%%%%%%%%%%%%%%%%%
%%%%%%%%%%%%%%%%%%%%%%%%%%%%%%%%%%%%%%%%%%%%%%%%%%%%%%%
   								       						%%%%%%%%%%%%%%%%%%%
\section{Fractional harmonic maps and weighted harmonic maps with  free boundary}\label{ELandConsLaw} %%%%%%%%%
								 						%%%%%%%%%%%%%%%%%%%
%%%%%%%%%%%%%%%%%%%%%%%%%%%%%%%%%%%%%%%%%%%%%%%%%%%%%%%
%%%%%%%%%%%%%%%%%%%%%%%%%%%%%%%%%%%%%%%%%%%%%%%%%%%%%%%

In this section, our goal is to review in details  the notion of weakly $s$-harmonic maps, the associated Euler-Lagrange equation, and more importantly 
to present its characterization in terms of fractional (nonlocal) conservation laws. We shall also prove at the end of this section that the fractional harmonic extension of an $s$-harmonic 
map satisfies a suitable (degenerate) partially free boundary condition, in the spirit of the classical harmonic map system with partially free boundary.

%%%%%%%%%%%%%%%%%%%%%%%%%%%%%%%%%%%%%%%%%%%%%%%%%%%%%%%

\subsection{Fractional harmonic maps into spheres and conservation laws}

%%%%%%%%%%%%%%%%%%%%%%%%%%%%%%%%%%%%%%%%%%%%%%%%%%%%%%%

\begin{definition}
Let $\Omega\subset\R^n$ be a bounded open set. A map $u\in \widehat H^s(\Omega;\mathbb{S}^{d-1})$ is said to be  {\sl a weakly $s$-harmonic map} in $\Omega$ (with values in $\mathbb{S}^{d-1}$) if 
$$\left[\frac{\de}{\de t} \mathcal{E}_s\Big(\frac{u+t\varphi}{|u+t\varphi|},\Omega\Big)\right]_{t=0}=0\qquad\forall \varphi\in\mathscr{D}(\Omega;\R^d) \,.$$
If $u$ is also stationary in $\Omega$ (in the sense of Definition \ref{defstatmap}), we say that $u$ is  {\sl a stationary weakly $s$-harmonic map} in  $\Omega$. 
\end{definition}

\begin{definition}
Let $\Omega\subset\R^n$ be a bounded open set. A map $u\in \widehat H^s(\Omega;\mathbb{S}^{d-1})$ is said to be {\sl a minimizing $s$-harmonic map} in  $\Omega$ (with values in $\mathbb{S}^{d-1}$) if 
$$\mathcal{E}_s(u,\Omega)\leq \mathcal{E}_s(w,\Omega) $$
for every  $w\in \widehat H^s(\Omega;\mathbb{S}^{d-1})$ such that ${\rm spt}(u-w)$ is compactly included in $\Omega$. 
\end{definition}

\begin{remark}\label{implicminstat}
A minimizing $s$-harmonic map in $\Omega$ is obviously a critical point with respect to both inner and (constrained) outer variations of the energy. In other words, if $u$ is a minimizing $s$-harmonic map in $\Omega$, then $u$ is also a stationary weakly $s$-harmonic map in $\Omega$.  
%$$ \text{minimizing $s$-harmonic }\Longrightarrow\text{ stationary weakly $s$-harmonic }\Longrightarrow \text{ weakly $s$-harmonic}$$
\end{remark}

\begin{remark}
If $u\in \widehat H^s(\Omega;\mathbb{S}^{d-1})$ is a weakly $s$-harmonic map in $\Omega$ (stationary, minimizing, respectively), then $u$ is also weakly $s$-harmonic in $\Omega^\prime$ (stationary, minimizing, respectively) for any open subset $\Omega^\prime\subset\Omega$. It can be directly checked from the definitions, or one can rely on the Euler-Lagrange equation presented below  and Remark \ref{localityLapls}. 
\end{remark}

\begin{proposition}\label{ELeqprop}
Let $\Omega\subset\R^n$ be a bounded open set. A map $u\in \widehat H^s(\Omega;\mathbb{S}^{d-1})$ is weakly $s$-harmonic in $\Omega$ if and only if 
\begin{equation}\label{ELtangtestfct}
\big\langle (-\Delta)^su,\varphi\big\rangle_\Omega=0
\end{equation}
for every $\varphi\in H^s_{00}(\Omega;\R^d)$ such that  ${\rm spt}(\varphi)\subset\Omega$ and $\varphi(x)\in {\rm Tan}(u(x),\mathbb{S}^{d-1})$ for a.e. $x\in\Omega$. 
Equivalently, 
\begin{equation}\label{ELeqorig}
(-\Delta)^su(x)= \Big(\frac{\gamma_{n,s}}{2}\int_{\R^n}\frac{|u(x)-u(y)|^2}{|x-y|^{n+2s}}\,\de y\Big)u(x)\quad\text{in $\mathscr{D}^\prime(\Omega)$}\,.
\end{equation}
\end{proposition}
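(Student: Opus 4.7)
The plan is to obtain the explicit PDE (the ``Equivalently'' statement) first by computing the first variation, and then to deduce the tangential characterisation from it.

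\textbf{Paragraph 1 — First variation.} Fix $\varphi\in\mathscr{D}(\Omega;\R^d)$. Since $|u|=1$ a.e., for $|t|<(2\|\varphi\|_\infty)^{-1}$ one has $|u+t\varphi|\geq \tfrac12$ a.e., so $v_t:=(u+t\varphi)/|u+t\varphi|$ is a valid competitor in $\widehat H^s(\Omega;\mathbb{S}^{d-1})$. A pointwise derivative at $t=0$ gives
\[
\partial_tv_t\big|_{t=0}=\varphi-(\varphi\cdot u)u=:\varphi^T,
\]
and I would justify differentiation under the integral defining $\mathcal{E}_s(v_t,\Omega)$ by dominated convergence (using that $|u+t\varphi|$ is uniformly bounded below and $\varphi$ is smooth compactly supported, so the difference quotient of $|v_t(x)-v_t(y)|^2/|x-y|^{n+2s}$ is dominated by the integrable function $C\,|u(x)-u(y)|^2/|x-y|^{n+2s}+C\,\chi_{\mathrm{spt}(\varphi)\cup y\in\mathrm{spt}(\varphi)}/|x-y|^{n+2s-2}$, on a neighbourhood of $t=0$). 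One gets
\[
\Big[\tfrac{d}{dt}\mathcal{E}_s(v_t,\Omega)\Big]_{t=0}=\langle(-\Delta)^s u,\varphi^T\rangle_\Omega.
\]
Hence weak $s$-harmonicity is equivalent to $\langle(-\Delta)^s u,\varphi-(\varphi\cdot u)u\rangle_\Omega=0$ for all $\varphi\in\mathscr{D}(\Omega;\R^d)$.

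\textbf{Paragraph 2 — Sphere identity and the PDE.} Setting $\eta:=\varphi\cdot u\in L^\infty(\R^n)$ (compactly supported in $\Omega$), the product $\eta u$ lies in $H^s_{00}(\Omega;\R^d)$: it is bounded, supported in $\mathrm{spt}\varphi$, and the $H^s$-seminorm on a neighbourhood of $\mathrm{spt}\varphi$ is controlled using $u\in H^s_{\mathrm{loc}}(\Omega)$, $|u|=1$, and the smoothness of $\varphi$ via a Leibniz-type estimate. Using $|u|^2\equiv1$, which yields $(u(x)-u(y))\cdot u(y)=-\tfrac12|u(x)-u(y)|^2$, a direct expansion gives
\[
(u(x)-u(y))\cdot\big(\eta(x)u(x)-\eta(y)u(y)\big)=\tfrac{\eta(x)+\eta(y)}{2}\,|u(x)-u(y)|^2.
\]
Symmetrising the integration domain $S:=(\R^n\times\R^n)\setminus(\Omega^c\times\Omega^c)$ in $(x,y)$ and using $\mathrm{spt}\eta\subset\Omega$, one finds
\[
\langle(-\Delta)^s u,\eta u\rangle_\Omega=\int_\Omega\eta(x)\,\lambda(x)\,\de x,\qquad \lambda(x):=\tfrac{\gamma_{n,s}}{2}\!\int_{\R^n}\tfrac{|u(x)-u(y)|^2}{|x-y|^{n+2s}}\,\de y.
\]
Note $\lambda\in L^1(\Omega)$, since $\int_\Omega\lambda\le 2\mathcal{E}_s(u,\Omega)<\infty$ by splitting $\R^n=\Omega\cup\Omega^c$ and comparing with the definition of $\mathcal{E}_s$. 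Since $\eta u\cdot\varphi^{-1}$... actually, since $\eta=\varphi\cdot u$, this gives $\langle(-\Delta)^s u,(\varphi\cdot u)u\rangle_\Omega=\int_\Omega\varphi\cdot(\lambda u)\,\de x$. Combined with Paragraph~1, weak $s$-harmonicity is thus equivalent to
\[
\langle(-\Delta)^s u,\varphi\rangle_\Omega=\int_\Omega\varphi\cdot(\lambda u)\,\de x\qquad\forall\varphi\in\mathscr{D}(\Omega;\R^d),
\]
which is exactly \eqref{ELeqorig} in $\mathscr{D}'(\Omega)$.

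\textbf{Paragraph 3 — Tangential formulation.} Assuming \eqref{ELeqorig}, the tangential condition \eqref{ELtangtestfct} follows: for $\varphi\in H^s_{00}(\Omega;\R^d)$ compactly supported in $\Omega$ with $\varphi\cdot u=0$ a.e., pick $\varphi_k\in\mathscr{D}(\Omega;\R^d)$ with $\varphi_k\to\varphi$ in $H^s_{00}$ and supported in a fixed compact $K\subset\Omega$. The LHS $\langle(-\Delta)^s u,\varphi_k\rangle$ converges to $\langle(-\Delta)^s u,\varphi\rangle$ by $H^{-s}$-$H^s_{00}$ duality, and the RHS $\int\varphi_k\cdot(\lambda u)\to\int\varphi\cdot(\lambda u)=0$, using the $L^p$-convergence $\varphi_k\to\varphi$ on $K$ (for some $p>2$ coming from Sobolev embedding, or a simpler truncation argument on $\lambda$). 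Conversely, \eqref{ELtangtestfct} implies weak $s$-harmonicity: by Paragraph~1 the first variation equals $\langle(-\Delta)^s u,\varphi^T\rangle_\Omega$, and $\varphi^T=\varphi-(\varphi\cdot u)u$ is a compactly supported tangential element of $H^s_{00}(\Omega;\R^d)$ (product estimates as above), so \eqref{ELtangtestfct} makes this zero.

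\textbf{Main obstacle.} The routine computation of the first variation (Paragraph~1) and the pointwise sphere identity (Paragraph~2) are straightforward. The delicate point is the density argument in Paragraph~3: approximating a tangential $H^s_{00}$ field by smooth fields does not preserve tangentiality, and controlling $\int\varphi_k\cdot(\lambda u)\to0$ when $\lambda$ is only $L^1$ requires a careful use of the Sobolev embedding of $H^s_{00}$ and some equi-integrability argument for $\lambda$. An alternative is to bypass this by working directly with projected test functions $\varphi-(\varphi\cdot u)u$ throughout and using only smooth $\varphi$, which suffices for all three equivalences in view of Paragraph~2.
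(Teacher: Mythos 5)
Your Paragraphs 1 and 2 track the paper's proof essentially word for word: the pointwise derivative of $(u+t\varphi)/|u+t\varphi|$ gives the projected test function $\varphi-(\varphi\cdot u)u$, and the sphere identity
\[
\big(u(x)-u(y)\big)\cdot\big(\eta(x)u(x)-\eta(y)u(y)\big)=\tfrac{\eta(x)+\eta(y)}{2}\,|u(x)-u(y)|^2,\qquad \eta:=\varphi\cdot u,
\]
together with the symmetry of the integration domain, yields \eqref{ELeqorig}. This part is correct and is the same argument as in the paper.

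There is, however, a genuine gap in Paragraph~3, precisely in the direction ``\eqref{ELeqorig} implies \eqref{ELtangtestfct} for \emph{every} tangential $\varphi\in H^s_{00}(\Omega;\R^d)$ compactly supported in $\Omega$''. You approximate such a $\varphi$ by smooth $\varphi_k$ and want $\int\varphi_k\cdot(\lambda u)\to\int\varphi\cdot(\lambda u)$ with $\lambda u\in L^1(\Omega)$ only. Neither of your two suggested fixes closes this: the Sobolev embedding of $H^s_{00}$ gives $L^p$ only for $p<2n/(n-2s)$ when $n>2s$, never $L^\infty$; and truncating $\lambda$ does not help if $\varphi$ is unbounded, because the error term $\int\varphi_k\cdot((\lambda-\lambda_M)u)$ is controlled only through an $L^\infty$ bound on $\varphi_k$, which is exactly what is missing. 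The paper's route is a two-step argument in the \emph{other} variable: first, by mollifying a $\varphi\in H^s_{00}(\Omega)\cap L^\infty(\R^n)$ compactly supported in $\Omega$ (so the approximants $\varphi_k$ are uniformly bounded and converge a.e.), dominated convergence on the $L^1$ right-hand side extends \eqref{ELeqorig} to all such bounded test functions, and \eqref{ELtangtestfct} follows for bounded tangential ones; then, for a general compactly supported tangential $\varphi\in H^s_{00}$, one truncates $\varphi$ itself (e.g.\ $\varphi_M:=\min(1,M/|\varphi|)\,\varphi$), observing that this multiplication by a scalar function preserves both the compact support and the tangentiality $\varphi_M\cdot u=0$, and that $\varphi_M\to\varphi$ in $H^s_{00}$. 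Passing to the limit $M\to\infty$ in the left-hand side alone (no right-hand side remains, since it already vanishes) gives \eqref{ELtangtestfct}. Your final ``alternative'' of working only with projected smooth test functions suffices to establish the equivalence between weak $s$-harmonicity and \eqref{ELeqorig}, and the converse implication from \eqref{ELtangtestfct}, but it does not establish \eqref{ELtangtestfct} for arbitrary $H^s_{00}$ tangential fields, which is the form of the statement actually used later in the paper (e.g.\ in Proposition~\ref{equivsharmfreebdry}).
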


\begin{proof}
Let $u\in \widehat H^s(\Omega;\mathbb{S}^{d-1})$, fix $\varphi\in\mathscr{D}(\Omega;\R^d)$, and notice that 
$$\left[\frac{\de}{\de t}\Big(\frac{u+t\varphi}{|u+t\varphi|}\Big) \right]_{t=0}=\varphi-(u\cdot\varphi)u \in H^{s}_{00}(\Omega;\R^d)\,.$$
Hence, 
$$\left[\frac{\de}{\de t} \mathcal{E}_s\Big(\frac{u+t\varphi}{|u+t\varphi|},\Omega\Big)\right]_{t=0}=\big\langle (-\Delta)^s u, \varphi\big\rangle_\Omega-\big\langle (-\Delta)^s u, (u\cdot\varphi)u\big\rangle_\Omega\,. $$
On the other hand, since $|u|^2=1$, we have 
\begin{multline*}
\big(u(x)-u(y)\big)\cdot\big((u(x)\cdot\varphi(x))u(x)-(u(y)\cdot\varphi(y))u(y) \big)\\
=\frac{1}{2}|u(x)- u(y)|^2u(x)\cdot\varphi(x) +\frac{1}{2}|u(x)- u(y)|^2u(y)\cdot\varphi(y)\,,
\end{multline*}
and it follows that 
\begin{equation}\label{computlagrangmult}
\big\langle (-\Delta)^s u, (u\cdot\varphi)u\big\rangle_\Omega=\int_\Omega\Big(\frac{\gamma_{n,s}}{2}\int_{\R^n}\frac{|u(x)-u(y)|^2}{|x-y|^{n+2s}}\,\de y\Big)u(x)\cdot\varphi(x)\,\de x\,. 
\end{equation}
Consequently, $u$ is weakly $s$-harmonic in $\Omega$ if and only if \eqref{ELeqorig} holds. 

By approximation, \eqref{ELeqorig}  also holds for any test function $\varphi\in H^s_{00}(\Omega;\R^d)\cap L^\infty(\R^n)$ compactly supported in $\Omega$. In view of  the right-hand side of  \eqref{ELeqorig}, \eqref{ELtangtestfct} clearly holds for every $\varphi\in H^s_{00}(\Omega;\R^d)\cap L^\infty(\R^n)$ compactly supported in $\Omega$ and satisfying $\varphi\cdot u=0$. By a standard truncation argument, it implies that \eqref{ELtangtestfct} holds for every $\varphi\in H^s_{00}(\Omega;\R^d)$ compactly supported in $\Omega$ and satisfying $\varphi\cdot u=0$. 

The other way around, if \eqref{ELtangtestfct} holds, then  the map $\varphi-(u\cdot\varphi)u$ with $\varphi\in \mathscr{D}(\Omega;\R^d)$ is admissible, and \eqref{ELtangtestfct}  combined with \eqref{computlagrangmult} shows that \eqref{ELeqorig} holds, i.e., $u$ is weakly $s$-harmonic in $\Omega$. 
\end{proof}

\begin{remark}\label{remsLaplorthog}
The variational equation \eqref{ELtangtestfct} corresponds to the weak formulation of the implicit equation
$$(-\Delta)^su\perp {\rm Tan}(u,\mathbb{S}^{d-1})\quad\text{in $\Omega$}\,,$$
and in equation \eqref{ELeqorig}, the Lagrange multiplier associated with the $\mathbb{S}^{d-1}$-constraint is made explicit. 
\end{remark}

\begin{remark}\label{remsmothhimplstat}
A  weakly $s$-harmonic map $u$ in $\Omega$ which is smooth in $\Omega$, is stationary in $\Omega$. 
 Indeed, if $X\in C^1(\Omega;\R^n)$ is compactly supported in $\Omega$, the smoothness of $u$ implies that 
$$\delta\mathcal{E}_s(u,\Omega)[X]= \big\langle (-\Delta)^su,X\cdot\nabla u\big\rangle_\Omega\,.$$
Since $|u|^2=1$, we have  $(X\cdot\nabla u)\cdot u=0$, and thus $\delta\mathcal{E}_s(u,\Omega)[X]=0$. 
\end{remark}

Now we rewrite the Euler-Lagrange equation \eqref{ELeqorig} in a more compact form using the fractional $s$-gradient ${\rm d}_su$ defined in Subsection \ref{sectoperandcompcomp}. More precisely, if $u=:(u^1,\ldots,u^d)$, then 
$$\frac{\gamma_{n,s}}{2}\int_{\R^n}\frac{|u(x)-u(y)|^2}{|x-y|^{n+2s}}\,\de y=\sum_{j=1}^d \frac{\gamma_{n,s}}{2}\int_{\R^n}\frac{|u^j(x)-u^j(y)|^2}{|x-y|^{n+2s}}\,\de y= \sum_{j=1}^d|{\rm d}_su^j|^2=:|{\rm d}_su|^2\,,$$
according to \eqref{modsqsvectfield} and \eqref{defsgrad}. We can thus rephrase Proposition \ref{ELeqprop} as follows: $u\in \widehat H^s(\Omega;\mathbb{S}^{d-1})$ is weakly $s$-harmonic in $\Omega$ if and only if 
\begin{equation}\label{ELeqsgrad}
(-\Delta)^su=|{\rm d}_su|^2u \quad\text{in $\mathscr{D}^\prime(\Omega)$}\,.
\end{equation}
Our aim  is to further rewrite equation \eqref{ELeqsgrad}, or more precisely its right-hand side, to reveal the fractional "div-curl structure" of Section  \ref{sectoperandcompcomp} in the spirit  of the well-known div-curl structure hidden in the classical equation for harmonic maps into spheres \cite{Hel1}. Following \cite{MazSchi}, the starting point is to notice that for  each $i,j\in\{1,\dots,d\}$,
%
%...{\bf COMPLETE} ...
%\vskip5pt
%
%
%We now observe that ... equation rewrites  
%$$(-\Delta)^su^i(x)=\sum^d_{j=1}|{\rm d}_su^j|^2u^i\quad\text{for $i=1,\ldots,d$}\,,$$
%in the distributional sense in $\Omega$.  
%For we first notice that  
\begin{align}\label{dopethrone}
\nonumber |{\rm d}_su^j|^2(x)u^i(x)&=\int_{\R^n} \frac{u^i(x){\rm d}_su^j(x,y){\rm d}_su^j(x,y)}{|x-y|^n}\,\de y\\
&=\begin{multlined}[t]
\int_{\R^n} \frac{u^i(x){\rm d}_su^j(x,y)-u^j(x){\rm d}_su^i(x,y)}{|x-y|^n}\,{\rm d}_su^j(x,y)\,\de y\\
\qquad\qquad\qquad\qquad\qquad\qquad\qquad\qquad+({\rm d}_su^i\odot{\rm d}_su^j)(x)u^j(x)\,.
\end{multlined}
\end{align}
Then, since $|u|^2=1$, we have  
\begin{align}\label{dopethrone2}
\nonumber\sum_{j=1}^d ({\rm d}_su^i\odot{\rm d}_su^j)(x)u^j(x)&=\sum_{j=1}^d\frac{\gamma_{n,s}}{2}\int_{\R^n}\frac{\big(u^j(x)-u^j(y)\big)u^j(x)}{|x-y|^{n+2s}}\big(u^i(x)-u^i(y)\big)\,\de y\\
&= \frac{\gamma_{n,s}}{4}\int_{\R^n}\frac{|u(x)-u(y)|^2}{|x-y|^{n+2s}}\big(u^i(x)-u^i(y)\big)\,\de y\,.
\end{align}
%Similarly to \eqref{dopethrone} and \eqref{dopethrone2}, we have 
%\begin{multline}\label{dopethrone3}
%\sum_{j=1}^d\int_{\Omega^c} \frac{u^i(x){\rm d}_su^j(x,y)-u^j(x){\rm d}_su^i(x,y)}{|x-y|^n}\,{\rm d}_su^j(x,y)\,\de y\\
%=\Big(\frac{\gamma_{n,s}}{2}\int_{\Omega^c}\frac{|u(x)-u(y)|^2}{|x-y|^{n+2s}}\,\de y\Big)u^i(x) 
%-\frac{\gamma_{n,s}}{4}\int_{\Omega^c}\frac{|u(x)-u(y)|^2}{|x-y|^{n+2s}}\big(u^i(x)-u^i(y)\big)\,\de y\,.
%\end{multline}
We can now introduce for $i,j\in\{1,\ldots,d\}$, 
\begin{equation}\label{defOmeij}
\boldsymbol{\Omega}^{ij}(x,y):=u^i(x){\rm d}_su^j(x,y)-u^j(x){\rm d}_su^i(x,y) \in L^2_{\rm od}(\Omega)\,,
\end{equation}
and
\begin{equation}\label{defTi}
T^i(x):=\frac{\gamma_{n,s}}{4} \int_{\R^n}\frac{|u(x)-u(y)|^2}{|x-y|^{n+2s}}\big(u^i(x)-u^i(y)\big)\,\de y \in L^1(\Omega)\,.
\end{equation}
%and 
%\begin{equation}\label{defFi}
%F^i(x):=\Big(\frac{\gamma_{n,s}}{2}\int_{\Omega^c}\frac{|u(x)-u(y)|^2}{|x-y|^{n+2s}}\,\de y\Big)u^i(x) \in L^1(\Omega)\,,
%\end{equation}
to derive from \eqref{dopethrone} and \eqref{dopethrone2} the following reformulation of equation \eqref{ELeqsgrad}. 

\begin{lemma}\label{rewritingsharmeq}
Let $\Omega\subset\R^n$ be a bounded open set.  A map $u\in \widehat H^s(\Omega;\mathbb{S}^{d-1})$ is weakly $s$-harmonic in $\Omega$ if and only if
\begin{equation}\label{rewriteEL}
(-\Delta)^su^i=\Big(\sum_{j=1}^d\boldsymbol{\Omega}^{ij}\odot{\rm d}_su^j \Big) +T^i \quad\text{in $\mathscr{D}^\prime(\Omega)$}
\end{equation}
for every $i=1,\ldots,d$, where $\boldsymbol{\Omega}^{ij}$ and $T^i$ are given by \eqref{defOmeij} and \eqref{defTi}, respectively. 
\end{lemma}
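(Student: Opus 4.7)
The plan is to derive the reformulation \eqref{rewriteEL} as a purely algebraic rewriting of the right-hand side of \eqref{ELeqsgrad} established in Proposition \ref{ELeqprop}. Since every step is an exact pointwise identity, it is reversible, and both implications follow simultaneously. Fixing a component $i\in\{1,\dots,d\}$, the task reduces to proving the $L^1_{\rm loc}(\Omega)$ identity
\begin{equation*}
|{\rm d}_su|^2(x)\, u^i(x)=\sum_{j=1}^d \bigl(\boldsymbol{\Omega}^{ij}\odot {\rm d}_s u^j\bigr)(x)+T^i(x)\qquad\text{for a.e. }x\in\Omega.
\end{equation*}

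First, I would perform the antisymmetrization \eqref{dopethrone} on each summand. For every $j$, the integrand splits as
\begin{equation*}
u^i(x)\,{\rm d}_s u^j(x,y)\,{\rm d}_s u^j(x,y)=\bigl[u^i(x){\rm d}_s u^j(x,y)-u^j(x){\rm d}_s u^i(x,y)\bigr]{\rm d}_s u^j(x,y)+u^j(x)\,{\rm d}_s u^i(x,y)\,{\rm d}_s u^j(x,y).
\end{equation*}
Integrating in $y$ against $|x-y|^{-n}\de y$ recognizes the bracketed factor as $\boldsymbol{\Omega}^{ij}(x,y)$, so the first term contributes $(\boldsymbol{\Omega}^{ij}\odot {\rm d}_s u^j)(x)$ while the second contributes $u^j(x)\,({\rm d}_s u^i\odot {\rm d}_s u^j)(x)$. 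Summing over $j$ reduces the task to identifying $\sum_j u^j\,({\rm d}_s u^i\odot {\rm d}_s u^j)$ with $T^i$.

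For this, I would unfold the $\odot$-product using \eqref{defsgrad}, pull the factor $(u^i(x)-u^i(y))/|x-y|^{n+2s}$ outside the sum in $j$, and invoke the pointwise sphere constraint $|u(x)|^2=|u(y)|^2=1$, which yields the key identity
\begin{equation*}
\sum_{j=1}^d \bigl(u^j(x)-u^j(y)\bigr)u^j(x)=1-u(x)\cdot u(y)=\tfrac{1}{2}\bigl|u(x)-u(y)\bigr|^2.
\end{equation*}
Substituting reproduces exactly the expression \eqref{defTi} defining $T^i$, which is the same computation already carried out in \eqref{computlagrangmult}. Combining this with the previous step gives the sought identity in $L^1_{\rm loc}(\Omega)$, hence also in $\mathscr{D}'(\Omega)$, completing the equivalence.

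There is essentially no analytic obstacle. Each $\boldsymbol{\Omega}^{ij}$ lies in $L^2_{\rm od}(\Omega)$ because $|u|\le 1$ and ${\rm d}_s u^j\in L^2_{\rm od}(\Omega)$, so $\boldsymbol{\Omega}^{ij}\odot {\rm d}_s u^j\in L^1(\Omega)$ by continuity of $\odot$; likewise $T^i\in L^1(\Omega)$, since $|u^i(x)-u^i(y)|\le |u(x)-u(y)|$ bounds its integrand pointwise by the $s$-Dirichlet energy density. Consequently both sides of \eqref{rewriteEL} are well-defined as locally integrable functions, and the rewriting amounts to bookkeeping the spherical Lagrange multiplier into an antisymmetric "$\boldsymbol{\Omega}$-part" (the piece where fractional div-curl compensation will be exploited later) and a quadratic "error term" $T^i$ of lower order.
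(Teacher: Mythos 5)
Your proof is correct and follows essentially the same approach as the paper, whose argument is implicit in the preceding algebraic identities \eqref{dopethrone} and \eqref{dopethrone2}: antisymmetrize $u^i(x)\,{\rm d}_su^j\,{\rm d}_su^j$ to extract $\boldsymbol{\Omega}^{ij}$, then use the sphere constraint in the form $\sum_j (u^j(x)-u^j(y))u^j(x)=\tfrac12|u(x)-u(y)|^2$ to identify the remainder with $T^i$, and combine with the equivalence of weak $s$-harmonicity and \eqref{ELeqsgrad}. Your additional remarks on the $L^1$-integrability of each term, which justify the rewriting in $\mathscr{D}'(\Omega)$, are accurate and are a nice complement to what the paper leaves implicit.
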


\begin{remark}
The presence of the extra term $T^i$ in \eqref{rewriteEL}, compared  the classical harmonic map equation (see \cite{Hel1}), is essentially due to the fact that the $s$-gradient ${\rm d}_su$ is not tangent to the target sphere. 
\end{remark}

The fundamental observation made in \cite[Lemma 3.1]{MazSchi} for $\Omega=\R$ and $s=1/2$ is a characterization of  the $1/2$-harmonic map equation in terms of nonlocal conservation laws satisfied by the $\boldsymbol{\Omega}^{ij}$'s (thus extending \cite{Shat} to the fractional setting). In the following proposition, we slightly generalize this result to a domain of arbitrary dimension and $s\in(0,1)$. The proof remains essentially the same, and we provide it for the reader's convenience. 

\begin{proposition}\label{propconservlaws}
Let $\Omega\subset\R^n$ be a bounded open set with Lipschitz boundary.  A map $u\in \widehat H^s(\Omega;\mathbb{S}^{d-1})$ is weakly $s$-harmonic in $\Omega$ if and only if
\begin{equation}\label{conservlaw}
{\rm div}_s\,\boldsymbol{\Omega}^{ij}=0\quad\text{in $H^{-s}(\Omega)$}
\end{equation}
for each $i,j\in\{1,\ldots,d\}$, where $\boldsymbol{\Omega}^{ij}$ is given by \eqref{defOmeij}. 
\end{proposition}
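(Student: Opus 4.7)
The plan is to derive the equivalence from a single algebraic identity linking the action of ${\rm div}_s\boldsymbol{\Omega}^{ij}$ to a skew combination of two fractional Laplace pairings. Concretely, for $u\in\widehat H^s(\Omega;\mathbb{S}^{d-1})$, $\varphi\in\mathscr{D}(\Omega)$, and indices $i,j$, I would show
\begin{equation}\label{plankeyid}
\int_{\R^n}\boldsymbol{\Omega}^{ij}\odot{\rm d}_s\varphi\,\de x=\big\langle(-\Delta)^s u^j,\varphi u^i\big\rangle_\Omega-\big\langle(-\Delta)^s u^i,\varphi u^j\big\rangle_\Omega\,.
\end{equation}
This is a direct unfolding of both sides via \eqref{deffraclap} and the definitions of $\boldsymbol{\Omega}^{ij}$, $\odot$, ${\rm d}_s\varphi$: the left-hand side integrand simplifies to a constant times $\bigl(u^j(x)u^i(y)-u^i(x)u^j(y)\bigr)\bigl(\varphi(x)-\varphi(y)\bigr)/|x-y|^{n+2s}$, and expanding the right-hand side (grouping the four resulting products pairwise and cancelling the diagonal terms $u^j(x)u^i(x)\varphi(x)$ and $u^j(y)u^i(y)\varphi(y)$) gives exactly the same kernel. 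Before using \eqref{plankeyid} I must verify that the pairings on the right are well-defined, which amounts to checking $\varphi u^i\in H^s_{00}(\Omega)$: since $|u^i|\le 1$ and $\mathrm{spt}\,\varphi\Subset\Omega$, the Leibniz-type split $|\varphi u^i(x)-\varphi u^i(y)|\le\|\varphi\|_\infty|u^i(x)-u^i(y)|+|u^i(y)||\varphi(x)-\varphi(y)|$ controls $[\varphi u^i]_{H^s(\R^n)}^2$ by a constant times $\mathcal E_s(u,\Omega)+[\varphi]_{H^s(\R^n)}^2$.

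For the forward direction, assume $u$ is weakly $s$-harmonic and fix $i,j$ and $\varphi\in\mathscr{D}(\Omega)$. The vector field $\psi:=\varphi(u^i e_j-u^j e_i)$ lies in $H^s_{00}(\Omega;\R^d)$ by the previous paragraph, is compactly supported in $\Omega$, and satisfies $\psi\cdot u=\varphi(u^iu^j-u^ju^i)=0$, i.e. it is tangent to $\mathbb S^{d-1}$ along $u$. Hence \eqref{ELtangtestfct} from Proposition~\ref{ELeqprop} gives $\langle(-\Delta)^s u,\psi\rangle_\Omega=0$, which is precisely the right-hand side of \eqref{plankeyid}. Therefore $\int\boldsymbol{\Omega}^{ij}\odot{\rm d}_s\varphi\,\de x=0$ on $\mathscr{D}(\Omega)$, and density of $\mathscr{D}(\Omega)$ in $H^s_{00}(\Omega)$ (which is \eqref{densitysmoothH1/200}, using the Lipschitz assumption on $\partial\Omega$) upgrades this to ${\rm div}_s\boldsymbol{\Omega}^{ij}=0$ in $H^{-s}(\Omega)$.

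For the converse, assume ${\rm div}_s\boldsymbol{\Omega}^{ki}=0$ for all $k,i$. Identity \eqref{plankeyid} then yields the symmetry
$$\langle(-\Delta)^s u^k,\phi u^i\rangle_\Omega=\langle(-\Delta)^s u^i,\phi u^k\rangle_\Omega\qquad\forall\phi\in\mathscr{D}(\Omega)\,.$$
The idea is now to feed this symmetry a test function of the form $\phi=\chi u^i$ with $\chi\in\mathscr{D}(\Omega)$ and sum over $i$. One checks that $\chi u^i\in H^s_{00}(\Omega)$, and that \eqref{plankeyid} extends by continuity (both sides are continuous bilinear forms in $(\varphi,u)$) to such $\phi$, using that $\chi u^iu^k\in H^s_{00}(\Omega)$ by the same Leibniz argument iterated. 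Summing the resulting identity $\langle(-\Delta)^s u^k,\chi(u^i)^2\rangle_\Omega=\langle(-\Delta)^s u^i,\chi u^iu^k\rangle_\Omega$ over $i$ and using $\sum_i(u^i)^2=1$ a.e.\ gives
$$\langle(-\Delta)^s u^k,\chi\rangle_\Omega=\sum_i\langle(-\Delta)^s u^i,\chi u^iu^k\rangle_\Omega\,.$$
By the algebraic identity \eqref{computlagrangmult} (applied with $\varphi=\chi e_k$), the right-hand side equals $\int|{\rm d}_su|^2 u^k\chi\,\de x$, which is \eqref{ELeqorig} componentwise and hence shows that $u$ is weakly $s$-harmonic.

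The main obstacle I anticipate is bookkeeping rather than depth: verifying the pointwise kernel identity behind \eqref{plankeyid} by correctly tracking the eight terms obtained after expansion, and carefully justifying that every test function used ($\varphi u^i$, $\chi u^i u^k$) genuinely belongs to $H^s_{00}(\Omega)$ so that one may invoke the conservation law on it and pass to limits by density.
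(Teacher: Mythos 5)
Your proof is correct and follows essentially the same route as the paper: both hinge on the algebraic identity $\int\boldsymbol{\Omega}^{ij}\odot{\rm d}_s\varphi\,\de x=\langle(-\Delta)^s u^j,\varphi u^i\rangle_\Omega-\langle(-\Delta)^s u^i,\varphi u^j\rangle_\Omega$, together with the Leibniz check that products like $\varphi u^i$ lie in $H^s_{00}(\Omega)$. The only cosmetic differences are that for the forward direction you invoke tangency via \eqref{ELtangtestfct} where the paper cancels the Lagrange multiplier terms explicitly, and for the converse you test against $\chi u^i$ and sum using $|u|^2=1$ and \eqref{computlagrangmult}, whereas the paper tests against $\psi^i u^j$ with $\psi=\varphi-(u\cdot\varphi)u$ tangent.
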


\begin{proof}
{\it Step 1.} Assume that $u$ is a  weakly $s$-harmonic map in $\Omega$, and let us compute ${\rm div}_s\,\boldsymbol{\Omega}^{ij}$. For $\varphi\in\mathscr{D}(\Omega)$, we have 
\begin{multline*}
\int_{\R^n} \boldsymbol{\Omega}^{ij}\odot {\rm d}_s\varphi\,\de x=\\
\iint_{(\R^n\times\R^n)\setminus(\Omega^c\times\Omega^c)}\big(u^i(x){\rm d}_su^j(x,y){\rm d}_s\varphi(x,y)-u^j(x){\rm d}_su^i(x,y){\rm d}_s\varphi(x,y)\big)\,\frac{\de x\de y}{|x-y|^n}\,. 
\end{multline*}
An elementary computation shows  
$$\begin{cases} 
u^i(x){\rm d}_s\varphi(x,y)={\rm d}_s(u^i\varphi)(x,y) - \varphi(y){\rm d}_su^i(x,y)\\
u^j(x){\rm d}_s\varphi(x,y)={\rm d}_s(u^j\varphi)(x,y) - \varphi(y){\rm d}_su^j(x,y)
\end{cases},$$
so that 
$$\int_{\R^n} \boldsymbol{\Omega}^{ij}\odot {\rm d}_s\varphi\,\de x=  \int_{\R^n}   {\rm d}_su^j \odot{\rm d}_s(u^i\varphi) \,\de x- \int_{\R^n}   {\rm d}_su^i \odot{\rm d}_s(u^j\varphi) \,\de x\,.$$
%\begin{align*}
%\int_{\Omega} \boldsymbol{\Omega}^{ij}\odot {\rm d}_s\varphi\,\de x=&
%\begin{multlined}[t]
%\bigg( \int_\Omega   {\rm d}_su^j \odot{\rm d}_s(u^i\varphi) \,\de x\\
%\qquad+\frac{\gamma_{n,s}}{2}\iint_{\Omega\times\Omega^c}\frac{\big(u^j(x)-u^j(y)\big)\big((u^i\varphi)(x)-(u^i\varphi)(y)\big)}{|x-y|^{n+2s}}\,\de x\de y\bigg)
%\end{multlined} \\
%&\begin{multlined}[t]
%-\bigg(\int_\Omega {\rm d}_su^i \odot  {\rm d}_s(u^j\varphi)\,\de x\\
%\qquad+\frac{\gamma_{n,s}}{2}\iint_{\Omega\times\Omega^c}\frac{\big(u^i(x)-u^i(y)\big)\big((u^j\varphi)(x)-(u^j\varphi)(y)\big)}{|x-y|^{n+2s}}\,\de x\de y\bigg)\,.
%\end{multlined}
%\end{align*}
Since $u^j\varphi$ and $u^i\varphi$ belong to $H^s_{00}(\Omega)$, we infer from Proposition \ref{fracintegbypart} and equation \eqref{ELeqsgrad} that 
\begin{align}
\label{eyehategod}\int_{\R^n} \boldsymbol{\Omega}^{ij}\odot {\rm d}_s\varphi\,\de x&=\big\langle (-\Delta)^s u^j, u^i\varphi\big\rangle_\Omega- \big\langle (-\Delta)^s u^i, u^j\varphi\big\rangle_\Omega\\
\nonumber &=\int_{\Omega}|{\rm d}_su|^2u^ju^i\varphi\,\de x-\int_{\Omega}|{\rm d}_su|^2u^iu^j\varphi\,\de x=0\,.
\end{align}
Therefore ${\rm div}_s\,\boldsymbol{\Omega}^{ij}=0$ in $\mathscr{D}^\prime(\Omega)$, and by approximation  also in $H^{-s}(\Omega)$ (see \eqref{densitysmoothH1/200}). 
\vskip5pt

\noindent{\it Step 2.} We assume that \eqref{conservlaw} holds, and we aim to prove that \eqref{ELeqsgrad} holds. We fix $\varphi\in\mathscr{D}(\Omega;\R^d)$, and we set $\psi:=\varphi-(u\cdot\varphi)u\in H^s_{00}(\Omega;\R^d)$, which satisfies $\psi\cdot u=0$ a.e. in $\R^n$. As in the proof of Proposition \ref{ELeqprop}, proving \eqref{ELeqsgrad} reduces to show that  
$$\big\langle(-\Delta)^su,\psi\big\rangle_\Omega=0\,. $$
Using $|u|^2=1$, we first observe that 
$$\big\langle(-\Delta)^su,\psi\big\rangle_\Omega=\sum_{i=1}^d\big\langle(-\Delta)^su^i,\psi^i\big\rangle_\Omega= \sum_{i,j=1}^d\big\langle(-\Delta)^su^i,(\psi^iu^j)u^j\big\rangle_\Omega\,.$$
Since $\psi^iu^j\in H^s_{00}(\Omega)$, we obtain as in \eqref{eyehategod}, 
\begin{multline*}
\big\langle(-\Delta)^su^i,(\psi^iu^j)u^j\big\rangle_\Omega=\big\langle(-\Delta)^su^j,(\psi^iu^j)u^i\big\rangle_\Omega-\int_{\R^n} \boldsymbol{\Omega}^{ij}\odot {\rm d}_s(\psi^iu^j)\,\de x\\
=\big\langle(-\Delta)^su^j,(\psi^iu^j)u^i\big\rangle_\Omega
\end{multline*}
for every $i,j\in\{1,\ldots,d\}$, thanks to \eqref{conservlaw}. Therefore, 
$$\big\langle(-\Delta)^su,\psi\big\rangle_\Omega= \sum_{i,j=1}^d\big\langle(-\Delta)^su^j,(\psi^iu^j)u^i\big\rangle_\Omega=\sum_{j=1}^d\big\langle(-\Delta)^su^j,(\psi\cdot u)u^j\big\rangle_\Omega=0\,,$$
and the proof is complete. 
\end{proof}

%%%%%%%%%%%%%%%%%%%%%%%%%%%%%%%%%%%%%%%%%%%%%%%%%%%%%%%

\subsection{Weighted harmonic maps with free boundary}

%%%%%%%%%%%%%%%%%%%%%%%%%%%%%%%%%%%%%%%%%%%%%%%%%%%%%%%

\begin{definition}
Let $G\subset\R^{n+1}_+$ be a bounded admissible open set, and $v\in H^1(G;\R^d,|z|^a\de{\bf x})$ satisfying $v({\bf x})\in\mathbb{S}^{d-1}$ for a.e. ${\bf x}\in\partial^0G$. The map $v$ is said to be {\sl a weighted weakly harmonic map in $G$ with respect to the partially free boundary condition}  $v(\partial^0G)\subset\mathbb{S}^{d-1}$  if 
\begin{equation}\label{vareqext}
\int_Gz^a\nabla v\cdot\nabla\Phi\,\de{\bf x}=0
\end{equation}
for every $\Phi\in H^1(G;\R^d,|z|^a\de{\bf x})$ such that $\Phi=0$ on $\partial^+G$ and $\Phi({\bf x})\in{\rm Tan}(v({\bf x}),\mathbb{S}^{d-1})$ for a.e. ${\bf x}\in\partial^0G$. In short, we shall say that $v$ is a weighted weakly harmonic map with free boundary in~$G$.  
\end{definition}

\begin{remark}
If $v\in H^1(G;\R^d,|z|^a\de{\bf x})$ is a weighted weakly harmonic map with free boundary in~$G$, then \eqref{vareqext} means that $v$ satisfies in the weak sense
\begin{equation}\label{EqHarmmapfreebdry} 
\begin{cases}
{\rm div}(z^a\nabla v)=0 &\text{in $G$}\,,\\[5pt]
\displaystyle z^a\frac{\partial v}{\partial \nu} \perp {\rm Tan}(v,\mathbb{S}^{d-1}) & \text{on $\partial^0G$}\,.
\end{cases}
\end{equation}
In particular, $v$ is smooth in $G$ by standard elliptic regularity. 
\end{remark}

In view of Remark \ref{remsLaplorthog}, equation \eqref{EqHarmmapfreebdry}  above, and Lemma \ref{repnormderfraclap}, it is clear that weighted weakly harmonic maps with free boundary and weakly $s$-harmonic maps are intimately related. This relation is made precise in the following proposition (see \cite[Proposition 4.6]{MS}).

\begin{proposition}\label{equivsharmfreebdry}
Let $\Omega\subset\R^n$ be a bounded open set with Lipschitz boundary. If a map $u\in \widehat H^s(\Omega;\mathbb{S}^{d-1})$ is a weakly $s$-harmonic map in $\Omega$, then its 
extension $u^\e$ given by~\eqref{poisson} is a weighted weakly harmonic map with free boundary in every bounded admissible open set $G\subset\R^{n+1}_+$ satisfying $\overline{\partial^0G}\subset\Omega$. 
\end{proposition}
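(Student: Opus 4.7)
The interior equation $\operatorname{div}(z^a\nabla u^\e)=0$ in $G$ is inherited for free from \eqref{eqextharm}, which holds on the entire half-space. The substance of the proposition is therefore the partially free boundary condition, i.e. showing that
\begin{equation*}
\int_G z^{a}\nabla u^\e\cdot\nabla\Phi\,\de\mathbf{x}=0
\end{equation*}
for every $\Phi\in H^1(G;\R^d,|z|^a\de\mathbf{x})$ with $\Phi=0$ on $\partial^+G$ and $\Phi(\mathbf{x})\in\rmTan(u^\e(\mathbf{x}),\bbS^{d-1})$ for a.e. $\mathbf{x}\in\partial^0G$.

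Given such a test field $\Phi$, my first move is to extend it by zero outside $G$. Because $\Phi$ has null trace on $\partial^+G$ and $G$ is admissible, the extension $\widetilde\Phi$ lies in $H^1(\R^{n+1}_+;\R^d,|z|^a\de\mathbf{x})$ with compact support contained in $\R^{n+1}_+\cup\overline{\partial^0G}$, and its trace $\varphi:=\widetilde\Phi_{|\R^n}$ is supported in $\overline{\partial^0G}\subset\Omega$; by Lemma~\ref{HsregtraceH1weight}, $\varphi$ belongs to $H^s(\R^n;\R^d)$, and since it vanishes a.e. outside the open set $\Omega$ which is Lipschitz (and $\overline{\partial^0G}$ is compactly contained in $\Omega$), $\varphi\in H^s_{00}(\Omega;\R^d)$. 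Next I would approximate $\widetilde\Phi$ by a sequence $\Phi_k\in\mathscr{D}(\R^{n+1}_+\cup\Omega;\R^d)$ converging to $\widetilde\Phi$ in $H^1(\R^{n+1}_+;\R^d,|z|^a\de\mathbf{x})$ and whose traces $\varphi_k\in\mathscr{D}(\Omega;\R^d)$ converge to $\varphi$ in $H^s(\R^n;\R^d)$. This relies on the fact that $|z|^a$ is an $A_2$ Muckenhoupt weight, so smooth functions are dense in the weighted space, combined with a localization near $\overline{\partial^0G}$ to keep the supports inside $\R^{n+1}_+\cup\Omega$.

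With such $\Phi_k$'s in hand, the definition \eqref{defNeumOp} of the Dirichlet-to-Neumann operator applied to the smooth test $\varphi_k$ together with Lemma~\ref{repnormderfraclap} yields
\begin{equation*}
\boldsymbol{\delta}_s\int_{\R^{n+1}_+}z^{a}\nabla u^\e\cdot\nabla\Phi_k\,\de\mathbf{x}=\boldsymbol{\delta}_s\,\langle\mathbf{\Lambda}^{(2s)}u,\varphi_k\rangle_\Omega=\langle(-\Delta)^s u,\varphi_k\rangle_\Omega\,.
\end{equation*}
Passing to the limit $k\to\infty$ (the left-hand side converges by the weighted $H^1$-convergence of $\Phi_k$ and the local weighted $H^1$-regularity of $u^\e$ furnished by Corollary~\ref{contextHsH1}; the right-hand side converges because $(-\Delta)^s u$ defines a continuous linear form on $H^s_{00}(\Omega)$), and using that $\widetilde\Phi\equiv 0$ outside $G$, one obtains
\begin{equation*}
\boldsymbol{\delta}_s\int_G z^{a}\nabla u^\e\cdot\nabla\Phi\,\de\mathbf{x}=\langle(-\Delta)^s u,\varphi\rangle_\Omega\,.
\end{equation*}
Finally, since $u^\e=u$ a.e. on $\partial^0G$ and $\Phi$ is tangent to $\bbS^{d-1}$ at $u^\e$ there, the trace $\varphi$ satisfies $\varphi(x)\cdot u(x)=0$ for a.e. $x\in\partial^0G$ and vanishes elsewhere in $\Omega$. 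Thus $\varphi\in H^s_{00}(\Omega;\R^d)$ is a tangential test function along $u$, and Proposition~\ref{ELeqprop} applied to the weakly $s$-harmonic map $u$ gives $\langle(-\Delta)^s u,\varphi\rangle_\Omega=0$, concluding the proof.

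The principal technical point is the approximation in the second paragraph: one must simultaneously approximate $\widetilde\Phi$ in the weighted $H^1$-norm and have the traces converge in $H^s$, while keeping the supports inside $\R^{n+1}_+\cup\Omega$, which is delicate near the corner where $\partial^+G$ meets $\overline{\partial^0G}$. The $A_2$ Muckenhoupt property of $|z|^a$ and the Lipschitz regularity of $\partial^0G$ are precisely what make this standard but nontrivial density argument available.
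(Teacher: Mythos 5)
Your proof is correct and follows essentially the same route as the paper: extend $\Phi$ by zero to $\R^{n+1}_+$, identify the boundary pairing with $\langle(-\Delta)^s u,\Phi_{|\R^n}\rangle_\Omega$ via Lemma~\ref{repnormderfraclap}, observe that the trace is a tangential $H^s_{00}(\Omega)$ test function supported in $\Omega$, and invoke Proposition~\ref{ELeqprop}. The only difference is one of exposition: the paper simply asserts the identity $\int_{\R^{n+1}_+}z^a\nabla u^\e\cdot\nabla\Phi\,\de\mathbf{x}=\frac{1}{\boldsymbol{\delta}_s}\langle(-\Delta)^su,\Phi_{|\R^n}\rangle_\Omega$ (relying implicitly on the continuity in \eqref{defNeumOp} and its independence of the chosen extension), whereas you explicitly spell out the density argument that justifies passing from $\mathscr{D}(\Omega)$ test traces to $H^s_{00}(\Omega)$ ones. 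This elaboration is accurate and useful, though perhaps slightly pessimistic: since Lemma~\ref{HsregtraceH1weight} shows that weighted $H^1$-convergence automatically controls $H^s$-convergence of traces, the simultaneous approximation you flag as delicate is in fact largely automatic once the supports are arranged correctly.
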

 
 \begin{proof}
 Let us  assume that $u$ is a weakly $s$-harmonic map in $\Omega$, and let $G\subset\R^{n+1}_+$ be bounded admissible open set such that $\overline{\partial^0G}\subset\Omega$. Let $\Phi\in H^1(G;\R^d,|z|^a\de{\bf x})$ such that $\Phi=0$ on $\partial^+G$, and $\Phi\cdot u=0$ on $\partial^0G$. We extend $\Phi$ by $0$ to the whole half space $\R^{n+1}_+$, and the resulting map, still denoted by $\Phi$, belongs to 
$H^1(\R^{n+1}_+;\R^d,|z|^a\de{\bf x})$. In view of \eqref{normexth1/2}, $\Phi_{|\R^n}\in  H^s_{00}(\Omega;\R^d)$, and ${\rm spt}(\Phi_{|\R^n})\subset\Omega$.  Since $\Phi_{|\R^n}\cdot u=0$, we conclude from  
Lemma \ref{repnormderfraclap} and Proposition \ref{ELeqprop} that 
$$\int_Gz^a\nabla u^\e\cdot\nabla\Phi\,\de{\bf x}=\int_{\R^{n+1}_+}z^a\nabla u^\e\cdot\nabla\Phi\,\de{\bf x}=\frac{1}{\boldsymbol{\delta}_s}\big\langle(-\Delta)^s u,\Phi_{|\R^n} \big\rangle_\Omega=0\,. $$
Hence, $u^\e$ is indeed a weighted weakly harmonic map with free boundary in $G$. 
\end{proof}

 %%%%%%%%%%%%%%%%%%%%%%%%%%%%%%%%%%%%%%%%%%%%%%%%%%%%%%%
%%%%%%%%%%%%%%%%%%%%%%%%%%%%%%%%%%%%%%%%%%%%%%%%%%%%%%%
   								       						%%%%%%%%%%%%%%%%%%%
\section{Small energy H\"older regularity}\label{EpsRegthm} 					%%%%%%%%%
								 						%%%%%%%%%%%%%%%%%%%
%%%%%%%%%%%%%%%%%%%%%%%%%%%%%%%%%%%%%%%%%%%%%%%%%%%%%%%
%%%%%%%%%%%%%%%%%%%%%%%%%%%%%%%%%%%%%%%%%%%%%%%%%%%%%%%

In this section, we present the main epsilon-regularity theorem asserting that under a certain smallness assumption of the energy in a ball,  a weakly $s$-harmonic map satisfying the monotonicity formula is 
H\"older continuous in a smaller ball. H\"older regularity will be improved to Lipschitz regularity in the next section with an explicit control on the Lipschitz norm in terms of the energy.

\begin{theorem}\label{thmepsregholder}
There exist constants $\boldsymbol{\eps}_0=\boldsymbol{\eps}_0(n,s)>0$ and  $\beta_0=\beta_0(n,s)\in(0,1)$ such that the following holds. Let $u\in \widehat H^s(D_R;\mathbb{S}^{d-1})$ be a weakly $s$-harmonic map in $D_R$ such that the function $r\in(0,R-|{\bf x}|)\mapsto \boldsymbol{\Theta}_s(u^\e,{\bf x},r)$ is non decreasing for every ${\bf x}\in\partial^0B_R^+$. If  
\begin{equation}\label{condeps0}
%\frac{1}{R^{n-2s}}\mathcal{E}_s(u,D_R)
\boldsymbol{\theta}_s(u,0,R)\leq\boldsymbol{\eps}_0\,, 
\end{equation}
then $u\in C^{0,\beta_0}(D_{R/2})$ and 
\begin{equation}\label{controlholdepsnonloc}
R^{2\beta_0}[u]^2_{C^{0,\beta_0}(D_{R/2})}\leq C \boldsymbol{\theta}_s(u,0,R)\,,
\end{equation}
for a constant $C=C(n,s)$. 
\end{theorem}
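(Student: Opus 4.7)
The plan is to adapt the strategy of L.C.~Evans \cite{Evans} for stationary harmonic maps to the fractional setting, relying on four key ingredients: the nonlocal conservation law $\mathrm{div}_s \boldsymbol{\Omega}^{ij}=0$ from Proposition \ref{propconservlaws}; the fractional div-curl estimate in Theorem \ref{divcurlthm}; the BMO bound on $u$ provided, via the monotonicity hypothesis, by Corollary \ref{coroBMO}; and a harmonic-analytic control of the extra term $T^i$ appearing in \eqref{rewriteEL} based on embeddings for Triebel--Lizorkin--Morrey spaces. After scaling to $R=1$, the target is a geometric decay
\begin{equation*}
\boldsymbol{\theta}_s(u,x_0,\tau r)\leq \tfrac{1}{2}\,\boldsymbol{\theta}_s(u,x_0,r)
\end{equation*}
valid uniformly in $x_0\in D_{1/2}$ for all sufficiently small $r$, with $\tau=\tau(n,s)\in(0,1/2)$. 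Iterating this contraction yields $\boldsymbol{\theta}_s(u,x_0,r)\leq C\,r^{2\beta_0}\,\boldsymbol{\theta}_s(u,0,1)$ for a suitable $\beta_0\in(0,1)$, and H\"older continuity with estimate \eqref{controlholdepsnonloc} then follows from Campanato's characterization, relating $\boldsymbol{\theta}_s$ to the $L^2$-mean oscillation of $u$ via Lemma \ref{hatH1/2toH1} and the Poincar\'e inequality of Lemma \ref{poincarebdry}.

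To prove the decay, I would test the reformulated equation
\begin{equation*}
(-\Delta)^s u^i = \sum_{j=1}^{d}\boldsymbol{\Omega}^{ij}\odot \mathrm{d}_s u^j + T^i
\end{equation*}
from Lemma \ref{rewritingsharmeq} against a function of the form $\varphi^i := \zeta\,(u^i-c^i)$, where $\zeta \in \mathscr{D}(D_r(x_0))$ is a cutoff equal to $1$ on $D_{r/2}(x_0)$ with $|\nabla\zeta|\leq C/r$, and $c=(u)_{x_0,r}$. By Proposition \ref{fracintegbypart}, the left-hand side produces (modulo commutator-type lower order terms coming from $\zeta$) the Dirichlet energy $\mathcal{E}_s(u,D_{r/2}(x_0))$. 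For the first term on the right, the $s$-divergence-free vector field $\boldsymbol{\Omega}^{ij}$ allows us to invoke Theorem \ref{divcurlthm}, producing an upper bound of the form
\begin{equation*}
C\,\|\boldsymbol{\Omega}^{ij}\|_{L^2_{\rm od}(D_r(x_0))}\sqrt{\mathcal{E}_s(u^j,D_r(x_0))}\,\Big([\varphi^i]_{\mathrm{BMO}(\R^n)}+r^{-n}\|\varphi^i\|_{L^1(\R^n)}\Big).
\end{equation*}
Since $|u|=1$ forces $|\boldsymbol{\Omega}^{ij}|\leq 2|\mathrm{d}_s u|$ pointwise, and since the BMO estimate of Corollary \ref{coroBMO}, combined with the monotonicity of $r\mapsto\boldsymbol{\Theta}_s(u^\e,{\bf x},r)$ at every boundary point ${\bf x}$, gives $[\zeta(u-c)]_{\mathrm{BMO}(\R^n)}^2 \leq C\,\boldsymbol{\theta}_s(u,x_0,Cr)$, which in turn is at most $C\,\boldsymbol{\eps}_0$ by monotonicity and the hypothesis \eqref{condeps0}, this contribution is bounded by $C\sqrt{\boldsymbol{\eps}_0}\,\mathcal{E}_s(u,D_r(x_0))$ and can be absorbed on the left once $\boldsymbol{\eps}_0$ is small enough, after a standard hole-filling step on concentric balls.

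The main obstacle is the error term $T^i$, which has no counterpart in the classical Evans setup. Its pointwise size is of order $\int_{\R^n}|u(x)-u(y)|^3|x-y|^{-n-2s}\de y$, and the naive bound $|T^i|\lesssim |\mathrm{d}_s u|^2$ only places $T^i$ in $L^1$, insufficient for the absorption. As indicated in the introduction, the plan is to reinterpret $\int T^i\,\varphi^i\,\de x$ as a genuinely trilinear expression in the increments of $u$ paired against $\varphi^i$, and to estimate it by a product of two Triebel--Lizorkin--Morrey-type norms of $\mathrm{d}_s u$ and a H\"older/BMO-type norm of $\varphi^i$, using the embedding theorem of \cite{Ho} together with the characterizations of such spaces in \cite{SaYY,YangYuan}. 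The Morrey-type norms of $\mathrm{d}_s u$ are controlled by $\sup_{y,\rho}\boldsymbol{\theta}_s(u,y,\rho)^{1/2}$ taken over a suitable range of scales, which by monotonicity again stays below $C\sqrt{\boldsymbol{\eps}_0}$, so this term also contributes a small multiple of $\mathcal{E}_s(u,D_r(x_0))$ to be absorbed. Once both right-hand-side terms are controlled, the resulting reverse-H\"older/Caccioppoli inequality yields the required geometric decay of $\boldsymbol{\theta}_s$, from which H\"older continuity on $D_{R/2}$ and the quantitative estimate \eqref{controlholdepsnonloc} follow by Campanato's theorem.
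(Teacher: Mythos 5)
Your overall architecture — establish a one--step geometric decay of $\boldsymbol{\theta}_s(u,x_0,\cdot)$ at every interior point, iterate, and conclude by Campanato's criterion — matches the paper's, and you correctly identify the four ingredients (conservation law, fractional div--curl, BMO bound via monotonicity, Triebel--Lizorkin--Morrey embedding). However, the way you propose to prove the decay is genuinely different from the paper's, and there is a gap in the treatment of the error term $T^i$ that I do not see how to close along the route you sketch.

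The paper does not prove the decay by directly testing the equation and absorbing; it proves it by a \emph{compactness/contradiction} argument (its Proposition~\ref{energimprovprop} and Lemma~\ref{compactexpandmap}). One assumes $\eps_k^2:=\mathcal{E}_s(u_k,D_1)\to 0$ while the decay fails, introduces the blown-up maps $w_k:=(u_k-(u_k)_{0,1})/\eps_k$, and shows $w_k\to w$ strongly in $H^s(D_{\boldsymbol{\sigma}})$ where $w$ is $s$-harmonic, so that the Lipschitz estimate for $s$-harmonic functions gives a definite energy drop on small balls that transfers back to $w_k$. In the expanded variables the term $T^i$ carries an explicit factor $\eps_k^3$: one writes $T^i=\eps_k^3\widetilde T_k$ and the relevant contribution is $\eps_k^2\int\widetilde T_k\cdot\varphi_k$. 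The TLM embedding is then used only to show that $[w_k]_{W^{s/3,6}}$ is \emph{uniformly bounded} (not small), so that $\eps_k^2\bigl|\int\widetilde T_k\cdot\varphi_k\bigr|=O(\eps_k^2)=o(\eps_k)$, negligible against the $O(\eps_k)$ div--curl contribution. This is a qualitatively different use of the embedding than what you propose.

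The gap in your direct absorption is a mismatch of homogeneities. After symmetrizing and applying H\"older one gets
\begin{equation*}
\Bigl|\int T^i\,\varphi^i\Bigr|\;\lesssim\;[u]^3_{W^{s/3,6}(D_r(x_0))}\,[\varphi]_{H^s}\,.
\end{equation*}
The TLM embedding (Corollary~\ref{coroinjQspaces}) together with monotonicity gives $[u]^2_{W^{s/3,6}(D_r(x_0))}\lesssim r^{(n-2s)/3}\boldsymbol{\eps}_0$, hence $[u]^3_{W^{s/3,6}(D_r(x_0))}\lesssim r^{(n-2s)/2}\boldsymbol{\eps}_0^{3/2}$, while $[\varphi]_{H^s}\lesssim\sqrt{\mathcal{E}_s(u,D_r(x_0))}=r^{(n-2s)/2}\sqrt{\boldsymbol{\theta}_s(u,x_0,r)}$. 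The $T^i$ contribution is therefore of order
\begin{equation*}
r^{n-2s}\,\boldsymbol{\eps}_0^{3/2}\,\sqrt{\boldsymbol{\theta}_s(u,x_0,r)}\,,
\end{equation*}
whereas the quantity you wish to absorb it into is $\delta\,\mathcal{E}_s(u,D_r(x_0))=\delta\, r^{n-2s}\boldsymbol{\theta}_s(u,x_0,r)$. Absorption thus requires $\boldsymbol{\eps}_0^{3/2}\leq\delta\sqrt{\boldsymbol{\theta}_s(u,x_0,r)}$, i.e.\ a \emph{lower} bound on $\boldsymbol{\theta}_s$; but $\boldsymbol{\theta}_s(u,x_0,r)\to0$ is exactly what we are trying to prove, so no such bound is available. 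The root cause is that the div--curl term is quadratic in the increments of $u$ and genuinely absorbs, but $T^i$ is cubic, so it produces $\boldsymbol{\eps}_0^{3/2}\sqrt{\mathcal{E}_s}$ instead of a small multiple of $\mathcal{E}_s$; the statement that it "contributes a small multiple of $\mathcal{E}_s(u,D_r(x_0))$" is the step that fails. (A secondary, more technical issue is that in the nonlocal setting $\langle(-\Delta)^s u^i,\zeta(u^i-c^i)\rangle$ is not directly comparable to $\mathcal{E}_s(u,D_{r/2})$ because of the tail terms, and the "commutator" errors need some care; the paper avoids this too by working with the blown-up sequence and absolute bounds rather than a Caccioppoli-type inequality.) You would also want to note separately the trivial subcritical case $n=1$, $s>1/2$, handled in the paper by the embedding $H^s\hookrightarrow C^{0,s-1/2}$.
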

 
For what follows, it is useful to translate the epsilon-regularity theorem above only in terms of the extension. This is the purpose of the following corollary.
 
\begin{corollary}\label{coroepsreghold}
There exist three constants $\boldsymbol{\eps}_1=\boldsymbol{\eps}_1(n,s)>0$, $\boldsymbol{\kappa}_1=\boldsymbol{\kappa}_1(n,s)\in(0,1)$, $\beta_1=\beta_1(n,s)\in(0,1)$  such that the following holds. Let $u\in \widehat H^s(D_{2R};\mathbb{S}^{d-1})$ be a weakly $s$-harmonic map in $D_{2R}$ such that the function $r\in(0,2R-|{\bf x}|)\mapsto \boldsymbol{\Theta}_s(u^\e,{\bf x},r)$ is non decreasing for every ${\bf x}\in\partial^0B_{2R}^+$. If  
\begin{equation}\label{condeps0ext}
%\frac{1}{R^{n-2s}}{\bf E}_s(u^\e,B_R^+)
\boldsymbol{\Theta}_s(u^\e,0,R)
\leq \boldsymbol{\eps}_1\,, 
\end{equation}
then $u^\e\in C^{0,\beta_1}(B^+_{\boldsymbol{\kappa}_1R})$ and  
$$R^{2\beta_1} [u^\e]^2_{C^{0,\beta_1}(B^+_{\boldsymbol{\kappa}_1R})}\leq C\,,$$
for a constant $C=C(n,s)$.
\end{corollary}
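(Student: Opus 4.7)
The plan is to chain together three ingredients already in place: the comparison between the weighted extension density and the nonlocal density (Lemma \ref{compardensities}), the H\"older $\varepsilon$-regularity for $u$ itself (Theorem \ref{thmepsregholder}), and the transfer of H\"older regularity from the trace to the Poisson extension (Lemma \ref{HoldTransf}).

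First, I would exploit that $u$ is $\mathbb{S}^{d-1}$-valued, so that $\|u\|_{L^\infty(\R^n)} = 1$. Applying Lemma \ref{compardensities} with $M = 1$ and target tolerance $\varepsilon = \boldsymbol{\eps}_0$ (the smallness threshold supplied by Theorem \ref{thmepsregholder}) yields constants $\delta_0 > 0$ and $\alpha_0 \in (0, 1/4]$ depending only on $n$ and $s$. Setting $\boldsymbol{\eps}_1 := \delta_0$, the hypothesis $\boldsymbol{\Theta}_s(u^\e, 0, R) \leq \boldsymbol{\eps}_1$ then forces $\boldsymbol{\theta}_s(u, 0, \alpha_0 R) \leq \boldsymbol{\eps}_0$.

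Next, I would apply Theorem \ref{thmepsregholder} on the disc $D_{\alpha_0 R}$. The required monotonicity of $r \mapsto \boldsymbol{\Theta}_s(u^\e, {\bf x}, r)$ for ${\bf x} \in \partial^0 B^+_{\alpha_0 R}$ is inherited from the hypothesis of the corollary since $\partial^0 B^+_{\alpha_0 R} \subset \partial^0 B^+_{2R}$ and $(0, \alpha_0 R - |{\bf x}|) \subset (0, 2R - |{\bf x}|)$. This produces $u \in C^{0,\beta_0}(D_{\alpha_0 R/2})$ together with the scale-invariant bound $(\alpha_0 R)^{2\beta_0}[u]^2_{C^{0,\beta_0}(D_{\alpha_0 R/2})} \leq C\, \boldsymbol{\eps}_0$, in particular $R^{\beta_0}[u]_{C^{0,\beta_0}(D_{\alpha_0 R/2})} \leq C$.

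Finally, I would invoke Lemma \ref{HoldTransf}, which requires an exponent in $(0, \min(1, 2s))$; a convenient choice is $\beta_1 := \min(\beta_0, s)$, which always satisfies this constraint. Since $\beta_1 \leq \beta_0$, elementary H\"older interpolation on the bounded set $D_{\alpha_0 R/2}$ transfers the above bound to exponent $\beta_1$ at the cost of an explicit power of the diameter, still giving a scale-invariant estimate of the form $R^{\beta_1}[u]_{C^{0,\beta_1}(D_{\alpha_0 R/2})} \leq C$. Lemma \ref{HoldTransf} applied with its radius equal to $\alpha_0 R/2$, combined with $\|u\|_{L^\infty(\R^n)} = 1$, then yields the desired H\"older estimate for $u^\e$ on $B^+_{\alpha_0 R/8}$, and I would set $\boldsymbol{\kappa}_1 := \alpha_0/8$. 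I do not anticipate any real obstacle here: the only subtlety is the exponent adjustment when $s < 1/2$, which is pure bookkeeping since $\beta_0 > 0$, all the substantive analytic work having been carried out in Theorem \ref{thmepsregholder}.
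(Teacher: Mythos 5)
Your proposal is correct and follows exactly the same route as the paper's proof: apply Lemma~\ref{compardensities} with $M=1$ to convert the smallness of $\boldsymbol{\Theta}_s(u^\e,0,R)$ into smallness of $\boldsymbol{\theta}_s(u,0,\alpha R)$, invoke Theorem~\ref{thmepsregholder} to get the H\"older bound on $u$, then pass to $\beta_1:=\min(\beta_0,s)$ by the elementary seminorm comparison and transfer to $u^\e$ via Lemma~\ref{HoldTransf}, landing on $\boldsymbol{\kappa}_1=\alpha/8$. The only cosmetic difference is that you phrase the exponent adjustment as H\"older interpolation while the paper folds it directly into the chain of inequalities combining \eqref{Holdtransfesti} and \eqref{controlholdepsnonloc}.
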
 
 
\begin{proof}
We consider the constant $\boldsymbol{\eps}_0=\boldsymbol{\eps}_0(n,s)>0$ given by Theorem \ref{thmepsregholder}. Since $|u|\equiv 1$, we obtain from Lemma \ref{compardensities} the existence of  $\boldsymbol{\eps}_1= \boldsymbol{\eps}_1(n,s)>0$ and $\alpha=\alpha(n,s)\in(0,1/4]$ such that the condition $\boldsymbol{\Theta}_s(u^\e,0,R)
\leq \boldsymbol{\eps}_1$ implies $\boldsymbol{\theta}_s(u,0,\alpha R)\leq\boldsymbol{\eps}_0$. In turn, Theorem \ref{thmepsregholder} tells us that $u\in C^{0,\beta_0}(D_{\alpha R/2})$. Then Lemma \ref{HoldTransf} implies that $u^\e\in C^{0,\beta_1}(B^+_{\boldsymbol{\kappa}_1 R})$ with $\beta_1:=\min(\beta_0,s)$ and $\boldsymbol{\kappa}_1:=\alpha/8$. Moreover, combining \eqref{Holdtransfesti} and \eqref{controlholdepsnonloc} leads to 
\begin{multline*}
R^{2\beta_1} [u^\e]^2_{C^{0,\beta_1}(B^+_{\boldsymbol{\kappa}_1R})}\leq C\big(R^{2\beta_1} [u]^2_{C^{0,\beta_1}(D_{\alpha R/2})}+1 \big) 
 \leq C\big(R^{2\beta_0} [u]^2_{C^{0,\beta_0}(D_{\alpha R/2})}+1 \big)\\
\leq C\big( \boldsymbol{\theta}_s(u,0,\alpha R)+1\big)\leq C\,,
\end{multline*}
and the proof is complete. 
\end{proof} 
 
\begin{remark}\label{remarlepsholdregsubcritic}
In the case $n\leq 2s$, the function $r\in(0,R-|{\bf x}|)\mapsto \boldsymbol{\Theta}_s(u^\e,{\bf x},r)$ is nondecreasing for every $u\in \widehat H^s(D_{R};\R^d)$. In other words, in the case $n\leq 2s$, Theorem \ref{thmepsregholder} and Corollary \ref{coroepsreghold} apply to arbitrary weakly $s$-harmonic maps. Moreover,  in the case $n=1$ and $s\in(1/2,1)$ (i.e., $n<2s$), the conclusions of Theorem \ref{thmepsregholder} and Corollary \ref{coroepsreghold} apply even without the smallness assumptions \eqref{condeps0} or \eqref{condeps0ext}, since it follows from the classical imbedding $H^s(\R)\hookrightarrow C^{0,s-1/2}(\R)$. For our purposes, it is convenient to state it suitably. This is the object of the proposition below, whose proof is postponed to the end of Section \ref{subsectprfthmhold}. 
\end{remark} 
 
\begin{proposition}\label{propHoldsubcritic}
Assume that $n=1$ and $s\in(1/2,1)$. If $u\in \widehat H^s(D_R;\R^d)$, then $u\in C^{0,s-1/2}(D_{R/2})$ and 
%$u^\e\in C^{0,s-1/2}(B_{R/8}^+)$. In addition,  
\begin{equation}\label{holdestisubcriticcase1}
R^{2s-1}[u]^2_{C^{0,s-1/2}(D_{R/2})}\leq C\boldsymbol{\theta}_s(u,0,R)\,, 
\end{equation}
%and 
%\begin{equation}\label{holdestisubcriticcase2}
%R^{2s-1}[u^\e]^2_{C^{0,s-1/2}(B^+_{R/8})}\leq C \big(\boldsymbol{\theta}_s(u,0,R)+\|u\|^2_{L^\infty(\R^n)} \big)\,,
%\end{equation}
for a constant $C=C(s)$. 
\end{proposition}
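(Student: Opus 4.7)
The plan is to deduce the estimate from the classical Sobolev–Slobodeckij embedding $H^s(D_1) \hookrightarrow C^{0,s-1/2}(\overline{D_{1/2}})$, which in dimension $n=1$ is valid precisely because $2s>1$, combined with a rescaling argument. No use is made of any equation, monotonicity property, or compensation phenomenon: the statement is a purely functional-analytic fact about the energy space $\widehat{H}^s(D_R)$ in the subcritical regime $n<2s$.

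The first preliminary step is to control the intrinsic $H^s$ seminorm on $D_R$ by $\mathcal{E}_s(u,D_R)$. Retaining only the contribution from $D_R\times D_R$ in the definition of $\mathcal{E}_s(u,D_R)$ yields
$$[u]^2_{H^s(D_R)} \;=\; \frac{\gamma_{1,s}}{2}\iint_{D_R\times D_R}\frac{|u(x)-u(y)|^2}{|x-y|^{1+2s}}\,\de x\,\de y \;\leq\; 2\,\mathcal{E}_s(u,D_R)\,,$$
so that $u_{|D_R}$ belongs to $H^s(D_R;\R^d)$ and the fractional Morrey embedding is applicable to it.

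The core step is the rescaling. Setting $v(x):=u(Rx)$, elementary changes of variables give the two scaling identities
$$\mathcal{E}_s(v,D_1) \;=\; R^{2s-1}\mathcal{E}_s(u,D_R) \;=\; \boldsymbol{\theta}_s(u,0,R)\,,\qquad [v]_{C^{0,s-1/2}(D_{1/2})} \;=\; R^{s-1/2}\,[u]_{C^{0,s-1/2}(D_{R/2})}\,,$$
where the first one uses $n-2s = 1-2s$ in the definition of $\boldsymbol{\theta}_s$. Applying the embedding to $v\in H^s(D_1;\R^d)$ together with the preliminary bound above gives
$$[v]^2_{C^{0,s-1/2}(D_{1/2})} \;\leq\; C\,[v]^2_{H^s(D_1)} \;\leq\; 2C\,\mathcal{E}_s(v,D_1)\,,$$
which, rewritten in terms of $u$ via the two identities, is exactly~\eqref{holdestisubcriticcase1}.

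I do not expect any genuine obstacle: the substantive content is that of the textbook fractional Morrey embedding in the supercritical range $sp>n$, and the only care required is the bookkeeping of scaling exponents, which is automatic once one observes that $\boldsymbol{\theta}_s$ is designed precisely to be invariant under the rescaling $v(x)=u(Rx)$. A minor point worth mentioning is that the embedding is applied to the restriction $v_{|D_1}$ rather than to a function in $H^s(\R)$; this is standard for bounded Lipschitz domains such as intervals, and the first step above already provides $v_{|D_1}\in H^s(D_1)$ with no extension procedure.
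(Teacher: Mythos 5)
Your argument is correct and rests on the same core ingredient as the paper's, namely the Morrey embedding $H^s\hookrightarrow C^{0,s-1/2}$ in dimension one for $2s>1$, but the packaging is slightly different. The paper multiplies by a cut-off $\zeta\in\mathscr{D}(D_{3/4})$ with $\zeta\equiv 1$ on $D_{1/2}$, controls $[\zeta u]_{H^s(\R)}$ by $\mathcal{E}_s(u,D_1)+\|u\|^2_{L^2(D_1)}$, invokes the embedding on the whole line, and then absorbs the $L^2$-term by normalizing $u$ to vanishing average and applying Poincar\'e's inequality; you instead drop directly to $[u]_{H^s(D_R)}\leq 2\mathcal{E}_s(u,D_R)$ and apply the embedding on the bounded interval after rescaling. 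Your route avoids the cut-off entirely, which is a genuine simplification, while the paper's cut-off trick fits the localization pattern used elsewhere in the manuscript (e.g.\ \eqref{zutbiblioberlin}).

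The one point you gloss over is that the inequality
$$[v]^2_{C^{0,s-1/2}(D_{1/2})}\leq C\,[v]^2_{H^s(D_1)}$$
is a seminorm-to-seminorm estimate, whereas the classical bounded-domain embedding you cite is stated with full norms, $\|v\|_{C^{0,s-1/2}(\overline D_1)}\leq C\|v\|_{H^s(D_1)}$. The passage from one to the other is routine but should be recorded: since both $[\,\cdot\,]_{C^{0,s-1/2}}$ and $[\,\cdot\,]_{H^s}$ are unchanged under adding a constant, one applies the full-norm estimate to $v-(v)_{D_1}$ and then uses the Poincar\'e inequality $\|v-(v)_{D_1}\|_{L^2(D_1)}\leq C[v]_{H^s(D_1)}$. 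This is precisely the step the paper makes explicit by assuming ``without loss of generality that $u$ has a vanishing average over $D_1$''; once you add the analogous sentence, your proof is complete and indeed a bit cleaner than the original.
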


\subsection{Proof of Theorem \ref{thmepsregholder} and Proposition \ref{propHoldsubcritic}}\label{subsectprfthmhold}
 
The key point to prove Theorem \ref{thmepsregholder} is to obtain a geometric decay of the energy in small balls. Then H\"older continuity follows classically from Campanato's criterion. The purpose of the next proposition, very much inspired from \cite[Proposition 3.1]{Evans}, is exactly to show such decay.

\begin{proposition}\label{energimprovprop}
Assume that $n\geq 2s$. There exist two constants $\boldsymbol{\varepsilon}_*=\boldsymbol{\varepsilon}_*(n,s)>0$ and $\boldsymbol{\tau}=\boldsymbol{\tau}(n,s)\in(0,1/4)$ such that the following holds. 
Let $u\in \widehat H^s(D_1;\mathbb{S}^{d-1})$ be a weakly $s$-harmonic map in $D_1$ such that the function $r\in(0,1-|{\bf x}|)\mapsto \boldsymbol{\Theta}_s(u^\e,{\bf x},r)$ is non decreasing for every ${\bf x}\in\partial^0B_1^+$. If  
$$\mathcal{E}_s(u,D_1)\leq  \boldsymbol{\varepsilon}_*\,,$$
then 
$$\frac{1}{\boldsymbol{\tau}^{n-2s}}\mathcal{E}_s(u,D_{\boldsymbol{\tau}}) \leq \frac{1}{2}\mathcal{E}_s(u,D_1)\,.$$
\end{proposition}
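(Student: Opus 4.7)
The plan is to produce the one-step energy decay by testing the Euler--Lagrange equation, written in the conservation-law form of Lemma \ref{rewritingsharmeq}, against a well-chosen localized function, and to combine three ingredients: the nonlocal conservation laws $\mathrm{div}_s\boldsymbol{\Omega}^{ij}=0$ from Proposition \ref{propconservlaws}; the fractional div-curl estimate of Theorem \ref{divcurlthm}; and the BMO control from Corollary \ref{coroBMO}. The monotonicity hypothesis is precisely what allows one to invoke Corollary \ref{coroBMO}: for a cutoff $\zeta\in\mathscr{D}(D_{1/2})$, $[\zeta u]^2_{\mathrm{BMO}(\R^n)}\lesssim \mathcal{E}_s(u,D_1)+\|u\|_{L^2(D_1)}^2$. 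Since $|u|\equiv 1$, the $L^2$ contribution is harmless, and the BMO seminorm of $\zeta u$ is small of order $\sqrt{\boldsymbol{\varepsilon}_*}$, which is the smallness that will feed the iteration.

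\textbf{Main compensated estimate.} Fix a cutoff $\zeta\equiv 1$ on $D_{\boldsymbol{\tau}}$ (with $\boldsymbol{\tau}\in(0,1/4)$ chosen at the end) supported in $D_{1/2}$, and let $c:=\dashint_{D_{1/2}} u$. Testing \eqref{rewriteEL} against $\varphi^i := \zeta^2(u^i - c^i)$ and rewriting the left-hand side via $\mathrm{d}_s$, one gets, modulo smooth cutoff commutators,
$$\mathcal{E}_s(u,D_{\boldsymbol{\tau}}) \lesssim \Bigl|\int_{\R^n}\!\sum_{i,j}\bigl(\boldsymbol{\Omega}^{ij}\odot\mathrm{d}_s u^j\bigr)\varphi^i\,\de x\Bigr| + \Bigl|\int_{D_1}\!\sum_i T^i\varphi^i\,\de x\Bigr| + R(\zeta,u),$$
where $R(\zeta,u)$ gathers lower-order cutoff terms dominated by $\|u\|_{L^2}^2$. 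The first integral is controlled by Theorem \ref{divcurlthm} with $F=\boldsymbol{\Omega}^{ij}$ (divergence-free by Proposition \ref{propconservlaws}) and test function $\varphi^i$: since $\|\boldsymbol{\Omega}^{ij}\|^2_{L^2_{\mathrm{od}}(D_1)}\lesssim \mathcal{E}_s(u,D_1)$ and $[\varphi^i]_{\mathrm{BMO}}\lesssim [\zeta u]_{\mathrm{BMO}}$, this contribution is bounded by $C\boldsymbol{\varepsilon}_*^{1/2}\mathcal{E}_s(u,D_1)$, i.e.\ a small constant times the total energy.

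\textbf{Error term.} The principal obstacle is the integral involving $T^i$, which contains the cubic factor $|u(x)-u(y)|^2(u^i(x)-u^i(y))$ and, unlike $\boldsymbol{\Omega}^{ij}$, is not produced by a genuine compensation; it is only in $L^1$ and cannot be bounded by the classical Hardy/BMO duality alone. My plan is to control it as indicated in the introduction: estimate $|T^i|$ pointwise and then use H\"older's inequality in Morrey-type norms, absorbing one factor $|u(x)-u(y)|$ into a small Morrey norm of $u$ that is itself dominated by the BMO seminorm of $\zeta u$, thanks to the embeddings and characterizations of Triebel--Lizorkin--Morrey spaces of \cite{Ho,SaYY,YangYuan}. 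The outcome is, again, a bound of the form $C[\zeta u]_{\mathrm{BMO}}\,\mathcal{E}_s(u,D_1)\leq C\boldsymbol{\varepsilon}_*^{1/2}\mathcal{E}_s(u,D_1)$.

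\textbf{Conclusion and iteration.} Summing both estimates, dividing by $\boldsymbol{\tau}^{n-2s}$ and using the scaling $n\geq 2s$ (so that the remainder $R(\zeta,u)$ has the right homogeneity), one obtains, for every $\boldsymbol{\tau}\in(0,1/4)$,
$$\boldsymbol{\tau}^{2s-n}\mathcal{E}_s(u,D_{\boldsymbol{\tau}}) \leq C\boldsymbol{\tau}^{2s}\mathcal{E}_s(u,D_1) + C\boldsymbol{\varepsilon}_*^{1/2}\boldsymbol{\tau}^{2s-n}\mathcal{E}_s(u,D_1).$$
Choosing first $\boldsymbol{\tau}$ small enough to make the first term on the right-hand side at most $\tfrac14\mathcal{E}_s(u,D_1)$, and then $\boldsymbol{\varepsilon}_*$ small enough to make the second term at most $\tfrac14\mathcal{E}_s(u,D_1)$, delivers the claimed decay. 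The principal technical difficulty, as signalled already, lies in the handling of $T^i$; the rest of the proof is a faithful transplant of the H\'elein--Evans scheme to the nonlocal, supercritical setting.
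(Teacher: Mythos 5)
The route you sketch is genuinely different from the paper's, and, as written, it has at least two serious gaps.

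The paper follows Evans's scheme quite literally, and that scheme is a \emph{compactness/contradiction} argument, not a direct testing estimate. Assuming the decay fails along a sequence $u_k$ with $\varepsilon_k^2 := \mathcal{E}_s(u_k,D_1)\to 0$, one normalizes $w_k := (u_k - (u_k)_{0,1})/\varepsilon_k$ (so $\mathcal{E}_s(w_k,D_1)=1$), passes to a weak limit $w$, shows that $w$ is a (linear) $s$-harmonic function, hence smooth with small local energy, and then transfers this decay back to $w_k$ via strong $H^s$-convergence in a smaller ball (Lemma \ref{compactexpandmap}). It is inside the proof of that strong convergence that the conservation laws, the div-curl lemma, and the $W^{s/3,6}$-Morrey embedding enter. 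So the ingredients you name are the right ones, but they play a different structural role.

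Concerning your direct estimate, the first gap is the treatment of the cutoff remainder $R(\zeta,u)$. Expanding $\langle(-\Delta)^s u^i,\zeta^2(u^i-c^i)\rangle$ produces commutator terms of the form
\[
\iint \frac{(u^i(x)-u^i(y))(\zeta^2(x)-\zeta^2(y))(u^i(y)-c^i)}{|x-y|^{n+2s}}\,\de x\de y\,,
\]
which, after Cauchy--Schwarz and Poincar\'e, are of order $\mathcal{E}_s(u,D_1)$ with no small prefactor ($\tau^{2s}$ or $\sqrt{\boldsymbol{\varepsilon}_*}$). The div-curl cancellation supplies smallness only for the principal term; it does not touch the cutoff errors. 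You assert the remainder ``has the right homogeneity,'' but $n\geq 2s$ does not by itself generate the factor $\tau^{2s}$ you would need to absorb it.

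The second gap is the $T^i$ term. Pointwise, $\|T^i\|_{L^1(D_1)}\lesssim\mathcal{E}_s(u,D_1)$ and $\|\varphi^i\|_{L^\infty}\lesssim 1$, so the naive bound is again $\lesssim\mathcal{E}_s(u,D_1)$ with no gain. Your plan to ``absorb one factor $|u(x)-u(y)|$ into a small Morrey norm'' does not identify a quantity that is actually small: the $H^s$-Morrey seminorm of $u$ is controlled uniformly over balls by the monotonicity, but not small (it is bounded, not $O(\sqrt{\boldsymbol{\varepsilon}_*})$, for balls near $\partial D_1$). In the paper, the analogue of this term appears only after the blow-up rescaling, where $T^i_k$ carries an explicit $\varepsilon_k^3$ prefactor (so that $R^{(2)}_k = \varepsilon_k^2\int \widetilde T_k\cdot\varphi_k\,\de x = o(\varepsilon_k)$). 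That extra $\varepsilon_k^2$ is precisely what the Morrey/$W^{s/3,6}$ machinery is used to convert into vanishing, and it is absent in your direct approach. Unless you supply a mechanism producing a genuine small factor on both the cutoff remainder and the $T^i$ integral, the one-step decay does not follow from the estimate you write down.
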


\begin{proof}
We fix the constant $\boldsymbol{\tau}\in(0,1/4)$ that will be specified later on. We proceed by contradiction assuming that there exists a sequence $\{u_k\}$ of stationary weakly $s$-harmonic maps in $D_1$
satisfying 
$$\varepsilon^2_k:=\mathcal{E}_s(u_k,D_1) \mathop{\longrightarrow}\limits_{k\to\infty} 0\,,$$
and
\begin{equation}\label{hypcontrepsregprop}
\frac{1}{\boldsymbol{\tau}^{n-2s}}\mathcal{E}_s(u_k,D_{\boldsymbol{\tau}}) >  \frac{1}{2}\mathcal{E}_s(u_k,D_1)\,.
\end{equation}
(Note that this later condition ensures that $\eps_k>0$.) Then we consider the (expanded) map
$$w_k:=\frac{u_k-(u_k)_{0,1}}{\eps_k}\in  \widehat H^s(D_1;\R^{d})\cap L^\infty(\R^n)\,,$$
which satisfies
$$\dashint_{D_1}w_k\,\de x=0\quad\text{and}\quad \mathcal{E}_s(w_k,D_1)=1 \,.$$
Assumption \eqref{hypcontrepsregprop} also rewrites
\begin{equation}\label{newhypcontr}
 \frac{1}{\boldsymbol{\tau}^{n-2s}}\mathcal{E}_s(w_k,D_{\boldsymbol{\tau}}) >  \frac{1}{2}\,.
 \end{equation}
% For the rest of proof, we shall now  denote by $C$ a (positive) generic constant depending only on $n$ and $s$ without mentioning this dependence. 
%\vskip3pt
By Poincar\'e's inequality in $H^s(D_1)$, we have 
$$\|w_k\|^2_{L^2(D_1)}\leq C   \mathcal{E}_s(w_k,D_1)\leq C\,.$$
Therefore $\{w_k\}$ is bounded in  $\widehat H^s(D_1;\R^{d})$, so that we can find a (not relabeled) subsequence and $w\in  \widehat H^s(D_1;\R^{d})$ such that $w_k\rightharpoonup w$ weakly in $\widehat H^s(D_1)$ and $w_k\to w$ strongly in $L^2(D_1)$ (see Remark \ref{remweakcvHhat}). In particular, $\|w\|_{L^2(D_1)}\leq C$. By lower semicontinuity of the energy $\mathcal{E}_s(\cdot,D_1)$, we also have $\mathcal{E}_s(w,D_1)\leq 1$ (see again Remark \ref{remweakcvHhat}).

Recalling that $u_k$ satisfies
$$\big\langle (-\Delta)^su_k,\varphi\big\rangle_{D_1}= \int_{D_1} |{\rm d}_su_k|^2u_k\cdot\varphi\,\de x \qquad \forall \varphi\in\mathscr{D}(D_1;\R^d)\,,$$
we obtain in terms of $w_k$, 
\begin{equation}\label{eqwk}
\big\langle (-\Delta)^sw_k,\varphi\big\rangle_{D_1}= \eps_k\int_{D_1} |{\rm d}_sw_k|^2u_k\cdot\varphi\,\de x \qquad \forall \varphi\in\mathscr{D}(D_1;\R^d)\,.
\end{equation}
Since $|u_k|\equiv 1$, it leads to 
\begin{multline*}
\Big|\big\langle (-\Delta)^sw_k,\varphi\big\rangle_{D_1}\Big| \leq \eps_k\big\| |{\rm d}_s w_k|^2 \big\|_{L^1(D_1)}\|\varphi\|_{L^\infty(D_1)}\\
\leq 2\eps_k\mathcal{E}_s(w_k,D_1)\|\varphi\|_{L^\infty(D_1)}=
2\eps_k\|\varphi\|_{L^\infty(D_1)}\mathop{\longrightarrow}\limits_{k\to\infty}0
\end{multline*}
for every $\varphi\in\mathscr{D}(D_1;\R^d)$. On the other hand, the weak convergence in $\widehat H^s(D_1)$ of $w_k$ towards $w$ implies that 
$$\big\langle (-\Delta)^sw_k,\varphi\big\rangle_{D_1}\mathop{\longrightarrow}\limits_{k\to\infty} \big\langle (-\Delta)^sw,\varphi\big\rangle_{D_1}\qquad \forall \varphi\in\mathscr{D}(D_1;\R^d)\,.$$
As a consequence, $w$ satisfies
\begin{equation}\label{eqwlim}
 (-\Delta)^sw=0\quad\text{in $H^{-s}(D_1)$}\,.
 \end{equation}
By Lemma \ref{lipestsharmfctlem} in Appendix \ref{AppSharmfct}, $w$ is (locally) smooth in $D_1$, and we have the estimate  
\begin{equation}\label{lipestiexpandmap}
\|w\|^2_{L^\infty(D_{1/2})}+\|\nabla w\|^2_{L^\infty(D_{1/2})} \leq C\big(\mathcal{E}_s(w,D_1)+\|w\|^2_{L^2(D_1)}\big)\leq C\,.
\end{equation}
In view of \eqref{lipestiexpandmap}, we have 
\begin{equation}\label{jeudsoirwhs2211}
 \iint_{D_{\boldsymbol{\tau}}\times D_{\boldsymbol{\tau}}}\frac{|w(x)-w(y)|^2}{|x-y|^{n+2s}}\,\de x\de y \leq C \iint_{D_{\boldsymbol{\tau}}\times D_{\boldsymbol{\tau}}} \frac{\de x\de y}{|x-y|^{n+2s-2}}\leq C\boldsymbol{\tau}^{n+2-2s}\,.
\end{equation}
Then, writing  
\begin{multline}\label{jeudsoirwhs2211bis}
 \iint_{D_{\boldsymbol{\tau}}\times D^c_{\boldsymbol{\tau}}}\frac{|w(x)-w(y)|^2}{|x-y|^{n+2s}}\,\de x\de y= \iint_{D_{\boldsymbol{\tau}}\times (D_{1/2}\setminus D_{\boldsymbol{\tau}})}\frac{|w(x)-w(y)|^2}{|x-y|^{n+2s}}\,\de x\de y\\
+\iint_{D_{\boldsymbol{\tau}}\times D^c_{1/2}}\frac{|w(x)-w(y)|^2}{|x-y|^{n+2s}}\,\de x\de y\,,
\end{multline}
we first estimate, using \eqref{lipestiexpandmap},
\begin{equation}\label{jeudsoirwhs2211bisbis}
\iint_{D_{\boldsymbol{\tau}}\times (D_{1/2}\setminus D_{\boldsymbol{\tau}})}\frac{|w(x)-w(y)|^2}{|x-y|^{n+2s}}\,\de x\de y
\leq C \iint_{D_{\boldsymbol{\tau}}\times (D_{1/2}\setminus D_{\boldsymbol{\tau}})}\frac{\de x\de y }{|x-y|^{n+2s-2}} \leq C\boldsymbol{\tau}^{n}\,.
\end{equation}
Next we infer from Lemma \ref{adminHchap} and \eqref{lipestiexpandmap} that 
\begin{multline}\label{jeudsoirwhs2211bisbisbis}
\iint_{D_{\boldsymbol{\tau}}\times D^c_{1/2}}\frac{|w(x)-w(y)|^2}{|x-y|^{n+2s}}\,\de x\de y\leq 2\iint_{D_{\boldsymbol{\tau}}\times D^c_{1/2}}\frac{|w(x)|^2+|w(y)|^2}{|x-y|^{n+2s}}\,\de x\de y\\
\leq C\Big(\int_{D_{\boldsymbol{\tau}}}|w(x)|^2\,\de x+\boldsymbol{\tau}^n\int_{D^c_{1/2}}\frac{|w(y)|^2}{(|y|+1)^{n+2s}}\,\de y\Big)\leq C\boldsymbol{\tau}^n\,.
\end{multline}
Gathering \eqref{jeudsoirwhs2211}, \eqref{jeudsoirwhs2211bis}, \eqref{jeudsoirwhs2211bisbis}, and \eqref{jeudsoirwhs2211bisbisbis} yields
\begin{equation}\label{energwsmall}
\frac{1}{\boldsymbol{\tau}^{n-2s}}\mathcal{E}_s(w,D_{\boldsymbol{\tau}}) \leq C\boldsymbol{\tau}^{2s}\,.
\end{equation}

By Lemma \ref{compactexpandmap} -- which is postponed at the end of the proof -- there exists a universal constant $\boldsymbol{\sigma}\in(0,1)$  such that 
\begin{equation}\label{strongcvwkw}
w_k\to w \text{ strongly in } H^s(D_{\boldsymbol{\sigma}})\,.
\end{equation}
In view of \eqref{energwsmall}, we can choose $\boldsymbol{\tau}$ (depending only on $n$ and $s$) in such a way that 
\begin{equation}\label{firstchoicetau}
0<\boldsymbol{\tau}<\boldsymbol{\sigma}/2\quad\text{and}\quad  \frac{1}{\boldsymbol{\tau}^{n-2s}}\mathcal{E}_s(w,D_{\boldsymbol{\tau}}) \leq \frac{1}{4}\,. 
\end{equation}
From  \eqref{jeudsoirwhs2211} and the strong convergence in \eqref{strongcvwkw},  we first infer that for $k$ large enough, 
\begin{equation}\label{samcanic1527}
\iint_{D_{\boldsymbol{\tau}}\times D_{\boldsymbol{\tau}}}\frac{|w_k(x)-w_k(y)|^2}{|x-y|^{n+2s}}\,\de x\de y \leq \iint_{D_{\boldsymbol{\tau}}\times D_{\boldsymbol{\tau}}}\frac{|w(x)-w(y)|^2}{|x-y|^{n+2s}}\,\de x\de y+\boldsymbol{\tau}^n\,.
\end{equation}
In the same way, for $k$ large enough, one obtains from \eqref{strongcvwkw}, 
\begin{align}
\nonumber \iint_{D_{\boldsymbol{\tau}}\times D^c_{\boldsymbol{\tau}}}\frac{|w_k(x)-w_k(y)|^2}{|x-y|^{n+2s}}\,\de x\de y=&\iint_{D_{\boldsymbol{\tau}}\times (D_{\boldsymbol{\sigma}}\setminus D_{\boldsymbol{\tau}})}\frac{|w_k(x)-w_k(y)|^2}{|x-y|^{n+2s}}\,\de x\de y\\
\nonumber &\qquad\qquad+ \iint_{D_{\boldsymbol{\tau}}\times D^c_{\boldsymbol{\sigma}}}\frac{|w_k(x)-w_k(y)|^2}{|x-y|^{n+2s}}\,\de x\de y\\[3pt]
\nonumber \leq&\;\boldsymbol{\tau}^n+\iint_{D_{\boldsymbol{\tau}}\times (D_{\boldsymbol{\sigma}}\setminus D_{\boldsymbol{\tau}})}\frac{|w(x)-w(y)|^2}{|x-y|^{n+2s}}\,\de x\de y\\
\label{samcanic1528} &\qquad\qquad+ \iint_{D_{\boldsymbol{\tau}}\times D^c_{\boldsymbol{\sigma}}}\frac{|w_k(x)-w_k(y)|^2}{|x-y|^{n+2s}}\,\de x\de y\,.
\end{align}
Then we estimate by means of Lemma \ref{adminHchap}, 
\begin{multline*}
\iint_{D_{\boldsymbol{\tau}}\times D^c_{\boldsymbol{\sigma}}}\frac{|w_k(x)-w_k(y)|^2}{|x-y|^{n+2s}}\,\de x\de y\leq 2\iint_{D_{\boldsymbol{\tau}}\times D^c_{\boldsymbol{\sigma}}}\frac{|w_k(x)|^2+|w_k(y)|^2}{|x-y|^{n+2s}}\,\de x\de y\\
\leq C\Big(\int_{D_{\boldsymbol{\tau}}}|w_k(x)|^2\,\de x+\boldsymbol{\tau}^n\int_{D^c_{\boldsymbol{\sigma}}}\frac{|w_k(y)|^2}{(|y|+1)^{n+2s}}\,\de y\Big)\leq C\Big(\int_{D_{\boldsymbol{\tau}}}|w_k(x)|^2\,\de x+\boldsymbol{\tau}^n\Big)\,.
\end{multline*}
Since $w_k\to w$ strongly in $L^2(D_1)$ and in view of \eqref{lipestiexpandmap}, we deduce that for $k$ large enough,
\begin{equation}\label{samcanic1525}
\iint_{D_{\boldsymbol{\tau}}\times D^c_{\boldsymbol{\sigma}}}\frac{|w_k(x)-w_k(y)|^2}{|x-y|^{n+2s}}\,\de x\de y\leq C\Big(\int_{D_{\boldsymbol{\tau}}}|w(x)|^2\,\de x+\boldsymbol{\tau}^n\Big)\leq C\boldsymbol{\tau}^n\,.
\end{equation} 
Combining \eqref{samcanic1527}, \eqref{samcanic1528}, and \eqref{samcanic1525}  together with \eqref{firstchoicetau},  we conclude that for $k$ large enough, 
$$\frac{1}{\boldsymbol{\tau}^{n-2s}}\mathcal{E}_s(w_k,D_{\boldsymbol{\tau}})\leq \frac{1}{\boldsymbol{\tau}^{n-2s}}\mathcal{E}_s(w,D_{\boldsymbol{\tau}})+C\boldsymbol{\tau}^{2s}\leq \frac{1}{4}+ C\boldsymbol{\tau}^{2s}\,.$$
Hence, we can choose $\boldsymbol{\tau}\in (0,1/4)$ small enough (depending only on $n$ and $s$) in such a way that $\frac{1}{\boldsymbol{\tau}^{n-2s}}\mathcal{E}_s(w_k,D_{\boldsymbol{\tau}})\leq 1/2$ whenever $k$ is large enough, contradicting \eqref{newhypcontr}. 
\end{proof}

As it is transparent from the proof above, Proposition \ref{energimprovprop} crucially rests on the strong convergence stated in \eqref{strongcvwkw} that we now prove. 

\begin{lemma}\label{compactexpandmap}
There exists a universal constant $\boldsymbol{\sigma}\in(0,1)$  such that the weakly converging subsequence $\{w_k\}$ (towards $w$) actually converges strongly in $H^s(D_{\boldsymbol{\sigma}})$. 
\end{lemma}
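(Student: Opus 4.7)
The plan is to show $d_s(w_k-w)\to 0$ in $L^2_{\rm od}(D_{\boldsymbol{\sigma}})$ for some universal $\boldsymbol{\sigma}\in(0,1)$; together with the strong $L^2(D_1)$-convergence recorded in Remark \ref{remweakcvHhat}, this will yield the strong convergence in $H^s(D_{\boldsymbol{\sigma}})$. The argument relies on three ingredients: the fractional conservation laws (Proposition \ref{propconservlaws}), the div-curl compensation (Theorem \ref{divcurlthm}), and the BMO control coming from the monotonicity assumption via Corollary \ref{coroBMO}.

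First, setting $c_k:=(u_k)_{0,1}$ so that $u_k=\eps_k w_k+c_k$, I would rescale the reformulated Euler--Lagrange equation of Lemma \ref{rewritingsharmeq} into
$$(-\Delta)^s w_k^i=\eps_k\sum_j\widetilde{\boldsymbol{\Omega}}^{ij}_k\odot d_s w_k^j+\eps_k^2 S^i_k\quad\text{in }\mathscr{D}'(D_1),$$
where $\widetilde{\boldsymbol{\Omega}}^{ij}_k(x,y):=u_k^i(x)d_sw_k^j(x,y)-u_k^j(x)d_sw_k^i(x,y)$ is $s$-divergence-free (Proposition \ref{propconservlaws}) and bounded in $L^2_{\rm od}(D_1)$ (since $|u_k|\equiv 1$), while the cubic remainder satisfies $|\eps_k S^i_k(x)|\le|d_s w_k|^2(x)$ pointwise, thanks to $|w_k(x)-w_k(y)|\le 2/\eps_k$. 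Since $(u_k-c_k)^{\rm e}=u_k^{\rm e}-c_k$, the density $\boldsymbol{\Theta}_s$ is unchanged, so the monotonicity hypothesis transfers to $u_k-c_k$. Applying Corollary \ref{coroBMO} to $u_k-c_k$ with a cutoff $\zeta\in\mathscr{D}(D_{\sigma_0})$ for some $\sigma_0<1/2$, combined with Poincar\'e's inequality $\|u_k-c_k\|_{L^2(D_1)}^2\le C\eps_k^2$, then yields the key uniform bound $[\zeta w_k]_{\rm BMO(\R^n)}\le C$.

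Setting $v_k:=w_k-w$ and choosing a cutoff $\eta\in\mathscr{D}(D_{2\boldsymbol{\sigma}})$ with $\eta\equiv 1$ on $D_{\boldsymbol{\sigma}}$, for $\boldsymbol{\sigma}\in(0,1)$ sufficiently small, I would test the equation for $v_k$ against $\phi_k:=v_k\eta^2\in H^s_{00}(D_{2\boldsymbol{\sigma}})$. Using $(-\Delta)^sw=0$ in $D_{1/2}$ together with the symmetric fractional Leibniz identity, I obtain
$$\int_{\R^n}\eta^2|d_s v_k|^2\,dx+R_k=\eps_k\sum_{i,j}\int(\widetilde{\boldsymbol{\Omega}}^{ij}_k\odot d_sw_k^j)\phi_k^i\,dx+\eps_k^2\sum_i\int S^i_k\phi_k^i\,dx=:E_k+F_k,$$
where $|R_k|\to 0$ by the strong $L^2_{\rm loc}$-convergence of $v_k$ and the tail bound of Lemma \ref{adminHchap}. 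For $E_k$, Theorem \ref{divcurlthm} (extended from $\mathscr{D}$ to $H^s_{00}\cap L^\infty$ by density and continuity of the bilinear form) applied with $F=\widetilde{\boldsymbol{\Omega}}^{ij}_k$ yields $|E_k|\le C\eps_k\bigl([\phi_k]_{\rm BMO}+\boldsymbol{\sigma}^{-n}\|\phi_k\|_{L^1}\bigr)=o(1)$, thanks to the uniform BMO bound on $\phi_k=v_k\eta^2$ (valid because $w$ is smooth) and $\|\phi_k\|_{L^1}\to 0$.

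The main obstacle is controlling $F_k$: $\|S^i_k\|_{L^1(D_1)}$ is only of order $\eps_k^{-1}$, and $\phi_k$ is not uniformly bounded in $L^\infty$. I would close this gap by exploiting two complementary ingredients. First, the sphere identity $u_k(x)\cdot(w_k(x)-w_k(y))=\tfrac{\eps_k}{2}|w_k(x)-w_k(y)|^2$ (from $|u_k|\equiv 1$) produces an additional $\eps_k$-factor whenever $S^i_k$ is paired with $u_k$; substituting $w_k=(u_k-c_k)/\eps_k$ in $F_k$ and using $1-|c_k|^2=O(\eps_k^2)$ (consequence of $|u_k|\equiv 1$ and Poincar\'e) then exposes enough cancellation in the ``diagonal'' part. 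Second, the Triebel--Lizorkin--Morrey refinement announced in the introduction --- using Ho's embedding \cite{Ho} and the characterizations of \cite{SaYY,YangYuan} --- upgrades the $L^1$ control of $S^i_k$ to a space whose duality pairs with the convergence $\phi_k\to 0$ in $L^p_{\rm loc}$ (valid for every $p<\infty$ by John--Nirenberg applied to the uniform BMO bound). Together these yield $F_k=o(1)$, whence $\int\eta^2|d_s v_k|^2\,dx\to 0$, giving the claimed strong $H^s(D_{\boldsymbol{\sigma}})$-convergence.
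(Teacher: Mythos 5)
Your overall strategy matches the paper's (test the equation against a cut-off times $w_k-w$, treat the quadratic part via the conservation laws and div--curl lemma with a BMO bound from the monotonicity, and control the cubic remainder via the $\mathcal{Q}$-space embedding), but the execution has two genuine gaps. The first is the claim that $L_k=\int\eta^2|{\rm d}_sv_k|^2\,\de x+R_k$ with $R_k\to 0$. This cannot work as stated: $\int\eta^2|{\rm d}_sv_k|^2\,\de x$ contains the far-field tail $\iint_{D_{2\boldsymbol{\sigma}}\times D_1^c}\eta^2(x)|v_k(x)-v_k(y)|^2|x-y|^{-n-2s}\,\de x\de y$, which is bounded by Lemma~\ref{adminHchap} but not $o(1)$ because $v_k=w_k-w$ is only known to converge to $0$ strongly in $L^2(D_1)$ (weak convergence in $\widehat H^s(D_1)$ gives no decay of $|v_k(y)|^2$ for $y$ outside $D_1$). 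The same non-vanishing tail, with opposite sign, sits inside your $R_k$, and the two cancel in $L_k$. So even with $L_k\to 0$ you cannot deduce $\int\eta^2|{\rm d}_sv_k|^2\to 0$: the intermediate goal is too strong. The paper circumvents this by \emph{not} isolating $\int\zeta|{\rm d}_sv_k|^2$ and instead keeping $L_k$ in its bilinear form and proving the one-sided bound $L_k\ge[v_k]^2_{H^s(D_{\boldsymbol{\sigma}})}+o(1)$; its decomposition into $L_k^{(1)},L_k^{(2)},L_k^{(3)}$ keeps the non-negative ``diagonal'' pieces (simply dropped for the lower bound) and estimates the ``cross'' pieces as ${\rm d}_s\triangle_k$ paired against $\triangle_k$ localized near the origin, where $\|\triangle_k\|_{L^2}\to 0$ does the work.

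The second gap is the treatment of $F_k$, which is only a sketch and does not align with the paper's argument. Your first proposed ingredient ($1-|c_k|^2=O(\eps_k^2)$ and the sphere identity) is not used in the paper, and it is unclear how it helps since $\phi_k^i$ involves $w_k^i-w^i$ and not $u_k^i$. Your second ingredient (the Triebel--Lizorkin--Morrey embedding) is the right tool, but the proposed mechanism (``upgrade the $L^1$ control of $S_k^i$ and pair by duality with $\phi_k\to 0$ in $L^p_{\rm loc}$'') is not what works: $S_k^i$ does not land in a better Lebesgue space in a useful uniform way. What the paper actually does is split the integral of $\widetilde T_k\cdot\varphi_k$ into a ``near'' part $I_k$ (both variables in a small ball) and a ``far'' part $II_k$. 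The near part is H\"oldered into $[w_k]^3_{W^{s/3,6}}[\varphi_k]_{H^s}$, each factor being uniformly bounded (the first via Corollary~\ref{coroinjQspaces} together with the monotonicity and John--Nirenberg), giving $\eps_k^2 I_k=O(\eps_k^2)$. The far part is bounded using $|w_k|\le 2/\eps_k$ and the tail estimate of Lemma~\ref{adminHchap}, producing a factor $\|\triangle_k\|_{L^2(D_1)}\to 0$, hence $\eps_k^2 II_k=o(\eps_k)$. Both gaps are substantive; as written, the argument does not close.
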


\begin{proof}
We choose the constant $\boldsymbol{\sigma}$ as follows: 
$$\boldsymbol{\sigma}:=\min\Big\{\frac{4}{5\Lambda},\frac{1}{32}\Big\}\,,$$
where $\Lambda>1$ is the universal constant given by Theorem \ref{divcurlthm}. 
%We divide the proof in several steps, and all along, $C$ still denotes a generic constant depending only on $n$ and $s$. 
\vskip3pt

\noindent{\it Step 1.} Subtracting  \eqref{eqwlim} from equation \eqref{eqwk} leads to
\begin{equation}\label{eqdiffwkw}
\big\langle (-\Delta)^s(w_k-w),\varphi\big\rangle_{D_1}= \eps_k\int_{D_1} |{\rm d}_sw_k|^2u_k\cdot\varphi\,\de x \qquad \forall \varphi\in\mathscr{D}(D_1;\R^d)\,.
\end{equation}
By approximation (see \eqref{densitysmoothH1/200}), this equation also holds for every $\varphi\in H^s_{00}(D_1;\R^d)\cap L^\infty(D_1)$ compactly supported in $D_1$. Let us now fix a smooth cut-off function $\zeta\in\mathscr{D}(D_{5\boldsymbol{\sigma}/4})$ 
such that $0\leq \zeta\leq 1$, $\zeta=1$ in $D_{\boldsymbol{\sigma}}$. Using the test function $\varphi_k:=\zeta(w_k-w)\in H^s_{00}(D_1;\R^d)\cap L^\infty(D_1)$ in \eqref{eqdiffwkw} yields 
\begin{equation}\label{identLk=Rk}
\big\langle (-\Delta)^s(w_k-w),\varphi_k \big\rangle_{D_1}= \eps_k\int_{D_1}|{\rm d}_sw_k|^2u_k\cdot\varphi_k\,\de x \,.
\end{equation}
Setting 
$$L_k:=\big\langle (-\Delta)^s(w_k-w),\zeta(w_k-w)\big\rangle_{D_1}\quad\text{and}\quad R_k:= \eps_k\int_{D_1} |{\rm d}_sw_k|^2u_k\cdot \varphi_k\,\de x\,, $$
we claim that 
\begin{equation}\label{claim1Lk}
 L_k\geq [w_k-w]^2_{H^s(D_{\boldsymbol{\sigma}})}+o(1)\quad\text{as $k\to\infty$}\,,
 \end{equation}
and 
\begin{equation}\label{claim2Rk}
\lim_{k\to\infty} R_k=0\,. 
\end{equation}
Identity  \eqref{identLk=Rk} rewrites $L_k=R_k$, and the two claims above will imply that $[w_k-w]^2_{H^s(D_{\boldsymbol{\sigma}})}\to 0$ as $k\to\infty$,  whence the conclusion. 
\vskip3pt

\noindent{\it Step 2.} This step is devoted to the proof of \eqref{claim1Lk}. For simplicity, let us  denote  
$$\triangle_k:=w_k-w\,.$$ 
Since $\zeta= 1$ in $D_{\boldsymbol{\sigma}}$, and $\zeta=0$ in $D^c_{2\boldsymbol{\sigma}}$, we have
\begin{equation}\label{decompLk}
L_k=  [\triangle_k]^2_{H^s(D_{\boldsymbol{\sigma}})}+\frac{\gamma_{n,s}}{2}\big(L_k^{(1)}+L_k^{(2)}+L_k^{(3)}\big)\,,
\end{equation}
with
$$ L_k^{(1)}:=\iint_{(D_1\setminus D_{\boldsymbol{\sigma}} )\times(D_1\setminus D_{\boldsymbol{\sigma}})}\frac{(\triangle_k(x)-\triangle_k(y))\cdot(\zeta(x)\triangle_k(x)-\zeta(y)\triangle_k(y))}{|x-y|^{n+2s}}\,\de x\de y\,,$$
$$ L_k^{(2)}:=2\iint_{D_{\boldsymbol{\sigma}} \times(D_1\setminus D_{\boldsymbol{\sigma}})}\frac{(\triangle_k(x)-\triangle_k(y))\cdot(\zeta(x)\triangle_k(x)-\zeta(y)\triangle_k(y))}{|x-y|^{n+2s}}\,\de x\de y\,,$$
and
$$L_k^{(3)}:=2\iint_{D_{2\boldsymbol{\sigma}} \times D^c_1}\frac{(\triangle_k(x)-\triangle_k(y))\cdot \triangle_k(x)}{|x-y|^{n+2s}}\, \zeta(x)\,\de x\de y\,,$$
Concerning $L_k^{(1)}$, we first rewrite 
\begin{align*}
L_k^{(1)}  = & \; \iint_{(D_1\setminus D_{\boldsymbol{\sigma}} )\times(D_1\setminus D_{\boldsymbol{\sigma}})}\frac{\big((\triangle_k(x)-\triangle_k(y))\cdot \triangle_k(x)\big) (\zeta(x)-\zeta(y))}{|x-y|^{n+2s}}\,\de x\de y\\
&\qquad\qquad +  \iint_{(D_1\setminus D_{\boldsymbol{\sigma}} )\times(D_1\setminus D_{\boldsymbol{\sigma}})}\frac{|\triangle_k(x)-\triangle_k(y)|^2}{|x-y|^{n+2s}}\,\zeta(y)\,\de x\de y\\[3pt]
\geq &   \; \iint_{(D_1\setminus D_{\boldsymbol{\sigma}} )\times(D_1\setminus D_{\boldsymbol{\sigma}})}\frac{\big((\triangle_k(x)-\triangle_k(y))\cdot \triangle_k(x)\big) (\zeta(x)-\zeta(y))}{|x-y|^{n+2s}}\,\de x\de y\,.
\end{align*}
Recalling that 
$$\mathcal{E}_s(\triangle_k,D_1)\leq 2\mathcal{E}_s(w_k,D_1)+2\mathcal{E}_s(w,D_1)\leq 4\,,$$
we estimate by means of H\"older's inequality, 
\begin{multline*}
\left|\iint_{(D_1\setminus D_{\boldsymbol{\sigma}} )\times(D_1\setminus D_{\boldsymbol{\sigma}})}\frac{\big((\triangle_k(x)-\triangle_k(y))\cdot \triangle_k(x)\big) (\zeta(x)-\zeta(y))}{|x-y|^{n+2s}}\,\de x\de y\right| \\
\leq \sqrt{\mathcal{E}_s(\triangle_k,D_1)}\left( \iint_{(D_1\setminus D_{\boldsymbol{\sigma}} )\times(D_1\setminus D_{\boldsymbol{\sigma}})}\frac{|\triangle_k(x)|^2|\zeta(x)-\zeta(y)|^2}{|x-y|^{n+2s}}\,\de x\de y\right)^{1/2}\\
\leq C\left( \iint_{(D_1\setminus D_{\boldsymbol{\sigma}} )\times(D_1\setminus D_{\boldsymbol{\sigma}})}\frac{|\triangle_k(x)|^2}{|x-y|^{n+2s-2}}\,\de x\de y\right)^{1/2}
\leq C\|\triangle_k\|_{L^2(D_1)}\,.
\end{multline*}
Since $\|\triangle_k\|_{L^2(D_1)}\to 0$, we conclude that 
\begin{equation}\label{lowvanL1k}
L^{(1)}_k\geq o(1)\quad\text{as $k\to\infty$}\,.
\end{equation}
Exactly in the same way, one derives 
\begin{equation}\label{lowvanL2k}
L^{(2)}_k\geq o(1)\quad\text{as $k\to\infty$}\,.
\end{equation}
For the last term $L_k^{(3)}$, we use again H\"older's inequality to derive 
\begin{equation}\label{lowvanL3k}
\big|L_k^{(3)}\big|\leq 2 \sqrt{\mathcal{E}_s(\triangle_k,D_1)} \left(\iint_{D_{2\boldsymbol{\sigma}} \times D^c_1}\frac{ |\triangle_k(x)|^2\zeta^2(x)}{|x-y|^{n+2s}}\,\de x\de y \right)^{1/2}\leq C\|\triangle_k\|_{L^2(D_1)}=o(1) 
\end{equation}
as $k\to\infty$. Gathering now \eqref{decompLk} with \eqref{lowvanL1k}, \eqref{lowvanL2k}, and \eqref{lowvanL3k} leads to  \eqref{claim1Lk}. 
\vskip3pt

\noindent{\it Step 3.} In order to prove \eqref{claim2Rk}, we need to rewrite $R_k$ in a suitable form. First, we rewrite 
$$R_k=\frac{1}{\eps_k}\int_{D_1}|{\rm d}_su_k|^2u_k\cdot \varphi_k\,\de x\,,$$
and we recall from  Lemma \ref{rewritingsharmeq} that for each $i=1,\ldots,d$, 
$$
 |{\rm d}_su_k|^2u^i_k =\Big(\sum^n_{j=1}\boldsymbol{\Omega}^{ij}_k\odot{\rm d}_su_k^j\Big)+T_k^i= \eps_k \Big(\sum^n_{j=1}\boldsymbol{\Omega}^{ij}_k\odot{\rm d}_sw_k^j\Big)+T_k^i\,,
$$
where $\boldsymbol{\Omega}^{ij}_k\in L^2_{\rm od}(D_1)$ is given by 
$$\boldsymbol{\Omega}^{ij}_k(x,y):=u_k^i(x){\rm d}_su_k^j(x,y)-u_k^j(x){\rm d}_su_k^i(x,y)\,,$$
and 
\begin{align*}
T_k^i(x):=&\,\frac{\gamma_{n,s}}{4}\int_{\R^n}\frac{|u_k(x)-u_k(y)|^2}{|x-y|^{n+2s}} \big(u_k^i(x)-u_k^i(y)\big)\,\de y\\
=&\, \frac{\gamma_{n,s}\eps_k^3}{4}\int_{\R^n}\frac{|w_k(x)-w_k(y)|^2}{|x-y|^{n+2s}} \big(w_k^i(x)-w_k^i(y)\big)\,\de y\,.
\end{align*}
%and 
%%\begin{align*}
%$$F^i_k(x):=\Big(\frac{\gamma_{n,s}}{2}\int_{D_1^c}\frac{|u_k(x)-u_k(y)|^2}{|x-y|^{n+2s}}\,\de y\Big)u_k^i(x)
%=\eps_k^2\Big(\frac{\gamma_{n,s}}{2}\int_{D_1^c}\frac{|w_k(x)-w_k(y)|^2}{|x-y|^{n+2s}}\,\de y\Big)u_k^i(x)\,.$$
%\end{align*}
Hence, 
%\begin{multline*}
$$R_k=\Big(\sum_{i,j=1}^n\int_{D_1} \big( \Omega^{ij}_k\odot {\rm d}_sw_k^j\big)\varphi_k^i\,\de x\Big) + \eps_k^2\int_{D_1}\widetilde T_k\cdot \varphi_k\,\de x
%-\eps_k\int_{D_1}\widetilde F_k\cdot\varphi_k\,\de x\\
=:R_k^{(1)}+R_k^{(2)}\,,$$
%\end{multline*}
where we have set 
$$\widetilde T_k(x):= \frac{\gamma_{n,s}}{4}\int_{\R^n}\frac{|w_k(x)-w_k(y)|^2}{|x-y|^{n+2s}} \big(w_k(x)-w_k(y)\big)\,\de y\,.$$
%and 
%$$\widetilde F_k(x):=\Big(\frac{\gamma_{n,s}}{2}\int_{D_1^c}\frac{|w_k(x)-w_k(y)|^2}{|x-y|^{n+2s}}\,\de y\Big)u_k^i(x)\,.  $$
\vskip3pt

\noindent{\it Step 4.} We shall now prove that 
\begin{equation}\label{vanishofR1kjul}
\lim_{k\to\infty}R_k^{(1)}=0\,.
\end{equation} 
%First, notice that by stationarity ... {\bf complete}, $u_k^\e$ satisfies the monotonicity formula in {\bf Lemma ?? complete}. 
First, notice that formula \eqref{poisson} shows that $u_k^\e=\eps_k w_k^\e+ (u_k)_{0,1}$, which implies that 
$$\boldsymbol{\Theta}_s(u_k^\e,{\bf x},r)=\eps_k^2\boldsymbol{\Theta}_s(w_k^\e,{\bf x},r)\quad\text{for every ${\bf x}\in\partial^0B_1^+$ and $r\in(0,1-|{\bf x}|)$}\,.$$
As a consequence, our assumption on $\boldsymbol{\Theta}_s(u_k^\e,{\bf x},r)$ tells us that  $r\in(0,1-|{\bf x}|)\mapsto \boldsymbol{\Theta}_s(w_k^\e,{\bf x},r)$ is non decreasing for every ${\bf x}\in\partial^0B_1^+$. 

Applying Corollary \ref{coroBMO} (with $R=2\boldsymbol{\sigma}$), we deduce that 
$$[\zeta w_k]_{{\rm BMO}(\R^n)}\leq C\Big(\mathcal{E}_s(w_k,4\boldsymbol{\sigma})+ \|w_k\|^2_{L^2(D_{4\boldsymbol{\sigma}})}\Big)^{1/2}\leq C\,,  $$
for some constant $C$ depending only on $n$, $s$, and $\zeta$. Since $w_k\to w$ strongly in $L^2(D_1)$ and $\zeta$ is supported in $D_{5\boldsymbol{\sigma}/4}$, we have $\zeta w_k\to \zeta w$ strongly in $L^1(\R^n)$ (in other words, $\|\varphi_k\|_{L^1(\R^n)}\to 0$). By lower semi-continuity of the BMO-seminorm with respect to the $L^1$-convergence, we deduce that $\zeta w\in {\rm BMO}(\R^n)$, and then  (remember that $\varphi_k:=\zeta(w_k-w)$)
$$[\varphi_k]_{{\rm BMO}(\R^n)}\leq C \,.$$
Next, we recall from Proposition \ref{propconservlaws} that $u_k$ being weakly $s$-harmonic in $D_1$ yields 
$${\rm div}_s\,\boldsymbol{\Omega}^{ij}_k=0\quad\text{in $H^{-s}(D_1)$}\,, $$
for each $i,j\in \{1,\ldots, d\}$. Applying Theorem \ref{divcurlthm} (with $x_0=0$ and $r=5\boldsymbol{\sigma}/4$), we infer that 
\begin{align*}
\left|\int_{D_1} \big(\boldsymbol{\Omega}^{ij}_k\odot {\rm d}_sw_k^j\big)\varphi_k^i\,\de x\right| &\leq C\|\boldsymbol{\Omega}_k^{ij}\|_{L^2_{\rm od}(D_1)}\sqrt{\mathcal{E}_s(w_k^j,D_1)} \Big([\varphi^i_k]_{{\rm BMO}(\R^n)}+\|\varphi^i_k\|_{L^1(\R^n)}\Big)\\
&\leq C\|\boldsymbol{\Omega}_k^{ij}\|_{L^2_{\rm od}(D_1)}\,.
\end{align*}
Since $|u_k|\equiv 1$, we have the pointwise estimate $|\boldsymbol{\Omega}_k^{ij}(x,y)|\leq |{\rm d}_su_k^j(x,y)| + |{\rm d}_su_k^i(x,y)|$ which leads to  
$\|\boldsymbol{\Omega}_k^{ij}\|^2_{L^2_{\rm od}(D_1)}\leq C\mathcal{E}_s(u_k,D_1)=O(\eps^2_k)$ for each $i,j\in \{1,\ldots, d\}$. Consequently,
$$R^{(1)}_k=O(\eps_k)\,, $$
and \eqref{vanishofR1kjul} is proved. 
\vskip5pt

\noindent{\it Step 5.} We  complete the proof of \eqref{claim2Rk} showing now that 
\begin{equation}\label{vanishingofR2k}
\lim_{k\to\infty}R_k^{(2)}=0\,.
\end{equation} 
Using the fact that $\varphi_k$ is supported in $D_{5\boldsymbol{\sigma}/4}\subset D_{1/20}\subset D_{1/16}$, we first write 
\begin{equation}\label{decompofR2k}
R_k^{(2)}=\eps_k^2\int_{D_1}\widetilde T_k\cdot \varphi_k\,\de x =\frac{\gamma_{n,s}}{4} \eps_k^2\big(I_k+II_k\big)\,,
\end{equation}
with
\begin{align*}
I_k&:=\iint_{D_{1/16}\times D_{1/16}}\frac{|w_k(x)-w_k(y)|^2}{|x-y|^{n+2s}} \big(w_k(x)-w_k(y)\big)\cdot\varphi_k(x)\,\de x\de y \\
&= \frac{1}{2}\iint_{D_{1/16}\times D_{1/16}}\frac{|w_k(x)-w_k(y)|^2}{|x-y|^{n+2s}} \big(w_k(x)-w_k(y)\big)\cdot\big(\varphi_k(x)-\varphi_k(y)\big)\,\de x\de y\,,
\end{align*}
and 
\begin{equation}\label{defofIIkinR2k}
II_k:=\iint_{D_{1/20}\times D^c_{1/16}} \frac{|w_k(x)-w_k(y)|^2}{|x-y|^{n+2s}} \big(w_k(x)-w_k(y)\big)\cdot\varphi_k(x)\,\de x\de y\,.
\end{equation}
We shall estimate separately the two terms $I_k$ and $II_k$. 

Concerning $I_k$, we apply H\"older's inequality to reach 
\begin{align}
\nonumber |I_k| & \leq \frac{1}{2} \iint_{D_{1/16}\times D_{1/16}}\frac{|w_k(x)-w_k(y)|^3|\varphi_k(x)-\varphi_k(y)|}{|x-y|^{n+2s}}\,\de x\de y\\[3pt]
\label{proutauchocolat}&\leq C [w_k]^3_{W^{s/3,6}(D_{1/16})}[\varphi_k]_{H^s(D_{1/16})}\,,
\end{align}
where $[\cdot]_{W^{s/3,6}(D_{1/16})}$ denotes the  $W^{s/3,6}(D_{1/16})$-seminorm (i.e., of the Sobolev-Slobodeckij space, see \eqref{defWspseminorm}). Recalling our 
notation $\triangle_k:=w_k-w$ and the fact that $0\leq \zeta\leq 1$, we have
\begin{align}
\nonumber [\varphi_k]^2_{H^s(D_{1/16})} &\leq C\left([\triangle_k]^2_{H^s(D_{1/16})}+ \iint_{D_{1/16}\times D_{1/16}}\frac{|\zeta(x)-\zeta(y)|^2|\triangle_k(x)|^2}{|x-y|^{n+2s}}\,\de x\de y\right)\\
\nonumber &\leq C\left([\triangle_k]^2_{H^s(D_{1/16})}+ \iint_{D_{1/16}\times D_{1/16}}\frac{|\triangle_k(x)|^2}{|x-y|^{n+2s-2}}\,\de x\de y\right)\\
\label{nocluphikhs} &\leq C\Big( \mathcal{E}_s(\triangle_k,D_{1/16})+\|\triangle_k\|^2_{L^2(D_{1/16})}\Big)\leq C\,.
\end{align}
To estimate $ [w_k]_{W^{s/3,6}(D_{1/16})}$, we proceed as follows. First, we fix a further cut-off function $\eta\in \mathscr{D}(D_{1/8})$ satisfying 
$0\leq \eta\leq 1$, $\eta\equiv 1$ in $D_{1/16}$, and $|\nabla \eta|\leq C$. Then we apply Corollary  \ref{coroinjQspaces} (in Appendix \ref{appQspaces}) to $\eta w_k$ to derive 
\begin{equation}\label{mardcpamorreystuff}
[w_k]^2_{W^{s/3,6}(D_{1/16})}= [\eta w_k]^2_{W^{s/3,6}(D_{1/16})} \leq C\Big(\sup_{D_r(\bar x)\subset\R^n}\frac{1}{r^{n-2s}}\,[\eta w_k]^2_{H^s(D_r(\bar x))}\Big)\,,
\end{equation}
and it remains to estimate the right hand side of \eqref{mardcpamorreystuff}. To this purpose, we need to distinguish different types of balls: 
\vskip3pt

{\sl Case 1:} $\bar x\in D_{3/16}$ and $0< r \leq 1/16$. Arguing as in \eqref{nocluphikhs}, we obtain 
\begin{align*}
 [\eta w_k]^2_{H^s(D_{r}(\bar x))} & \leq C\left([w_k]^2_{H^s(D_{r}(\bar x))} + \iint_{D_{r}(\bar x)\times D_{r}(\bar x)}\frac{|w_k(x)|^2}{|x-y|^{n+2s-2}}\,\de x\de y \right)\\
%  & \leq C\left([w_k]^2_{H^s(D_{r}(z))} + \int_{D_{r}(z)}\Big( \int_{D_{2r}(x)}\frac{\de y}{|x-y|^{n+2s-2}}\Big) |w_k(x)|^2   \,\de x \right)\\
  & \leq C\Big([w_k]^2_{H^s(D_{r}(\bar x))} + r^{2-2s}\|w_k\|^2_{L^2(D_r(\bar x))} \Big)\,.
 \end{align*}
Applying H\"older's inequality in the case $n\geq 3$, we obtain 
\begin{equation}\label{mercrbefiep1559}
 [\eta w_k]^2_{H^s(D_{r}(\bar x))} \leq 
\begin{cases}
C\Big([w_k]^2_{H^s(D_{r}(\bar x))} + r^{n-2s}\|w_k\|^{2}_{L^n(D_r(\bar x))} \Big) & \text{if $n\geq 3$}\\[5pt]
C\Big([w_k]^2_{H^s(D_{r}(\bar x))} + r^{2-2s}\|w_k\|^2_{L^2(D_r(\bar x))} \Big) & \text{if $n\leq 2$}\,.
\end{cases}
\end{equation}
Let us now recall that $r\mapsto\boldsymbol{\Theta}_s(w_k^\e, {\bf x},r)$ is non decreasing for every ${\bf x}\in B^+_{1}$  (see Step 4). By the proof of Lemma \ref{cutoffbmo1}, Step 1 (applied to $w_k^\e$),  we have 
\begin{equation}\label{bmowkjuly}
[w_k]_{{\rm BMO}(D_{7/16})}\leq C\sqrt{\mathbf{E}_s(w^\e_k,B^+_{1/2})}\leq C\sqrt{\mathcal{E}_s(w_k,D_1)}\leq C \,,
\end{equation}
where we have used Lemma \ref{hatH1/2toH1} in the last inequality. In case $n\geq 3$, we apply the John-Nirenberg inequality  in Lemma \ref{JohnNir} and use the fact that $D_r(\bar x)\subset D_{7/16}$,  to derive 
\begin{multline}\label{argbdlebwklundjul}
\|w_k\|_{L^n(D_r(\bar x))}\leq \|w_k\|_{L^n(D_{7/16})} \leq \big\|w_k-(w_k)_{0,7/16}\big\|_{L^n(D_{7/16})}+C\|w_k\|_{L^1(D_{7/16})}\\
\leq C \big([w_k]_{{\rm BMO}(D_{7/16})} + \|w_k\|_{L^2(D_{7/16})}\big)\leq C\,.
\end{multline}
 Back to \eqref{mercrbefiep1559} and in view of Lemma \ref{HsregtraceH1weight}, we have thus proved that
  \begin{multline*}
  [\eta w_k]^2_{H^s(D_{r}(\bar x))} \leq C\big( [w_k]^2_{H^s(D_{r}(\bar x))} + r^{n-2s}\big)\\
  \leq  C\Big( \mathbf{E}_s\big(w^\e_k,B^+_{2r}(\bar {\bf x})\big) + r^{n-2s}\Big)\leq Cr^{n-2s}\big(\boldsymbol{\Theta}_s(w_k^\e,\bar {\bf x},2r)+1\big)\,,
  \end{multline*}
with $\bar {\bf x}:=(\bar x,0)$.  Then the monotonicity of $r\mapsto \boldsymbol{\Theta}_s(w_k^\e,\bar {\bf x},2r)$ together with Lemma \ref{hatH1/2toH1} yields 
 \begin{multline*}
 \frac{1}{r^{n-2s}} [\eta w_k]^2_{H^s(D_{r}(\bar x))} \leq C \big(\boldsymbol{\Theta}_s(w_k^\e,\bar {\bf x},1/8)+1\big)\\
 \leq C \big({\bf E}_s(w_k^\e,B^+_{1/2})+1\big)\leq C \big(\mathcal{E}_s(w_k,D_1)+1\big)\leq C\,.
 \end{multline*}
\vskip3pt

{\sl Case 2:} $\bar x\not\in D_{3/16}$ and $0< r \leq 1/16$. This case is trivial since $\eta w_k \equiv 0$ in $D_r(\bar x)$. 
\vskip3pt

{\sl Case 3:} $\bar x\in \R^n$ and $r>1/16$. Since $\eta w_k$ is supported in $D_{1/8}$ and $0\leq \eta\leq 1$, we have (recall that $n-2s\geq 0$)
\begin{align*}
\frac{1}{r^{n-2s}}  [\eta w_k]^2_{H^s(D_{r}(\bar x))} &\leq 16^{2s-n}[\eta w_k]^2_{H^s(\R^n)}\\
&\leq C\Big( [\eta w_k]^2_{H^s(D_{1/4})}+\iint_{D_{1/8}\times D^c_{1/4}}\frac{|\eta(x)w_k(x)|^2}{|x-y|^{n+2s}}\,\de x\de y\Big)\\
&\leq C\big( [\eta w_k]^2_{H^s(D_{1/4})}+\|w_k\|^2_{L^2(D_{1/8})}\big)\,.
\end{align*}
Arguing as in \eqref{nocluphikhs}, we obtain 
$$  [\eta w_k]^2_{H^s(D_{1/4})}\leq C\big(\mathcal{E}_s(w_k,D_{1/4})+\|w_k\|^2_{L^2(D_{1/4})}\big)\,,$$
and thus
\begin{equation}\label{zutbiblioberlin}
\frac{1}{r^{n-2s}}  [\eta w_k]^2_{H^s(D_{r}(\bar x))} \leq C\big(\mathcal{E}_s(w_k,D_{1})+\|w_k\|^2_{L^2(D_{1})}\big)\leq C\,.
\end{equation}
\vskip5pt

Gathering Cases 1, 2, and 3 above, we have proved that  the right hand side of \eqref{mardcpamorreystuff} remains bounded independently of $k$. 
We can now conclude from \eqref{mardcpamorreystuff} that $[w_k]_{W^{s/3,6}(D_{1/16})}\leq C$. In view of \eqref{proutauchocolat} and \eqref{nocluphikhs}, we have thus obtained that 
\begin{equation}\label{firstpieceR2k}
|I_k|\leq C\,,
\end{equation}
and it only remains to estimate the term $II_k$ (defined in \eqref{defofIIkinR2k}). 

First, we trivially have 
\begin{align}
\nonumber |II_k|&\leq \iint_{D_{1/20}\times D^c_{1/16}} \frac{|w_k(x)-w_k(y)|^3}{|x-y|^{n+2s}}|\triangle_k(x)|\,\de x\de y \\
\nonumber &\leq 4\iint_{D_{1/20}\times D^c_{1/16}} \frac{|w_k(x)|^3}{|x-y|^{n+2s}}|\triangle_k(x)|\,\de x\de y \\
\label{almfinlundjul1} &\qquad\qquad\qquad\qquad\qquad\qquad\qquad +4\iint_{D_{1/20}\times D^c_{1/16}} \frac{|w_k(y)|^3}{|x-y|^{n+2s}}|\triangle_k(x)|\,\de x\de y\,.
\end{align}
On the other hand,
\begin{multline*}
\iint_{D_{1/20}\times D^c_{1/16}} \frac{|w_k(x)|^3}{|x-y|^{n+2s}}|\triangle_k(x)|\,\de x\de y\leq C\int_{D_{1/20}} |w_k(x)|^3|\triangle_k(x)|\,\de x\\
\leq C\|w_k\|^3_{L^6(D_{1/20})}\|\triangle_k\|_{L^2(D_{1})}\,.
\end{multline*}
Recalling from \eqref{bmowkjuly} that $\{w_k\}$ is bounded in ${\rm BMO}(D_{7/16})$, we can argue as in \eqref{argbdlebwklundjul} to infer that $\{w_k\}$ is bounded in $L^6(D_{1/20})$. Hence, 
\begin{equation}\label{almfinlundjul2}
\iint_{D_{1/20}\times D^c_{1/16}} \frac{|w_k(x)|^3}{|x-y|^{n+2s}}|\triangle_k(x)|\,\de x\de y\leq C \|\triangle_k\|_{L^2(D_{1})}\,.
\end{equation}
Since $|u_k|\equiv1$, we have $|w_k|\leq 2/\eps_k$, and consequently
\begin{align}
\nonumber \iint_{D_{1/20}\times D^c_{1/16}} \frac{|w_k(y)|^3}{|x-y|^{n+2s}}|\triangle_k(x)|\,\de x\de y& \leq  \frac{2}{\eps_k}\int_{D_{1/20}}\left(\int_{D^c_{1/16}}\frac{|w_k(y)|^2}{|x-y|^{n+2s}}\,\de y\right)|\triangle_k(x)|\,\de x\\
\nonumber &\leq   \frac{C}{\eps_k}\int_{D_{1/20}}\left(\int_{\R^n}\frac{|w_k(y)|^2}{(|y|+1)^{n+2s}}\,\de y\right)|\triangle_k(x)|\,\de x\\
\label{almfinlundjul3} & \leq \frac{C}{\eps_k}\Big(\mathcal{E}_s(w_k,D_1)+\|w_k\|^2_{L^2(D_1)}\Big)\|\triangle_k\|_{L^2(D_{1})}\,,
\end{align}
where we have used Lemma \ref{adminHchap} in the last inequality.  Combining \eqref{almfinlundjul1}, \eqref{almfinlundjul2}, and \eqref{almfinlundjul3}, we obtain the estimate 
\begin{equation}\label{secondpieceR2k}
|II_k|\leq C\eps_k^{-1}\|\triangle_k\|_{L^2(D_{1})}= o(\eps_k^{-1})\,. 
\end{equation}
In view of \eqref{decompofR2k}, \eqref{firstpieceR2k}, and \eqref{secondpieceR2k}, we have thus proved that 
$$R_k^{(2)}=o(\eps_k)\,, $$
and thus \eqref{vanishingofR2k} holds, which completes the whole proof. 
%\vskip5pt
%
%\noindent{\it Step 6.} We  finally complete the proof \eqref{claim2Rk} showing that 
%$$\lim_{k\to\infty} R^{(3)}_k=0\,. $$
%Since $|u_k|=1$, we have for $x\in D_{1/20}$, 
%\begin{multline*}
%|\widetilde F_k(x)|\leq C\Big(|w_k(x)|^2+\int_{D_1^c}\frac{|w_k(y)|^2}{|x-y|^{n+2s}}\,\de y\Big)\\
%\leq C\big(|w_k(x)|^2+\mathcal{E}_s(w_k,D_1)+\|w_k\|^2_{L^2(D_1)}\big)\leq C\big(|w_k(x)|^2+1\big)\,,
%\end{multline*}
% thanks again to Lemma \ref{adminHchap}. Since ${\rm spt}(\varphi_k)\subset D_{1/20}$, we deduce from H\"older's inequality that 
% $$\Big|\int_{D_1}\widetilde F_k\cdot\varphi_k\,\de x\Big|\leq C\int_{D_{1/20}}\big(|w_k|^2+1\big)|\triangle_k|\,\de x\leq C\big(\|w_k\|^2_{L^4(D_{1/20})}+1\big)\|\triangle_k\|_{L^2(D_1)} \,.$$
%We have already shown (in the previous step) that $\{w_k\}$ is bounded in $L^6(D_{1/20})$, and thus it is also bounded in $L^4(D_{1/20})$. Consequently, 
%$$  R^{(3)}_k=\eps_k\int_{D_1}\widetilde F_k\cdot\varphi_k\,\de x=o(\eps_k)\,,$$
%which completes the proof of \eqref{claim2Rk}, and thus the whole proof.
\end{proof}

\begin{proof}[Proof of Theorem \ref{thmepsregholder}] 
Rescaling variables, we can assume that $R=2$. We need to distinguish the two cases $n\geq 2s$, and $n=1$ with $s\in(1/2,1)$. 
\vskip3pt

\noindent{\it Case 1: $n\geq 2s$.} We choose $\boldsymbol{\eps}_0:=2^{2s-n}\boldsymbol{\eps}_*$ where  $\boldsymbol{\eps}_*=\boldsymbol{\eps}_*(n,s)>0$ is the constant provided by Proposition ~\ref{energimprovprop}. We fix an arbitrary point $x_0\in D_1$, and we observe that condition \eqref{condeps0} implies 
$$\mathcal{E}_s\big(u,D_1(x_0)\big)\leq  \mathcal{E}_s(u,D_2)=2^{n-2s}\boldsymbol{\theta}_s(u,0,2)\leq \boldsymbol{\eps}_*\,.$$
Setting ${\bf e}:= \mathcal{E}_s(u,D_2)$, 
%
%\vskip30pt
%Then condition \eqref{condeps0} together with 
Proposition  \ref{energimprovprop} then leads to 
\begin{equation}\label{merjul171648}
\frac{1}{\boldsymbol{\tau}^{n-2s}}\mathcal{E}_s\big(u,D_{\boldsymbol{\tau}}(x_0)\big)\leq \frac{1}{2} \mathcal{E}_s\big(u,D_1(x_0)\big)\leq \frac{1}{2}{\bf e}\,,
\end{equation}
where $\boldsymbol{\tau}=\boldsymbol{\tau}(n,s)\in(0,1/4)$. Considering the rescaled map $u_{\boldsymbol{\tau}}(x):=u(\boldsymbol{\tau}x+x_0)$, one realizes from \eqref{merjul171648} that $u_{\boldsymbol{\tau}}$ satisfies $\mathcal{E}_s(u_{\boldsymbol{\tau}},D_1)\leq\frac{1}{2}\boldsymbol{\eps}_* $, and thus  Proposition  \ref{energimprovprop} applies. Unscaling variables, it yields 
$$\frac{1}{(\boldsymbol{\tau}^{n-2s})^2}\mathcal{E}_s(u,D_{\boldsymbol{\tau}^2}(x_0)\big)=\frac{1}{\boldsymbol{\tau}^{n-2s}}\mathcal{E}_s(u_{\boldsymbol{\tau}},D_{\boldsymbol{\tau}})\leq \frac{1}{2} \mathcal{E}_s(u_{\boldsymbol{\tau}},D_1)=\frac{1}{2\boldsymbol{\tau}^{n-2s}}\mathcal{E}_s\big(u,D_{\boldsymbol{\tau}}(x_0)\big) \leq \frac{1}{4}{\bf e}\,.$$
Arguing by induction, we infer that  
\begin{equation}\label{merjul171648bis}
\mathcal{E}_s\big(u,D_{\boldsymbol{\tau}^k}(x_0)\big)\leq\frac{\boldsymbol{\tau}^{k(n-2s)}}{2^k}{\bf e} \quad\text{for each $k=0,1,2,3,\ldots$}\,.
\end{equation}
Let us now fix an arbitrary  $r\in(0,1)$, and consider the integer $k$ such that $\boldsymbol{\tau}^{k+1}<r\leq\boldsymbol{\tau}^k$. From \eqref{merjul171648bis}, we deduce that 
$$\frac{1}{r^{n-2s}}\mathcal{E}_s\big(u,D_r(x_0)\big)\leq \frac{1}{r^{n-2s}}\mathcal{E}_s\big(u,D_{\boldsymbol{\tau}^k}(x_0)\big)\leq \frac{\boldsymbol{\tau}^{2s-n}}{2^k}{\bf e}\leq 2\boldsymbol{\tau}^{2s-n}{\bf e}\, r^{2\beta_0} \,,$$
with $2\beta_0:=\log(2)/\log(1/\boldsymbol{\tau})$. By the Poincar\'e inequality in $H^s(D_r(x_0))$, it yields 
$$\frac{1}{r^n}\int_{D_r(x_0)}\big|u-(u)_{x_0,r}\big|^2\,\de x\leq \frac{C}{r^{n-2s}}[u]^2_{H^s(D_r(x_0))}\leq  \frac{C}{r^{n-2s}}\mathcal{E}_s\big(u,D_r(x_0)\big)\leq C{\bf e}\,r^{2\beta_0}\,.$$
In view of the arbitrariness of $r$ and $x_0$, we can apply Campanato's criterion (see e.g. \cite[Theorem I.6.1]{Maggi}), and it yields $u\in C^{0,\beta_0}(D_1)$ with 
$$|u(x)-u(y)|\leq C\sqrt{{\bf e}}\,|x-y|^{\beta_0} \qquad\forall x,y\in D_1\,,$$
which completes the proof. 
\vskip3pt

\noindent{\it Case 2: $n=1$ and $s\in (1/2,1)$.} In this case, we simply choose $\boldsymbol{\eps}_0:=1$, and we invoke Proposition \ref{propHoldsubcritic} whose proof is given below. 
%one can choose $\boldsymbol{\eps}_0=1$, and the conclusion follows from the embedding $H^s(D_2)\hookrightarrow C^{0,s-1/2}(D_1)$ {\bf (FIND REF)}. 
\end{proof}

\begin{proof}[Proof of Proposition \ref{propHoldsubcritic}]
Rescaling variables, we can assume that $R=1$. Without loss of generality, we can also assume that $u$ has a vanishing average over $D_1$. We consider a given cut-off function $\zeta\in\mathscr{D}(D_{3/4})$ such that $0\leq \zeta\leq 1$ and $\zeta=1$ in $D_{1/2}$. Arguing as \eqref{zutbiblioberlin}, we obtain that $\zeta u\in H^{s}(\R;\R^d)$ with 
\begin{equation}\label{prfholdembed1}
[\zeta u]^2_{H^s(\R)}\leq C \big(\mathcal{E}_s(u,D_1)+\|u\|^2_{L^2(D_1)}\big) \,.
%\leq C \mathcal{E}_s(u,D_1) \,, 
\end{equation}
On the other hand, by the continuous embedding $H^s(\R^n)\hookrightarrow C^{0,s-1/2}(\R^n)$ (see e.g. \cite[Theorem~1.4.4.1]{G}), we have  
\begin{equation}\label{prfholdembed2}
 [\zeta u]^2_{C^{0,s-1/2}(\R)}\leq C\big([\zeta u]^2_{H^s(\R)} + \|\zeta u\|^2_{L^2(\R)}\big)
 \leq C\big([\zeta u]^2_{H^s(\R)} + \|u\|^2_{L^2(D_1)}\big)\,.
\end{equation}
Combining \eqref{prfholdembed2} with  \eqref{prfholdembed1} and applying Poincar\'e's inequality in $H^s(D_1)$, we derive that 
$$[u]^2_{C^{0,s-1/2}(D_{1/2})}\leq  [\zeta u]^2_{C^{0,s-1/2}(\R)} \leq  C \big(\mathcal{E}_s(u,D_1)+\|u\|^2_{L^2(D_1)}\big)\leq C \mathcal{E}_s(u,D_1)\,,$$
which completes the proof of  \eqref{holdestisubcriticcase1}. 
%Then, we infer \eqref{holdestisubcriticcase2} directly from Lemma \ref{HoldTransf}.  
\end{proof}

%%%%%%%%%%%%%%%%%%%%%%%%%%%%%%%%%%%%%%%%%%%%%%%%%%%%%%%
%%%%%%%%%%%%%%%%%%%%%%%%%%%%%%%%%%%%%%%%%%%%%%%%%%%%%%%
   								       						%%%%%%%%%%%%%%%%%%%
\section{Small energy Lipschitz regularity}\label{Highordreg} 					%%%%%%%%%
								 						%%%%%%%%%%%%%%%%%%%
%%%%%%%%%%%%%%%%%%%%%%%%%%%%%%%%%%%%%%%%%%%%%%%%%%%%%%%
%%%%%%%%%%%%%%%%%%%%%%%%%%%%%%%%%%%%%%%%%%%%%%%%%%%%%%%

In this section, our goal is to improve the conclusion of Theorem \ref{thmepsregholder} to Lipschitz continuity, as stated in the following theorem. Higher order regularity will be the object of the next section.

\begin{theorem}\label{thmepsregLip}
Let  $\boldsymbol{\eps}_1=\boldsymbol{\eps}_1(n,s)>0$  be  the constant given by Corollary \ref{coroepsreghold}. There exists a constant $\boldsymbol{\kappa}_2=\boldsymbol{\kappa}_2(n,s)\in(0,1)$ such that the following holds. Let $u\in \widehat H^s(D_{2R};\mathbb{S}^{d-1})$ be a weakly $s$-harmonic map in $D_{2R}$ such that the function $r\in(0,2R-|{\bf x}|)\mapsto \boldsymbol{\Theta}_s(u^\e,{\bf x},r)$ is nondecreasing for every ${\bf x}\in\partial^0B_{2R}^+$. If  
\begin{equation}\label{condeps0Lip}
%\frac{1}{R^{n-2s}}\mathcal{E}_s(u,D_R)
\boldsymbol{\Theta}_s(u^\e,0,R)\leq\boldsymbol{\eps}_1\,, 
\end{equation}
then $u\in C^{0,1}(D_{\boldsymbol{\kappa}_2R})$ and 
$$R^2\|\nabla u\|^2_{L^\infty(D_{\boldsymbol{\kappa}_2R})}\leq C \boldsymbol{\Theta}_s(u^\e,0,R)\,, $$
for a constant $C=C(n,s)$. 
\end{theorem}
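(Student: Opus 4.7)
By rescaling we may assume $R=1$, so the task is to prove $u^\e$ is Lipschitz on $D_{\boldsymbol{\kappa}_2}$ with the stated quantitative energy bound. The strategy adapts the harmonic replacement iteration of Roberts~\cite{Rob} for degenerate free-boundary harmonic maps to the sphere-valued setting, taking the Hölder regularity of Corollary~\ref{coroepsreghold} as input.

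First, I apply Corollary~\ref{coroepsreghold} to obtain $u^\e \in C^{0,\beta_1}(B^+_{\boldsymbol{\kappa}_1})$ with quantitatively controlled Hölder seminorm, and $\|u^\e\|_{L^\infty(\R^{n+1}_+)}\leq 1$. Then, for every base point $\mathbf{x}_0=(x_0,0)$ with $x_0\in D_{\boldsymbol{\kappa}_1/4}$ and every $r\in(0,\boldsymbol{\kappa}_1/4)$, I introduce the \emph{weighted harmonic replacement} $v_r\in H^1(B^+_r(\mathbf{x}_0);\R^d,z^a\,\de\mathbf{x})$, defined as the unique minimizer of $\mathbf{E}_s(\cdot,B^+_r(\mathbf{x}_0))$ among competitors that agree with $u^\e$ on $\partial^+ B^+_r(\mathbf{x}_0)$. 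Then $v_r$ solves the linear degenerate equation $\operatorname{div}(z^a\nabla v_r)=0$ in $B^+_r(\mathbf{x}_0)$ with the homogeneous degenerate Neumann condition $\lim_{z\downarrow0}z^a\partial_z v_r=0$ on $\partial^0 B^+_r(\mathbf{x}_0)$. Extending $v_r$ evenly across $\partial^0$, standard regularity for degenerate elliptic operators with $A_2$-Muckenhoupt weights \cite{FKS} yields smoothness up to $\partial^0 B^+_{r/2}(\mathbf{x}_0)$, together with the dimensionally sharp interior bound
\begin{equation*}
\|\nabla v_r\|^2_{L^\infty(B^+_{r/2}(\mathbf{x}_0))}\leq \frac{C}{r^{n+3-2s}}\int_{B^+_r(\mathbf{x}_0)} z^a|\nabla v_r|^2\,\de\mathbf{x}.
\end{equation*}

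The decisive step is the \emph{comparison estimate}
\begin{equation*}
\mathbf{E}_s(u^\e-v_r,B^+_r(\mathbf{x}_0)) \leq C\,r^{2\beta_1}\, \mathbf{E}_s(u^\e,B^+_r(\mathbf{x}_0)).
\end{equation*}
This is obtained by testing both the weak equation for $u^\e$ (Proposition~\ref{equivsharmfreebdry}) and that of $v_r$ against $\Phi:=u^\e-v_r$, which vanishes on $\partial^+ B^+_r(\mathbf{x}_0)$. The resulting boundary term on $\partial^0 B^+_r(\mathbf{x}_0)$ is estimated by combining three ingredients: the collinearity $\lim_{z\downarrow 0}z^a\partial_z u^\e \in \R\cdot u^\e$ coming from the Euler--Lagrange equation of the sphere-constrained problem; the pointwise constraint $|u^\e|^2=1$ on $\partial^0$; and the Hölder oscillation bound from Step 1. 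A crucial observation is the quadratic cancellation
\begin{equation*}
a\cdot(u^\e-a)=-\tfrac12|u^\e-a|^2 = O(r^{2\beta_1})\quad\text{on $\partial^0 B^+_r(\mathbf{x}_0)$},
\end{equation*}
where $a:=u^\e(\mathbf{x}_0)\in\mathbb{S}^{d-1}$; this identity shows that the component of $u^\e-v_r$ along $a$ (the ``normal'' direction to the tangent space of $\mathbb{S}^{d-1}$) is quadratically small in the oscillation, which is what supplies the sharp factor $r^{2\beta_1}$ rather than $r^{\beta_1}$.

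Combining the interior gradient estimate for $v_r$ with the comparison estimate via the triangle inequality produces the decay recursion
\begin{equation*}
\boldsymbol{\Theta}_s(u^\e,\mathbf{x}_0,\tau r) \leq C\bigl(\tau^{2}+\tau^{2s-n}r^{2\beta_1}\bigr)\boldsymbol{\Theta}_s(u^\e,\mathbf{x}_0,r)\quad\forall\tau\in(0,1/2).
\end{equation*}
Choosing $\tau$ small and iterating on dyadic scales $r\leq r_0$, standard Campanato-type arguments yield a power-law decay $\boldsymbol{\Theta}_s(u^\e,\mathbf{x}_0,r)\leq Cr^{2\alpha_0}$ for some $\alpha_0 \in (0,1)$; bootstrapping (plugging the improved Hölder exponent back in place of $\beta_1$ into Step 3 and re-running the iteration) drives $\alpha_0$ arbitrarily close to $1$. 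To upgrade from ``almost Lipschitz'' to genuine Lipschitz, I refine the comparison by replacing $u^\e$ by $u^\e-\ell$ for an affine $\ell$ approximating $u^\e$ (following the standard $C^{1,\alpha}$ Campanato scheme), yielding a Morrey-type decay of $\nabla u^\e$ in $L^2(z^a)$ that, combined with the interior smoothness of $u^\e$ in $\R^{n+1}_+$ and the trace estimate of Lemma~\ref{HsregtraceH1weight}, forces $|\nabla u|$ to be uniformly bounded in $D_{\boldsymbol{\kappa}_2}$, with the claimed linear dependence on $\boldsymbol{\Theta}_s(u^\e,0,1)$. The main obstacle is the comparison estimate in Step 3, which requires precise exploitation of the sphere constraint to produce the sharp $r^{2\beta_1}$ gain; a secondary technical issue is closing the iteration at the Lipschitz threshold rather than merely for exponents $\alpha<1$, which necessitates the affine-Campanato refinement.
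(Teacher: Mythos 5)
Your proposal pursues a genuinely different route from the paper. The paper does not apply harmonic replacement to $u^\e$ directly; instead it passes to the phase $v:=u^\e/|u^\e|$ (well-defined once the H\"older estimate of Corollary~\ref{coroepsreghold} forces $|u^\e|\geq 1/2$ in a small half-ball), derives from Proposition~\ref{equivsharmfreebdry} the \emph{local} degenerate harmonic map system $-{\rm div}(z^a\rho^2\nabla v)=z^a\rho^2|\nabla v|^2 v$ in $B^+_r$ with homogeneous Neumann condition on $\partial^0$ (Lemma~\ref{lemmodphase}), reflects evenly to a full ball (Corollary~\ref{eqsymtrizedphase}), and then runs harmonic replacement on \emph{full} balls against the weight $\rho^2=|u^\e|^2$. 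Because $|v|\equiv 1$ throughout (not just on the trace), the Lagrange multiplier $z^a\rho^2|\nabla v|^2$ lives in the interior, all terms are local, and the iterate takes the telescoping form $\frac{1}{|B_{r/2}|_a}\int_{B_{r/2}}|z|^a\rho^2|\nabla v|^2\leq(1+C_\eta r^{\beta/2})\frac{1}{|B_r|_a}\int_{B_r}|z|^a\rho^2|\nabla v|^2$, whose dyadic product converges and gives directly the $L^\infty$ gradient bound with the claimed dependence on $\boldsymbol{\Theta}_s$.

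Your approach, while natural, runs into three concrete difficulties, the first of which is an error. (1) The quantity controlling the boundary term is $u\cdot(u-v_r)$ on $\partial^0$, not $a\cdot(u-a)$. Using $|u|=1$ one gets the exact identity $u\cdot(u-v_r)=\tfrac12|u-v_r|^2+\tfrac12(1-|v_r|^2)$, and while $|u-v_r|^2=O(r^{2\beta_1})$, the piece $1-|v_r|^2$ is only $O(r^{\beta_1})$ because $v_r$ is \emph{not} sphere-valued (the cancellation you invoke requires both vectors on the sphere). So the comparison estimate can at best deliver a factor $r^{\beta_1}$, not $r^{2\beta_1}$; your decay recursion and bootstrap are built on the stronger power and would need to be restructured. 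A factor $r^{\beta_1}$ does suffice, as the paper shows, but it requires the paper's multiplicative iteration rather than your Campanato decay. (2) When you test the equation for $u^\e$ against $\Phi=u^\e-v_r$, the boundary contribution is $\tfrac1{\boldsymbol{\delta}_s}\int_{D_r}|\de_s u|^2\,u\cdot(u-v_r)\,\de x$, and $\int_{D_r}|\de_s u|^2$ is a \emph{nonlocal} quantity (essentially $\mathcal{E}_s(u,D_r)$); it carries a tail over $D_r\times D_{Cr}^c$ that is not controlled by the local extension energy $\mathbf{E}_s(u^\e,B_r^+)$, so the right-hand side of your comparison estimate is not simply proportional to $\mathbf{E}_s(u^\e,B_r^+)$ at the same scale. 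The paper's passage to $u^\e/|u^\e|$ eliminates this issue entirely, since the equation for the phase is local. (3) Even granting the comparison estimate, a recursion of the form $\boldsymbol{\Theta}_s(\tau r)\leq C(\tau^2+\tau^{2s-n}r^\gamma)\boldsymbol{\Theta}_s(r)$ with $C>1$ can only yield $C^{0,\alpha}$ for $\alpha<1$ after iteration; you correctly flag that an affine (Campanato $C^{1,\alpha}$-type) refinement is needed to reach Lipschitz, but that is a substantial additional argument which your outline does not supply, whereas the paper's formulation sidesteps it completely.
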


The proof of Theorem \ref{thmepsregLip} consists in considering the system satisfied by the $\mathbb{S}^{d-1}$-valued map $u^\e/|u^\e|$. By Corollary \ref{coroepsreghold}, $u^\e$ is H\"older continuous, and therefore $|u^\e|\geq 1/2$ in a smaller half ball $B_r^+$.  In particular, $v:=u^\e/|u^\e|$ is well defined and H\"older continuous in $B_r^+$. We shall see that it satisfies in the weak sense the degenerate system with {\sl homogeneous} Neumann boundary condition
\begin{equation}\label{strongfromsystumodu}
\begin{cases}
-{\rm div}\big(z^a\rho^2\nabla v\big)= z^a\rho^2|\nabla v|^2 v & \text{in $B^+_r$}\,,\\[5pt]
\displaystyle z^a\rho^2\frac{\partial v}{\partial\nu}=0 & \text{on $\partial^0B^+_r$}\,,
\end{cases}
\end{equation}
with H\"older continuous weight $\rho^2:=|u^\e|^2$.  Up to the extra weight term $\rho^2$, this system fits into the class of degenerate harmonic map systems with free boundary considered in \cite{Rob}. Adjusting the arguments in \cite{Rob} to take care of the extra weight  $\rho^2$, we shall prove that $v$ is Lipschitz continuous in an even smaller half ball. Since $u^\e=v$ on $\partial^0B^+_r$, the conclusion will follow straight away.

\subsection{Proof of Theorem \ref{thmepsregLip}}

The aforementioned Lipschitz estimate on the map $u^\e/|u^\e|$ is the object of the following proposition.

\begin{proposition}\label{themholdimplLip}
Let $u\in \widehat H^s(D_{2R};\mathbb{S}^{d-1})$ be a weakly $s$-harmonic map in $D_{2R}$. Assume that $u^\e\in C^{0,\beta}(B^+_R)$ for some exponent $\beta\in(0,1)$, and that $|u^\e|\geq 1/2$ in $B_R^+$.  Setting $\eta:=R^\beta[u^\e]_{C^{0,\beta}(B^+_R)}$, the map $u^\e/|u^\e|$  is Lipschitz continuous in $\overline B^+_{R/3}$, and 
$$R^2 \|\nabla \big(u^\e/|u^\e|\big)\|^2_{L^\infty(B^+_{R/3})}\leq C_{\eta,\beta} \boldsymbol{\Theta}_s(u^\e,0,R)\,,$$
for a constant $C_{\eta,\beta}=C_{\eta,\beta}(\eta,\beta,n,s)$. 
\end{proposition}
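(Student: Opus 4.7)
The plan is to exploit the reformulation announced just before the statement: setting $\rho:=|u^\e|$ and $v:=u^\e/\rho$, the map $v$ is $\mathbb{S}^{d-1}$-valued throughout $B_R^+$, and the identities $u^\e=\rho v$ together with $v\cdot\nabla v=0$ (from $|v|^2\equiv 1$) allow one to derive the degenerate system~\eqref{strongfromsystumodu} from the extension equation. Concretely, Proposition~\ref{equivsharmfreebdry} yields $\mathrm{div}(z^a\nabla u^\e)=0$ in $B_R^+$ with free boundary condition $z^a\partial_z u^\e\perp\mathrm{Tan}(u,\mathbb{S}^{d-1})$ on $\partial^0 B_R^+$. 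Expanding $\mathrm{div}(z^a\nabla(\rho v))$ by the product rule, scalar-multiplying by $v$, and using the identity $v\cdot\mathrm{div}(z^a\nabla v)=-z^a|\nabla v|^2$ (valid since $v\cdot\nabla v=0$), one obtains $\mathrm{div}(z^a\rho^2\nabla v)=-z^a\rho^2|\nabla v|^2 v$ in $B_R^+$, while on $\partial^0 B_R^+$ the orthogonality condition combined with $\rho\equiv 1$ and $v\cdot\partial_z v=0$ there forces $z^a\rho^2\partial_z v=0$.

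Since the boundary condition is homogeneous, the natural next step is to reflect $v$ and $\rho$ evenly across $\{z=0\}$, producing on the full ball $B_R$ a weak solution of
\begin{equation*}
-\mathrm{div}(|z|^a\rho^2\nabla v) = |z|^a\rho^2|\nabla v|^2 v\,,
\end{equation*}
relative to the $A_2$-Muckenhoupt weight $|z|^a$. Throughout $B_R$ the map $v$ is $\mathbb{S}^{d-1}$-valued and lies in $C^{0,\beta}$, while $\rho^2\in C^{0,\beta}$ satisfies $1/4\leq\rho^2\leq 1$ (using \eqref{bdlinftyext} for the upper bound and the hypothesis $|u^\e|\geq 1/2$ for the lower bound); both $C^{0,\beta}$-seminorms are controlled by $\eta$. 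The problem is thus reduced to an interior regularity question for a sphere-valued weighted harmonic-type map with H\"older continuous positive coefficient, precisely the framework of~\cite{Rob}.

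Lipschitz regularity of $v$ in $\overline B^+_{R/3}$ is then obtained by adapting the harmonic-replacement scheme of Roberts to accommodate the extra coefficient $\rho^2$. The main ingredients are: (i) the weighted H\'elein-type conservation laws $\mathrm{div}\bigl(|z|^a\rho^2(v^i\nabla v^j-v^j\nabla v^i)\bigr)=0$, which follow from the equation together with $|v|^2\equiv 1$; (ii) for each ball $B_r(\mathbf{x}_0)\subset B_{R/2}$, a comparison of $v$ with the weighted harmonic map $\tilde v$ of frozen coefficient $\rho^2(\mathbf{x}_0)$ having the same boundary trace as $v$ on $\partial B_r(\mathbf{x}_0)$; (iii) a weighted $\mathcal{H}^1$--BMO type compensation estimate controlling the quadratic term $|\nabla v|^2$; and (iv) the H\"older control $\rho^2-\rho^2(\mathbf{x}_0)=O(r^\beta)$. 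Combined, these yield a Campanato-type decay of the weighted $L^2$ mean oscillation of $\nabla v$ on $B_r(\mathbf{x}_0)$, which upgrades $\nabla v$ to $L^\infty(B^+_{R/3})$; the explicit bound in terms of $\boldsymbol{\Theta}_s(u^\e,0,R)$ then follows by applying Lemma~\ref{hatH1/2toH1} to $u^\e$.

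The main obstacle lies in items~(iii)--(iv): one must verify that the multiplicative perturbation by $\rho^2$ neither destroys the compensation phenomenon behind the $\mathcal{H}^1$-estimate of the quadratic right-hand side, nor degrades the decay exponent in the Campanato iteration. This is handled by freezing $\rho^2$ at the centre of each ball and treating the difference as an $O(r^\beta)$ lower-order term; the iteration then accumulates a factor depending only on $\eta$ and $\beta$, preserving a positive decay rate and producing the announced constant $C_{\eta,\beta}$.
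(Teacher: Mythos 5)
Your overall skeleton — derive the weighted system for $v=u^\e/|u^\e|$, extend by even reflection across $\{z=0\}$, compare with a harmonic replacement ball by ball, and iterate a Campanato/Morrey decay — matches the paper's strategy. But your proposed key ingredients (i) and (iii), a weighted H\'elein conservation law and a ``weighted $\mathcal{H}^1$--BMO type compensation estimate'' to control $|\nabla v|^2$, are not what this step uses, and proposing them is a genuine misdirection. Under the standing hypothesis $u^\e\in C^{0,\beta}(B_R^+)$ (which you are already invoking for $\rho^2$), the quadratic right-hand side is controlled by the maximum principle, not by compensation compactness. If $w$ solves $\mathrm{div}(|z|^a\nabla w)=0$ in $B_r(\mathbf{y})$ with $w=v$ on $\partial B_r(\mathbf{y})$, then testing the equation for $v$ against $v-w$ yields
$$\int_{B_r}|z|^a\rho^2|\nabla(v-w)|^2\,\de\mathbf{x}
=\int_{B_r}|z|^a\rho^2|\nabla v|^2\, v\cdot(v-w)\,\de\mathbf{x}
-\int_{B_r}|z|^a\bigl(\rho^2-\rho^2(\mathbf{y})\bigr)\nabla w\cdot\nabla(v-w)\,\de\mathbf{x}\,,$$
and since $|v|\equiv 1$ the first term is simply bounded by $\|v-w\|_{L^\infty(B_r(\mathbf{y}))}\int_{B_r}|z|^a\rho^2|\nabla v|^2$. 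The maximum principle (Lemma~\ref{maxprincip}) applied componentwise, together with $v\in C^{0,\beta}$, gives $\|v-w\|_{L^\infty(B_r(\mathbf{y}))}\leq C\eta r^\beta$. That $O(r^\beta)$ gain, plus the analogous bound on the $\rho^2-\rho^2(\mathbf{y})$ term (your item~(iv)), is the whole source of the Campanato decay. A weighted $\mathcal{H}^1$--BMO div-curl lemma for the $A_2$ weight $|z|^a$ is a significant and unverified piece of machinery, and its purpose would be to re-prove H\"older continuity — which you already have. The simpler mechanism is both available and the correct one here.

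Two smaller remarks. First, the harmonic replacement is taken with respect to the plain degenerate operator $\mathrm{div}(|z|^a\nabla\cdot)$, not a ``frozen-coefficient'' $\rho^2(\mathbf{x}_0)$-weighted one; the two coincide up to a constant factor, so your item~(ii) is essentially right, and the factor $\rho^2$ then re-enters the estimates only through its H\"older modulus (your item~(iv)) and the monotonicity/minimality of $w$ (Lemmas~\ref{monotIharmreplac} and~\ref{minimalityharmonw}). Second, to convert the integral decay into the pointwise bound $\|\nabla v\|_{L^\infty(B^+_{R/3})}$ one must run the iteration over \emph{both} balls centered on $\{z=0\}$ and balls $B_r(\mathbf{y})$ with $\mathbf{y}\notin\{z=0\}$ and $r\leq |z_0|/2$, using the separate monotonicity statement Lemma~\ref{monotIharmreplac2} for the latter, before applying Lebesgue differentiation; your sketch omits this two-track structure.
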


Before proving this proposition, we need to show that $u^\e/|u^\e|$ satisfies system \eqref{strongfromsystumodu} in the weak sense.

\begin{lemma}\label{lemmodphase}
Let $u\in \widehat H^s(D_{2R};\mathbb{S}^{d-1})$ be a weakly $s$-harmonic map in $D_{2R}$. Assume that $\rho:=|u^\e|$ satisfies $\rho\geq 1/2$ a.e. in $B_R^+$. Then the map 
$v:=u^\e/\rho$ belongs to $H^1(B_R^+;\R^d,|z|^a\de{\bf x})$ and it satisfies
$$\int_{B_R^+}z^a\rho^2\nabla v\cdot\nabla \phi \,\de {\bf x}=\int_{B_R^+}z^a\rho^2|\nabla v|^2v\cdot\phi\,\de{\bf x}$$
for every $\phi\in H^1(B_R^+;\R^d,|z|^a\de{\bf x})\cap L^\infty(B_R^+)$ such that $\phi=0$ on $\partial^+B_R$. 
\end{lemma}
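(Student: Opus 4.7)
The plan is to derive the equation for $v := u^\e/\rho$ from the weighted harmonic-map equation satisfied by $u^\e$ itself (Proposition \ref{equivsharmfreebdry}), via a suitable test function and the constraint $|v|\equiv 1$. As a preliminary I would verify that $\rho = |u^\e|$ and $v$ both belong to $H^1(B_R^+;\mathbb{R}^d,|z|^a\de\mathbf{x}) \cap L^\infty(B_R^+)$: by Lemma \ref{hatH1/2toH1} and the estimate $\|u^\e\|_{L^\infty}\leq 1$ from \eqref{bdlinftyext}, $u^\e$ lies in this space, and since $\rho \geq 1/2$ the maps $t\mapsto \sqrt{t}$ and $t\mapsto 1/t$ are smooth on the range of $\rho$, so the usual chain-rule and product-rule arguments apply.

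Given any test function $\phi \in H^1(B_R^+;\mathbb{R}^d,|z|^a\de\mathbf{x}) \cap L^\infty$ with $\phi = 0$ on $\partial^+ B_R$, I would split $\phi = \phi_T + (v\cdot\phi)v$ with tangential part $\phi_T := \phi - (v\cdot\phi)v$ satisfying $\phi_T \cdot v \equiv 0$ in $B_R^+$. The sphere constraint $|v|^2=1$ yields $\sum_i v^i \partial_k v^i = 0$ a.e., from which a direct computation gives $\nabla v \cdot \nabla\big((v\cdot\phi)v\big) = |\nabla v|^2 (v\cdot\phi)$. Hence
\[
\int_{B_R^+} z^a \rho^2 \nabla v \cdot \nabla\phi\, \de\mathbf{x} = \int_{B_R^+} z^a \rho^2 \nabla v \cdot \nabla\phi_T\, \de\mathbf{x} + \int_{B_R^+} z^a \rho^2 |\nabla v|^2 (v\cdot\phi)\, \de\mathbf{x},
\]
and the problem reduces to showing that the first integral on the right-hand side vanishes.

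The heart of the argument is then to test the equation from Proposition \ref{equivsharmfreebdry}, applied to $u^\e$ on the admissible set $B_R^+$, against $\Phi := \rho\,\phi_T$. This $\Phi$ lies in $H^1 \cap L^\infty$, vanishes on $\partial^+ B_R$, and on $\partial^0 B_R^+$ one has $\rho=1$ and $v=u\in \mathbb{S}^{d-1}$, so $\Phi\cdot u = \phi_T\cdot v$, whose trace is zero since $\phi_T\cdot v$ is identically zero as an element of $H^1$. Writing $u^\e = \rho v$ and expanding $\nabla u^\e\cdot\nabla(\rho\phi_T)$ componentwise produces four terms: the $(v\cdot\phi_T)|\nabla\rho|^2$ term vanishes by $\phi_T\cdot v = 0$, the main $\rho^2\nabla v\cdot\nabla\phi_T$ term is what we want, and the two cross terms cancel by the identity $\sum_i v^i\partial_k\phi_T^i = -\sum_i \phi_T^i\partial_k v^i$ obtained by differentiating $\phi_T\cdot v \equiv 0$. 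The only mildly delicate point I anticipate is the bookkeeping in the weighted Sobolev space, namely verifying that all the products ($\rho\phi_T$, $(v\cdot\phi)v$, etc.) are in $H^1 \cap L^\infty$ so that these formal product rules hold in the distributional sense; this is routine because each factor is $H^1\cap L^\infty$ and $\rho$ is bounded away from zero.
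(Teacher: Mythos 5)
Your argument is correct and follows essentially the same route as the paper: your $\phi_T$ is the paper's $\psi$, your test vector field $\Phi = \rho\phi_T$ is the paper's $\xi = \rho\psi$, and the algebraic identities you use ($v\cdot\partial_k v = 0$, $\partial_k(v\cdot\phi_T)=0$) are exactly those the paper invokes when expanding $\partial_k u^\e\cdot\partial_k\xi = \rho^2\,\partial_k v\cdot\partial_k\psi$ and rewriting $\partial_k v\cdot\partial_k\psi$. The only difference is cosmetic: you split $\phi$ into tangential and normal parts at the outset and reduce to showing $\int z^a\rho^2\nabla v\cdot\nabla\phi_T = 0$, whereas the paper first establishes $\int z^a\rho^2\nabla v\cdot\nabla\psi = 0$ and then expands $\nabla v\cdot\nabla\psi$ in terms of $\nabla v\cdot\nabla\phi$ at the end; the content is identical.
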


\begin{proof}
First recall from \eqref{bdlinftyext} and Lemma \ref{hatH1/2toH1} that $u^\e\in H^1(B_R^+;\R^d,|z|^a\de{\bf x})\cap L^\infty(\R_+^{n+1})$, and consequently, $\rho\in H^1(B_R^+,|z|^a\de{\bf x})\cap L^\infty(\R_+^{n+1})$. By assumption $\rho\geq 1/2$, so that  $1/\rho\in H^1(B_R^+,|z|^a\de{\bf x})\cap L^\infty(\R_+^{n+1})$. The space $H^1(B_R^+,|z|^a\de{\bf x})\cap L^\infty(\R_+^{n+1})$ being an algebra, it follows that 
$v\in H^1(B_R^+;\R^d,|z|^a\de{\bf x})$, and by definition $|v|=1$ a.e. in $B_R^+$. 

Let us now fix $\Phi\in H^1(B_R^+;\R^d,|z|^a\de{\bf x})\cap L^\infty(B_R^+)$ such that  $\Phi=0$ on $\partial^+B_R$. Again, $H^1(B_R^+;\R^d,|z|^a\de{\bf x})\cap L^\infty(B_R^+)$ being an algebra, 
 $\psi:=\Phi-(\Phi\cdot v)v\in H^1(B_R^+;\R^d,|z|^a\de{\bf x})\cap L^\infty(B_R^+)$. It also satisfies $\psi=0$ on $\partial^+B_R$, and by construction, we have $v\cdot\psi=0$ a.e. in $B_R^+$. Now we consider  $\xi:=\rho\psi\in H^1(B_R^+;\R^d,|z|^a\de{\bf x})\cap L^\infty(\R^n)$, which still satisfies $\xi=0$ on $\partial^+B_R$, and $u^\e\cdot\xi=0$  in $B_R^+$. In particular, $u\cdot \xi=0$  on $\partial^0B_R^+$. 
 
 By Proposition \ref{equivsharmfreebdry}, the map $u^\e$ is a weighted weakly harmonic map with free boundary in the half ball $B_R^+$, i.e., it satisfies \eqref{vareqext}. Hence, 
\begin{equation}\label{2334vend19}
\int_{B_R^+}z^a\nabla u^\e\cdot\nabla\xi\,\de {\bf x}=0\,. 
\end{equation}
On the other hand, $\partial_i u^\e=\partial_i\rho v + \rho\partial_i v$ and $\partial_i \xi= \partial_i\rho \psi+\rho\partial_i \psi$ in $B_R^+$ for $i=1,\ldots,n+1$. Then we notice that $v\cdot\psi=0$ implies $v\cdot\partial_i\psi=-\partial_iv\cdot\psi$ in $B_R^+$ for $i=1,\ldots,n+1$. In the same way, the fact that $|v|^2=1$ leads to $v\cdot\partial_iv=0$ in $B_R^+$ for $i=1,\ldots,n+1$. As a consequence, 
$$\partial_iu^\e\cdot\partial_i\xi=\big(\partial_i\rho v+\rho\partial_iv\big)\cdot\big(\partial_i\rho\psi+\rho\partial_i\psi\big) =\rho^2\partial_i v\cdot\partial_i\psi\quad\text{a.e. in $B_R^+$}\,,$$
for $i=1,\ldots,n+1$. Inserting this identity in \eqref{2334vend19} yields 
\begin{equation}\label{preeqphasesam20}
\int_{B_R^+}z^a\rho^2\nabla v\cdot\nabla\psi\,\de {\bf x}=0\,. 
\end{equation}
To conclude, we notice that 
$$\partial_iv\cdot\partial_i\psi=\partial_iv\cdot\big(\partial_i\Phi-(v\cdot\Phi)\partial_iv-(\partial_i v\cdot\Phi+v\cdot\partial_i\Phi)v\big)=\partial_iv\cdot\partial_i\Phi -|\partial_iv|^2v\cdot\Phi\quad\text{a.e. in $B_R^+$}\,,$$
for $i=1,\ldots,n+1$. Using this last identity in \eqref{preeqphasesam20} leads to the announced conclusion. 
\end{proof}

As usual, to deal with homogeneous Neumann condition, we extend the equation to the whole ball by symmetry.  In this way, proving estimates up to the boundary reduces to  prove interior estimates. 

\begin{corollary}\label{eqsymtrizedphase}
Let $u\in \widehat H^s(D_{2R};\mathbb{S}^{d-1})$ be a weakly $s$-harmonic map in $D_{2R}$. Assume that $|u^\e|\geq 1/2$ a.e. in $B_R^+$. Then the function $\rho$ and the map $v$ defined by 
\begin{equation}\label{defmodrhosym}
\rho({\bf x}):=\begin{cases}
|u^\e(x,z)| & \text{if ${\bf x}=(x,z)\in B_R^+$}\\
|u^\e(x,-z)| & \text{if ${\bf x}=(x,z)\in B_R^-$}
\end{cases}
\end{equation}
 and
 \begin{equation}\label{defphsevsym}
 v({\bf x}):=
\begin{cases}
u^\e(x,z)/\rho({\bf x}) & \text{if ${\bf x}=(x,z)\in B_R^+$}\\
u^\e(x,-z)/\rho({\bf x}) & \text{if ${\bf x}=(x,z)\in B_R^-$}
\end{cases}
\end{equation}
belong to  $H^1(B_R,|z|^a\de{\bf x})\cap L^\infty(B_R)$ and  $H^1(B_R;\R^d,|z|^a\de{\bf x})\cap L^\infty(B_R)$ respectively, and 
\begin{equation}\label{eqsymmetrizedphse}
\int_{B_R}|z|^a\rho^2\nabla  v\cdot\nabla\Phi\,\de{\bf x}=\int_{B_R}|z|^a\rho^2|\nabla  v|^2v\cdot\Phi\,\de{\bf x} 
\end{equation}
holds for every $\Phi\in H^1(B_R;\R^d,|z|^a\de{\bf x})\cap L^\infty(B_R)$ such that $\Phi=0$ on $\partial B_R$.
\end{corollary}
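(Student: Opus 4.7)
The strategy is to reduce \eqref{eqsymmetrizedphse} to the one-sided identity established in Lemma \ref{lemmodphase} by decomposing an arbitrary test function into its even and odd parts with respect to the reflection $z\mapsto -z$, exploiting the parity of $\rho$, $v$, and of the weight $|z|^a$.

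First I would handle the regularity claim. Setting $\rho_+:=|u^\e|$ and $v_+:=u^\e/\rho_+$ on $B_R^+$, Lemma \ref{lemmodphase} (and its proof) yields $\rho_+\in H^1(B_R^+,|z|^a\de{\bf x})\cap L^\infty$ and $v_+\in H^1(B_R^+;\R^d,|z|^a\de{\bf x})\cap L^\infty$. The functions $\rho$ and $v$ defined by \eqref{defmodrhosym}--\eqref{defphsevsym} are the even reflections of $\rho_+$ and $v_+$ across $\{z=0\}$. Since the weight $|z|^a$ is invariant under $z\mapsto -z$, a standard reflection argument (approximating $\rho_+$ by smooth functions and reflecting) shows that even reflection defines a bounded linear operator from $H^1(B_R^+,|z|^a\de{\bf x})$ into $H^1(B_R,|z|^a\de{\bf x})$, with the weak partial derivatives in $B_R$ given by the even reflection of $\partial_iv_+$ (for $i\leq n$) and by the odd reflection of $\partial_z v_+$. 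Thus $\rho\in H^1(B_R,|z|^a\de{\bf x})\cap L^\infty(B_R)$ and $v\in H^1(B_R;\R^d,|z|^a\de{\bf x})\cap L^\infty(B_R)$.

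Next I would prove the integral identity. Given $\Phi\in H^1(B_R;\R^d,|z|^a\de{\bf x})\cap L^\infty(B_R)$ with $\Phi=0$ on $\partial B_R$, decompose
\[
\Phi({\bf x})=\Phi_s({\bf x})+\Phi_a({\bf x}), \qquad \Phi_s(x,z):=\tfrac{1}{2}\bigl(\Phi(x,z)+\Phi(x,-z)\bigr),\quad \Phi_a(x,z):=\tfrac{1}{2}\bigl(\Phi(x,z)-\Phi(x,-z)\bigr).
\]
Both pieces inherit the regularity and boundedness of $\Phi$, and both vanish on $\partial B_R$ (for $\Phi_s$ this uses that $\Phi$ vanishes on the whole sphere, including the lower hemisphere). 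By construction $\Phi_s$ is even in $z$, while $\Phi_a$ is odd and in particular $\Phi_a(x,0)=0$ in the trace sense.

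The third step is a parity count. Since $\rho$ and $v$ are even in $z$, the tangential derivatives $\nabla_x\rho$, $\nabla_xv$ are even in $z$ and the normal derivatives $\partial_z\rho$, $\partial_zv$ are odd. Consequently:
\begin{itemize}
\item $|z|^a\rho^2\nabla v\cdot\nabla\Phi_s$ is even in $z$ (even$\times$even plus odd$\times$odd), so its integral over $B_R$ equals twice its integral over $B_R^+$; the same holds for $|z|^a\rho^2|\nabla v|^2\, v\cdot\Phi_s$.
\item $|z|^a\rho^2\nabla v\cdot\nabla\Phi_a$ is odd in $z$, hence integrates to $0$ over $B_R$; similarly for the right-hand side counterpart with $\Phi_a$.
\end{itemize}
Therefore it suffices to establish \eqref{eqsymmetrizedphse} with $\Phi$ replaced by $\Phi_s$. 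The restriction $\Phi_s\restriction_{B_R^+}$ belongs to $H^1(B_R^+;\R^d,|z|^a\de{\bf x})\cap L^\infty(B_R^+)$ and vanishes on $\partial^+B_R$, so Lemma \ref{lemmodphase} yields
\[
\int_{B_R^+}z^a\rho_+^2\nabla v_+\cdot\nabla\Phi_s\,\de{\bf x}=\int_{B_R^+}z^a\rho_+^2|\nabla v_+|^2\,v_+\cdot\Phi_s\,\de{\bf x}.
\]
Doubling both sides using the evenness observed above delivers \eqref{eqsymmetrizedphse} for $\Phi_s$, and hence for $\Phi$. The mildly delicate point is the reflection argument for the weighted Sobolev space, where one must ensure that the odd extension of $\partial_z v_+$ truly represents the weak $z$-derivative of $v$ in $B_R$; this is where the symmetry of the weight $|z|^a$ is crucial, but it is otherwise a routine density argument.
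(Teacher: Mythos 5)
Your proposal is correct and follows essentially the same route as the paper's proof: the same even/odd decomposition of the test function, the same parity bookkeeping showing the odd part contributes nothing, and the same reduction of the even part to Lemma~\ref{lemmodphase} on the half-ball followed by doubling. Your elaboration of the reflection argument for the weighted Sobolev space (which the paper dispatches in one sentence) is a reasonable expansion but not a different method.
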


\begin{proof}
The fact that $\rho$ and $v$ belong to $H^1(B_R,|z|^a\de{\bf x})\cap L^\infty(B_R)$ and  $H^1(B_R;\R^d,|z|^a\de{\bf x})\cap L^\infty(B_R)$ respectively follows from Lemma \ref{lemmodphase} together with the symmetry with respect to the hyperplane $\{z=0\}$. 

We now consider an arbitrary $\Phi\in H^1(B_R;\R^d,|z|^a\de{\bf x})\cap L^\infty(B_R)$ satisfying $\Phi=0$ on $\partial B_R$. We split $\Phi$ into its symmetric and anti-symmetric parts defined by 
$$\Phi^s(x,z):=\frac{\Phi(x,z)+\Phi(x,-z)}{2}\quad\text{and}\quad \Phi^a(x,z):=\frac{\Phi(x,z)-\Phi(x,-z)}{2}\,. $$
Clearly, $\Phi^s,\Phi^a\in H^1(B_R;\R^d,|z|^a\de{\bf x})\cap L^\infty(B_R)$ and $\Phi^s=\Phi^a=0$ on $\partial B_R$. By construction, we have $\Phi^s(x,-z)=\Phi^s(x,z)$ and $\Phi^a(x,-z)=-\Phi^a(x,z)$, so that 
$\partial_z \Phi^s(x,z)=-\partial_z \Phi^s(x,-z)$ and $\partial_z \Phi^a(x,z)=\partial_z \Phi^a(x,-z)$. The map $v$ being symmetric with respect to $\{z=0\}$, it also satisfies $\partial_z v(x,z)=-\partial_zv(x,-z)$. Therefore, 
$$(\nabla v\cdot\nabla\Phi^s)(x,z)=(\nabla v\cdot\nabla\Phi^s)(x,-z) \quad\text{and}\quad (\nabla v\cdot\nabla\Phi^a)(x,z)=-(\nabla v\cdot\nabla\Phi^a)(x,-z) \,.$$
As a first consequence, 
\begin{equation}\label{antisym1}
\int_{B_R}|z|^a\rho^2\nabla  v\cdot\nabla\Phi^a\,\de{\bf x}=0\,.
\end{equation}
Since $(v\cdot\Phi^a)(x,-z)=-(v\cdot\Phi^a)(x,z)$, we also have 
\begin{equation}\label{antisym2}
\int_{B_R}|z|^a\rho^2|\nabla  v|^2v\cdot\Phi^a\,\de{\bf x} =0\,.
\end{equation}
Then we infer from Lemma  \ref{lemmodphase} that 
\begin{multline}\label{symparteq}
\int_{B_R}|z|^a\rho^2\nabla  v\cdot\nabla\Phi^s\,\de{\bf x}=2\int_{B^+_R}z^a\rho^2\nabla  v\cdot\nabla\Phi^s\,\de{\bf x}\\
= 2\int_{B^+_R}z^a\rho^2|\nabla  v|^2v\cdot\Phi^s\,\de{\bf x} =\int_{B_R}|z|^a\rho^2|\nabla  v|^2v\cdot\Phi^s\,\de{\bf x}\,. 
\end{multline}
Gathering \eqref{antisym1}, \eqref{antisym2}, and \eqref{symparteq} leads to \eqref{eqsymmetrizedphse}, and the proof is complete. 
\end{proof}

\begin{proof}[Proof of Proposition \ref{themholdimplLip}] Rescaling variables, we can assume without loss of generality that $R=1$. 
Throughout the proof, we shall write for a measurable set $A\subset\R^{n+1}$, 
$$|A|_a:=\int_{A}|z|^a\,\de{\bf x}\,,$$
and we notice that for ${\bf y}\in\R^n\times\{0\}$, 
\begin{equation}\label{weightvolball}
|B_r({\bf y})|_a=|B_r|_a=|B_1|_ar^{n+2-2s}\,. 
\end{equation}
We start by applying Corollary \ref{eqsymtrizedphase} to consider the (symmetrized) modulus function $\rho$ and the (symmetrized) phase map $v$ defined by \eqref{defmodrhosym} and \eqref{defphsevsym}, respectively. Since $u^\e$ belongs to $C^{0,\beta}(B_R^+)$ and $|u^\e|\geq 1/2$ in $B_R^+$, it follows that $v\in C^{0,\beta}(B_R)$, and $\rho\in C^{0,\beta}(B_R)$ with $\rho\geq 1/2$ in $B_R$. 
By Corollary \ref{eqsymtrizedphase}, $v$ satisfies \eqref{eqsymmetrizedphse}, and from this equation we shall obtain that $v\in C^{0,1}(B_{R/3})$. We proceed in several steps. 
\vskip3pt

\noindent{\it Step 1.} Let us fix ${\bf y}\in D_{1/2}\times\{0\}$ and $r\in(0,1/2]$. We consider  the unique weak solution $w\in H^1(B_r({\bf y});\R^d,|z|^a\de{\bf x})$ of 
\begin{equation}\label{eqwlipregproof}
\begin{cases}
{\rm div}(|z|^a\nabla w)=0 &\text{in $B_r({\bf y})$}\,,\\
w=v & \text{on $\partial B_r({\bf y})$}\,, 
\end{cases}
\end{equation}
see Appendix \ref{appendweightharm}. The map $v$ being continuous in $\overline B_r({\bf y})$, it follows from Lemma \ref{maxprincip} that $w\in C^0(\overline B_r({\bf y}))$. 
Moreover, since $v$ is symmetric with respect to the hyperplane $\{z=0\}$, Lemma \ref{symmharmw} tells us that $w$ is also symmetric with respect to $\{z=0\}$. 
\vskip3pt

Now we estimate through Minkowski's inequality, 
\begin{multline}\label{ppffpasideedutou}
\left(\frac{1}{|B_{r/2}|_a}\int_{B_{r/2}({\bf y})}|z|^a\rho^2|\nabla v|^2\,\de{\bf x}\right)^{1/2} \leq \left(\frac{1}{|B_{r/2}|_a}\int_{B_{r/2}({\bf y})}|z|^a\rho^2|\nabla w|^2\,\de{\bf x}\right)^{1/2} \\
+ C\left(\frac{1}{|B_{r}|_a}\int_{B_{r}({\bf y})}|z|^a\rho^2|\nabla (v-w)|^2\,\de{\bf x}\right)^{1/2} \,,
\end{multline}
and our first aim is to estimate the two terms in the right hand side of this inequality. 

From the definition of $\eta$ and the fact that $0\leq \rho\leq 1$, we have 
\begin{equation}\label{holdestrhosq}
|\rho^2({\bf x})-\rho^2({\bf y})|\leq 2\eta|{\bf x}-{\bf y}|^\beta\leq C\eta r^\beta\quad\forall{\bf x}\in B_r({\bf y})\,. 
\end{equation}
Consequently,
\begin{align}
\nonumber \int_{B_{r/2}({\bf y})}|z|^a\rho^2|\nabla w|^2\,\de{\bf x}&\leq \rho^2({\bf y})\int_{B_{r/2}({\bf y})}|z|^a|\nabla w|^2\,\de{\bf x}+\int_{B_{r/2}({\bf y})}|z|^a|\rho^2-\rho^2({\bf y})||\nabla w|^2\,\de{\bf x}\\
\label{estiwcomm2steps} &\leq (1+C\eta r^\beta)\int_{B_{r/2}({\bf y})}|z|^a|\nabla w|^2\,\de{\bf x}\,.
\end{align}
Since $w$ is symmetric with respect to $\{z=0\}$, we infer from Lemma \ref{monotIharmreplac} and \eqref{weightvolball} that the function 
$$t\in(0,r]\mapsto  \frac{1}{|B_{t}|_a}\int_{B_{t}({\bf y})}|z|^a|\nabla w|^2\,\de{\bf x}$$
is nondecreasing. Hence,  
\begin{multline*}
\frac{1}{|B_{r/2}|_a}\int_{B_{r/2}({\bf y})}|z|^a\rho^2|\nabla w|^2\,\de{\bf x}\leq \frac{(1+C\eta r^\beta)}{|B_{r}|_a}\int_{B_{r}({\bf y})}|z|^a|\nabla w|^2\,\de{\bf x}\\
\leq \frac{(1+C\eta r^\beta)}{|B_{r}|_a}\int_{B_{r}({\bf y})}|z|^a|\nabla v|^2\,\de{\bf x}\,,
\end{multline*}
where we have used the minimality of $w$ stated in Lemma \ref{minimalityharmonw} in the last inequality. Using $\rho({\bf y})=1$ and $\rho\geq 1/2$, we now estimate as above, 
\begin{align*}
\int_{B_{r}({\bf y})}|z|^a|\nabla v|^2\,\de{\bf x} & \leq \int_{B_{r}({\bf y})}|z|^a\rho^2|\nabla v|^2\,\de{\bf x}+\int_{B_{r}({\bf y})}|z|^a|\rho^2-\rho^2({\bf y})||\nabla v|^2\,\de{\bf x}\\
&\leq  (1+C\eta r^\beta) \int_{B_{r}({\bf y})}|z|^a\rho^2|\nabla v|^2\,\de{\bf x}\,,
\end{align*}
to reach 
\begin{equation}\label{firstpiecelipregbdry}
 \frac{1}{|B_{r/2}|_a}\int_{B_{r/2}({\bf y})}|z|^a\rho^2|\nabla w|^2\,\de{\bf x}\leq  \frac{(1+C\eta r^\beta)^2}{|B_{r}|_a}\int_{B_{r}({\bf y})}|z|^a\rho^2|\nabla v|^2\,\de{\bf x}\,.
\end{equation}

Next, we recall that $v-w\in H^1(B_r({\bf y});\R^d,|z|^a\de{\bf x})$ satisfies $v-w=0$ on $\partial B_r({\bf y})$. Hence, we can apply Corollary \ref{eqsymtrizedphase} to deduce that
\begin{align}
\nonumber \int_{B_{r}({\bf y})}|z|^a\rho^2&|\nabla (v-w)|^2\,\de{\bf x}=\int_{B_{r}({\bf y})}|z|^a\rho^2\nabla v\cdot \nabla (v-w)\,\de{\bf x}-\int_{B_{r}({\bf y})}|z|^a\rho^2\nabla w\cdot \nabla (v-w)\,\de{\bf x}\\
\label{dim21071936} &=\int_{B_{r}({\bf y})}|z|^a\rho^2|\nabla v|^2 v\cdot (v-w)\,\de{\bf x}-\int_{B_{r}({\bf y})}|z|^a\rho^2\nabla w\cdot \nabla (v-w)\,\de{\bf x}\,.
\end{align}
On the other hand, the equation \eqref{eqwlipregproof} satisfied by $w$ yields 
\begin{align}
\nonumber\int_{B_{r}({\bf y})}|z|^a\rho^2\nabla w\cdot \nabla (v-w)\,\de{\bf x}&=\rho^2({\bf y})\int_{B_{r}({\bf y})}|z|^a\nabla w\cdot \nabla (v-w)\,\de{\bf x}\\
\nonumber &\qquad \qquad\qquad  +  \int_{B_{r}({\bf y})}|z|^a\big(\rho^2-\rho^2({\bf y})\big)\nabla w\cdot \nabla (v-w)\,\de{\bf x}\\
 \label{ctjrspasdim07}&=  \int_{B_{r}({\bf y})}|z|^a\big(\rho^2-\rho^2({\bf y})\big)\nabla w\cdot \nabla (v-w)\,\de{\bf x}\,.
 \end{align}
 By \eqref{holdestrhosq} and the minimality of $w$, we have 
\begin{align}
\nonumber\left| \int_{B_{r}({\bf y})}|z|^a\big(\rho^2-\rho^2({\bf y})\big)\nabla w\cdot \nabla (v-w)\,\de{\bf x}\right| & \leq C\eta r^\beta \int_{B_{r}({\bf y})}|z|^a|\nabla w||\nabla (v-w)|\,\de{\bf x}\\
\nonumber&\leq C\eta r^\beta \int_{B_{r}({\bf y})}|z|^a\big(|\nabla w|^2 +|\nabla v|^2\big)\,\de{\bf x}\\
\nonumber &\leq C\eta r^\beta \int_{B_{r}({\bf y})}|z|^a|\nabla v|^2\,\de{\bf x}\\
\label{pasideedim07}&\leq C\eta r^\beta \int_{B_{r}({\bf y})}|z|^a\rho^2|\nabla v|^2\,\de{\bf x}\,,
\end{align}
where we have used that $\rho\geq 1/2$ in the last inequality. 
 Combining \eqref{dim21071936}, \eqref{ctjrspasdim07}, \eqref{pasideedim07}, and using that $|v|=1$, we infer that 
\begin{equation}\label{ahbahcrottejsp}
\int_{B_{r}({\bf y})}|z|^a\rho^2|\nabla (v-w)|^2\,\de{\bf x} \leq \big(\|v-w\|_{L^\infty(B_r({\bf y}))}+C\eta r^\beta\big)\int_{B_{r}({\bf y})}|z|^a\rho^2|\nabla v|^2\,\de{\bf x}\,.
\end{equation}
Let us now bound $\|v-w\|_{L^\infty(B_r({\bf y}))}$. First, notice that for ${\bf x}\in B_r({\bf y})$, 
\begin{equation}\label{pasunpoildideedim1631}
|v({\bf x})-w({\bf x})|\leq |v({\bf x})-v({\bf y})|+|w({\bf x})-v({\bf y})|\leq C\eta r^\beta+ |w({\bf x})-v({\bf y})|\,.
\end{equation}
Next we observe that for each $i=1,\ldots,d$, the scalar function $w^i-v^i({\bf y})\in H^1(B_r({\bf y}),|z|^a\de{\bf x})$ satisfies in the weak sense
$$\begin{cases}
{\rm div}\big(|z|^a\nabla(w^i-v^i({\bf y}))\big)= 0 & \text{in $B_r({\bf y})$}\,,\\
w^i-v^i({\bf y})= v^i-v^i({\bf y}) & \text{on $\partial B_r({\bf y})$}\,.
\end{cases}$$
It then follows from Lemma \ref{maxprincip} that for each $i=1,\ldots,d$, 
$$\|w^i-v^i({\bf y})\|_{L^\infty(B_r({\bf y}))}\leq \|v^i-v^i({\bf y})\|_{L^\infty(\partial B_r({\bf y}))}\leq \|v-v({\bf y})\|_{L^\infty(\partial B_r({\bf y}))}\leq C\eta r^\beta\,. $$
Back to \eqref{pasunpoildideedim1631}, we have thus obtained
$$\|w-v\|_{L^\infty(B_r({\bf y}))} \leq C\eta r^\beta\,.$$
Using this estimate in \eqref{ahbahcrottejsp}, we derive that 
\begin{equation}\label{secondpieceestireglipbdry}
\int_{B_{r}({\bf y})}|z|^a\rho^2|\nabla (v-w)|^2\,\de{\bf x} \leq C\eta r^\beta\int_{B_{r}({\bf y})}|z|^a\rho^2|\nabla v|^2\,\de{\bf x}\,.
\end{equation}

Now, inserting estimates \eqref{firstpiecelipregbdry} and \eqref{secondpieceestireglipbdry} in \eqref{ppffpasideedutou}, and then squaring both sides of the resulting inequality, we are led to 
$$ \frac{1}{|B_{r/2}|_a}\int_{B_{r/2}({\bf y})}|z|^a\rho^2|\nabla v|^2\,\de{\bf x}\leq \frac{(1+C_\eta r^{\beta/2}) }{|B_{r}|_a}\int_{B_{r}({\bf y})}|z|^a\rho^2|\nabla v|^2\,\de{\bf x} \,,$$ 
for a constant $C_\eta=C_\eta(\eta,n,s)$.  Iterating this inequality along dyadic radii $r_k:=2^{-k}$ with $k\geq 1$, we deduce that 
\begin{multline}\label{mard33july}
 \frac{1}{|B_{r_{k+1}}|_a}\int_{B_{r_{k+1}}({\bf y})}|z|^a\rho^2|\nabla v|^2\,\de{\bf x}  \leq \Big(\prod_{j=1}^k(1+C_\eta 2^{-j\beta/2})\Big) \frac{1}{|B_{1/2}|_a}\int_{B_{1/2}({\bf y})}|z|^a\rho^2|\nabla v|^2\,\de{\bf x} \\
 \leq C_{\eta,\beta} \int_{B_{1}}|z|^a\rho^2|\nabla v|^2\,\de{\bf x} \,,
 \end{multline}
 for a constant $C_{\eta,\beta}=C_{\eta,\beta}(\eta,\beta,n,s)$. Next, for an arbitrary radius $r\in(0,1/2]$, we consider the integer $k\geq 1$ satisfying $r_{k+1} <r\leq r_k$, and estimate 
$$ \frac{1}{|B_{r}|_a}\int_{B_{r}({\bf y})}|z|^a\rho^2|\nabla v|^2\,\de{\bf x}  \leq \frac{2^{n+2-2s}}{|B_{r_k}|_a}\int_{B_{r_k}({\bf y})}|z|^a\rho^2|\nabla v|^2\,\de{\bf x}  \,,$$
to conclude from \eqref{mard33july} and the symmetry of $v$ and $\rho$ with respect to $\{z=0\}$ that 
$$\frac{1}{|B_{r}|_a}\int_{B_{r}({\bf y})}|z|^a\rho^2|\nabla v|^2\,\de{\bf x}  \leq C_{\eta,\beta} \int_{B^+_{1}}|z|^a\rho^2|\nabla v|^2\,\de{\bf x}\quad\forall r\in(0,1/2]\,.$$
Noticing that $|\nabla u^\e|^2=|\nabla \rho|^2+\rho^2|\nabla v|^2$, and in view of the arbitrariness of ${\bf y}$, we have thus proved that 
\begin{equation}\label{gfaimadonfpoilonez}
\frac{1}{|B_{r}|_a}\int_{B_{r}({\bf y})}|z|^a\rho^2|\nabla v|^2\,\de{\bf x}  \leq C_{\eta,\beta} \int_{B^+_{1}}|z|^a|\nabla u^\e|^2\,\de{\bf x}\quad\forall {\bf y}\in D_{1/2}\times\{0\}\,,\;\forall r\in(0,1/2]\,.
\end{equation}

\vskip5pt

\noindent{\it Step 2.} Our main goal in this step is to obtain an estimate similar to \eqref{gfaimadonfpoilonez} for balls which are not centered at points of $\{z=0\}$. By symmetry of $v$ and $\rho$ with respect to $\{z=0\}$, it is enough to consider balls centered at points of $\R^{n+1}_+$.  

Let us fix an arbitrary point ${\bf y}=(y,t)\in B^+_{1/3}$, and notice that $\overline B_{t/2}({\bf y})\subset B_1^+$ . We also consider an arbitrary radius $r\in(0,t/2]$ (so that $\overline B_r({\bf y})\subset B_1^+$). As in Step 1, we introduce the (weak) solution $w\in H^1(B_r({\bf y});\R^d,|z|^a\de{\bf x})$ of \eqref{eqwlipregproof}. Exactly as in \eqref{ppffpasideedutou}, we have 
\begin{multline}\label{ppffpasideedutouStep2}
\left(\Big(\frac{2}{r}\Big)^{n+1}\int_{B_{r/2}({\bf y})}|z|^a\rho^2|\nabla v|^2\,\de{\bf x}\right)^{1/2} \leq \left(\Big(\frac{2}{r}\Big)^{n+1}\int_{B_{r/2}({\bf y})}|z|^a\rho^2|\nabla w|^2\,\de{\bf x}\right)^{1/2} \\
+ C\left(\frac{1}{r^{n+1}}\int_{B_{r}({\bf y})}|z|^a\rho^2|\nabla (v-w)|^2\,\de{\bf x}\right)^{1/2} \,.
\end{multline}
Arguing precisely as in Step 1, we derive that \eqref{secondpieceestireglipbdry} still holds. Then, we estimate as in \eqref{estiwcomm2steps}, 
\begin{equation}\label{prejecpa1}
 \int_{B_{r/2}({\bf y})}|z|^a\rho^2|\nabla w|^2\,\de{\bf x}\leq (\rho^2({\bf y})+C\eta r^\beta)\int_{B_{r/2}({\bf y})}|z|^a|\nabla w|^2\,\de{\bf x}\,.
 \end{equation}
Applying Lemma \ref{monotIharmreplac2} with $\theta=t/r$ and then the minimality of $w$, we obtain   
\begin{multline}\label{prejecpa2}
\Big(\frac{2}{r}\Big)^{n+1} \int_{B_{r/2}({\bf y})}|z|^a|\nabla w|^2\,\de{\bf x}\leq \Big(1+\frac{Cr}{t}\Big)\frac{1}{r^{n+1}} \int_{B_{r}({\bf y})}|z|^a|\nabla w|^2\,\de{\bf x}\\
\leq  \Big(1+\frac{Cr}{t}\Big)\frac{1}{r^{n+1}} \int_{B_{r}({\bf y})}|z|^a|\nabla v|^2\,\de{\bf x}\,.
\end{multline}
Combining \eqref{prejecpa1} with \eqref{prejecpa2}, and using again the H\"older continuity of $\rho^2$ (as in \eqref{holdestrhosq}) together with $1/2\leq \rho\leq 1$, we deduce that 
\begin{align}\label{presqfinimarjul}
\Big(\frac{2}{r}\Big)^{n+1}  \int_{B_{r/2}({\bf y})}|z|^a\rho^2|\nabla w|^2\,\de{\bf x}  \leq \Big(1+C(\eta r^\beta +r/t) \Big) \frac{1}{r^{n+1}}  \int_{B_{r}({\bf y})}|z|^a\rho^2|\nabla v|^2\,\de{\bf x}\,.
% &\leq  \Big(1+C(\eta+1/t)r^\beta \Big) \frac{1}{r^{n+1}}  \int_{B_{r}({\bf y})}|z|^a\rho^2|\nabla v|^2\,\de{\bf x}\,.
 \end{align}
 Inserting \eqref{secondpieceestireglipbdry} and \eqref{presqfinimarjul} in \eqref{ppffpasideedutouStep2}, we infer that 
 $$\frac{1}{|B_{r/2}({\bf y})|}\int_{B_{r/2}({\bf y})}|z|^a\rho^2|\nabla v|^2\,\de{\bf x}  \leq \frac{1+C_{\eta}(r^{\beta/2}+r/t)}{|B_r({\bf y})|} \int_{B_{r}({\bf y})}|z|^a\rho^2|\nabla v|^2\,\de{\bf x} \,,$$
 for a constant $C_{\eta}=C_{\eta}(\eta,n,s)$. Arguing as Step 1 (using the dyadic radii $r_k:=2^{-k}t$), the arbitrariness of $r\in(0,t/2]$ in this latter estimate implies that 
 \begin{equation}\label{mercrecanic}
  \frac{1}{|B_r({\bf y})|}\int_{B_{r}({\bf y})}|z|^a\rho^2|\nabla v|^2\,\de{\bf x}  \leq \frac{C_{\eta,\beta}}{|B_{t/2}({\bf y})|}   \int_{B_{t/2}({\bf y})}|z|^a\rho^2|\nabla v|^2\,\de{\bf x}\quad\forall r\in(0,t/2]\,,
  \end{equation}
 for a constant $C_{\eta,\beta}=C_{\eta,\beta}(\eta,\beta,n,s)$. Then, we notice that for every radius $r\in(0,t/2]$, 
 $$ |B_r({\bf y})|_a\leq 
 \begin{cases}
t^a(1+r/t)^a|B_r({\bf y})| & \text{if $s\leq 1/2$}\,,\\
t^a(1-r/t)^a|B_r({\bf y})| & \text{if $s >1/2$}\,,  
 \end{cases}$$
 and 
$$ |B_r({\bf y})|_a\geq 
 \begin{cases}
t^a(1-r/t)^a|B_r({\bf y})| & \text{if $s\leq 1/2$}\,,\\
t^a(1+r/t)^a|B_r({\bf y})| & \text{if $s >1/2$}\,.  
 \end{cases}$$
Consequently, dividing \eqref{mercrecanic} by $t^a$, we obtain 
\begin{equation}\label{mercrecanic2}
\frac{1}{|B_r({\bf y})|_a}\int_{B_{r}({\bf y})}|z|^a\rho^2|\nabla v|^2\,\de{\bf x}  \leq \frac{C_{\eta,\beta}}{|B_{t/2}({\bf y})|_a}   \int_{B_{t/2}({\bf y})}|z|^a\rho^2|\nabla v|^2\,\de{\bf x}\quad\forall r\in(0,t/2]\,.
\end{equation}
Setting $\widetilde {\bf y}:=(y,0)\in D_{1/3}\times\{0\}$, we now observe that $B_{t/2}({\bf y})\subset B^+_{3t/2}(\widetilde{\bf y})$ and $3t/2\leq 1/2$. 
Using the symmetry of $v$ and $\rho$ with respect to $\{z=0\}$ and \eqref{gfaimadonfpoilonez}, we deduce that
\begin{align}\label{mercrecanic3}
\nonumber \frac{1}{|B_{t/2}({\bf y})|_a}   \int_{B_{t/2}({\bf y})}|z|^a\rho^2|\nabla v|^2\,\de{\bf x} & \leq \frac{C}{|B^+_{3t/2}(\widetilde {\bf y})|_a}   \int_{B^+_{3t/2}(\widetilde {\bf y})}|z|^a\rho^2|\nabla v|^2\,\de{\bf x}\\
\nonumber & \leq \frac{C}{|B_{3t/2}(\widetilde {\bf y})|_a}   \int_{B_{3t/2}(\widetilde {\bf y})}|z|^a\rho^2|\nabla v|^2\,\de{\bf x} \\
 &\leq C_{\eta,\beta} \int_{B^+_{1}}|z|^a|\nabla u^\e|^2\,\de{\bf x}\,.
 \end{align}
 Combining \eqref{mercrecanic2} and \eqref{mercrecanic3}, and in view of the arbitrariness of ${\bf y}$, we infer that
$$ \frac{1}{|B_r({\bf y})|_a}\int_{B_{r}({\bf y})}|z|^a\rho^2|\nabla v|^2\,\de{\bf x}  \leq C_{\eta,\beta} \int_{B^+_{1}}|z|^a|\nabla u^\e|^2\,\de{\bf x}\quad\forall {\bf y}=(y,t)\in B_{1/3}^+\,,\;\forall r\in(0,t/2]\,.$$
Still by symmetry of $v$ and $\rho$, this estimate actually holds for every ${\bf y}=(y,t)\in  B_{1/3}\setminus\{z=0\}$ and $r\in(0,|t|/2)$. By Lebesgue's differentiation theorem, we have thus proved that 
$$\rho^2|\nabla v|^2\leq C_{\eta,\beta} \int_{B^+_{1}}|z|^a|\nabla u^\e|^2\,\de{\bf x} \quad\text{a.e. in $B_{1/3}$}\,,$$
and the conclusion follows from the fact that $\rho\geq 1/2$. 
 \end{proof}

\begin{proof}[Proof of Theorem \ref{thmepsregLip}]
Once again, rescaling variables, we can assume that $R=1$. Under condition \eqref{condeps0Lip}, Corollary \ref{coroepsreghold} says that $u^\e\in C^{0,\beta_1}(B^+_{\boldsymbol{\kappa}_1})$ and  $[u^\e]_{C^{0,\beta_1}(B^+_{\boldsymbol{\kappa}_1})}$ is bounded by a constant depending only on $n$ and $s$. Since $|u^\e|=|u|=1$ on $\partial^0 B^+_{\boldsymbol{\kappa}_1}$, we can thus find a constant $\boldsymbol{\kappa}_2=\boldsymbol{\kappa}_2(n,s)\in(0,1)$ such that $6\boldsymbol{\kappa}_2\leq \boldsymbol{\kappa}_1$ and $|u^\e|\geq 1/2$ in $B^+_{3\boldsymbol{\kappa}_2}$. Since $\beta_1=\beta_1(n,s)$,  and $(3\boldsymbol{\kappa}_2)^{\beta_1}[u^\e]_{C^{0,\beta_1}(B^+_{3\boldsymbol{\kappa}_2})}$ is bounded by a constant depending only on $n$ and $s$, Proposition \ref{themholdimplLip} implies that  $v:=u^\e/|u^\e|$ is Lipschitz continuous in $\overline B^+_{\boldsymbol{\kappa}_2}$ with 
$$|v({\bf x})-v({\bf y})|\leq C \boldsymbol{\Theta}_s(u^\e,0,\boldsymbol{\kappa}_2)|{\bf x}-{\bf y}|\leq C\boldsymbol{\Theta}_s(u^\e,0,1)|{\bf x}-{\bf y}|\quad\forall {\bf x},{\bf y}\in \overline B^+_{\boldsymbol{\kappa}_2}\,,$$
for a constant $C=C(n,s)$. Since $v({\bf x})=u(x)$ for every ${\bf x}=(x,0)\in \partial^0B^+_{\boldsymbol{\kappa}_2}$, the conclusion follows. 
\end{proof}

%
%\begin{proof}[Proof of Theorem \ref{Lipregsubcritic}]
%We proceed as in the proof above, assuming that $R=1$. Under condition \eqref{condlipregsubcritic}, Proposition \ref{propHoldsubcritic} implies that $u\in C^{0,s-1/2}(D_{R/4})$ 
%with $[u^\e]^2_{C^{0,s-1/2}(B^+_{R/4})}\leq C(M+1)$ for a constant depending only on $s$.  Since $|u^\e|=1$ on $\partial^0B^+_{R/4}$, we can find a constant $\boldsymbol{\kappa}_M\in(0,1/12)$ depending only on $M$ and $s$ such that  $|u^\e|\geq 1/2$ in $B^+_{3\boldsymbol{\kappa}_M}$.  Then the conclusion follows from Proposition \ref{themholdimplLip} as above. 
%\end{proof}
%

%%%%%%%%%%%%%%%%%%%%%%%%%%%%%%%%%%%%%%%%%%%%%%%%%%%%%%%
%%%%%%%%%%%%%%%%%%%%%%%%%%%%%%%%%%%%%%%%%%%%%%%%%%%%%%%
   								       						%%%%%%%%%%%%%%%%%%%
\section{Higher order regularity}\label{highordreg} 					%%%%%%%%%
								 						%%%%%%%%%%%%%%%%%%%
%%%%%%%%%%%%%%%%%%%%%%%%%%%%%%%%%%%%%%%%%%%%%%%%%%%%%%%
%%%%%%%%%%%%%%%%%%%%%%%%%%%%%%%%%%%%%%%%%%%%%%%%%%%%%%%

We have now reached the final stage of our small energy regularity result where it only remains to prove that a Lipschitz continuous  $s$-harmonic map is of class $C^\infty$. To achieve this result, we shall apply  (local) Schauder type estimates for $(-\Delta)^s$. We only refer to \cite{RosSer} for those estimates as it is best suited to our presentation (see also \cite{Sil}).

\begin{theorem}\label{highordthm}
Let $u\in \widehat H^s(D_{1};\mathbb{S}^{d-1})$ be a weakly $s$-harmonic map in $D_{1}$. If $u$ is Lipschitz continuous in $D_1$, then $u\in C^\infty(D_{1/2})$.  
\end{theorem}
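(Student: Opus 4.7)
The strategy is a bootstrap based on localized Schauder estimates for the fractional Laplacian. By Proposition~\ref{ELeqprop}, the map $u$ satisfies
$$(-\Delta)^s u = f\,u \qquad \text{in } \mathscr{D}'(D_1), \qquad f(x) := \frac{\gamma_{n,s}}{2}\int_{\R^n}\frac{|u(x)-u(y)|^2}{|x-y|^{n+2s}}\,\de y.$$
Since $|u|\equiv 1$ on $\R^n$ and $u$ is Lipschitz on $D_1$ with constant $L$, the integrand is pointwise controlled by $\min(L^2|x-y|^2,4)/|x-y|^{n+2s}$ for $x\in D_{7/8}$, which is integrable; hence $f\in L^\infty(D_{7/8})$.

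The first task is to upgrade $f$ to H\"older continuity on a smaller disc. Writing
$$f(x_1)-f(x_2) = \frac{\gamma_{n,s}}{2}\int_{\R^n}\left(\frac{|u(x_1)-u(y)|^2}{|x_1-y|^{n+2s}}-\frac{|u(x_2)-u(y)|^2}{|x_2-y|^{n+2s}}\right)\de y$$
for $x_1,x_2\in D_{3/4}$, I would split $y$ into the near region $\{|y-x_1|\leq 2|x_1-x_2|\}$ and its complement. On the near region a pointwise bound using the Lipschitz estimate alone gives a contribution of order $|x_1-x_2|^{2-2s}$. On the far region, the mean value theorem applied to the kernel $y\mapsto|y|^{-n-2s}$, together with the algebraic identity $|u(x_1)-u(y)|^2-|u(x_2)-u(y)|^2=(u(x_1)+u(x_2)-2u(y))\cdot(u(x_1)-u(x_2))$, yields a contribution of order $|x_1-x_2|$. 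Summing, $f\in C^{0,\beta_0}(D_{3/4})$ for some $\beta_0=\beta_0(s)\in(0,1)$, and therefore $fu\in C^{0,\beta_0}(D_{3/4})$ because $u$ itself is Lipschitz. Localized interior Schauder estimates for $(-\Delta)^s$ (see e.g.~\cite{RosSer}) applied with right-hand side $fu\in C^{0,\beta_0}(D_{3/4})$ and $u\in L^\infty(\R^n)$ then give $u\in C^{2s+\beta_0}(D_{5/8})$, possibly after slightly reducing $\beta_0$ to ensure $2s+\beta_0\notin\Z$.

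The argument now iterates. The bootstrap step I would prove by induction is the propagation claim: if $u\in C^{k,\alpha}(D_r)\cap L^\infty(\R^n)$ with $|u|\equiv 1$, then $f\in C^{k,\alpha}(D_{r'})$ for every $r'<r$, with a quantitative estimate. Its proof is a more elaborate version of the H\"older argument above: the far contribution to $f(x)$ inherits regularity directly from the smoothness of the kernel in $x$ combined with the $C^{k,\alpha}$ regularity of the factor $u(x)$ and the global $L^\infty$ bound, while the near contribution is handled by Taylor expansion of $u$ around the base point together with finite-difference estimates on the remainder. Coupling this propagation with the Schauder estimate produces a fixed positive regularity gain at each iteration, and finitely many steps yield $u\in C^\infty(D_{1/2})$. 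The main obstacle is precisely the propagation claim: because $f$ is genuinely nonlocal, one must use the global $L^\infty$ control (coming from $|u|=1$) to tame the tail of the integral at every iteration, all while avoiding the integer thresholds at which the Schauder regularity degenerates --- a technical inconvenience that is easily resolved by sacrificing an arbitrarily small H\"older exponent at each step.
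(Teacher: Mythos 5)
Your overall plan coincides with the paper's: split the Lagrange multiplier $f$ into a near‐field piece (where the Lipschitz modulus of $u$ and the singularity of the kernel interact) and a far‐field piece (where only $\|u\|_{L^\infty}$ is used and the kernel is smooth in $x$), then invoke the localized Schauder estimates of \cite{RosSer} for $(-\Delta)^s$ and iterate. The paper does exactly this via the decomposition $f = G_u + H_u$ with the cut‐off $\zeta$ and Lemmas~\ref{keybootstraplemma}, \ref{regC1alphafctGreg}, and \ref{regC1alphafctGregbis}. So the route is the same.

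The gap is in your propagation claim: ``if $u \in C^{k,\alpha}(D_r)$ then $f \in C^{k,\alpha}(D_{r'})$.'' This is false when $s>1/2$. Differentiating $f$ $k$ times produces integrands of the form $(\partial^\nu u(x)-\partial^\nu u(x+z))\cdot(\partial^{\beta-\nu}u(x)-\partial^{\beta-\nu}u(x+z))|z|^{-n-2s}$ (see the paper's formula \eqref{multiderivfctGu}); the near‐field contribution to a finite difference of the top derivative of $f$ scales like $|h|^{1+\alpha-2s}$, and $1+\alpha-2s < \alpha$ precisely when $s > 1/2$. Thus the H\"older exponent of $\nabla^k f$ is capped strictly below $\alpha$, at roughly $2-2s$ when $\alpha$ is close to $1$, exactly as in Lemma~\ref{keybootstraplemma}(iii) and Lemma~\ref{regC1alphafctGregbis}. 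Your own first step already exhibits this: you correctly derive $f\in C^{0,\beta_0}$ with $\beta_0 < 1$ from $u \in C^{0,1}$ when $s\ge 1/2$, which contradicts the propagation claim at $k=0$, $\alpha=1$. The bootstrap does not collapse — the Schauder gain of $2s$ more than compensates the loss $2s-1$, so you still gain a full derivative per step — but the invariant you assert to prove by induction cannot be proved, and a correct induction needs to carry the weaker conclusion $f\in C^{k,\alpha'}$ with $\alpha' < 2-2s$, as the paper does. A second, lesser omission: for small $s$, a single Schauder step from Lipschitz gives only $u\in C^{2s+\beta_0}$, which may fall short of $C^{1,\cdot}$; the paper needs the rescaling argument in Step 3 of Proposition~\ref{C1alphareg} (roughly $\lceil 1/(2s)\rceil$ iterations) to push the exponent past the integer threshold, and your ``finitely many steps'' glosses over this. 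Both points are fixable, but as written the induction would not close.
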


\begin{proof}
The proof of Theorem \ref{highordthm} follows from a bootstrap procedure. The initiation of the induction consists in passing from Lipschitz regularity to $C^{1,\alpha}$-regularity, and it is the object of Proposition \ref{C1alphareg}  in the following subsection. Then we shall prove in Proposition \ref{Ckalphareg} that $C^{k,\alpha}$-regularity upgrades to $C^{k+1,\alpha}$-regularity for every integer $k\geq 1$. In applying this bootstrap argument, we first fix an arbitrary point $x_0\in D_{1/2}$ and an integer $k\geq 1$. We translate variables by $x_0$ and rescale suitably  in order to apply Proposition \ref{C1alphareg} and Proposition \ref{Ckalphareg}, and then conclude that $u$ is $C^{k,\alpha}$ in a neighborhood of $x_0$. 
\end{proof}

\subsection{H\"older continuity of first order derivatives}

\begin{proposition}\label{C1alphareg}
Let $u\in \widehat H^s(D_{3};\mathbb{S}^{d-1})$ be a weakly $s$-harmonic map in $D_{3}$. If $u$ is Lipschitz continuous in $D_3$, then $u\in C^{1,\alpha}(D_{r_*})$ for every $\alpha\in(0,1)$ and some  $r_*=r_*(n,s)\in(0,1/2)$. 
%\begin{enumerate}
%\item for $s\in(1/2,1)$, $u\in C^{1,\alpha}(D_{1/2})$ for every $\alpha<2s-1$; 
%\item for $s\in(0,1/2)$, $u\in C^{1,\alpha}(D_{1/2})$ for every $\alpha<2s$. 
%\end{enumerate}
\end{proposition}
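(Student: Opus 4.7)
The proof would proceed by a bootstrap argument combining the Euler–Lagrange equation from Proposition \ref{ELeqprop} with the Schauder-type estimates for the fractional Laplacian established in \cite{RosSer}. The starting point is the equation
\[
(-\Delta)^s u = f u \quad \text{in } \mathscr{D}'(D_3), \qquad f(x) := \frac{\gamma_{n,s}}{2} \int_{\R^n} \frac{|u(x)-u(y)|^2}{|x-y|^{n+2s}}\,\de y,
\]
together with the global $L^\infty$ bound $|u|\equiv 1$. The scheme is to upgrade the Hölder regularity of the right-hand side from the available regularity of $u$, then apply Schauder to upgrade $u$ in turn, and iterate.

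The first, and most delicate, step is to show that if $u$ is Lipschitz on $D_3$, then $f \in C^{0,\beta_0}(D_{r_0})$ for some $\beta_0 = \beta_0(n,s) > 0$ and $r_0 \in (0,2)$. Since $u$ is only locally Lipschitz, I would first extend $u$ to a globally Lipschitz map $\tilde u$ on $\R^n$ with $\|\tilde u\|_\infty \leq 1$, coinciding with $u$ on $D_3$; setting $\tilde f$ analogously to $f$ but using $\tilde u$, one checks that $f-\tilde f \in C^\infty(D_{2})$ because the difference integrates only over $\{|x-y|\geq 1\}$, where the kernel is smooth in $x$. It then suffices to estimate $\tilde f$. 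Writing
\[
\tilde f(x+h)-\tilde f(x) = \frac{\gamma_{n,s}}{2}\int_{\R^n}\frac{|\tilde u(x+h)-\tilde u(x+h+w)|^2 - |\tilde u(x)-\tilde u(x+w)|^2}{|w|^{n+2s}}\,\de w,
\]
factoring $|a|^2-|b|^2 = (a-b)\cdot(a+b)$, and using the Lipschitz bounds $|a-b|\leq \min(2L|h|,2L|w|)$ and $|a+b|\leq 2L\min(|w|,2/L)$, a splitting over $\{|w|\leq |h|\}$, $\{|h|<|w|\leq 1\}$, and $\{|w|>1\}$ yields $|\tilde f(x+h)-\tilde f(x)|\leq C|h|^{\beta_0}$ for every $\beta_0<\min(1,2(1-s))$.

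The second step is a standard bootstrap. Since $fu \in C^{0,\beta_0}(D_{r_0})$ and $u \in L^\infty(\R^n)$, the interior Schauder estimate of \cite{RosSer} gives $u \in C^{2s+\beta_0}(D_{r_1})$ for some $r_1<r_0$, provided $2s+\beta_0\notin\N$ (which we can ensure by infinitesimal adjustment). If $2s+\beta_0>1$ this already yields $u\in C^{1,2s+\beta_0-1}(D_{r_1})$; otherwise (necessarily $s<1/2$), we re-enter Step 1 with the improved Hölder regularity of $u$, gain roughly $2s$ in regularity per round, and after finitely many iterations (depending only on $s$) reach $u\in C^{1,\alpha_0}(D_{r_*})$ for some $\alpha_0=\alpha_0(n,s)>0$ and $r_* = r_*(n,s) \in (0,1/2)$. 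Finally, once $u\in C^{1,\alpha_0}(D_{r_*})$, the mean-value identity
\[
\tilde u(x+h+w)-\tilde u(x+h)-\tilde u(x+w)+\tilde u(x)=\int_0^1\bigl[\nabla\tilde u(x+w+th)-\nabla\tilde u(x+th)\bigr]\cdot h\,\de t
\]
gives $|a-b|\leq C|h|\min(|w|^{\alpha_0},1)$. Reinjecting this sharper bound into the splitting argument of Step 1 produces $f\in C^{0,\alpha}(D_{r_*})$ for every $\alpha\in(0,1)$, and one last application of the Schauder estimate yields $u\in C^{1,\alpha}(D_{r_*})$ for every $\alpha\in(0,1)$, after possibly shrinking $r_*$ a fixed finite number of times.

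The main obstacle is Step 1: converting mere Lipschitz regularity of $u$ into Hölder regularity of the strongly nonlocal quantity $f$. The difficulty is that the kernel $|w|^{-n-2s}$ is non-integrable near $w=0$, so the estimate relies on the cancellation encoded in the factorization $|a|^2-|b|^2=(a-b)(a+b)$, with $|a-b|$ small in $h$ and $|a+b|$ small in $w$. The behaviour of the resulting integrals changes regime between $s<1/2$ and $s>1/2$ (one sees this in the final exponent $\min(1,2(1-s))$), and borderline integer cases in the Schauder step must be avoided by a slight reduction of exponents. A secondary technical issue, handled by the globally Lipschitz extension $\tilde u$, is the discrepancy between the regularity of $u$ inside $D_3$ and its mere boundedness outside.
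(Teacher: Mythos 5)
Your overall strategy is the same as the paper's: isolate the nonlocal coefficient $f(x)=\frac{\gamma_{n,s}}{2}\int|u(x)-u(y)|^2|x-y|^{-n-2s}\,\de y$, prove it is H\"older continuous by exploiting the factorization $|a|^2-|b|^2=(a-b)\cdot(a+b)$ in second differences, and then bootstrap via the Schauder estimates of \cite{RosSer}. Your Step~1 (H\"older regularity of $f$) is essentially correct and matches the paper's Lemma~\ref{keybootstraplemma}; the splitting over $\{|w|\leq|h|\}$, $\{|h|<|w|\leq 1\}$, $\{|w|>1\}$ and the resulting exponent $\min(1,2-2s)$ are right (for $s<1/2$ one actually gets full Lipschitz continuity). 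A cosmetic inaccuracy: the far-field difference $f-\tilde f$ is not $C^\infty(D_2)$, since it retains the factor $u(x)$ multiplying a smooth convolution-type integral; it is only as regular as $u$ itself (this is exactly why the paper's term $H_u$ in \eqref{deffctHu} is recorded as merely Lipschitz). This does not break anything, since Lipschitz suffices at that stage. For $s\geq1/2$, your argument closes: $\beta_0<2-2s$ and one Schauder step yield $u\in C^{2s+\beta_0}\subset C^{1,\alpha}$ for all $\alpha<1$.

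For $s<1/2$, however, there is a genuine gap in the bootstrap. Your re-entry into Step~1 with $u\in C^{1,\alpha_0}$ and the mean-value bound $|a-b|\leq C|h|\min(|w|^{\alpha_0},1)$ only yields $f\in C^{0,1}(D_{r_*})$, i.e.\ Lipschitz continuity of $f$ (hence trivially $f\in C^{0,\alpha}$ for all $\alpha<1$). But Schauder from a $C^{0,\alpha}$ right-hand side with $\alpha<1$ gives only $u\in C^{2s+\alpha}$, which for $s<1/2$ caps out at $C^{1,2s-\eps}$ --- strictly below $C^{1,\alpha}$ for $\alpha$ near $1$. The iteration is therefore stuck at $C^{1,2s-\eps}$: re-running your Step~1 produces $f$ Lipschitz again, and Schauder re-produces $u\in C^{1,2s-\eps}$ with no net gain. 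To genuinely gain $2s$ in regularity per round, one must show that the nonlocal coefficient itself becomes differentiable, i.e.\ establish $f\in C^{1,\alpha'}$ for some increasing family of exponents $\alpha'$. This is the point of the paper's Lemma~\ref{regC1alphafctGreg} (and Lemma~\ref{regC1alphafctGregbis} for $s\geq1/2$): one differentiates under the integral sign, justifies the formula
\begin{equation*}
\partial_i G_u(x)=2\int_{D_1}\frac{\big(\partial_i u(x+y)-\partial_i u(x)\big)\cdot\big(u(x+y)-u(x)\big)}{|y|^{n+2s}}\zeta(y)\,\de y\,,
\end{equation*}
and then applies the $C^0$-H\"older estimate to the pair $(\partial_i u,u)$. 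This yields $\nabla f\in C^{0,\alpha'}$ for $\alpha'$ one notch below the H\"older exponent of $\nabla u$, and feeding $f\in C^{1,\alpha'}$ into Schauder gives $u\in C^{1+2s+\alpha'}$, i.e.\ a true gain of $2s$ per round. Without this differentiation step the argument cannot pass $C^{1,2s}$ when $s<1/2$. A secondary, more minor difference: to apply the Schauder estimates the paper works with the compactly supported truncation $u_0:=\zeta u$ and rewrites $(-\Delta)^su_0$ via a Leibniz-type identity; your globally Lipschitz extension $\tilde u$ plays an analogous role and should be acceptable, provided you verify the version of the interior Schauder estimate you invoke only requires $u\in L^\infty(\R^n)$ rather than compact support.
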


One of the main ingredients to obtain an improved regularity is the following elementary lemma. 

\begin{lemma}\label{keybootstraplemma}
Let $f:D_3\to \R^d$ be a Lipschitz continuous function, $g:D_3\to\R^d$ an H\"older continuous function,  
 and $\zeta:D_1\to [0,1]$ a measurable function. Assume that one of the following items holds: 
\begin{enumerate}
\item[(i)] $s\in(0,1/2)$ and $g\in C^{0,\alpha}(D_3)$ for some $\alpha\in(2s,1]$;
\item[(ii)] $s\in(0,1/2)$ and $g\in C^{0,\alpha}(D_3)$ for every $\alpha\in(0,2s)$;  
\item[(iii)] $s\in[1/2,1)$ and $g\in C^{0,\alpha}(D_3)$ for every $\alpha\in(0,1)$. 
\end{enumerate} 
 Then the function 
\begin{equation}\label{defGfunctionreg}
 G:x\in D_{1}\mapsto \int_{D_1}\frac{\big(f(x+y)-f(x)\big)\cdot\big(g(x+y)-g(x)\big)}{|y|^{n+2s}}\zeta(y)\,\de y 
 \end{equation}
belongs to 
\begin{enumerate}
\item $C^{0,\alpha}(D_1)$ in case (i);
\item $C^{0,\alpha^\prime}(D_1)$ for every $\alpha^\prime\in(0,2s)$ in case (ii);
\item $C^{0,\alpha^\prime}(D_1)$ for every $\alpha^\prime\in(0,2-2s)$ in case (iii). 
\end{enumerate} 
\end{lemma}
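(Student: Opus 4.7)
The plan is to split the region of integration into near and far parts relative to $h:=x_2-x_1$. Setting $\phi(x,y):=(f(x+y)-f(x))\cdot(g(x+y)-g(x))$ and assuming $|h|\le 1/4$ (the case $|h|\ge 1/4$ being handled by a crude $L^\infty$ bound on $G$), I would write
$$G(x_2)-G(x_1)=I_{\rm near}+I_{\rm far},\qquad I_\bullet=\int_{A_\bullet}\bigl[\phi(x_2,y)-\phi(x_1,y)\bigr]\frac{\zeta(y)}{|y|^{n+2s}}\,dy,$$
with $A_{\rm near}:=\{|y|<2|h|\}$ and $A_{\rm far}:=D_1\setminus A_{\rm near}$. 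The auxiliary $L^\infty$-bound on $G$ comes from the pointwise estimate $|\phi(x,y)|\le C[f]_{C^{0,1}}[g]_{C^{0,\alpha}}|y|^{1+\alpha}$, integrable over $D_1$ whenever $\alpha>2s-1$; this is automatic in cases (i)--(ii) since $s<1/2$, while in case (iii) it forces us to exploit the hypothesis that $g\in C^{0,\alpha}$ for $\alpha$ arbitrarily close to $1$.

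For the near piece, the same pointwise bound $|\phi(x_i,y)|\le C|y|^{1+\alpha}$ yields $|I_{\rm near}|\le C|h|^{1+\alpha-2s}$. For the far piece, I would use the bilinear split
$$\phi(x_2,y)-\phi(x_1,y)=\bigl[\Delta(f(\cdot+y)-f(\cdot))\bigr](g(x_2+y)-g(x_2))+(f(x_1+y)-f(x_1))\bigl[\Delta(g(\cdot+y)-g(\cdot))\bigr],$$
with $\Delta\psi:=\psi(x_2)-\psi(x_1)$. On the far region the triangle inequality on each factor gives $|\Delta(f(\cdot+y)-f(\cdot))|\le 2[f]_{C^{0,1}}|h|$ and $|\Delta(g(\cdot+y)-g(\cdot))|\le 2[g]_{C^{0,\alpha}}|h|^\alpha$, while $|g(x_2+y)-g(x_2)|\le[g]_{C^{0,\alpha}}|y|^\alpha$ and $|f(x_1+y)-f(x_1)|\le[f]_{C^{0,1}}|y|$. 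Combining these factor-by-factor produces
$$|I_{\rm far}|\le C|h|\int_{2|h|}^1 r^{\alpha-2s-1}\,dr+C|h|^\alpha\int_{2|h|}^1 r^{-2s}\,dr.$$

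The remaining work is a case-by-case optimisation over the admissible range of $\alpha$. In case (i), with $s<1/2$ and the fixed $\alpha\in(2s,1]$, both radial integrals are $O(1)$ (since $\alpha-2s>0$ and $-2s>-1$), so $|I_{\rm far}|\lesssim|h|^\alpha$; also $|I_{\rm near}|\lesssim|h|^{1+\alpha-2s}\lesssim|h|^\alpha$ (as $1+\alpha-2s\geq\alpha$), giving $G\in C^{0,\alpha}$. In case (ii) with target $\alpha'\in(0,2s)$, I would take $\alpha=\alpha'$: the first radial integral produces $|h|^{\alpha-2s}$, yielding a far contribution $|h|^{1+\alpha-2s}\le|h|^\alpha$ (since $s<1/2$), and the second stays $O(1)$, giving $|h|^\alpha$. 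In case (iii) with target $\alpha'\in(0,2-2s)$, I would pick $\alpha\in(\max\{\alpha'+2s-1,\,2s-1\},\,1)$, a non-empty interval precisely because $\alpha'<2-2s$; the two far integrals both contribute $|h|^{1+\alpha-2s}\le|h|^{\alpha'}$, as does $|I_{\rm near}|$ (the borderline $s=1/2$ produces a $\log(1/|h|)$ factor harmlessly absorbed by choosing $\alpha>\alpha'$). The main technical obstacle is precisely the choice of $\alpha$ in case (iii), where one must simultaneously secure integrability of $\phi$ near $y=0$ (requiring $\alpha>2s-1$) and the desired Hölder exponent $\alpha'$ (requiring $\alpha\geq\alpha'+2s-1$); no cancellation or compensated-compactness input is needed, only careful bookkeeping of two competing powers of $|h|$.
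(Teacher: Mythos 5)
Your proof is correct and uses a near--far decomposition that is distinct from, though morally parallel to, the paper's argument. The paper works with a single pointwise estimate: after establishing $|\Gamma(x+h,y)-\Gamma(x,y)|\leq C|h|^\alpha|y|^\alpha$ by the same bilinear splitting you use (though they present it only as a statement), they integrate this directly over $D_1$ in case (i) (where $\alpha>2s$ makes $|y|^{\alpha-n-2s}$ integrable), and in cases (ii)--(iii) they interpolate between the crude bound $|\Gamma(x+h,y)-\Gamma(x,y)|\leq C|y|^{1+\alpha}$ and the $|h|^\alpha|y|^\alpha$ bound with a carefully chosen weight $\beta$, again yielding a single integral over $D_1$. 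You instead split the integral at radius $2|h|$, use the crude bound $|\phi|\leq C|y|^{1+\alpha}$ on the near part and the bilinear factor bounds on the far part, and track the two powers of $|h|$ that emerge from the two radial integrals. The two approaches secure the same balance of exponents --- the paper's interpolation weight $\beta$ plays precisely the role of your split radius --- and both require picking an auxiliary exponent $\alpha$ in the admissible range and optimizing. The near--far formulation has the advantage of making the borderline $s=1/2$ case transparent (you see the $\log(1/|h|)$ factor explicitly and absorb it by choosing $\alpha>\alpha'$), whereas the paper's choice $\alpha=1-\varepsilon$, $\beta=2-2s-2\varepsilon$ hides this. The paper's version is more compact since the entire estimate happens in one integral, but both are standard, correct, and of comparable length.

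One small remark: in case (i) your far-region bound $|I_{\rm far}|\lesssim |h| + |h|^\alpha$ uses $\alpha\leq 1$ to absorb the $|h|$ into $|h|^\alpha$; for the endpoint $\alpha=1$ (allowed in case (i) since the hypothesis is $\alpha\in(2s,1]$) the two terms coincide, so this is fine, but it is worth noting explicitly since the rest of your argument treats $\alpha<1$. Also, your displayed far-piece bound omits the constant coming from the angular integration, but that is clearly harmless. These are purely cosmetic; the argument is sound.
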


\begin{proof}
{\it Step 1.} We first claim that $G$ is well defined in all cases. To simplify the notation, we write 
\begin{equation}\label{notgamGfct}
\Gamma(x,y):= \big(f(x+y)-f(x)\big)\cdot\big(g(x+y)-g(x)\big)\,.
\end{equation}
Observe that in all cases, we have $1+\alpha>2s$ (it holds for every $\alpha\in(0,2s)$ in case (ii), and we can choose such $\alpha\in(0,1)$ in case (iii)). Since $|\Gamma(x,y)|\leq C_{f,g,\alpha}|y|^{1+\alpha}$, we have  
$$\int_{D_1}\frac{|\Gamma(x,y)|}{|y|^{n+2s}}\,\de y\leq C_{f,g,\alpha}\int_{D_1}\frac{\de y}{|y|^{n+2s-(1+\alpha)}}\leq C_{f,g,\alpha}\quad\forall x\in D_1\,, $$
for a constant $C_{f,g,\alpha}$ depending only on $f$, $g$, $\alpha$, $n$, and $s$. 
\vskip5pt

\noindent{\it Step 2, case (i).} Fix arbitrary points $x,h\in D_1$. Since
\begin{equation}\label{debilestiapriorbisbis}
\big|\Gamma(x+h,y)- \Gamma(x,y)\big| 
\leq C_{f,g,\alpha}|h|^\alpha|y|^\alpha \quad\forall y\in D_1\,,
\end{equation}
we have 
$$|G(x+h)-G(x)|\leq C_{f,g,\alpha}|h|^\alpha \int_{D_1}\frac{1}{|y|^{n+2s-\alpha}}\,\de y \leq C_{f,g,\alpha}|h|^\alpha\,,  $$
for a constant $C_{f,g,\alpha}$ depending only on $f$, $g$, $\alpha$, $n$, and $s$. 
\vskip5pt

\noindent{\it Step 3, case (ii).} Let us fix an arbitrary $\varepsilon\in(0,s)$. We set $\alpha:=2s-\eps$ and $\beta:=1-2\varepsilon$. Since
$$\big|\Gamma(x+h,y)- \Gamma(x,y)\big|\leq \big|\Gamma(x+h,y)\big|+\big|\Gamma(x,y)\big| \leq C_{f,g,\eps}|y|^{1+\alpha}\,,$$
we can use \eqref{debilestiapriorbisbis} to obtain 
\begin{equation}\label{vendre02aout1}
\big|\Gamma(x+h,y)- \Gamma(x,y)\big|\leq  C_{f,g,\eps}|y|^{(1+\alpha)(1-\beta)}|h|^{\alpha\beta}|y|^{\alpha\beta}=C_{f,g,\eps}|y|^{2s+\eps}|h|^{\alpha\beta} \quad\forall y\in D_1\,.
\end{equation}
Hence, 
\begin{equation}\label{vendre02aout2}
|G(x+h)-G(x)|\leq C_{f,g,\eps}|h|^{\alpha\beta}\int_{D_1}\frac{1}{|y|^{n-\eps}}\,\de y \leq C_{f,g,\eps}|h|^{\alpha\beta}\,, 
\end{equation}
for a constant $C_{f,g,\eps}>0$ depending only on $f$, $g$, $\eps$, $n$, and $s$. 
\vskip5pt

\noindent{\it Step 4, case (iii).} Now we fix an arbitrary $\varepsilon\in(0,1-s)$, and we set $\alpha:=1-\eps$ and $\beta:=2-2s-2\varepsilon$. Then \eqref{vendre02aout1} still holds, and consequently also \eqref{vendre02aout2}. 
\end{proof}

\begin{proof}[Proof of Proposition \ref{C1alphareg}]
{\it Step 1.} We start by fixing a radial cut-off function $\zeta\in \mathscr{D}(\R^n)$ such that $0\leq \zeta\leq 1$, $\zeta=1$ in $D_{1/2}$, and $\zeta=0$ in $\R^n\setminus D_{3/4}$. With $\zeta$ in hands, we rewrite for $x\in D_1$, 
\begin{multline}\label{rewriterhs1}  
\int_{\R^n}\frac{|u(x)-u(y)|^2}{|x-y|^{n+2s}}\,\de y 
=\int_{\R^n}\frac{|u(x+y)-u(x)|^2}{|y|^{n+2s}}\,\de y\\
= \int_{D_1}\frac{|u(x+y)-u(x)|^2}{|y|^{n+2s}}\zeta(y)\,\de y + \int_{D^c_{1/2}}\frac{|u(x+y)-u(x)|^2}{|y|^{n+2s}}(1-\zeta(y))\,\de y \,,
 \end{multline}
 and we set 
 \begin{equation}\label{deffctGu}
 G_u(x):= \int_{D_1}\frac{|u(x+y)-u(x)|^2}{|y|^{n+2s}}\zeta(y)\,\de y \,. 
 \end{equation}
 By Lemma \ref{keybootstraplemma} (applied to $f=g=u$), the function $G_u$ is Lipschitz continuous in $D_1$ for $s\in(0,1/2)$, and it belongs to $C^{0,\alpha}(D_1)$ for every $\alpha\in(0,2-2s)$ for $s\in[1/2,1)$. 
 
 Concerning the second term in the right hand side of \eqref{rewriterhs1}, we use the identity $|u|^2=1$ to rewrite it as 
\begin{multline} \label{rewriterhs2}
\int_{D^c_{1/2}}\frac{|u(x+y)-u(x)|^2}{|y|^{n+2s}}(1-\zeta(y))\,\de y =\int_{\R^n}\frac{2(1-\zeta(y))}{|y|^{n+2s}}\,\de y\\
-\left(\int_{\R^n}\frac{2(1-\zeta(y))}{|y|^{n+2s}}u(x+y)\,\de y\right)\cdot u(x)\,.
\end{multline}
In view of \eqref{rewriterhs2}, it is convenient to introduce the constant $L_\zeta>0$ and the function $Z\in C^\infty(\R^n)$ given by 
$$L_\zeta:= \int_{\R^n}\frac{2(1-\zeta(y))}{|y|^{n+2s}}\,\de y\quad\text{and}\quad Z(x):= \frac{2}{L_\zeta} \frac{(1-\zeta(x))}{|x|^{n+2s}}\,.$$
In this way, the right-hand side of \eqref{rewriterhs2} can be written as 
 \begin{equation}\label{deffctHu}
H_u(x):=L_\zeta\big(1-Z*u(x)\cdot u(x)\big) \quad\text{for $x\in D_1$}\,.
 \end{equation}
Notice that $Z*u\in C^{\infty}(\R^n)$, so that $H_u$ is Lipschitz continuous in $D_1$.  

Summarizing our manipulations in \eqref{rewriterhs1} and  \eqref{rewriterhs2}, we have obtained 
$$ \int_{\R^n}\frac{|u(x)-u(y)|^2}{|x-y|^{n+2s}}\,\de y=G_u(x)+H_u(x)\qquad\forall x\in D_1\,.$$
Now we introduce the map $F_u:D_1\to \R^d$ given by 
\begin{equation}\label{deffctFu}
F_u(x):=\frac{\gamma_{n,s}}{2}\big(G_u(x)+H_u(x)\big)u(x)\,.
 \end{equation}
Then $F_u\in C^{0,1}(D_1)$ for $s\in(0,1/2)$, and $F_u\in C^{0,\alpha}(D_1)$ for every $\alpha\in(0,2-2s)$ for $s\in[1/2,1)$. 
\vskip5pt

\noindent{\it Step 2.} We consider the map $u_0:\R^n\to \R^d$ given by $u_0:=\zeta u$. Then $u_0 \in C^{0,1}(\R^n)$ and $u_0=0$ in $\R^n\setminus D_1$. In particular, $u_0\in H^s_{00}(D_1;\R^d)$. A lengthy but  straightforward computation shows that 
$$(-\Delta)^su_0= \zeta(-\Delta)^su+\big((-\Delta)^s\zeta\big)u-\gamma_{n,s}\int_{\R^n}\frac{(\zeta(x)-\zeta(y))(u(x)-u(y))}{|x-y|^{n+2s}}\,\de y\quad\text{in $H^{-s}(D_1;\R^d)$}\,,$$
i.e., in the sense of \eqref{deffraclap}. Since $u$ is a weakly $s$-harmonic map in $D_3$, it satisfies equation \eqref{ELeqorig}. In view of Step 1, we thus have 
\begin{equation}\label{eqofprodzetau}
(-\Delta)^su_0= \zeta F_u+\big((-\Delta)^s\zeta\big)u-\gamma_{n,s}\int_{\R^n}\frac{(\zeta(x)-\zeta(y))(u(x)-u(y))}{|x-y|^{n+2s}}\,\de y\quad\text{in $H^{-s}(D_1;\R^d)$}\,.
\end{equation}
The function $(-\Delta)^s\zeta$ being smooth over $\R^n$, we infer from Step 1 that $\zeta F_u+\big((-\Delta)^s\zeta\big)u$ belongs to $C^{0,1}(D_1)$ for $s\in(0,1/2)$, and to $C^{0,\alpha}(D_1)$ for every $\alpha\in(0, 2-2s)$ for $s\in[1/2,1)$.  We now  determine the regularity of the last term in the right-hand side of \eqref{eqofprodzetau} arguing as in Step 1. We write it as 
$$\int_{\R^n}\frac{(\zeta(x)-\zeta(y))(u(x)-u(y))}{|x-y|^{n+2s}}\,\de y=:I(x)+II(x)\,, $$
with 
$$I(x):=\int_{D_1}\frac{(\zeta(x+y)-\zeta(x))(u(x+y)-u(x))}{|y|^{n+2s}}\zeta(y)\,\de y\,, $$
and 
\begin{align*}
II(x)& := \int_{\R^n}\frac{(\zeta(x+y)-\zeta(x))(u(x+y)-u(x))}{|y|^{n+2s}}(1-\zeta(y))\,\de y\\
&= \int_{\R^n}\frac{(\zeta(x)-\zeta(y))(u(x)-u(y))}{|x-y|^{n+2s}}(1-\zeta(x-y))\,\de y\,.
\end{align*}
By Lemma \ref{keybootstraplemma}, the term $I$ belongs to $C^{0,1}(D_1)$ for $s\in(0,1/2)$, and to $C^{0,\alpha}(D_1)$ for every $\alpha\in(0,2-2s)$ for $s\in[1/2,1)$. On the other hand, the function $\zeta$ being smooth and equal to $1$ in $D_{1/2}$, the term $II$ has clearly the regularity of $u$ in $D_1$, that is $C^{0,1}(D_1)$. Summarizing these considerations, we have shown that $u_0\in H^s(\R^n;\R^d)\cap L^\infty(\R^n)$ is a weak solution of 
$$ 
\begin{cases}
(-\Delta)^s u_0=F_0 & \text{in $D_1$}\,,\\
u_0=0 & \text{in $\R^n\setminus D_1$}\,,
\end{cases}
$$
for a right-hand side $F_0$ which belongs to $C^{0,1}(D_1)$ for $s\in(0,1/2)$, and to $C^{0,\alpha}(D_1)$ for every $\alpha\in(0,2-2s)$ for $s\in[1/2,1)$. From well-known (by now) regularity estimates for this equation (see e.g. \cite[Section 2]{RosSer}), the map $u_0$ belongs to $C^{1,\alpha}(D_{1/2})$ for every $\alpha\in(0,2s)$ for $s\in(0,1/2)$, and to $C^{1,\alpha}(D_{1/2})$ for every $\alpha\in(0,1)$ for $s\in[1/2,1)$. 
Since $u_0=u$ in $D_{1/2}$, the proposition is proved  in the case $s\in[1/2,1)$, and we obtained $u\in C^{1,\alpha}(D_{1/2})$ for every $\alpha\in(0,2s)$ for $s\in (0,1/2)$. 

\vskip5pt

\noindent{\it Step 3.} We now assume that $s\in (0,1/2)$, and it remains to prove that $u$ actually belongs to $C^{1,\alpha}(D_{r_*})$ for every $\alpha\in (0,1)$ and a radius $r_*\in(0,1/2)$ depending only on $s$. 
To this purpose, we rescale $u$ by setting $\widetilde u(x):=u(x/6)$, and from Step 3, we infer that $\widetilde u\in C^{1,\alpha}(D_3)$ for every $\alpha\in(0,2s)$. We shall  now make use of the following lemma. 

\begin{lemma}\label{regC1alphafctGreg}
Assume that $s\in(0,1/2)$. Let $f:D_3\to \R^d$ and $g:D_3\to \R^d$  be two $C^1$-functions,   
 and $\zeta:D_1\to [0,1]$ a measurable function. Assume that one of the following items holds: 
 \begin{enumerate}
 \item[(i)] $f,g \in C^{1,\alpha}(D_3)$ for every  $\alpha\in(0,2s)$;
  \item[(ii)] $f,g \in C^{1,\alpha}(D_3)$ for some  $\alpha\in(2s,1)$;
 \end{enumerate}
  Then the function $G:D_1\to\R$ given by \eqref{defGfunctionreg} belongs to 
\begin{enumerate}
\item $C^{1,\alpha^\prime}(D_1)$ for every $\alpha^\prime\in(0,2s)$ in case (i);
\item $C^{1,\alpha}(D_1)$  in case (ii);
\end{enumerate}
and for $x\in D_1$, 
\begin{multline}\label{formpartialderGfctreg}
\partial_i G(x)= \int_{D_1}\frac{\big(\partial_i f(x+y)-\partial_i f(x)\big)\cdot\big(g(x+y)-g(x)\big)}{|y|^{n+2s}}\zeta(y)\,\de y\\
+\int_{D_1}\frac{\big(f(x+y)-f(x)\big)\cdot\big(\partial_i g(x+y)-\partial_i g(x)\big)}{|y|^{n+2s}}\zeta(y)\,\de y\,,
\end{multline}
for $i=1,\ldots,n$. 
\end{lemma}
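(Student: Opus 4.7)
The plan is to prove formula \eqref{formpartialderGfctreg} by differentiating under the integral sign, and then to deduce the Hölder regularity of $\partial_i G$ by applying Lemma \ref{keybootstraplemma} to each of the two resulting integrals. The point is that the assumption $s<1/2$ makes the singular kernel $|y|^{-(n+2s)}$ mild enough that differentiation under the integral remains admissible even though the integrand involves the nonlocal difference $(f(x+y)-f(x))$.

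First I would verify the formal differentiation. A direct application of the product rule gives, for fixed $y$,
$$\partial_{x_i}\bigl[(f(x+y)-f(x))\cdot(g(x+y)-g(x))\bigr]=\Gamma_1(x,y)+\Gamma_2(x,y),$$
with $\Gamma_1(x,y):=(\partial_i f(x+y)-\partial_i f(x))\cdot(g(x+y)-g(x))$ and $\Gamma_2$ defined symmetrically. Since $g$ is Lipschitz on $D_3$, $|g(x+y)-g(x)|\leq C|y|$, and since $\partial_i f\in C^{0,\alpha}(D_3)$ for some $\alpha>0$ (in either case (i) or (ii)), $|\partial_i f(x+y)-\partial_i f(x)|\leq C|y|^\alpha$ whenever $x\in D_1$ and $y\in D_1$ (so that $x+y\in D_2\subset D_3$). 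Consequently $|\Gamma_1(x,y)|+|\Gamma_2(x,y)|\leq C|y|^{1+\alpha}$, and the full differentiated integrand is dominated by $C|y|^{1+\alpha-n-2s}\,\zeta(y)$. Because $s<1/2$, we have $1+\alpha-2s>0$, so this dominating function is integrable on $D_1$, uniformly in $x\in D_1$. By dominated convergence, $G$ is continuously differentiable on $D_1$ and its partial derivatives are given by \eqref{formpartialderGfctreg}.

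Next I would invoke Lemma \ref{keybootstraplemma} applied to each of the two integrals in \eqref{formpartialderGfctreg}. In the first integral, the Lipschitz factor is $g$ and the Hölder factor is $\partial_i f$; in the second, the roles are reversed. In case (ii), where $f,g\in C^{1,\alpha}(D_3)$ for some $\alpha\in(2s,1)$, the partial derivatives lie in $C^{0,\alpha}(D_3)$ with $\alpha>2s$, and Lemma \ref{keybootstraplemma}(i) yields $\partial_i G\in C^{0,\alpha}(D_1)$, whence $G\in C^{1,\alpha}(D_1)$. In case (i), where $\partial_i f,\partial_i g\in C^{0,\alpha}(D_3)$ for every $\alpha\in(0,2s)$, Lemma \ref{keybootstraplemma}(ii) gives $\partial_i G\in C^{0,\alpha'}(D_1)$ for every $\alpha'\in(0,2s)$, so that $G\in C^{1,\alpha'}(D_1)$ for every such $\alpha'$.

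There is no serious obstacle: the only point requiring mild care is that $\zeta$ is merely measurable and bounded, hence cannot be used to improve integrability through smoothness, but it equally cannot spoil anything since it does not depend on $x$ and is bounded by $1$. The restriction $s<1/2$ is essential, however: it is precisely what makes $1+\alpha-2s>0$ automatic for every $\alpha>0$, so that differentiation under the integral is legitimate without any additional Hölder hypothesis on $f$ and $g$ beyond $C^1$.
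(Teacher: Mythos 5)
Your proof is correct and takes essentially the same route as the paper: differentiate under the integral sign by dominated convergence to obtain \eqref{formpartialderGfctreg}, then apply Lemma~\ref{keybootstraplemma} to each of the two resulting integrals (with $g$ Lipschitz and $\partial_i f$ H\"older in the first, and the roles swapped in the second). The only cosmetic difference is that the paper dominates the difference quotient of the integrand by $C_{f,g}\,|y|^{1-2s-n}$ using only the $C^1$ regularity of $f,g$ (so $s<1/2$ is already all that is needed), whereas you bring in the H\"older modulus of the derivatives to get the sharper majorant $C|y|^{1+\alpha-2s-n}$; both are legitimate.
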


\begin{proof}
We keep using notation \eqref{notgamGfct}. First we fix an arbitrary point $x\in D_1$ and we claim that $G$ admits a partial derivative $\partial_i G$ at $x$. Indeed, for $t>0$ small enough, we have 
$$\big|\Gamma(x+t e_i,y)- \Gamma(x,y)\big| 
\leq C_{f,g} |y| t \quad\forall y\in D_1\,,$$
since $f$ and $g$ are $C^1$  over $D_3$. Hence, 
$$\frac{|\Gamma(x+t e_i,y)- \Gamma(x,y)|}{|y|^{n+2s}t}\leq C_{f,g} |y|^{1-2s-n} \in L^1(D_1)\,,$$
and it follows from the dominated convergence theorem that $G$ admits a partial derivative $\partial_i G$ at $x$ given by formula \eqref{formpartialderGfctreg}. 

Next we apply Lemma \ref{keybootstraplemma} to the right-hand side of \eqref{formpartialderGfctreg} to deduce that $\partial_i G$ is H\"older continuous, and the conclusion follows. 
\end{proof}

\noindent{\it Proof of Proposition \ref{C1alphareg} completed.}
We consider the function $G_{\widetilde u}:D_1\to\R$ as defined in \eqref{deffctGu} with $\tilde u$ in place of $u$. By Lemma \ref{regC1alphafctGreg} (applied to $f=g=\widetilde u$), $G_{\widetilde u}\in C^{1,\alpha}(D_1)$ for every $\alpha\in(0,2s)$. On the other hand, the function $H_{\widetilde u}:D_1\to \R$ as defined in \eqref{deffctHu} clearly belongs to $C^{1,\alpha}(D_1)$ for every $\alpha\in(0,2s)$. Consequently, the map $F_{\widetilde u}:D_1\to\R^d$ as defined in \eqref{deffctFu} also belongs to $C^{1,\alpha}(D_1)$ for every $\alpha\in(0,2s)$. Since $\widetilde u$ is a rescaling of $u$, it is also $s$-harmonic in $D_1$, and thus $(-\Delta)^s\widetilde u=F_{\widetilde u}$ in $\mathscr{D}^\prime(D_1)$. Next, we keep arguing as in Step 2, and 
 we consider the bounded map $\widetilde u_0:=\zeta \widetilde u$. Applying  Lemma \ref{regC1alphafctGreg} again, we argue as in Step 2 to infer that   $(-\Delta)^s\widetilde u_0=\widetilde F_{0}$ in $H^{-s}(D_1;\R^d)$, for a right-hand side $\widetilde F_0\in C^{1,\alpha}(D_1)$ for every $\alpha\in(0,2s)$. By the results in \cite{RosSer}, we have $\widetilde u_0\in C^{1,\alpha}(D_{1/2})$ for every $\alpha\in(0,4s)$ if $4s<1$, and $\widetilde u_0\in C^{1,\alpha}(D_{1/2})$ for every $\alpha\in(0,1)$ if $4s\geq 1$. Once again, since $\widetilde u_0=\widetilde u$ in $D_{1/2}$,  
 %$(-\Delta)^s(\widetilde u -\widetilde u_0)=0$ in $\mathscr{D}^\prime(D_1)$, so that $\widetilde u-\widetilde u_0$ is smooth in $D_{1/2}$. 
 we have $\widetilde u\in C^{1,\alpha}(D_{1/2})$ for every $\alpha\in(0,4s)$ if $4s<1$, and $\widetilde u\in C^{1,\alpha}(D_{1/2})$ for every $\alpha\in(0,1)$ if $4s\geq 1$.

In the case $s\in[ 1/4,1/2)$, we have thus proved that $u\in C^{1,\alpha}(D_{1/12})$ for every $\alpha\in(0,1)$. Hence it remains to consider the case $s<1/4$. In that case, we repeat the preceding argument considering the rescaling $\widehat u(x):=\widetilde u(x/6)$. Following the same notation as above, Lemma \ref{regC1alphafctGreg}  tells us that $G_{\widehat u}$ belongs to  $C^{1,\alpha}(D_1)$ for every $\alpha\in(0,4s)$, and hence also $F_{\widehat u}$. Then, applying the results of \cite{RosSer} to $\widehat u_0$, we conclude that  $\widehat u\in C^{1,\alpha}(D_{1/2})$ for every $\alpha\in(0,6s)$ if $6s<1$, and $\widehat u\in C^{1,\alpha}(D_{1/12})$ for every $\alpha\in(0,1)$ if $6s\geq 1$. Therefore, if $s\geq 1/6$, then $u\in C^{1,\alpha}(D_{1/72})$ for every $\alpha\in(0,1)$, which is the announced regularity. On the other hand, if $s\in (0,1/6)$, then we repeat the argument. It is now clear that repeating a finite number $\ell$ of times this argument, one reaches the conclusion that $u\in C^{1,\alpha}(D_{(6)^{-\ell}/2})$ for every $\alpha\in(0,1)$, and $\ell$ is essentially the integer part of $1/2s$. 
\end{proof}

Before closing this subsection, we provide an analogue of Lemma \ref{regC1alphafctGreg} in the case $s\in[1/2,1)$. 

\begin{lemma}\label{regC1alphafctGregbis}
Assume that $s\in[1/2,1)$. Let $f:D_3\to \R^d$ and $g:D_3\to \R^d$  be two $C^1$-functions,   
 and $\zeta:D_1\to [0,1]$ a measurable function. If $f$ and $g$ belongs to $C^{1,\alpha}(D_3)$ for every $\alpha\in(0,1)$, then the function $G:D_1\to\R$ given by \eqref{defGfunctionreg} belongs to 
$C^{1,\alpha^\prime}(D_1)$ for every $\alpha^\prime\in(0,2-2s)$, and \eqref{formpartialderGfctreg} holds. 
\end{lemma}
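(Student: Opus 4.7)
The plan is to follow the template of Lemma \ref{regC1alphafctGreg}, with the new case (iii) of Lemma \ref{keybootstraplemma} playing the role of cases (i)--(ii). Write
$$\Gamma(x,y):= \big(f(x+y)-f(x)\big)\cdot\big(g(x+y)-g(x)\big)\,.$$
First I would check that $G$ is well defined on $D_1$: since $f,g\in C^1(D_3)$, one has $|\Gamma(x,y)|\leq C|y|^2$, and the exponent $2-n-2s>-n$ (because $s<1$) ensures that $|y|^{2-n-2s}$ is integrable on $D_1$.

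Second, I would establish the formula \eqref{formpartialderGfctreg} for $\partial_i G$. Fix once and for all an exponent $\alpha\in(2s-1,1)$, which is possible exactly because $s<1$; by hypothesis $f,g\in C^{1,\alpha}(D_3)$. A direct calculation gives
\begin{multline*}
\partial_{x_i}\Gamma(x,y)=\big(\partial_i f(x+y)-\partial_i f(x)\big)\cdot\big(g(x+y)-g(x)\big)\\
+\big(f(x+y)-f(x)\big)\cdot\big(\partial_i g(x+y)-\partial_i g(x)\big)\,,
\end{multline*}
and combining $|\partial_i f(x+y)-\partial_i f(x)|\leq C|y|^\alpha$ with $|g(x+y)-g(x)|\leq C|y|$ (and symmetrically) yields
$$|\partial_{x_i}\Gamma(x,y)|\leq C|y|^{1+\alpha}\qquad\forall(x,y)\in D_1\times D_1\,.$$
Since $1+\alpha>2s$, the function $|y|^{1+\alpha-n-2s}$ is integrable on $D_1$. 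Applying the mean value theorem along the direction $e_i$ and then dominated convergence to the difference quotient
$(G(x+te_i)-G(x))/t$ produces formula \eqref{formpartialderGfctreg}.

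Third, I would deduce the H\"older regularity of $\partial_i G$ by invoking Lemma \ref{keybootstraplemma}, case (iii), applied separately to the two summands in the right-hand side of \eqref{formpartialderGfctreg}: in the first one, the roles of $f$ and $g$ in Lemma \ref{keybootstraplemma} are played by $\partial_i f$ (which is Lipschitz, since $\partial_i f\in C^{0,\alpha}$ for every $\alpha\in(0,1)$, in particular we can take $\partial_i f$ as the "Lipschitz" input) and $g$ respectively; for the second summand, swap the roles. In either case the hypothesis of case (iii) is met, yielding H\"older continuity of each term with any exponent $\alpha^\prime\in(0,2-2s)$. Hence $\partial_i G\in C^{0,\alpha^\prime}(D_1)$ for every such $\alpha^\prime$, which is the desired conclusion.

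The only genuine subtlety, compared to Lemma \ref{regC1alphafctGreg}, is the differentiation under the integral sign in the second step: because $2s\geq 1$ the kernel $|y|^{-n-2s}$ is more singular, and the naive bound $|\partial_{x_i}\Gamma(x,y)|\lesssim|y|$ coming from $C^1$-regularity of $f,g$ does not suffice. It is precisely here that the assumption $f,g\in C^{1,\alpha}$ for $\alpha$ arbitrarily close to $1$ is used, via the choice $\alpha>2s-1$. Everything else is a straightforward transcription of the arguments already written for Lemma \ref{regC1alphafctGreg} and Lemma \ref{keybootstraplemma}.
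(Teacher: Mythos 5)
Your proposal follows the paper's strategy: fix $\alpha\in(2s-1,1)$, establish the bound $|\Gamma(x+te_i,y)-\Gamma(x,y)|\leq C\,t\,|y|^{1+\alpha}$ so that dominated convergence yields \eqref{formpartialderGfctreg}, and then invoke Lemma~\ref{keybootstraplemma} for the H\"older continuity of $\partial_iG$. The paper gets the key bound by rewriting $\Gamma(x+te_i,y)-\Gamma(x,y)$ with the fundamental theorem of calculus, while you obtain it from the pointwise estimate $|\partial_{x_i}\Gamma(x,y)|\leq C|y|^{1+\alpha}$ together with the (scalar) mean value theorem; these two routes are interchangeable.

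There is, however, a slip in the last step. You assert that $\partial_i f$ can serve as the ``Lipschitz'' input of Lemma~\ref{keybootstraplemma} ``since $\partial_i f\in C^{0,\alpha}$ for every $\alpha\in(0,1)$.'' This inference is false: $C^{0,\alpha}$ for every $\alpha<1$ does not imply Lipschitz (e.g. $t\mapsto t\log(1/|t|)$ near $0$), and the hypothesis $f\in C^{1,\alpha}(D_3)$ gives no Lipschitz bound on $\partial_i f$. The fix is immediate and does not affect the conclusion: the integrand of \eqref{defGfunctionreg} is symmetric in its two arguments, so in the first summand of \eqref{formpartialderGfctreg} one should take $g$ (genuinely Lipschitz, being $C^1$ with bounded gradient on $D_3$) as the Lipschitz function of Lemma~\ref{keybootstraplemma}, and $\partial_i f$ as the H\"older function of case~(iii); the roles are swapped in the second summand. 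With this corrected role assignment, case~(iii) applies verbatim and the proof is complete.
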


\begin{proof}
We proceed as in the proof of Lemma \ref{regC1alphafctGreg} using notation \eqref{notgamGfct}. We fix an arbitrary point $x\in D_1$ and we want to show that $G$ admits a partial derivative $\partial_i G$ at $x$. 
For $t>0$ small, we have
\begin{multline*}
\Gamma(x+t e_i,y)- \Gamma(x,y)=\Big(\int_0^t\big(\partial_if(x+y+\rho e_i)-\partial_if(x+\rho e_i)\big)\,\de\rho
%\big(f(x+y+te_i)-f(x+y)\big)-\big(f(x+te_i)-f(x)\big)
\Big)\cdot\big(g(x+y+te_i)-g(x+te_i)\big)\\
+\big(f(x+y)-f(x)\big)\cdot \Big(\int_0^t\big(\partial_ig(x+y+\rho e_i)-\partial_i g(x+\rho e_i)\big)\,\de\rho
\Big)
\end{multline*}
for every $y\in D_1$. Fixing an exponent $\alpha\in(2s-1,1)$, we deduce that 
$$\big|\Gamma(x+t e_i,y)- \Gamma(x,y)\big|\leq C_{f,g,\alpha} |y|^{1+\alpha}t \quad\forall y\in D_1\,.$$
Consequently, 
$$\frac{|\Gamma(x+t e_i,y)- \Gamma(x,y)|}{|y|^{n+2s} t}\leq C_{f,g,\alpha} |y|^{n+2s-1-\alpha} \in L^1(D_1)\,.$$
As in the proof of Lemma \ref{regC1alphafctGreg}, it now follows that $G$ admits a partial derivative $\partial_i G$ at $x$ given by \eqref{formpartialderGfctreg}, and the H\"older continuity of the partial  derivatives of $G$  is a consequence of Lemma~\ref{keybootstraplemma}. 
\end{proof}

\subsection{H\"older continuity of higher order derivatives}

\begin{proposition}\label{Ckalphareg}
Let $u\in \widehat H^s(D_{3};\mathbb{S}^{d-1})$ be a weakly $s$-harmonic map in $D_{3}$. If $u\in C^{k,\alpha}(D_3)$ for some integer $k\geq 1$ and every $\alpha\in(0,1)$, then $u\in C^{k+1,\alpha}(D_{r_*})$ for every $\alpha\in(0,1)$, where the radius $r_*\in(0,1/2)$ is given by Proposition \ref{C1alphareg}. 
%\begin{enumerate}
%\item for $s\in(1/2,1)$, $u\in C^{1,\alpha}(D_{1/2})$ for every $\alpha<2s-1$; 
%\item for $s\in(0,1/2)$, $u\in C^{1,\alpha}(D_{1/2})$ for every $\alpha<2s$. 
%\end{enumerate}
\end{proposition}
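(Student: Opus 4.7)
The proof will follow the same architecture as that of Proposition \ref{C1alphareg}, replacing the Lipschitz input by $C^{k,\alpha}$ regularity and organising everything around a bootstrap based on the Schauder estimates for $(-\Delta)^s$ from \cite{RosSer}. I fix once and for all a radial cut-off $\zeta\in\mathscr{D}(\R^n)$ with $0\leq\zeta\leq 1$, $\zeta\equiv 1$ in $D_{1/2}$ and $\zeta\equiv 0$ outside $D_{3/4}$, and set $u_0:=\zeta u\in H^s_{00}(D_1;\R^d)\cap L^\infty(\R^n)$. Exactly as in Steps~1--2 of the proof of Proposition \ref{C1alphareg}, the $s$-harmonic map equation \eqref{ELeqorig} yields
\begin{equation*}
(-\Delta)^s u_0 = \zeta F_u+\big((-\Delta)^s\zeta\big)u-\gamma_{n,s}\int_{\R^n}\frac{(\zeta(x)-\zeta(y))(u(x)-u(y))}{|x-y|^{n+2s}}\,\de y=:F_0\quad\text{in }H^{-s}(D_1;\R^d),
\end{equation*}
with $F_u$, $G_u$, $H_u$ defined as in \eqref{deffctFu}, \eqref{deffctGu}, \eqref{deffctHu}. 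The problem is thus reduced to showing $F_0\in C^{k,\alpha}(D_1)$ for every $\alpha\in(0,1)$ and then applying a Schauder type estimate.

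The main new ingredient is an iterated version of Lemma \ref{regC1alphafctGreg} (for $s<1/2$) and Lemma \ref{regC1alphafctGregbis} (for $s\geq1/2$), obtained by induction on the order of differentiation. Precisely, I will prove that whenever $f,g\in C^{k,\alpha}(D_3;\R^d)$ for every $\alpha\in(0,1)$, the function $G$ defined by \eqref{defGfunctionreg} belongs to $C^{k,\alpha}(D_1)$ for every $\alpha\in(0,1)$, and for any multi-index $\beta$ with $|\beta|\leq k$,
\begin{equation*}
\partial^\beta G(x)=\sum_{\beta'+\beta''=\beta}\binom{\beta}{\beta'}\int_{D_1}\frac{\big(\partial^{\beta'}\!f(x{+}y)-\partial^{\beta'}\!f(x)\big)\cdot\big(\partial^{\beta''}\!g(x{+}y)-\partial^{\beta''}\!g(x)\big)}{|y|^{n+2s}}\zeta(y)\,\de y,
\end{equation*}
the dominated-convergence justification being identical to that in Lemma \ref{regC1alphafctGreg}/\ref{regC1alphafctGregbis} since the differentiated integrand retains the same ``double difference'' structure and the hypothesis $\partial^{\beta'}\!f,\partial^{\beta''}\!g\in C^{1,\alpha}$ for every $\alpha\in(0,1)$ makes the pointwise bound $|y|^{1+\alpha}$ uniform in the perturbation parameter. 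The Hölder continuity of each $\partial^\beta G$ then follows from Lemma \ref{keybootstraplemma} applied to the pair $(\partial^{\beta'}\!f,\partial^{\beta''}\!g)$. Applying this statement to $f=g=u$ yields $G_u\in C^{k,\alpha}(D_1)$ for every $\alpha\in(0,1)$; a fully analogous (and easier) argument handles the cross term involving $\zeta(x)-\zeta(y)$, while $H_u$ and $((-\Delta)^s\zeta)u$ are evidently $C^{k,\alpha}$ for every $\alpha\in(0,1)$ since $Z*u$ and $(-\Delta)^s\zeta$ are smooth. Combining everything, $F_0\in C^{k,\alpha}(D_1)$ for every $\alpha\in(0,1)$.

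For the final bootstrap, I distinguish two regimes. If $s\in[1/2,1)$, the Schauder estimates from \cite{RosSer} applied to $(-\Delta)^s u_0=F_0$ with $u_0$ supported in $D_1$ and $F_0\in C^{k,\alpha}(D_1)$ for every $\alpha\in(0,1)$ yield $u_0\in C^{k+2s-\eps}(D_{1/2})$ for every small $\eps>0$; since $2s\geq 1$, this gives $u\in C^{k+1,\alpha}(D_{1/2})$ for every $\alpha\in(0,1)$ and the proposition holds with $r_*=1/2$. If $s\in(0,1/2)$, one application of Schauder only upgrades the regularity by $2s<1$, so one has to iterate exactly as in Step~3 of the proof of Proposition \ref{C1alphareg}: after rescaling $u$ to fill $D_3$, apply the regularity of the nonlinear side (using the higher derivatives now available) together with Schauder, gaining $2s$ derivatives at each step, and stop after roughly $\lceil 1/(2s)\rceil$ iterations. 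This produces $u\in C^{k+1,\alpha}(D_{r_*})$ for every $\alpha\in(0,1)$, with the same radius $r_*=r_*(n,s)\in(0,1/2)$ already fixed in Proposition \ref{C1alphareg}.

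The main obstacle is the combinatorial/analytic verification of the iterated differentiation lemma: one has to keep track of all cross-derivatives $\partial^{\beta'}\!f\cdot\partial^{\beta''}\!g$ that appear, to check at each step that the dominated-convergence bound still degenerates only like $|y|^{1+\alpha-2s}$ (which is integrable near $0$), and to apply Lemma \ref{keybootstraplemma} in the right case---notably, when $|\beta'|=k$ one of the two factors is merely $C^{0,\alpha}$ and the subcritical/supercritical split of Lemma \ref{keybootstraplemma} plays its full role. Once this iterated lemma is in place, the rest of the argument is essentially a verbatim repetition of the proof of Proposition \ref{C1alphareg}.
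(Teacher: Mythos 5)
Your strategy mirrors the paper's: establish that $F_0$ is $C^{k,\alpha}$ via an inductive Leibniz-type differentiation of $G_u$ built on Lemmas~\ref{regC1alphafctGreg} and~\ref{regC1alphafctGregbis}, apply the Schauder estimates of \cite{RosSer} to $u_0=\zeta u$, and, for $s<1/2$, iterate after rescaling as in Step~3 of Proposition~\ref{C1alphareg}. However, two of your quantitative statements for $s\in[1/2,1)$ are wrong, and together they break the argument as written. The iterated differentiation lemma does \emph{not} yield $G_u\in C^{k,\alpha}(D_1)$ for every $\alpha\in(0,1)$ when $s\geq 1/2$: Lemma~\ref{regC1alphafctGregbis}, applied inductively, only gives $C^{k,\alpha}$ for $\alpha\in(0,2-2s)$, the gain being capped by the kernel. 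Moreover, the Schauder conclusion you draw, ``$u_0\in C^{k+2s-\eps}(D_{1/2})$ for every small $\eps$, hence $u\in C^{k+1,\alpha}$ for every $\alpha<1$'', discards the H\"older regularity of $F_0$ altogether: for $s=1/2$ it gives only $u_0\in C^{k+1-\eps}$, which is strictly weaker than $C^{k+1}$, and for $s$ slightly above $1/2$ it gives $C^{k+1,\alpha}$ only for $\alpha$ near $0$. The correct bookkeeping (which is what the paper does) is: $F_0\in C^{k,\alpha}(D_1)$ for every $\alpha\in(0,2-2s)$; by \cite{RosSer}, $u_0\in C^{k+\alpha+2s}(D_{1/2})=C^{k+1,\alpha+2s-1}(D_{1/2})$; letting $\alpha\uparrow 2-2s$ gives $u_0\in C^{k+1,\alpha'}(D_{1/2})$ for every $\alpha'\in(0,1)$, which is what you want. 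The $s<1/2$ branch of your proposal, including the rescaling iteration, is fine.
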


\begin{proof}
We proceed as in Step 1 in the proof of Proposition \ref{C1alphareg}, and we consider the function $G_u:D_1\to\R$ given by \eqref{deffctGu}. We claim that $G_u\in C^{k,\alpha}(D_1)$ for every $\alpha\in(0,1)$ if $s\in(0,1/2)$, and that   $G_u\in C^{k,\alpha}(D_1)$ for every $\alpha\in(0,2-2s)$ if $s\in[1/2,1)$, together with the formula 
\begin{equation}\label{multiderivfctGu}
\partial^{\beta}G_u(x)=\sum_{\nu\leq \beta} {\beta\choose \nu}\int_{D_1}\frac{(\partial^\nu u(x+y)-\partial^\nu u(x))\cdot(\partial^{\beta-\nu}u(x+y)-\partial^{\beta-\nu}u(x))}{|y|^{n+2s}}\zeta(y)\,\de y
\end{equation}
for every multi-index $\beta\in\mathbb{N}^n$ of length $|\beta|\leq k$. To prove this claim, we distinguish the case $s\in(0,1/2)$ from the case $s\in[1/2,1)$. 
\vskip3pt

\noindent{\it Case $s\in(0,1/2)$.} We proceed by induction. First notice that the fact that $G_u\in C^{1,\alpha}(D_1)$ for every $\alpha\in(0,1)$ follows from Lemma \ref{regC1alphafctGreg}, as well as 
\eqref{multiderivfctGu} with $|\beta|=1$. Next we assume that $G_u\in C^{\ell,\alpha}(D_1)$ for every $\alpha\in(0,1)$ for some integer $\ell<k$, and that \eqref{multiderivfctGu} holds for every multi-index $\beta$ satisfying $|\beta|=\ell$.  Applying Lemma \ref{regC1alphafctGreg} to each term in the right hand side of \eqref{multiderivfctGu}, we infer that $\partial^\beta G_u\in C^{1,\alpha}(D_1)$ for every $\alpha\in(0,1)$ and each $\beta$ satisfying $|\beta|=\ell$, and that \eqref{multiderivfctGu} holds for multi-indices $\beta^\prime$ in place of $\beta$ of length $|\beta^\prime|=|\beta|+1$. The claim is thus proved for $s\in(0,1/2)$. 
\vskip3pt

\noindent{\it Case $s\in[1/2,1)$.} We proceed exactly as in the previous case but using Lemma \ref{regC1alphafctGregbis} instead of Lemma  \ref{regC1alphafctGreg}. 
\vskip3pt

Now we consider the function $H_u:D_1\to\R$ given by \eqref{deffctHu} which clearly belongs to $C^{k,\alpha}(D_1)$ for every $\alpha\in(0,1)$ by our assumption on $u$. Consequently, the map $F_u:D_1\to \R^d$ belongs to $C^{k,\alpha}(D_1)$ for every $\alpha\in(0,1)$ if $s\in(0,1/2)$, and to $C^{k,\alpha}(D_1)$ for every $\alpha\in(0,2-2s)$ if $s\in[1/2,1)$. By the results in \cite{RosSer} (together with \ref{regC1alphafctGreg} and Lemma \ref{regC1alphafctGregbis}), it implies that the map $u_0:=\zeta u$ as defined in  Step 2, proof of Proposition \ref{C1alphareg}, belongs to $C^{k+1,\alpha}(D_{1/2})$
for every $\alpha\in(0,2s)$ if $s\in(0,1/2)$, and to $C^{k+1,\alpha}(D_{1/2})$
for every $\alpha\in(0,1)$ if $s\in[1/2,1)$. Since $u_0=u$ in $D_{1/2}$, 
%As in the proof of Proposition \ref{C1alphareg}, it implies that $u\in C^{k+1,\alpha}(D_{3/8})$ for every $\alpha\in(0,2s)$ if $s\in(0,1/2)$, and  $u\in C^{k+1,\alpha}(D_{3/8})$ for every $\alpha\in(0,1)$ if $s\in[1/2,1)$. 
the proof is thus complete for $s\in[1/2,1)$. In the case $s\in(0,1/2)$, we argue as in the proof of Proposition~\ref{C1alphareg}, Step 3, applying (inductively) Lemma \ref{regC1alphafctGreg} to formula \eqref{multiderivfctGu} with $|\beta|=k$. It leads to the fact that $u\in C^{k+1,\alpha}(D_{r_*})$ for every $\alpha\in(0,1)$, and hence concludes the proof. 
\end{proof}

%%%%%%%%%%%%%%%%%%%%%%%%%%%%%%%%%%%%%%%%%%%%%%%%%%%%%%%
%%%%%%%%%%%%%%%%%%%%%%%%%%%%%%%%%%%%%%%%%%%%%%%%%%%%%%%
   								       						%%%%%%%%%%%%%%%%%%%
\section{Partial regularity for stationary  and minimizing $s$-harmonic maps}\label{partialreg} %%%%%%%%%
								 						%%%%%%%%%%%%%%%%%%%
%%%%%%%%%%%%%%%%%%%%%%%%%%%%%%%%%%%%%%%%%%%%%%%%%%%%%%%
%%%%%%%%%%%%%%%%%%%%%%%%%%%%%%%%%%%%%%%%%%%%%%%%%%%%%%%

In this section, we complete the proof of Theorems \ref{mainthm1}, \ref{mainthm2}, and \ref{mainthm3}. For $n>2s$, we need to prove compactness of stationary / minimizing $s$-harmonic map to apply Federer's dimension reduction principle. This is the object of the first subsection.

\subsection{Compactness properties of $s$-harmonic maps}\label{subsectcompact}

\begin{theorem}\label{maincompactthm}
Assume that $s\in(0,1)\setminus\{1/2\}$ and $n>2s$. Let $\Omega\subset\R^n$ be a bounded open set. Let $\{u_k\}\subset \widehat H^s(\Omega;\mathbb{S}^{d-1})$ be a sequence of stationary weakly $s$-harmonic maps in $\Omega$. Assume that $\sup_k\mathcal{E}_s(u_k,\Omega)<+\infty$, and that $u_k\to u$ a.e. in $\R^n$. Then $u\in \widehat H^s(\Omega;\mathbb{S}^{d-1})$, $u_k\rightharpoonup u$ weakly in $\widehat H^s(\Omega;\R^d)$, and $u$ is a stationary weakly $s$-harmonic map in $\Omega$. In addition, for every open subset $\omega\subset \Omega$ and every  bounded admissible open set $G\subset \R^{n+1}_+$ satisfying $\overline\omega\subset \Omega$ and $\overline{\partial^0G}\subset \Omega$, 
\begin{enumerate}
\item[(i)] $u_k\to u$ strongly in $\widehat H^s(\omega;\R^d)$;
\item[(ii)] $u_k^\e\to u^\e$ strongly in $H^1(G;\R^d,|z^a\de{\bf x})$.
\end{enumerate}
\end{theorem}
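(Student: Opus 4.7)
The plan is to follow the strategy of \cite{MSK}: weak compactness together with the monotonicity formula and Marstrand's theorem will kill the defect measure at the boundary. Since $|u_k|=1$ a.e., $\|u_k\|_{L^2(\Omega)}^2 = |\Omega|$, and the uniform energy bound makes $\{u_k\}$ bounded in $\widehat H^s(\Omega;\R^d)$. By Remark~\ref{remweakcvHhat}, a (not relabeled) subsequence converges weakly in $\widehat H^s(\Omega;\R^d)$, and the a.e.\ limit is $u$; the pointwise constraint $|u|=1$ is preserved, so $u\in\widehat H^s(\Omega;\mathbb{S}^{d-1})$. Corollary~\ref{contextHsH1} then also yields $u_k^\e \rightharpoonup u^\e$ weakly in $H^1(G;\R^d,|z|^a\de\mathbf{x})$ for every admissible $G$ with $\overline{\partial^0 G}\subset\Omega$.

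Next I would analyze the defect of convergence. Introduce the positive Radon measures $\mu_k := \boldsymbol{\delta}_s z^a |\nabla u_k^\e|^2\,\de\mathbf{x}$. Along a further subsequence, $\mu_k \rightharpoonup \mu$ weakly-$*$ on compact subsets of $\overline{\R^{n+1}_+}$; by lower semicontinuity, $\mu \geq \boldsymbol{\delta}_s z^a |\nabla u^\e|^2\,\de\mathbf{x}$, and we set $\nu := \mu - \boldsymbol{\delta}_s z^a |\nabla u^\e|^2\,\de\mathbf{x} \geq 0$. Away from the hyperplane $\{z=0\}$, the equation ${\rm div}(z^a\nabla u_k^\e)=0$ is uniformly elliptic with smooth coefficients, and standard Caccioppoli-type estimates give strong convergence of $u_k^\e$ to $u^\e$ in $H^1_{\rm loc}(\R^{n+1}_+)$, forcing $\rmspt(\nu)\subset \R^n\times\{0\}$. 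On the boundary, Proposition~\ref{monotformula} applied to each stationary $u_k$ says that $r\mapsto \mu_k(B_r^+(\mathbf{x}))/r^{n-2s}$ is non-decreasing for every $\mathbf{x}\in\Omega\times\{0\}$, and this monotonicity is inherited by $\mu$. Combined with Corollary~\ref{corolmonotform} applied to $u$, the non-negative measure $\bar\nu := \nu_{|\R^n\times\{0\}}$ possesses a well-defined, upper semicontinuous, finite $(n-2s)$-density at every point of $\Omega$.

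Marstrand's theorem then forces $\bar\nu = 0$. In its standard form \cite{Matti}, it asserts that a nontrivial finite Radon measure in Euclidean space admits a positive finite $\alpha$-density at $\mathcal{H}^\alpha$-almost every point of its support only when $\alpha$ is an integer. Since $s\neq 1/2$ and $n>2s$, we have $n-2s\notin \N$, so $\bar\nu=0$ as required, hence $\nu\equiv 0$. Equivalently, $\int_G z^a |\nabla u_k^\e|^2\,\de\mathbf{x} \to \int_G z^a |\nabla u^\e|^2\,\de\mathbf{x}$ for every admissible $G$ with $\overline{\partial^0 G}\subset \Omega$; together with the weak $H^1$-convergence, this yields (ii). The strong convergence $u_k \to u$ in $\widehat H^s(\omega;\R^d)$ for every $\omega\subset\subset \Omega$ then follows, proving (i); this can be seen either from Lemma~\ref{hatH1/2toH1} combined with the norm convergence of the extensions, or by a direct defect-measure argument at the level of the $s$-Dirichlet energy.

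Finally, strong local convergence lets us pass to the limit in both the Euler--Lagrange equation \eqref{ELeqorig} (the right-hand side $|\de_s u_k|^2 u_k$ converges in $L^1_{\rm loc}(\Omega)$, so $u$ is weakly $s$-harmonic) and in the representation formula of Proposition~\ref{represfirstvar} (each term is continuous under strong $L^2_{\rm loc}$-convergence of $z^{a/2} \nabla u_k^\e$), giving stationarity of $u$. The main obstacle is the Marstrand step: one must identify $\bar\nu$ with a measure on $\R^n$ whose $(n-2s)$-density exists everywhere, not merely a.e.; the pointwise monotonicity of $\mu_k(B_r^+(\mathbf{x}))/r^{n-2s}$ uniformly in $k$, coupled with Corollary~\ref{corolmonotform}, is exactly what supplies this existence, and the non-integrality of $n-2s$ --- which is the whole reason for the hypothesis $s\neq 1/2$ --- is what ultimately allows the conclusion.
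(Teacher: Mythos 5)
Your overall architecture matches the paper's, but there is a genuine gap at the Marstrand step: you cannot conclude $\bar\nu=0$ from the mere fact that its $(n-2s)$-density exists and is finite everywhere. Marstrand's theorem says that a measure cannot have \emph{positive} finite $\alpha$-density $\mu$-a.e.\ on a set of positive measure unless $\alpha\in\N$; it says nothing when the density may vanish. A nontrivial finite measure can perfectly well have $(n-2s)$-density identically zero (Lebesgue measure on a bounded set, for instance, since $n-2s<n$). Existence of the density at every point, which is indeed what the monotonicity formula and Corollary~\ref{corolmonotform} supply, is the easy half; what you are missing is a reason why the density of $\mu$ must be bounded \emph{below} by a fixed positive constant on the support of the defect $\mu_{\rm sing}$.

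That lower bound is precisely what the small-energy regularity theory provides, and it is the step the paper interposes before invoking Marstrand. In the paper's Step~1, one defines the concentration set $\Sigma:=\{{\bf x}:\Theta^{n-2s}(\mu,{\bf x})\geq\boldsymbol{\eps}_1\}$ with $\boldsymbol{\eps}_1$ from Corollary~\ref{coroepsreghold}, and then shows $\rm spt(\mu_{\rm sing})\subset\Sigma$: at any ${\bf x}_0$ with sub-threshold density one finds a ball on which, for $k$ large, the rescaled energy of $u_k$ is $<\boldsymbol{\eps}_1$, so Theorem~\ref{thmepsregLip} gives a uniform Lipschitz bound, dominated convergence yields strong $H^s$- (hence, via Lemma~\ref{hatH1/2toH1}, strong weighted $H^1$-) convergence near ${\bf x}_0$, and no defect can concentrate there. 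Only with $\rm spt(\mu_{\rm sing})\subset\Sigma$ in hand does the assumption $\mu(\Sigma)>0$ place one in the situation of Marstrand (positive finite density on a set of positive $\mu$-measure), forcing $n-2s\in\N$ and hence $\mu(\Sigma)=0$, $\mu_{\rm sing}=0$. The remainder of your argument (interior elliptic regularity, inheritance of monotonicity by the limit, strong convergence of $u_k$ in $\widehat H^s_{\rm loc}$ via the extension plus dominated-convergence control of the nonlocal tail, passage to the limit in \eqref{ELeqorig} and Proposition~\ref{represfirstvar}) is sound and agrees with the paper's Steps~2--4; it is only the bridge from ``density exists'' to ``Marstrand applies'' that needs to be rebuilt with the $\eps$-regularity.
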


\begin{theorem}\label{compactthmmins}
Assume that $s\in(0,1/2)$. In addition to Theorem \ref{maincompactthm}, if each $u_k$ is assumed to be a minimizing $s$-harmonic map in $\Omega$, then the limit $u$ is a minimizing $s$-harmonic map in $\Omega$.  
\end{theorem}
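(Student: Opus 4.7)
The plan is to show directly that any competitor $v$ for $u$ (with $v - u$ compactly supported in $\Omega$ and $v \in \widehat H^s(\Omega;\mathbb{S}^{d-1})$) satisfies $\mathcal{E}_s(u,\Omega) \leq \mathcal{E}_s(v,\Omega)$, by constructing competitors $v_k$ for each $u_k$ that are close enough to $v$ to pass to the limit in the minimality inequality $\mathcal{E}_s(u_k,\Omega) \leq \mathcal{E}_s(v_k,\Omega)$. Fix a bounded Lipschitz open set $\omega$ with $\mathrm{spt}(v-u) \subset \omega \Subset \Omega$, and define the natural gluing
$$v_k(x) := \begin{cases} v(x) & \text{if } x\in\omega,\\ u_k(x) & \text{if } x\in\R^n\setminus\omega.\end{cases}$$
Then $v_k$ is $\mathbb{S}^{d-1}$-valued, $v_k - u_k$ is supported in $\overline\omega \subset \Omega$, so $v_k \in \widehat H^s(\Omega;\mathbb{S}^{d-1})$ is a valid competitor for $u_k$.

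A key algebraic observation is the following cancellation: since $v = u$ and $v_k = u_k$ on $\omega^c$, splitting the integral defining $\mathcal{E}_s(\cdot,\Omega)$ according to whether $(x,y) \in (\omega^c\times\omega^c)\cap\text{allowed}_\Omega$ or not produces identical contributions for $v_k$ vs. $u_k$ on the former region, and similarly for $v$ vs. $u$. Hence
$$\mathcal{E}_s(v_k,\Omega) - \mathcal{E}_s(u_k,\Omega) = \mathcal{E}_s(v_k,\omega) - \mathcal{E}_s(u_k,\omega), \quad \mathcal{E}_s(v,\Omega) - \mathcal{E}_s(u,\Omega) = \mathcal{E}_s(v,\omega) - \mathcal{E}_s(u,\omega).$$
Thus it suffices to show $\mathcal{E}_s(v_k,\omega) \to \mathcal{E}_s(v,\omega)$ and $\mathcal{E}_s(u_k,\omega) \to \mathcal{E}_s(u,\omega)$, for then the limit of the minimality inequality yields $\mathcal{E}_s(u,\omega) \leq \mathcal{E}_s(v,\omega)$, hence $\mathcal{E}_s(u,\Omega) \leq \mathcal{E}_s(v,\Omega)$.

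The convergence $\mathcal{E}_s(u_k,\omega) \to \mathcal{E}_s(u,\omega)$ follows immediately from the strong convergence $u_k \to u$ in $\widehat H^s(\omega)$ provided by Theorem~\ref{maincompactthm}(i), via Minkowski's inequality. For $\mathcal{E}_s(v_k,\omega) - \mathcal{E}_s(v,\omega)$, inspection shows that only the off-diagonal term remains, giving
$$\mathcal{E}_s(v_k,\omega) - \mathcal{E}_s(v,\omega) = \frac{\gamma_{n,s}}{2}\iint_{\omega\times\omega^c}\frac{|v(x)-u_k(y)|^2 - |v(x)-u(y)|^2}{|x-y|^{n+2s}}\,\de x\,\de y.$$
The integrand is bounded in absolute value by $8|w_k(y)|/|x-y|^{n+2s}$ with $w_k := u_k - u$ satisfying $|w_k|\leq 2$ and $w_k \to 0$ a.e., and by Cauchy--Schwarz it is controlled by a constant times the square root of $\iint_{\omega\times\omega^c} |w_k(y)|^2 |x-y|^{-(n+2s)}\,\de x\,\de y$.

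The main (and only delicate) point is then to invoke dominated convergence on this last integral, which requires the integrability of $|x-y|^{-(n+2s)}$ over $\omega\times\omega^c$; this is precisely finiteness of the fractional $2s$-perimeter $P_{2s}(\omega,\R^n)$, which holds for smooth bounded $\omega$ exactly because $s < 1/2$. This is where the hypothesis $s\in(0,1/2)$ is crucial and cannot be dispensed with by this argument: for $s \geq 1/2$ the cutting-and-pasting strategy loses control of the interface energy across $\partial\omega$, reflecting the sharper requirement on passing trace information to the limit in the minimizing case.
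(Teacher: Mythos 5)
Your proof is correct and follows essentially the same route as the paper's, which is based on \cite[Theorem 4.1]{MSY}: glue the competitor $v$ to $u_k$ across a Lipschitz set $\omega$, use minimality of $u_k$, reduce to an estimate localized on $\omega$, and pass to the limit via dominated convergence on the $\omega\times\omega^c$ cross term (which requires $s<1/2$ for integrability of $|x-y|^{-(n+2s)}$). The only cosmetic difference is that the paper handles $\liminf_k\mathcal{E}_s(u_k,\omega)\geq\mathcal{E}_s(u,\omega)$ by Fatou's lemma, whereas you invoke the strong convergence from Theorem \ref{maincompactthm}(i); both are valid.
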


\begin{theorem}\label{compactthmmin1/2}
 Let $\Omega\subset\R^n$ be a bounded open set and $\{u_k\}\subset \widehat H^{1/2}(\Omega;\mathbb{S}^{d-1})$  be a sequence of minimizing $1/2$-harmonic maps in $\Omega$. Assume that $\sup_k\mathcal{E}_{\frac{1}{2}}(u_k,\Omega)<+\infty$, and that $u_k\to u$ a.e. in $\R^n$. Then the conclusion of Theorem \ref{maincompactthm} holds and the limit $u$ is a minimizing $1/2$-harmonic map in $\Omega$. 
\end{theorem}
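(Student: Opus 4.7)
The plan is to mirror the classical Luckhaus-type compactness argument for minimizing harmonic maps, taking advantage of the fact that at $s=1/2$ the extension weight $|z|^{1-2s}$ is trivial; via the Caffarelli--Silvestre extension (Proposition~\ref{equivsharmfreebdry}) the problem reduces to compactness of minimizing harmonic maps with partially free boundary condition into $\mathbb{S}^{d-1}$, exactly as in \cite{MS,MilPeg}. To begin, the uniform energy bound combined with Lemma~\ref{adminHchap} ensures that $\{u_k\}$ is bounded in $\widehat H^{1/2}(\Omega;\R^d)$; invoking Remark~\ref{remweakcvHhat} together with the a.e. convergence, I extract $u_k\rightharpoonup u$ weakly in $\widehat H^{1/2}(\Omega;\R^d)$, and $|u_k|\equiv 1$ forces $|u|=1$ a.e., so $u\in\widehat H^{1/2}(\Omega;\mathbb{S}^{d-1})$. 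By Corollary~\ref{contextHsH1} one has $u_k^\e\rightharpoonup u^\e$ weakly in $H^1(G;\R^d)$ on every admissible $G$ with $\overline{\partial^0 G}\subset\Omega$ (the weight is trivial since $a=0$).

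The heart of the proof is to establish minimality of $u$. Given an arbitrary competitor $v\in\widehat H^{1/2}(\Omega;\mathbb{S}^{d-1})$ with $\rmspt(v-u)$ compactly contained in $\Omega$, I would construct competitors $v_k$ for $u_k$ with
\[
\limsup_{k\to\infty}\mathcal{E}_{1/2}(v_k,\Omega)\leq \mathcal{E}_{1/2}(v,\Omega),
\]
so that the minimality of $u_k$, combined with weak lower semicontinuity of $\mathcal{E}_{1/2}(\cdot,\Omega)$, gives
\[
\mathcal{E}_{1/2}(u,\Omega)\leq \liminf_k\mathcal{E}_{1/2}(u_k,\Omega)\leq \limsup_k\mathcal{E}_{1/2}(v_k,\Omega)\leq\mathcal{E}_{1/2}(v,\Omega).
\]
In view of Corollary~\ref{minenergdirchfrac}, the construction of $v_k$ is most naturally performed on the extended map: choosing $R$ with $\rmspt(v-u)\subset\overline{D_{R/2}}\subset\overline{D_R}\subset\Omega$ and $G=B_R^+$, I would build $w_k\in H^1(B_R^+;\R^d)$ with $w_k=u_k^\e$ on $\partial^+ B_R^+$, trace on $\partial^0 B_R^+$ valued in $\mathbb{S}^{d-1}$, and $\mathbf{E}_{1/2}(w_k,B_R^+)\leq \mathbf{E}_{1/2}(v^\e,B_R^+)+o(1)$; then $v_k$ is the extension of the boundary trace of $w_k$ by $u_k$ outside $D_R$.

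I envision building $w_k$ by a slicing-and-gluing technique. Fixing a thin half-annulus $\mathcal{A}_\delta=B_R^+\setminus B_{R-\delta}^+$, I would use a Fubini argument over radii $\rho\in(R-\delta,R)$ to select a slice on which $u_k^\e-v^\e$ converges strongly (in $H^{1/2}$) along $\partial^+ B_\rho^+$, then set $w_k=v^\e$ in $B_\rho^+$, interpolate radially from $v^\e|_{\partial^+ B_\rho^+}$ to $u_k^\e|_{\partial^+ B_R^+}$ in the half-annulus, and apply a Luckhaus-type projection on the boundary portion $\partial^0\mathcal{A}_\delta$ to correct the trace to take values in $\mathbb{S}^{d-1}$. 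The weak convergence $u_k^\e\rightharpoonup u^\e$ combined with the localization of $v-u$ inside $D_{R/2}$ provides the energy matching up to $o_k(1)+o_\delta(1)$, after which sending first $k\to\infty$ and then $\delta\to 0$ yields the claimed bound. Once $u$ is known to be minimizing, the symmetric construction (using $u$ as boundary profile to build admissible competitors for $u_k$) gives $\limsup_k\mathcal{E}_{1/2}(u_k,\omega)\leq \mathcal{E}_{1/2}(u,\omega)$ on every open $\omega$ compactly contained in $\Omega$, and combining this with weak lower semicontinuity forces the strong convergence asserted in items~(i)--(ii) of Theorem~\ref{maincompactthm}. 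Stationarity of $u$ then follows from Remark~\ref{implicminstat}.

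The main obstacle will be the Luckhaus-type projection step on the partially free boundary: the radial interpolation produces a trace on $\partial^0\mathcal{A}_\delta$ whose values lie near but not on $\mathbb{S}^{d-1}$, and one needs a careful projection with quantitative control of the extra Dirichlet energy it induces. At $s=1/2$ the absence of the degenerate weight makes this essentially the classical Luckhaus projection applied in the half-space setting, but the compatible slice selection (ensuring good behaviour both in the bulk of the half-annulus and on its flat side simultaneously, and a projection range staying away from the origin so the retraction onto $\mathbb{S}^{d-1}$ is Lipschitz) is where the bookkeeping is most delicate, and would be handled along the lines developed in \cite{MS,MilPeg} for the $s=1/2$ partially free boundary problem.
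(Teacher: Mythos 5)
The paper does not actually contain a proof of Theorem~\ref{compactthmmin1/2}: after its statement and a few remarks, the text goes directly to the proofs of Theorem~\ref{maincompactthm} and Theorem~\ref{compactthmmins} and never returns to the $s=1/2$ minimizing case, which is treated as known and deferred to \cite{MS,MilPeg}. Your outline—passing to the half-space via the Caffarelli--Silvestre extension (trivial weight at $s=1/2$), reducing to compactness for minimizing harmonic maps with partially free boundary, and using a Luckhaus-type slicing/interpolation/projection construction of competitors—is exactly the route taken in those references, so at the level of strategy your proposal is correct and consistent with the paper's (implicit) approach. You also correctly identify why the simple gluing used in the paper's proof of Theorem~\ref{compactthmmins} cannot be reused: a sharp interface is not in $H^{1/2}$, so the transition layer and the Luckhaus projection onto $\mathbb{S}^{d-1}$ on $\partial^0\mathcal{A}_\delta$ are essential at $s=1/2$.

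Two smaller comments. First, the logical ordering in your last paragraph is backwards: strong convergence of $u_k$ in $\widehat H^{1/2}(\omega)$ and of $u_k^\e$ in $H^1(G)$ does not require knowing $u$ is minimizing; it comes from taking $\hat v=u$ in the competitor construction, which uses only the minimality of each $u_k$ and lower semicontinuity, so the two conclusions are obtained in parallel rather than sequentially. Second, you should be careful about how Corollary~\ref{minenergdirchfrac} is invoked: it compares a \emph{local} competitor $w$ for $u_k^\e$ (with $w-u_k^\e$ compactly supported in $G\cup\partial^0G$) to the nonlocal energy of the trace of $w$ extended by $u_k$. The extension $\hat v^\e$ of a nonlocal competitor $\hat v$ is \emph{not} compactly supported, so it is not itself admissible in Corollary~\ref{minenergdirchfrac}; one must first run the Luckhaus interpolation to build an admissible $w_k$ agreeing with $u_k^\e$ near $\partial^+B_R^+$, and then control the extra energy. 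That bookkeeping (and the selection of good slices simultaneously for the spherical part and the flat part of the boundary of the half-annulus) is where the real work lies, and is carried out in detail in \cite{MS,MilPeg}. Your outline flags this as the main obstacle, which is accurate.
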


\begin{remark}
In the case $s\in(1/2,1)$, we do not know if minimality of the sequence $\{u_k\}$ implies  minimality of the limit. We believe this is indeed the case, but we won't need this fact. 
\end{remark}

\begin{remark}
In the case $n=1$ and $s\in(1/2,1)$, sequences of (arbitrary) weakly $s$-harmonic maps with uniformly bounded energy are relatively compact, i.e., the conclusion of Theorem~\ref{maincompactthm} holds. This fact is a consequence of the Lipschitz estimate established in Theorem \ref{thmepsregLip} together with Remark \ref{remarlepsholdregsubcritic}. Since we shall not need this, we leave the details to the reader. 
\end{remark}

\begin{remark}
In the case $s=1/2$, sequences of (stationary or not)  $1/2$-harmonic maps are not compact in general, see e.g. \cite{DaLi1,MPis,MirPis,MilPeg}. The prototypical example  is the following sequence of smooth $1/2$-harmonic maps from $\R^n$ into $\mathbb{S}^1\subset \mathbb{C}$ given by  
$$u_k(x)=u_k(x_1):=\frac{kx_1-i}{kx_1+i}\,,\quad k\in\mathbb{N}\,,$$
%Their harmonic extensions are given by 
%$$u^\e_k({\bf x})=\frac{k(x_1+iz)-i}{k(x_1+iz)+i} \,,$$ 
%where ${\bf x}=(x_1,\ldots,x_n,z)\in\R^{n+1}_+$.  From this explicit formula, one obtains that 
%$${\bf E}_{1/2}(u^\e_k,B_r^+)\leq Cr^{n-1} \quad \forall r>0\,.$$
which is converging  weakly but not strongly to the constant map $1$ in $\widehat H^{1/2}(D_r)$ for every $r>0$. (Recall that $u_k$ being smooth, it is stationary, see Remark \ref{remsmothhimplstat}.) 
\end{remark}

%
%\begin{Example}
%$s=1/2$ and $n=1$ 
%\end{Example}
%
%\begin{Example}
%$s=1/2$ and $n=2$ 
%\end{Example}

\begin{proof}[Proof of Theorem \ref{maincompactthm}]
{\it Step 1.} We fix two arbitrary admissible bounded open sets $G,G^\prime\subset \R^{n+1}_+$ such that $\overline G\subset G^\prime\cup\partial^0G^\prime$ and satisfying $\overline{\partial^0G^\prime}\subset \Omega$. Since $u_k\to u$ a.e. in $\R^n$ and $|u_k|=1$,  we first deduce that $|u|=1$ and $u_k\to u$ strongly in $L^2_{\rm loc}(\R^2;\R^d)$. It then follows from our assumption that $\{u_k\}$ is bounded in  $\widehat H^s(\Omega;\R^d)$. Next we derive from Remark \ref{remweakcvHhat} that $u\in \widehat H^s(\Omega;\mathbb{S}^{d-1})$ and $u_k\rightharpoonup u$ weakly in 
$\widehat H^s(\Omega;\R^d)$. In view of Corollary \ref{contextHsH1}, $u_k^\e\rightharpoonup u^\e$ weakly in $H^1(G^\prime;\R^d,|z|^a\de{\bf x})$. Since $|u_k|\leq 1$, we have $u_k^\e({\bf x})\to u^\e({\bf x})$ for every ${\bf x}\in G^\prime$ by dominated convergence. In turn, we have $|u^\e_k-u^\e|\leq 2$, and it follows by dominated convergence again that $u_k^\e\to u^\e$ strongly in $L^2(G^\prime;\R^d,|z|^a\de{\bf x})$. Recalling that ${\rm div}(z^a\nabla u^\e_k)=0$ in $G^\prime$, we infer from standard elliptic regularity that $u^\e_k\to u^\e$ in $C^1_{\rm loc} (G^\prime)$. In particular, 
\begin{equation}\label{strcvloccmpct}
u^\e_k\to u^\e\quad\text{strongly in $H^1_{\rm loc}(G^\prime;\R^d)$}\,. 
\end{equation}

We aim to show that  $u_k^\e\to u^\e$ strongly in $H^1(G;\R^d,|z|^a\de{\bf x})$. To prove this strong convergence, we consider the finite measures on $G^\prime\cup\partial^0G^\prime$ given by 
$$\mu_k:= \frac{\boldsymbol{\delta}_s}{2}z^a|\nabla u_k^\e|^2\mathscr{L}^{n+1}\res G^\prime\,.$$
Since $\sup_k\mu_k(G^\prime\cup\partial^0G^\prime)<+\infty$, we can find a further (not relabeled) subsequence such that 
\begin{equation}\label{weakcvmuk}
\mu_k\rightharpoonup   \frac{\boldsymbol{\delta}_s}{2}z^a|\nabla u^\e|^2\mathscr{L}^{n+1}\res G^\prime+\mu_{\rm sing}\quad\text{as $k\to\infty$}\,,
\end{equation}
weakly* as Radon measures on $G^\prime\cup\partial^0G^\prime$ for some finite nonnegative measure $\mu_{\rm sing}$. In view of \eqref{strcvloccmpct}, the defect measure $\mu_{\rm sing}$ is supported 
by $\partial^0G^\prime$. 

Since $u_k$ is stationary in $\Omega$, it satisfies the monotonicity formula in Proposition \ref{monotformula}, and thus 
\begin{equation}\label{monotmuk}
\mu_k(B_\rho({\bf x}))\leq  \mu_k(B_r({\bf x}))
\end{equation}
for every ${\bf x}\in\partial^0G^\prime$ and $0<\rho<r<{\rm dist}({\bf x}, \partial^+G^\prime)$. From the weak* convergence of $\mu_k$ towards $\mu$, we then infer that
$$\mu(B_\rho({\bf x}))\leq  \mu(B_r({\bf x}))$$
for every ${\bf x}\in\partial^0G^\prime$ and $0<\rho<r<{\rm dist}({\bf x}, \partial^+G^\prime)$. As a consequence, the $(n-2s)$-dimensional density 
$$\Theta^{n-2s}(\mu,{\bf x}):=\lim_{r\to 0}\frac{\mu(B_r({\bf x}))}{r^{n-2s}}  $$
exists and is finite at every point ${\bf x}\in\partial^0G^\prime$. More precisely, \eqref{monotmuk} implies that 
$$ \Theta^{n-2s}(\mu,{\bf x})\leq \big({\rm dist}({\bf x}, \partial^+G^\prime)\big)^{2s-n}\sup_k{\bf E}_s(u_k,G^\prime)<+\infty\quad \forall {\bf x}\in\partial^0G^\prime\,.$$
We now consider the ``concentration set'' 
$$\Sigma:=\Big\{{\bf x}\in \partial^0G^\prime: \inf_r\big\{\liminf_{k\to\infty} r^{2s-n}\mu_k(B_r({\bf x})) : 0<r<{\rm dist}({\bf x}, \partial^+G^\prime)\big\}\geq \boldsymbol{\eps}_1\Big\}\,, $$
where the constant $\boldsymbol{\eps}_1>0$ is given by Corollary \ref{coroepsreghold}. From the monotonicity of $\mu_k$ and $\mu$ together with \eqref{weakcvmuk}, we deduce that 
\begin{multline*}
\Sigma=\Big\{{\bf x}\in \partial^0G^\prime: \lim_{r\to 0}\liminf_{k\to\infty} r^{2s-n}\mu_k(B_r({\bf x})) \geq  \boldsymbol{\eps}_1 \Big\}\\
= \Big\{{\bf x}\in \partial^0G^\prime: \lim_{r\to 0} r^{2s-n}\mu(B_r({\bf x})) \geq  \boldsymbol{\eps}_1 \Big\}\,,
\end{multline*}
that is 
$$\Sigma= \Big\{{\bf x}\in \partial^0G^\prime:\Theta^{n-2s}(\mu,{\bf x})\geq  \boldsymbol{\eps}_1 \Big\}\,.$$
Observing that ${\bf x}\in \partial^0G^\prime\mapsto \Theta^{n-2s}(\mu,{\bf x})$ is upper semi-continuous, the set $\Sigma$ is a relatively closed subset of $\partial^0G^\prime$. 

We claim that ${\rm spt}(\mu_{\rm sing})\subset\Sigma$. To prove this inclusion, we fix an arbitrary point ${\bf x}_0=(x_0,0)\in\partial^0 G^\prime\setminus\Sigma$. Then we can find a  radius $0<r<{\rm dist}({\bf x}_0, \partial^+G^\prime)$ such that $r^{2s-n}\mu(B_r({\bf x}_0))< \boldsymbol{\eps}_1$ and  $\mu(\partial B_r({\bf x}_0))=0$. By \eqref{weakcvmuk} and our choice of $r$, we have $\lim_k\mu_k(B_r({\bf x}_0))=\mu(B_r({\bf x}_0))$. Therefore,  $r^{2s-n}\mu_k(B_r({\bf x}_0))< \boldsymbol{\eps}_1$ for $k$ large enough, and we derive from Theorem \ref{thmepsregLip} that for $k$ large enough, $u_k$ is bounded in 
$C^{0,1}(D_{\boldsymbol{\kappa}_2r}(x_0))$ (and $u\in C^{0,1}(D_{\boldsymbol{\kappa}_2r}(x_0))$), where the constant $\boldsymbol{\kappa}_2\in(0,1)$ only depends on $n$ and $s$. It then follows by dominated convergence that 
$$[u_k-u]^2_{H^s(D_{\boldsymbol{\kappa}_2r}(x_0))} \mathop{\longrightarrow}\limits_{k\to\infty} 0\,.$$
Setting $w_k:=u_k-u$, we now estimate
\begin{multline*}
\mathcal{E}_s(w_k,D_{2\boldsymbol{\kappa}_2r/3}(x_0))\leq C\Big([u_k-u]^2_{H^s(D_{\boldsymbol{\kappa}_2r}(x_0))}\\+\iint_{D_{2\boldsymbol{\kappa}_2r/3}(x_0)\times D^c_{\boldsymbol{\kappa}_2r}(x_0)}
\frac{|w_k(x)-w_k(y)|^2}{|x-y|^{n+2s}}\,\de x\de y\Big) \,.
\end{multline*}
Since $|w_k|\leq 2$ and $w_k\to 0$ a.e. in $\R^n$, by dominated convergence we have 
\begin{equation}\label{pipicacaprout}
\iint_{D_{2\boldsymbol{\kappa}_2r/3}(x_0)\times D^c_{\boldsymbol{\kappa}_2r}(x_0)}
\frac{|w_k(x)-w_k(y)|^2}{|x-y|^{n+2s}}\,\de x\de y  \mathop{\longrightarrow}\limits_{k\to\infty} 0\,.
\end{equation}
Hence $\mathcal{E}_s(w_k,D_{2\boldsymbol{\kappa}_2r/3}(x_0))\to 0$, and it follows from Lemma \ref{hatH1/2toH1} that 
$${\bf E}_s\big(u_k^\e-u^\e,B^+_{\boldsymbol{\kappa}_2r/3}({\bf x}_0)\big)\leq C  \mathcal{E}_s(u_k-u,D_{2\boldsymbol{\kappa}_2r/3}(x_0))\to 0\,.$$
Hence, $u_k^\e\to u^\e$ strongly in $H^1(B^+_{\boldsymbol{\kappa}_2r/3}({\bf x}_0),|z|^a\de{\bf x})$, and thus $\mu_{\rm sing}(B_{\boldsymbol{\kappa}_2r/3}({\bf x}_0))=0$. This shows that ${\bf x}_0\not\in {\rm spt}(\mu_{\rm sing})$, and the claim is proved. 
\vskip3pt

Next we claim that $\mu(\Sigma)=0$. Indeed, assume by contradiction that $\mu(\Sigma)>0$. Then the density $\Theta^{n-2s}(\mu,{\bf x})$ exists, it is positive (greater than $\boldsymbol{\eps}_1$) and finite, at every point ${\bf x}\in \Sigma$. By Marstrand's theorem (see e.g. \cite[Theorem 14.10]{Matti}), it implies that $n-2s$ is an integer, a contradiction. 

Knowing that $\mu(\Sigma)=0$, we now deduce that $\mu_{\rm sing}(\Sigma)=0$. But $\mu_{\rm sing}$ being supported by $\Sigma$, it implies that $\mu_{\rm sing}\equiv0$. As a  consequence, 
${\bf E}_s(u^\e_k,G)\to  {\bf E}_s(u^\e,G)$, which combined with the weak convergence in $H^1(G;\R^d,|z|^a\de{\bf x})$ implies that ${\bf E}_s(u^\e_k-u^\e,G)\to  0$. We have thus proved that $u_k^\e\to u^\e$ strongly in $H^1(G;\R^d,|z|^a\de{\bf x})$. 
\vskip5pt

\noindent{\it Step 2.} We consider in this step an open subset $\omega\subset\Omega$ such that $\overline\omega\subset \Omega$, and our goal is to prove that $u_k\to u$ 
strongly in $\widehat H^s(\omega;\R^d)$. 
%We fix a further open subset $\omega^\prime\subset \Omega$ such that $\overline\omega\subset\omega^\prime$ and  $\overline{\omega^\prime}\subset\Omega$. 
Set $\delta:=\frac{1}{8}{\rm dist}(\omega,\Omega^c)$, and consider a finite covering of $\omega$ by balls $(D_{\delta}(x_i))_{i\in I}$ with $x_i\in\overline{\omega}$. By Lemma \ref{HsregtraceH1weight} and Step 1, we have for each $i\in I$, 
\begin{equation}\label{pipicacaprout1}
[u_k-u]^2_{H^s(D_{2\delta}(x_i))}\leq C{\bf E}_s(u^\e_k-u^\e,B^+_{4\delta}({\bf x}_i)) \mathop{\longrightarrow}\limits_{k\to\infty}0\,,
\end{equation}
where ${\bf x}_i:=(x_i,0)$. Writing again $w_k:=u_k-u$, we now estimate 
\begin{align}
\nonumber \mathcal{E}_s(w_k,\omega)&\leq C\iint_{\omega\times\R^n}\frac{|w_k(x)-w_k(y)|^2}{|x-y|^{n+2s}}\,\de x\de y\\
\nonumber&\leq C\sum_{i\in I} \iint_{D_{\delta}(x_i)\times\R^n}\frac{|w_k(x)-w_k(y)|^2}{|x-y|^{n+2s}}\,\de x\de y\,\\
\label{pipicacaprout2}&\leq C\sum_{i\in I} \Big( [w_k]^2_{H^s(D_{2\delta}(x_i))}+ \iint_{D_{\delta}(x_i)\times D^c_{2\delta}(x_i)}\frac{|w_k(x)-w_k(y)|^2}{|x-y|^{n+2s}}\,\de x\de y \Big)\,.
\end{align}
As in \eqref{pipicacaprout}, by dominated convergence we have 
\begin{equation}\label{pipicacaprout3}
\iint_{D_{\delta}(x_i)\times D^c_{2\delta}(x_i)}\frac{|w_k(x)-w_k(y)|^2}{|x-y|^{n+2s}}\,\de x\de y  \mathop{\longrightarrow}\limits_{k\to\infty}0 \quad\forall i\in I\,.
\end{equation}
Combining \eqref{pipicacaprout1}, \eqref{pipicacaprout2}, and \eqref{pipicacaprout3} leads to $\mathcal{E}_s(w_k,\omega)\to 0$, and thus $u_k\to u$ 
strongly in $\widehat H^s(\omega;\R^d)$. 
\vskip5pt

\noindent{\it Step 3.} Our aim in this step is to show that $u$ is a weakly $s$-harmonic map in $\Omega$, i.e., $u$ satisfies equation \eqref{ELeqorig}, or equivalently \eqref{ELeqsgrad}, by Proposition \ref{ELeqprop}. To this purpose, we fix an arbitrary $\varphi\in\mathscr{D}(\Omega;\R^d)$, and we choose an open subset $\omega\subset \Omega$ such that ${\rm spt}(\varphi)\subset \omega$ and $\overline\omega\subset\Omega$. Writing again $w_k:=u_k-u$, we have proved in Step 2 that 
$\mathcal{E}_s(w_k,\omega)\to 0$. 

Recalling our notations from Subsection \ref{sectoperandcompcomp}, we observe that 
$$|{\rm d}_s u_k|^2-|{\rm d}_s u|^2=|{\rm d}_sw_k|^2 +2{\rm d}_sw_k\odot{\rm d}_s u\,,$$
and then estimate
\begin{align*}
\big\||{\rm d}_s u_k|^2-|{\rm d}_s u|^2\big\|_{L^1(\omega)}& \leq \big\| |{\rm d}_sw_k|^2\big\|_{L^1(\omega)}+2 \big\| {\rm d}_sw_k\odot{\rm d}_s u\big\|_{L^1(\omega)}\\
&\leq 2 \mathcal{E}_s(w_k,\omega) + 2 \|{\rm d}_sw_k\|_{L^2_{\rm od}(\omega)}\|{\rm d}_su\|_{L^2_{\rm od}(\omega)}\\
&\leq 2 \mathcal{E}_s(w_k,\omega) + 2\sqrt{2} \|{\rm d}_su\|_{L^2_{\rm od}(\omega)}\sqrt{\mathcal{E}_s(w_k,\omega)}\,.
\end{align*}
Therefore $|{\rm d}_s u_k|^2\to |{\rm d}_s u|^2$ in $L^1(\omega)$, and we can find a further (not relabeled) subsequence and $h\in L^1(\omega)$ such that 
$$|{\rm d}_su_k|^2(x)\to |{\rm d}_su|^2(x)\text{ for a.e. $x\in\omega$, and } |{\rm d}_su_k|^2(x)\leq h(x)\text{ for a.e. $x\in\omega$}\,.$$
Since $|u_k|=1$ and $u_k\to u$ a.e. in $\omega$, it follows by dominated convergence that $|{\rm d}_su_k|^2u_k\to |{\rm d}_su|^2u$ in $L^1(\omega)$. Consequently, 
$$\int_\Omega|{\rm d}_su_k|^2u_k\cdot\varphi\,\de x\mathop{\longrightarrow}\limits_{k\to\infty}\int_\Omega|{\rm d}_su|^2u\cdot\varphi\,\de x \,. $$
On the other hand, the weak convergence of $u_k$ to $u$ in $\widehat H^s(\Omega;\R^d)$ implies that $\big\langle (-\Delta)^su_k,\varphi\big\rangle_\Omega$ converges to  
$\big\langle (-\Delta)^su,\varphi\big\rangle_\Omega$. Hence, 
$$ \big\langle (-\Delta)^su,\varphi\big\rangle_\Omega=\lim_{k\to\infty} \big\langle (-\Delta)^su_k,\varphi\big\rangle_\Omega=\lim_{k\to\infty} \int_\Omega|{\rm d}_su_k|^2u_k\cdot\varphi\,\de x=\int_\Omega|{\rm d}_su|^2u\cdot\varphi\,\de x \,,$$
so that $u$ is indeed weakly $s$-harmonic in $\Omega$ (see \eqref{ELeqsgrad}). 
\vskip5pt

\noindent{\it Step 4.} It now only remains to prove that $u$ is stationary  in $\Omega$. This is in fact an easy consequence of the strong convergence of $u^\e$ established in Step 1. Indeed, let us fix an arbitrary vector field 
$X\in C^1(\R^n;\R^n)$ compactly supported in $\Omega$. Combining the strong convergence of $u^\e_k$ established in Step 1 together with the representation of the first variation $\delta\mathcal{E}_s$ stated in Proposition \ref{represfirstvar}, we obtain that $\delta\mathcal{E}_s(u_k,\Omega)[X]\to \delta\mathcal{E}_s(u,\Omega)[X]$, whence $\delta\mathcal{E}_s(u,\Omega)=0$.
\end{proof}

\begin{proof}[Proof of Theorem \ref{compactthmmins}]
In view of Remark \ref{implicminstat} and Theorem \ref{maincompactthm}, it only remains  to prove that the limiting map $u$ is a minimizing $s$-harmonic map in $\Omega$. We follow here the argument in \cite[Theorem 4.1]{MSY}. 
%First notice that Remark \ref{remweakcvHhat} tells us 
%$$\mathcal{E}_s(u,\Omega)\leq \liminf_{k\to\infty} \mathcal{E}_s(u_k,\Omega)\,.$$

Let us now consider an arbitrary $\widetilde u\in \widehat H^s(\Omega;\mathbb{S}^{d-1})$ such that ${\rm spt}(u-\widetilde u)\subset\Omega$. We select an open subset $\omega\subset\Omega$ with Lipschitz boundary such that ${\rm spt}(u-\widetilde u)\subset\omega$ and $\overline\omega\subset\Omega$. Define 
$$\widetilde u_k(x):=\begin{cases}
\widetilde u(x) & \text{if $x\in\omega$}\,,\\
u_k(x) & \text{otherwise}\,.
\end{cases} $$
Since $s\in(0,1/2)$ and $\partial\omega$ is Lipschitz regular, it turns out that $\widetilde u_k\in\widehat H^s(\Omega;\mathbb{S}^{d-1})$ (see e.g. \cite[Section 2.1]{MSK}), and ${\rm spt}(u_k-\widetilde u_k)\subset \Omega$. By minimality of $u_k$, we have $\mathcal{E}_s(u_k,\Omega)\leq \mathcal{E}_s(\widetilde u_k,\Omega)$. Since $\widetilde u_k=u_k$ in $\R^n\setminus\omega$, it reduces to 
$$ \mathcal{E}_s(u_k,\omega)\leq \mathcal{E}_s(\widetilde u_k,\omega)=\frac{\gamma_{n,s}}{4}\iint_{\omega\times\omega}\frac{|\widetilde u(x)-\widetilde u(y)|^2}{|x-y|^{n+2s}}\,\de x\de y+\frac{\gamma_{n,s}}{2}\iint_{\omega\times\omega^c}\frac{|\widetilde u(x)-u_k(y)|^2}{|x-y|^{n+2s}}\,\de x\de y\,.$$
On the other hand, 
$$\frac{|\widetilde u(x)- u_k(y)|^2}{|x-y|^{n+2s}}\leq \frac{4}{|x-y|^{n+2s}}\in L^1(\omega\times\omega^c)\,, $$ 
since $\omega$ has Lipschitz boundary. Hence,  $\mathcal{E}_s(\widetilde u_k,\omega)\to \mathcal{E}_s(\widetilde u,\omega)$ by dominated convergence and the fact that $\widetilde u=u$ in $\R^n\setminus\omega$. By Fatou's Lemma, we have $\liminf_k\mathcal{E}_s(u_k,\omega)\geq \mathcal{E}_s(u,\omega)$, and we reach the conclusion that  $ \mathcal{E}_s(u,\omega)\leq \mathcal{E}_s(\widetilde u,\omega)$. Once again, the fact that $\widetilde u=u$ in $\R^n\setminus\omega$ then implies that $\mathcal{E}_s(u,\Omega)\leq \mathcal{E}_s(\widetilde u,\Omega)$. By arbitrariness of $\widetilde u$, we conclude that $u$ is indeed a minimizing $s$-harmonic map in $\Omega$. 
\end{proof}

We now close this subsection with an easy consequence of Theorem \ref{maincompactthm} and Theorem \ref{compactthmmin1/2} in terms of the pointwise density function $\boldsymbol{\Xi}_s(u,\cdot)$ defined in \eqref{deflimitdens}.

\begin{corollary}\label{uscdensit}
Assume that $n>2s$. In addition to Theorem \ref{maincompactthm} and Theorem \ref{compactthmmin1/2}, if $\{x_k\}\subset\Omega$ is a sequence converging to $x_*\in\Omega$, then 
$$\limsup_{k\to\infty}\,\boldsymbol{\Xi}_s(u_k,x_k)\leq \boldsymbol{\Xi}_s(u,x_*) \,.$$ 
\end{corollary}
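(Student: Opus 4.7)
The strategy is to combine the strong convergence of $u_k^\e$ to $u^\e$ (coming from Theorem \ref{maincompactthm} for $s \neq 1/2$, and from Theorem \ref{compactthmmin1/2} for $s = 1/2$, noting that in all cases $u_k$ is stationary so the monotonicity of Proposition \ref{monotformula} applies to each $u_k^\e$) with the pointwise monotonicity of the density $r \mapsto \boldsymbol{\Theta}_s(u_k^\e, (x_k,0), r)$.

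First I would exploit monotonicity: fix any radius $r > 0$ such that $\overline{D}_{2r}(x_*) \subset \Omega$; then for $k$ large enough, $\overline{D}_r(x_k) \subset \Omega$ as well, and Corollary \ref{corolmonotform} gives
\[
\boldsymbol{\Xi}_s(u_k, x_k) \leq \boldsymbol{\Theta}_s\!\big(u_k^\e, (x_k, 0), r\big).
\]
Next I would show that the right-hand side converges to $\boldsymbol{\Theta}_s(u^\e, (x_*,0), r)$ as $k \to \infty$. Pick an admissible bounded open set $G \subset \R^{n+1}_+$ with $B_{2r}^+((x_*, 0)) \subset G$ and $\overline{\partial^0 G} \subset \Omega$; by item~(ii) of Theorem \ref{maincompactthm} (respectively Theorem \ref{compactthmmin1/2}), $u_k^\e \to u^\e$ strongly in $H^1(G; \R^d, |z|^a \de{\bf x})$, so $|z|^a |\nabla u_k^\e|^2 \to |z|^a |\nabla u^\e|^2$ in $L^1(G)$. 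Consequently
\[
\Big|{\bf E}_s\!\big(u_k^\e, B_r^+((x_k,0))\big) - {\bf E}_s\!\big(u^\e, B_r^+((x_k,0))\big)\Big| \leq \frac{\boldsymbol{\delta}_s}{2}\int_G |z|^a \big||\nabla u_k^\e|^2 - |\nabla u^\e|^2\big| \, \de{\bf x} \xrightarrow[k\to\infty]{} 0,
\]
while dominated convergence applied to $|z|^a|\nabla u^\e|^2 \mathbf{1}_{B_r^+((x_k,0))} \to |z|^a|\nabla u^\e|^2 \mathbf{1}_{B_r^+((x_*,0))}$ (valid off the negligible set $\partial B_r^+((x_*,0))$) yields
\[
{\bf E}_s\!\big(u^\e, B_r^+((x_k,0))\big) \xrightarrow[k\to\infty]{} {\bf E}_s\!\big(u^\e, B_r^+((x_*,0))\big).
\]
Dividing by $r^{n-2s}$ gives $\boldsymbol{\Theta}_s(u_k^\e, (x_k,0), r) \to \boldsymbol{\Theta}_s(u^\e, (x_*,0), r)$.

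Taking $\limsup_{k\to\infty}$ in the monotonicity inequality above thus yields $\limsup_k \boldsymbol{\Xi}_s(u_k, x_k) \leq \boldsymbol{\Theta}_s(u^\e, (x_*,0), r)$ for every admissible $r$. Letting $r \downarrow 0$ and invoking the definition \eqref{deflimitdens} of $\boldsymbol{\Xi}_s(u, x_*)$ completes the proof. The only delicate point is justifying the convergence of ${\bf E}_s$ on the moving balls $B_r^+((x_k,0))$, but this splits cleanly into an $L^1$-convergence of the integrands (Theorem \ref{maincompactthm}(ii)) and a translation of the domain with fixed integrand (dominated convergence), both of which are routine.
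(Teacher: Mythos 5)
Your proof is correct and follows essentially the same route as the paper: monotonicity of the density via Corollary \ref{corolmonotform}, strong $H^1(\cdot,|z|^a\de\mathbf{x})$ convergence of $u_k^\e$, then $r\downarrow 0$. The one minor technical difference is in passing to the limit on the moving balls: you split into $L^1$-convergence of the integrands plus a dominated-convergence argument for the translating domain, whereas the paper simply observes $B^+_r(\mathbf{x}_k)\subset B^+_{r+r_k}(\mathbf{x}_*)$, so $\boldsymbol{\Theta}_s(u_k^\e,\mathbf{x}_k,r)\leq r^{2s-n}\,\mathbf{E}_s(u_k^\e,B^+_{r+r_k}(\mathbf{x}_*))$, and then lets $r_k\to 0$ together with the strong convergence on a fixed half-ball $B^+_{2r}(\mathbf{x}_*)$. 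Both are routine; the paper's enclosure trick avoids the moving-domain step entirely, at the cost of producing an inequality rather than an exact limit at fixed $r$, which is all one needs here.
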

 
 \begin{proof}
 Without loss of generality, we can assume that $x_*=0$. Applying Corollary \ref{corolmonotform}, we obtain for $r>0$ small enough and $r_k:=|x_k|$, 
 \begin{equation}\label{concertcesoir}
 \boldsymbol{\Xi}_s(u_k,x_k)\leq \boldsymbol{\Theta}(u^\e_k,{\bf x}_k,r)\leq \frac{1}{r^{n-2s}}{\bf E}_s(u^\e_k,B^+_{r+r_k})\,, 
 \end{equation}
 where ${\bf x}_k:=(x_k,0)$. 
 By Theorem \ref{maincompactthm} (in the case $s\not=1/2$) and Theorem \ref{compactthmmin1/2} (in the case $s=1/2$), $u_k^\e\to u^\e$ strongly in $H^1(B^+_{2r},|z|^a\de{\bf x})$. Since $r_k\to0$, we deduce from \eqref{concertcesoir} that 
 $$\limsup_{k\to\infty} \, \boldsymbol{\Xi}_s(u_k,x_k)\leq \boldsymbol{\Theta}(u^\e,0,r)\,, $$
 and the conclusion follows letting $r\to 0$. 
 \end{proof}

\subsection{Tangent maps}\label{secttangmap}

We assume throughout this subsection that $s\in(0,1)$ and $n>2s$. We consider a bounded open set $\Omega\subset\R^n$ and a map $u\in\widehat H^s(\Omega;\mathbb{S}^{d-1})$ that we assume to be 
\begin{itemize}
\item a stationary weakly $s$-harmonic map in $\Omega$ for $s\not=1/2$; 
\item a minimizing $1/2$-harmonic map in $\Omega$ for $s=1/2$. 
\end{itemize}
We shall apply the results of Subsection \ref{subsectcompact} to define the so-called {\sl tangent maps} of $u$ at a given point. To this purpose, we fix a point of study $x_0\in\Omega$ and a reference radius $\rho_0>0$ such that $D_{2\rho_0}(x_0)\subset\Omega$. We introduce the rescaled function
$$u_{x_0,\rho}(x):=u(x_0+\rho x)\,,$$
and we observe that $(u_{x_0,\rho})^\e({\bf x})=u^\e({\bf x}_0+\rho {\bf x})=u_{x_0,\rho}^\e({\bf x})$ with ${\bf x}_0=(x_0,0)$. Rescaling variables, $u_{x_0,\rho}$ is a stationary weakly $s$-harmonic map in $(\Omega-x_0)/\rho$ for $s\not=1/2$, or a minimizing $1/2$-harmonic map in $(\Omega-x_0)/\rho$ for $s=1/2$. In addition, 
\begin{equation}\label{identrescaldens}
\boldsymbol{\Theta}_s(u^\e_{x_0,\rho},0,r)=\boldsymbol{\Theta}_s(u^\e,{\bf x}_0,\rho r) \quad\forall r\in(0,\rho_0/\rho]\,.
\end{equation}
This identity together with the monotonicity formula in Proposition \ref{monotformula} and  Lemma \ref{hatH1/2toH1} yields 
$$\boldsymbol{\Theta}_s(u^\e_{x_0,\rho},0,r)\leq \boldsymbol{\Theta}_s(u^\e,{\bf x}_0,\rho_0)
%\leq  2^{n-2s}\boldsymbol{\theta}_s(u,x_0,2\rho_0)
\leq C\rho_0^{2s-n}\mathcal{E}_s(u,\Omega) \quad\forall r\in(0,\rho_0/\rho]\,,$$
for a constant $C$ depending only on $n$ and $s$. 
In turn, Lemma \ref{HsregtraceH1weight} implies that 
$$[u_{x_0,\rho}]^2_{H^s(D_{2r})}\leq C\rho_0^{2s-n}r^{n-2s} \mathcal{E}_s(u,\Omega)\quad\forall r\in(0,\rho_0/(4\rho)]\,.$$
Using $|u_{x_0,\rho}|=1$, we can now estimate for $r\in(0,\rho_0/(4\rho)]$, 
$$\mathcal{E}_s(u_{x_0,\rho},D_r)\leq C\Big([u_{x_0,\rho}]^2_{H^s(D_{2r})}+\iint_{D_r\times D^c_{2r}}\frac{\de x\de y}{|x-y|^{n+2s}}\Big) \leq Cr^{n-2s}\big(\rho_0^{2s-n} \mathcal{E}_s(u,\Omega)+1\big)\,.$$
Given a sequence $\rho_k\to 0$, we deduce from the  above estimate that 
$$\limsup_{k\to\infty} \mathcal{E}_s(u_{x_0,\rho_k},D_r) <+\infty\quad \forall r>0\,.$$
Applying Theorem \ref{maincompactthm}, Theorem \ref{compactthmmins}, and Theorem \ref{compactthmmin1/2}, we can now find a subsequence $\{\rho^\prime_k\}$ and $\varphi\in H^s_{\rm loc}(\Rn;\mathbb{S}^{d-1})$ such that 
$$u_{x_0,\rho^\prime_k}\to \varphi\text{ strongly in $\widehat H^s(D_r)$, and }u^\e_{x_0,\rho^\prime_k}\to \varphi^\e\text{ strongly in $H^1(B_r^+,|z|^a\de{\bf x})$ for all $r>0$}\,, $$
where 
\begin{enumerate}
\item[(i)] {\it if $s\not=1/2$:}  $\varphi$ is a stationary weakly $s$-harmonic map in $D_r$ for all $r>0$; 
\item[(ii)] {\it if $s\leq1/2$ and $u$  minimizing:}  $\varphi$ is a minimizing $s$-harmonic map in $D_r$ for all $r>0$. 
\end{enumerate}

\begin{definition}
Every function $\varphi$ obtained by this process will be referred to as  {\sl a tangent map to $u$ at the point $x_0$}. The family of all tangent maps to $u$ at $x_0$ is denoted by $T_{x_0}(u)$. 
\end{definition}

We now  present some classical properties of tangent maps following e.g. \cite{Sim} or \cite[Section~6]{MSK}.

\begin{lemma}
If $\varphi\in T_{x_0}(u)$, then 
$$\boldsymbol{\Theta}_s(\varphi^\e,0,r)= \boldsymbol{\Xi}_s(\varphi,0)=\boldsymbol{\Xi}_s(u,x_0)\quad\forall r>0\,,$$
and $\varphi$ is positively $0$-homogeneous, i.e., $\varphi(\lambda x)=\varphi(x)$ for every $\lambda>0$ and $x\in\R^n$. In particular, 
\begin{equation}\label{homogdenstangmap}
 \boldsymbol{\Xi}_s(\varphi,\lambda x)= \boldsymbol{\Xi}_s(\varphi,x)\quad\text{for every $x\in\R^n\setminus\{0\}$ and $\lambda>0$}\,.
 \end{equation}
\end{lemma}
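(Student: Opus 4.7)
My plan is to derive all three assertions from the strong convergence defining $\varphi$ together with the equality case of the monotonicity formula of Proposition~\ref{monotformula}/Corollary~\ref{corolmonotform}, and from the scale invariance of the extended energy $\mathbf{E}_s$.

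First, to prove that $\boldsymbol{\Theta}_s(\varphi^\e,0,r)$ is constant in $r$ and equals $\boldsymbol{\Xi}_s(u,x_0)$, I would combine the strong convergence $u^\e_{x_0,\rho'_k}\to\varphi^\e$ in $H^1(B_r^+,|z|^a\mathrm{d}\mathbf{x})$ (built into the definition of a tangent map, via Theorems~\ref{maincompactthm}, \ref{compactthmmins}, \ref{compactthmmin1/2}) with the identity \eqref{identrescaldens}, which yields
\[
\boldsymbol{\Theta}_s(\varphi^\e,0,r)=\lim_{k\to\infty}\boldsymbol{\Theta}_s(u^\e_{x_0,\rho'_k},0,r)=\lim_{k\to\infty}\boldsymbol{\Theta}_s(u^\e,\mathbf{x}_0,\rho'_k r)=\boldsymbol{\Xi}_s(u,x_0)
\]
for every $r>0$, since $\rho'_k r\to 0$. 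Letting $r\to 0$ in this identity and using the definition \eqref{deflimitdens} of $\boldsymbol{\Xi}_s(\varphi,0)$ gives $\boldsymbol{\Xi}_s(\varphi,0)=\boldsymbol{\Xi}_s(u,x_0)$.

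Second, knowing $r\mapsto\boldsymbol{\Theta}_s(\varphi^\e,0,r)$ is constant, I would apply \eqref{monotformCor} (which holds because $\varphi$ is stationary on $\mathbb{R}^n$ thanks to the passage to the limit in Theorem~\ref{maincompactthm}): for every $0<\rho<r$,
\[
0=\boldsymbol{\Theta}_s(\varphi^\e,0,r)-\boldsymbol{\Theta}_s(\varphi^\e,0,\rho)=\boldsymbol{\delta}_s\int_{B^+_r\setminus B^+_\rho}z^a\frac{|\mathbf{x}\cdot\nabla\varphi^\e|^2}{|\mathbf{x}|^{n+2-2s}}\,\mathrm{d}\mathbf{x}.
\]
Hence $\mathbf{x}\cdot\nabla\varphi^\e=0$ a.e.\ in $\mathbb{R}^{n+1}_+$, which is the Euler relation for $0$-homogeneity. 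A standard argument (absolute continuity of $\varphi^\e$ along a.e.\ ray, or a distributional integration-by-parts) then yields $\varphi^\e(\lambda\mathbf{x})=\varphi^\e(\mathbf{x})$ for every $\lambda>0$ and a.e.\ $\mathbf{x}\in\mathbb{R}^{n+1}_+$; taking traces produces the $0$-homogeneity of $\varphi$.

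Finally, for \eqref{homogdenstangmap}, I would exploit the $0$-homogeneity of $\varphi^\e$ together with the change of variables $\mathbf{x}=\lambda\mathbf{y}$. Since $\nabla\varphi^\e(\lambda\mathbf{y})=\lambda^{-1}\nabla\varphi^\e(\mathbf{y})$ and $a=1-2s$, the Jacobian computation gives
\[
\mathbf{E}_s\bigl(\varphi^\e,B^+_r((\lambda x,0))\bigr)=\lambda^{n-2s}\,\mathbf{E}_s\bigl(\varphi^\e,B^+_{r/\lambda}((x,0))\bigr),
\]
so that $\boldsymbol{\Theta}_s(\varphi^\e,(\lambda x,0),r)=\boldsymbol{\Theta}_s(\varphi^\e,(x,0),r/\lambda)$. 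Sending $r\to 0$ yields $\boldsymbol{\Xi}_s(\varphi,\lambda x)=\boldsymbol{\Xi}_s(\varphi,x)$.

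No step looks truly delicate; the only minor point requiring care is the passage from $\mathbf{x}\cdot\nabla\varphi^\e=0$ a.e.\ to pointwise $0$-homogeneity, which is handled by the standard ODE argument along radial lines (using Fubini to select a.e.\ a ray on which $\varphi^\e$ is absolutely continuous with vanishing radial derivative).
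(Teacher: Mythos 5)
Your proof is correct and follows essentially the same route as the paper: strong $H^1$ convergence of the rescalings plus the scaling identity give constancy of $r\mapsto\boldsymbol{\Theta}_s(\varphi^\e,0,r)$, the equality case of the monotonicity formula forces $\mathbf{x}\cdot\nabla\varphi^\e=0$ and hence $0$-homogeneity, and the scaling of $\mathbf{E}_s$ combined with homogeneity yields \eqref{homogdenstangmap}. Your extra remark about passing from a.e.\ vanishing radial derivative to pointwise homogeneity is a fine point but in practice redundant here since $\varphi^\e$ is smooth in $\R^{n+1}_+$ (it solves the degenerate elliptic equation $\operatorname{div}(z^a\nabla\varphi^\e)=0$ with smooth coefficients away from $\{z=0\}$), so the radial ODE argument applies pointwise.
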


\begin{proof}
From the strong convergence of $u^\e_{x_0,\rho^\prime_k}$ to $\varphi^\e$ in $H^1(B_r^+,|z|^a\de{\bf x})$ and \eqref{identrescaldens}, we first deduce that 
$$\boldsymbol{\Theta}_s(\varphi^\e,0,r)=\lim_{k\to\infty}\boldsymbol{\Theta}_s(u^\e,{\bf x}_0,\rho_k^\prime r)=\boldsymbol{\Xi}_s(u,x_0)\quad\forall r>0\,.$$
Then, the constancy of $r\mapsto \boldsymbol{\Theta}_s(\varphi^\e,0,r)$ together with the monotonicity formula in Proposition~\ref{monotformula} implies that ${\bf x}\cdot\nabla\varphi^\e({\bf x})=0$ for every 
${\bf x}\in\R^{n+1}_+$. Hence, $\varphi^\e$ is positively $0$-homogeneous, and the homogeneity of $\varphi$ follows. As a consequence, for $x\in\R^n\setminus\{0\}$ and $\lambda>0$, 
$$\boldsymbol{\Theta}_s(\varphi^\e,\lambda {\bf x},r)= \boldsymbol{\Theta}_s(\varphi^\e,{\bf x},r/\lambda)\,,$$
where ${\bf x}:=(x,0)$. Letting now $r\to0$ yields \eqref{homogdenstangmap}. 
\end{proof}

\begin{lemma}\label{defSphi}
If $\varphi\in T_{x_0}(u)$, then 
$$\boldsymbol{\Xi}_s(\varphi,y)\leq \boldsymbol{\Xi}_s(\varphi,0) \quad\forall y\in\R^n\,.$$
In addition, the set
$$ S(\varphi):=\Big\{y\in\R^n: \boldsymbol{\Xi}_s(\varphi,y)= \boldsymbol{\Xi}_s(\varphi,0)\Big\}$$
is a linear subspace of $\R^n$, and $\varphi(x+y)=\varphi(x)$ for every $y\in S(\varphi)$ and every $x\in\R^n$. 
\end{lemma}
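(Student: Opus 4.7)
The plan is to combine the monotonicity formula of Proposition~\ref{monotformula} (or more precisely its Corollary~\ref{corolmonotform}) with the $0$-homogeneity of $\varphi^\e$ already established.

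First, I would prove that $\displaystyle\lim_{r\to\infty}\boldsymbol{\Theta}_s(\varphi^\e,{\bf y},r)=\boldsymbol{\Xi}_s(\varphi,0)$ for every ${\bf y}=(y,0)$. By the $0$-homogeneity of $\varphi^\e$ (so that $|\nabla\varphi^\e(\lambda{\bf x})|^2=\lambda^{-2}|\nabla\varphi^\e({\bf x})|^2$), a straightforward change of variables gives ${\bf E}_s(\varphi^\e,B_r^+(0))=r^{n-2s}\boldsymbol{\Xi}_s(\varphi,0)$ for every $r>0$. Using the inclusions $B^+_{r-|y|}(0)\subset B^+_r({\bf y})\subset B^+_{r+|y|}(0)$ for $r>|y|$, this forces the claimed limit. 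The monotonicity of $r\mapsto\boldsymbol{\Theta}_s(\varphi^\e,{\bf y},r)$ combined with $\boldsymbol{\Xi}_s(\varphi,y)=\lim_{r\to 0}\boldsymbol{\Theta}_s(\varphi^\e,{\bf y},r)\leq \boldsymbol{\Theta}_s(\varphi^\e,{\bf y},r)$ then yields the inequality $\boldsymbol{\Xi}_s(\varphi,y)\leq\boldsymbol{\Xi}_s(\varphi,0)$ by letting $r\to\infty$.

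Next, assume $y\in S(\varphi)$, i.e. $\boldsymbol{\Xi}_s(\varphi,y)=\boldsymbol{\Xi}_s(\varphi,0)$. Then the monotone function $r\mapsto\boldsymbol{\Theta}_s(\varphi^\e,{\bf y},r)$ is constant (equal to $\boldsymbol{\Xi}_s(\varphi,0)$), and the explicit formula \eqref{monotformCor} in Corollary~\ref{corolmonotform} forces
\[
({\bf x}-{\bf y})\cdot\nabla\varphi^\e({\bf x})=0\qquad\text{for a.e.\ }{\bf x}\in\R^{n+1}_+\,.
\]
Combining this with the identity ${\bf x}\cdot\nabla\varphi^\e({\bf x})=0$ coming from $0$-homogeneity gives $(y,0)\cdot\nabla\varphi^\e({\bf x})=0$ a.e., hence $\varphi^\e(x+ty,z)=\varphi^\e(x,z)$ for all $t\in\R$ and a.e.\ $(x,z)$; in particular $\varphi(x+ty)=\varphi(x)$ for a.e.\ $x$.

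Finally, to conclude that $S(\varphi)$ is a linear subspace, I would observe that for any $y\in S(\varphi)$, the translation invariance of $\varphi^\e$ in the direction $(y,0)$ implies (via a direct change of variables in the definition of $\boldsymbol{\Theta}_s(\varphi^\e,\cdot,r)$) that $\boldsymbol{\Xi}_s(\varphi,x_0+y)=\boldsymbol{\Xi}_s(\varphi,x_0)$ for every $x_0\in\R^n$. If $y_1,y_2\in S(\varphi)$ and $\lambda_1,\lambda_2\in\R$, then $\varphi$ is translation-invariant in both directions $y_1,y_2$, hence in the direction $\lambda_1 y_1+\lambda_2 y_2$, so in particular $\boldsymbol{\Xi}_s(\varphi,\lambda_1 y_1+\lambda_2 y_2)=\boldsymbol{\Xi}_s(\varphi,0)$, i.e.\ $\lambda_1 y_1+\lambda_2 y_2\in S(\varphi)$, and the translation invariance $\varphi(x+y)=\varphi(x)$ for $y\in S(\varphi)$ has already been established.

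The only mildly delicate step is the computation $\lim_{r\to\infty}\boldsymbol{\Theta}_s(\varphi^\e,{\bf y},r)=\boldsymbol{\Xi}_s(\varphi,0)$; everything else is a direct consequence of the monotonicity identity and $0$-homogeneity, standard in the theory of tangent maps.
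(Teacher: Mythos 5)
Your argument is correct and follows essentially the same route as the paper's proof: it hinges on the monotonicity identity of Corollary~\ref{corolmonotform} together with the $0$-homogeneity of $\varphi^\e$, uses the large-$r$ behaviour of $\boldsymbol{\Theta}_s(\varphi^\e,{\bf y},\cdot)$ to get the inequality $\boldsymbol{\Xi}_s(\varphi,y)\leq\boldsymbol{\Xi}_s(\varphi,0)$, and characterizes $S(\varphi)$ by the vanishing of the directional derivative $(y,0)\cdot\nabla\varphi^\e$. The only cosmetic differences are that you prove the sharper two-sided limit $\lim_{r\to\infty}\boldsymbol{\Theta}_s(\varphi^\e,{\bf y},r)=\boldsymbol{\Xi}_s(\varphi,0)$ (the paper only needs the upper bound $\boldsymbol{\Theta}_s(\varphi^\e,{\bf y},\rho)\leq\bigl(\tfrac{\rho+|y|}{\rho}\bigr)^{n-2s}\boldsymbol{\Xi}_s(\varphi,0)$ before passing $\rho\to\infty$), and you spell out linearity via linearity of directional derivatives rather than leaving it to the reader; both are fine.
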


\begin{proof}
{\it Step 1.} By Corollary \ref{corolmonotform}, we have have for every $y\in\R^n$ and $\rho>0$, 
\begin{equation}\label{yaourtpres}
\boldsymbol{\Xi}_s(\varphi,y) +\boldsymbol{\delta}_s\int_{B^+_\rho({\bf y})}z^a\frac{|({\bf x}-{\bf y})\cdot\nabla \varphi^\e|^2}{|{\bf x}-{\bf y}|^{n+2-2s}}\,\de {\bf x}=\boldsymbol{\Theta}_s(\varphi^\e,{\bf y},\rho)\,,
\end{equation}
where ${\bf y}=(y,0)$. On the other hand, by homogeneity of $\varphi$, 
$$ \boldsymbol{\Theta}_s(\varphi^\e,{\bf y},\rho)\leq \frac{(\rho+|y|)^{n-2s}}{\rho^{n-2s}} \boldsymbol{\Theta}_s(\varphi^\e,0,\rho+|y|)=\frac{(\rho+|y|)^{n-2s}}{\rho^{n-2s}}\boldsymbol{\Xi}_s(\varphi,0)\,. $$
Combining this inequality with \eqref{yaourtpres} and letting $\rho\to\infty$ yields 
$$\boldsymbol{\Xi}_s(\varphi,y) +\boldsymbol{\delta}_s\int_{\R^{n+1}_+}z^a\frac{|({\bf x}-{\bf y})\cdot\nabla \varphi^\e({\bf x})|^2}{|{\bf x}-{\bf y}|^{n+2-2s}}\,\de {\bf x}\leq \boldsymbol{\Xi}_s(\varphi,0) \,.$$
\vskip5pt

\noindent{\it Step 2.} Next, assume that $\boldsymbol{\Xi}_s(\varphi,y)=\boldsymbol{\Xi}_s(\varphi,0)$ for some $y\not=0$. Then $({\bf x}-{\bf y})\cdot\nabla\varphi^\e({\bf x})=0$ for all ${\bf x}\in\R^{n+1}_+$.  
By $0$-homogeneity of $\varphi^\e$, we then have ${\bf y}\cdot\nabla\varphi^\e({\bf x})=0$ for all ${\bf x}\in\R^{n+1}_+$, and thus 
\begin{equation}\label{invartransSphi}
\varphi(x+y)=\varphi(x) \quad\forall x\in\R^n\,.
\end{equation}
The other way around, if \eqref{invartransSphi} holds for some $y\not=0$, then $({\bf x}-{\bf y})\cdot\nabla\varphi^\e({\bf x})=0$ for all ${\bf x}\in\R^{n+1}_+$ (again by homogeneity). We then infer from \eqref{yaourtpres} and \eqref{invartransSphi} that for $\rho>0$, 
$$\boldsymbol{\Xi}_s(\varphi,y) =\boldsymbol{\Theta}_s(\varphi^\e,{\bf y},\rho)=\boldsymbol{\Theta}_s(\varphi^\e,0,\rho) =\boldsymbol{\Xi}_s(\varphi,0)\,,$$
i.e., $y\in S(\varphi)$. Hence, \eqref{invartransSphi} caracterizes $S(\varphi)$, and the linearity of  $S(\varphi)$ follows. 
\end{proof}

\begin{remark}\label{remdimSphinonconst}
If there exists $\varphi\in T_{x_0}(u)$ such that  ${\rm dim}\,S(\varphi)=n$, then $\varphi$ is clearly constant, and thus $\boldsymbol{\Xi}_s(u,x_0)=\boldsymbol{\Xi}_s(\varphi,0)=0$. By Theorem \ref{thmepsregLip}, $u$ is  continuous  in a neighborhood of $x_0$, so that $\varphi=u(x_0)$. In other words, $ T_{x_0}(u)=\{u(x_0)\}$. 

As a consequence, if on the contrary $\boldsymbol{\Xi}_s(u,x_0)>0$, then all tangent maps $\varphi\in T_{x_0}(u)$ must be non constant, and hence satisfy ${\rm dim}\,S(\varphi)\leq n-1$. 
\end{remark}

\begin{lemma}\label{rigidlemtangmapsbig1/2}
Assume that $s\in[1/2,1)$. If $\varphi\in T_{x_0}(u)$ is not constant, then
$${\rm dim}\,S(\varphi)\leq n-2\,. $$
\end{lemma}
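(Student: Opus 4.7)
My plan is to argue by contradiction. Suppose $\varphi \in T_{x_0}(u)$ is non-constant yet $\dim S(\varphi) \geq n-1$. By Remark \ref{remdimSphinonconst}, non-constancy forces $\dim S(\varphi) \leq n-1$, so necessarily $\dim S(\varphi) = n-1$. After an orthogonal change of coordinates, which preserves the $0$-homogeneity, the value of $\boldsymbol{\Xi}_s$, and local membership in $\widehat H^s$, I may assume $S(\varphi) = \R^{n-1} \times \{0\}$. The translation-invariance of $\varphi$ under $S(\varphi)$ provided by Lemma \ref{defSphi} then yields a measurable $\tilde\varphi: \R \to \mathbb{S}^{d-1}$ with $\varphi(x) = \tilde\varphi(x_n)$ a.e., and the $0$-homogeneity gives $\tilde\varphi(\lambda t) = \tilde\varphi(t)$ for every $\lambda > 0$ and a.e.\ $t$. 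Consequently, there exist $a, b \in \mathbb{S}^{d-1}$ with $a \neq b$ such that
$$\varphi(x) = a\,\chi_{\{x_n > 0\}}(x) + b\,\chi_{\{x_n < 0\}}(x) \quad \text{a.e.\ in } \R^n.$$

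I will then derive a contradiction with $\varphi \in \widehat H^s(D_1; \mathbb{S}^{d-1})$ (a property inherent to any tangent map, cf.\ Subsection \ref{secttangmap}) by proving that $\mathcal{E}_s(\varphi, D_1) = +\infty$ when $s \geq 1/2$. Since $|\varphi(x) - \varphi(y)|^2 = |a-b|^2$ whenever $x_n$ and $y_n$ have opposite signs, restricting the double integral defining $\mathcal{E}_s(\varphi, D_1)$ to $\{x, y \in D_1 : x_n > 0,\, y_n < 0\}$, writing $x = (x', x_n)$, $y = (y', y_n)$, and performing the $(n-1)$-dimensional integration in $u = x' - y'$ via the substitution $u = h\sigma$ with $h := x_n - y_n$, I expect to obtain the lower bound
$$\mathcal{E}_s(\varphi, D_1) \;\gtrsim\; |a-b|^2 \int_0^{1/2} h^{-2s}\, \de h \;=\; +\infty \quad \text{whenever } s \geq 1/2.$$
Equivalently, this integral is (a piece of) the Caffarelli--Roquejoffre--Savin fractional $2s$-perimeter of the half-space $\{x_n > 0\}$ inside $D_1$, which is well known to be infinite for $s \geq 1/2$, in sharp contrast with the case $s < 1/2$ (cf.\ the discussion following Theorem \ref{mainthm1} and Remark \ref{rem1intro}).

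The hard part will be precisely this final divergence estimate: it is the only step where the hypothesis $s \geq 1/2$ enters, and it mirrors the classical threshold that $\chi_{\{x_n > 0\}}$ belongs to $\widehat H^s_{\rm loc}(\R^n)$ if and only if $s < 1/2$. The rest of the argument is a rigidity statement arising purely from translation invariance under $S(\varphi)$ and the $0$-homogeneity of $\varphi$; the stationarity (resp.\ minimality) of $\varphi$ plays no role in this particular step, which is why the same conclusion could be expected for any element of $T_{x_0}(u)$ provided only that it lies in $\widehat H^s_{\rm loc}(\R^n; \mathbb{S}^{d-1})$.
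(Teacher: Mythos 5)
Your proposal is correct and follows essentially the same strategy as the paper: reduce by contradiction to a $0$-homogeneous step map $\varphi(x) = a\chi_{\{x_n>0\}} + b\chi_{\{x_n<0\}}$ depending on one variable, then conclude from membership in $\widehat H^s$ that necessarily $a=b$. The difference is purely in how the final contradiction is reached. The paper passes to the one-dimensional profile $\psi \in H^s_{\rm loc}(\R;\mathbb{S}^{d-1})$ and invokes the Sobolev embedding $H^s_{\rm loc}(\R)\hookrightarrow C^{0,s-1/2}_{\rm loc}(\R)$, which forces the step function to be continuous, hence constant. You instead compute $\mathcal{E}_s(\varphi,D_1)$ directly and show it diverges like $\int_0^{1/2} h^{-2s}\,\de h$. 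Your route is arguably more robust: the paper's embedding into $C^{0,s-1/2}$ is a borderline statement at $s=1/2$ (the H\"older exponent degenerates to $0$), whereas your direct divergence estimate treats $s=1/2$ and $s\in(1/2,1)$ uniformly and makes the threshold $s=1/2$ completely transparent. You also correctly identify the geometric meaning of this divergence as the infinite fractional $2s$-perimeter of a half-space for $s\geq 1/2$, which nicely complements the discussion of the $s<1/2$ case in Remark \ref{rem1intro}. The observation that stationarity/minimality of $\varphi$ is irrelevant here (only the finiteness of the local $s$-energy is needed) is also accurate and matches the structure of the paper's proof.
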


\begin{proof}
We proceed by contradiction assuming that there exists a non constant tangent map $\varphi\in T_{x_0}(u)$ such that ${\rm dim}\,S(\varphi)=n-1$. Rotating coordinates if necessary, we can assume 
that  $S(\varphi)=\{0\}\times\R^{n-1}$. By Lemma \ref{defSphi}, the map $\varphi$ only depends on the $x_1$-variable, that is $\varphi(x)=:\psi(x_1)$ where $\psi\in H^s_{\rm loc}(\R;\mathbb{S}^{d-1})$.  
Since $\varphi$ is positively $0$-homogeneous and non constant, the map $\psi$ is of the form 
\begin{equation}\label{shapetangmap}
\psi(x_1)=\begin{cases}
{\rm a} & \text{if $x_1>0$}\,\\
{\rm b} & \text{if $x_1<0$}\,,
\end{cases}
\end{equation}
for some points ${\rm a}, {\rm b}\in\mathbb{S}^{d-1}$, ${\rm a}\not={\rm b}$. However, the space $H^s_{\rm loc}(\R)$ imbeds into $C^{0,s-1/2}_{\rm loc}(\R)$, which enforces ${\rm a}={\rm b}$, a contradiction.
\end{proof}

\begin{lemma}\label{rigidminsharmtangmap}
Assume that $n\geq 2$, $s\in(0,1/2)$, and that $u$ is a minimizing $s$-harmonic map in $\Omega$. If $\varphi\in T_{x_0}(u)$ is not constant, then
$${\rm dim}\,S(\varphi)\leq n-2\,. $$
\end{lemma}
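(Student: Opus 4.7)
The plan is to argue by contradiction, mirroring the opening of the proof of Lemma \ref{rigidlemtangmapsbig1/2} and then using minimality (absent for $s\geq 1/2$) to rule out the one-dimensional jump. First I would assume $\dim S(\varphi) = n-1$; this is allowed by Remark \ref{remdimSphinonconst}, and larger dimension is forbidden since $\varphi$ is non constant. Rotating coordinates, I may assume $S(\varphi) = \{0\} \times \R^{n-1}$, and then Lemma \ref{defSphi} together with the positive $0$-homogeneity of $\varphi$ yields $\varphi(x) = \psi(x_1)$ where $\psi \in H^s_{\rm loc}(\R;\mathbb{S}^{d-1})$ takes the jump form \eqref{shapetangmap} for some distinct points ${\rm a}, {\rm b} \in \mathbb{S}^{d-1}$. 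Note that, contrary to the case $s \geq 1/2$, such a $\psi$ does lie in $H^s_{\rm loc}(\R)$, so homogeneity alone gives no contradiction.

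The core step is to deduce that $\psi$ is itself a minimizing $s$-harmonic map on every bounded interval of $\R$. Given any $\widetilde\psi \in \widehat H^s((-A,A);\mathbb{S}^{d-1})$ sufficiently close to $\psi$ in $L^\infty$ with $\widetilde\psi - \psi$ compactly supported in $(-A,A)$, I would construct for large $R$ a cylindrical competitor $\widetilde\varphi_R$ on $D_{2R} \subset \R^n$ defined by sphere-valued geodesic interpolation: $\widetilde\varphi_R(x_1,x') := \widetilde\psi(x_1)$ for $|x'| \leq R/2$, $\widetilde\varphi_R(x_1,x') := \psi(x_1)$ for $|x'| \geq R$, with a smooth transition on $R/2 < |x'| < R$ governed by a cutoff $\eta_R$ with $|\nabla \eta_R| \leq C/R$. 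Since $\widetilde\varphi_R - \varphi$ is supported in $\{|x_1| \leq A\} \times \{|x'| \leq R\} \subset D_{2R}$ for $R$ large, the minimality of $\varphi$ in $D_{2R}$ gives
$$\mathcal{E}_s(\varphi, D_{2R}) \leq \mathcal{E}_s(\widetilde\varphi_R, D_{2R}).$$
Using Fubini together with the translation-invariance in $x'$, the bulk-bulk contribution to the energy difference on $\{|x'|,|y'| \leq R/2\}$ equals, up to a universal factor $C_n$,
$$C_n R^{n-1}\bigl(\mathcal{E}^{(1)}_s(\widetilde\psi,(-A,A)) - \mathcal{E}^{(1)}_s(\psi,(-A,A))\bigr) + O(R^{n-2}),$$
where $\mathcal{E}^{(1)}_s$ denotes the one-dimensional $s$-Dirichlet energy, because integrating $|x_1-y_1|^2 + |x'-y'|^2)^{-(n+2s)/2}$ in $y'$ over $\R^{n-1}$ produces a constant times $|x_1-y_1|^{-1-2s}$. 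The remaining pieces (bulk-annulus, annulus-annulus, and far-field interactions) are controlled using $|\nabla \eta_R| \leq C/R$ and shown to be of order $o(R^{n-1})$. Dividing by $R^{n-1}$ and letting $R \to \infty$ yields $\mathcal{E}^{(1)}_s(\psi,(-A,A)) \leq \mathcal{E}^{(1)}_s(\widetilde\psi,(-A,A))$. Since this holds for every $L^\infty$-small admissible $\widetilde\psi$, and one upgrades to arbitrary competitors by an approximation/density argument (or a replacement trick), $\psi$ is indeed minimizing in dimension one.

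To conclude, I would invoke the regularity of one-dimensional minimizing $s$-harmonic maps of \cite{MSY}, which asserts that for $s \in (0,1/2)$ every minimizing $s$-harmonic map from $\R$ into $\mathbb{S}^{d-1}$ is smooth (this is also item (3) of Theorem \ref{mainthm3} in the case $n=1$). This directly contradicts the fact that $\psi$ jumps from ${\rm a}$ to ${\rm b} \neq {\rm a}$ across $0$, completing the argument.

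The main obstacle is the quantitative estimate controlling the transition-region contributions to $\mathcal{E}_s(\widetilde\varphi_R,D_{2R}) - \mathcal{E}_s(\varphi,D_{2R})$: one must split the double integral carefully into bulk-bulk, bulk-annulus, annulus-annulus, and exterior interactions, and show, via the translation invariance of $\varphi$ in $x'$, the cutoff bound $|\nabla \eta_R| \leq C/R$, and the smallness of $\widetilde\psi - \psi$, that only the bulk-bulk piece contributes at order $R^{n-1}$. A secondary technical point is guaranteeing that the sphere-valued geodesic interpolation used to define $\widetilde\varphi_R$ is well-posed; this is resolved by first establishing the minimality inequality for small perturbations and then extending by density to all competitors.
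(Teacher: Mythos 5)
Your proposal follows the same overall strategy as the paper's proof: argue by contradiction with $\dim S(\varphi)=n-1$, reduce via homogeneity and translation invariance to a one-variable jump function $\psi$, show that $\psi$ is a \emph{minimizing} $s$-harmonic map on an interval by testing $\varphi$ against cylindrical competitors, and conclude from the one-dimensional regularity theorem of \cite{MSY}. The significant difference lies in how the cylindrical competitor is built. You propose a smooth, sphere-valued geodesic interpolation on a transition annulus $R/2<|x'|<R$, which forces you to first restrict to competitors $\widetilde\psi$ that are $L^\infty$-close to $\psi$ (so the interpolation is well defined) and then ``upgrade to arbitrary competitors by an approximation/density argument.'' The paper instead uses a \emph{sharp} cutoff: $\widetilde v_r(x):=v(x_1)$ for $|x'|<r$ and $\widetilde v_r(x):=\psi(x_1)$ for $|x'|\ge r$, with no interpolation at all. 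This discontinuous competitor is admissible (i.e.\ has finite energy) precisely because $s\in(0,1/2)$ — the extra interface at $\{|x'|=r\}$ contributes a finite cross-interaction $\sim r^{n-1-2s}$, which is $o(r^{n-1})$ — and it gives a clean Fubini identity with the exact constant $\alpha_{n,s}=\gamma_{1,s}/\gamma_{n,s}$. This eliminates the geodesic/antipodal issue and the smallness restriction entirely.

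The genuine gap in your argument is the final ``extend by density'' step. Knowing that $\psi$ minimizes only against competitors that are small in $L^\infty$ is a local-minimality statement; it does not by itself yield minimality against arbitrary sphere-valued competitors (the energy is not convex, so local does not imply global), and your sketch gives no mechanism to bridge this. The replacement trick you allude to would need to be made concrete, and the most natural way to make it concrete is precisely to abandon the smooth interpolation and use the sharp cutoff — which is exactly what the paper does, and which is exactly where the hypothesis $s<1/2$ is consumed. One smaller inaccuracy: your claimed error rate $O(R^{n-2})$ for the bulk–bulk term is not what the computation gives; the true rate is $O(R^{n-1-2s})$, which for $s<1/2$ is \emph{larger} than $R^{n-2}$ but still $o(R^{n-1})$, so the conclusion is unaffected.
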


To prove Lemma \ref{rigidminsharmtangmap}, we shall make use of the following pleasant computation. 

%\begin{lemma}\label{lemcompPoikern}
%Assume that $n\geq 2$. The fractional Poisson kernel ${\bf P}_{n,s}$ satisfies  
%$$\int_{\R^{n-1}}{\bf P}_{n,s}\big((x_1,x^\prime,z)\big)\,\de x^\prime= {\bf P}_{1,s}(x_1,z)\quad\forall (x_1,z)\in\R^2_+\,.$$
%\end{lemma}

\begin{remark}\label{remcomputmasspoisskern}
For $n\geq 2$, we have 
\begin{equation}\label{computalphans}
\alpha_{n,s}:=\int_{\R^{n-1}} \frac{\de x^\prime}{(1+|x^\prime|^2)^{\frac{n+2s}{2}}}=\frac{\gamma_{1,s}}{\gamma_{n,s}}\,.
\end{equation}
Indeed, we easily compute in polar coordinates and  setting $t:=r^2$, 
$$\int_{\R^{n-1}} \frac{\de x^\prime}{(1+|x^\prime|^2)^{\frac{n+2s}{2}}}=|\mathbb{S}^{n-2}|\int_{0}^{+\infty}\frac{r^{n-2}}{(1+r^2)^{\frac{n+2s}{2}}}\,\de r 
=\frac{|\mathbb{S}^{n-2}|}{2}\int_0^{+\infty}\frac{t^{\frac{n-1}{2}-1}}{(1+t)^{\frac{n+2s}{2}}}\,\de t\,.$$
Recalling the value of $\gamma_{n,s}$ given in \eqref{defHsandgammans}, we thus have
\begin{multline}\label{calculpoisskern}
\int_{\R^{n-1}} \frac{\de x^\prime}{(1+|x^\prime|^2)^{\frac{n+2s}{2}}}=\frac{|\mathbb{S}^{n-2}|}{2} {\rm B}\Big(\frac{n-1}{2},\frac{1+2s}{2}\Big)\\
=\frac{|\mathbb{S}^{n-2}|}{2} \frac{\Gamma(\frac{n-1}{2})\Gamma(\frac{1+2s}{2})}{\Gamma(\frac{n+2s}{2})} =\pi^{\frac{n-1}{2}}\frac{\Gamma(\frac{1+2s}{2})}{\Gamma(\frac{n+2s}{2})}=\frac{\gamma_{1,s}}{\gamma_{n,s}}\,,
\end{multline}
where ${\rm B}(\cdot,\cdot)$ denotes the Euler Beta function. 
%Combining \eqref{reduccalcpoisskern} with \eqref{calculpoisskern} and \eqref{defpoisskern}, we obtain
%$$\int_{\R^{n-1}}{\bf P}_{n,s}\big((x_1,x^\prime,z)\big)\,\de x^\prime=\sigma_{1,s}\frac{z^{2s}}{(x_1^2+z^2)^{\frac{1+2s}{2}}}=  {\bf P}_{1,s}(x_1,z)\,,$$
%and the proof is complete.  
\end{remark}

\begin{proof}[Proof of Lemma \ref{rigidminsharmtangmap}]
{\it Step 1.} We proceed again by contradiction assuming that there exists a non constant tangent map $\varphi\in T_{x_0}(u)$ such that ${\rm dim}\,S(\varphi)=n-1$. Rotating coordinates if necessary, we can proceed as in the proof of Lemma \ref{rigidlemtangmapsbig1/2} to infer that $\varphi(x)=:\psi(x_1)$ where $\psi\in H^s_{\rm loc}(\R;\mathbb{S}^{d-1})$ is of the form  \eqref{shapetangmap} for some points ${\rm a}, {\rm b}\in\mathbb{S}^{d-1}$, ${\rm a}\not={\rm b}$.
%
%we can assume 
%that  $S(\varphi)=\{0\}\times\R^{n-1}$. By Lemma \ref{defSphi}, the map $\varphi$ only depends on the $x_1$-variable, that is $\varphi(x)=:\psi(x_1)$ where $\psi\in H^s_{\rm loc}(\R;\mathbb{S}^{d-1})$.  
%Since $\varphi$ is positively $0$-homogeneous and non constant, the map $\psi$ is of the form
%$$\psi(x_1)=\begin{cases}
%{\rm a} & \text{if $x_1>0$}\,\\
%{\rm b} & \text{if $x_1<0$}\,,
%\end{cases}$$
%for some points ${\rm a}, {\rm b}\in\mathbb{S}^{d-1}$, ${\rm a}\not={\rm b}$. 
We claim that $\psi$ is a minimizing $s$-harmonic map in the interval $(-1,1)$. Once the claim is proved (which is the object of the next step), we can infer from the regularity result \cite[Theorem 1.2]{MSY} that $\psi$ is continuous in $(-1,1)$, which again enforces ${\rm a}={\rm b}$, a contradiction.  
\vskip5pt

\noindent{\it Step 2.} We now prove that $\psi$ is a minimizing $s$-harmonic map in $(-1,1)$. To this purpose, we fix an arbitrary competitor $v\in \widehat H^s((-1,1);\mathbb{S}^{d-1})$ such that ${\rm spt}(v-\psi)\subset(-1,1)$. Given $r>1$, we consider the open set $Q_r\subset\R^n$ defined by $Q_r:=(-1,1)\times D^\prime_r$ where $D_r^\prime$ denotes the open ball in $\R^{n-1}$ centered at the origin of radius $r$. 
We define a map $\widetilde v_r\in \widehat H^s(Q_r;\mathbb{S}^{d-1})$ by setting for $x=(x_1,x^\prime)\in\R^n$, 
$$\widetilde v_r(x):=
\begin{cases} 
v(x_1) & \text{if $|x^\prime|<r$}\,,\\
\psi(x_1) & \text{if $|x^\prime|\geq r$}\,.
\end{cases}
$$
Recalling that $u$ is assumed to be minimizing, $\varphi$ is minimizing in every ball. Since ${\rm spt}(\widetilde v_r-\varphi)\subset Q_{r+1}$, we thus have 
$$\mathcal{E}_s(\varphi,Q_{r+1})\leq \mathcal{E}_s(\widetilde v_r,Q_{r+1})\,. $$
Since $\widetilde v_r=\varphi$ in $\R^n\setminus Q_r$, it reduces to 
\begin{equation}\label{mintesttangmap}
\mathcal{E}_s(\varphi,Q_{r})\leq \mathcal{E}_s(\widetilde v_r,Q_{r})\,.
\end{equation}
We claim that 
\begin{equation}\label{asymptreducdimenerg}
\frac{1}{|D^\prime_r|}\,\mathcal{E}_s(\widetilde v_r,Q_{r})\mathop{\longrightarrow}\limits_{r\to\infty} \mathcal{E}_s\big(v,(-1,1)\big)\,,
\end{equation}
where $|D^\prime_r|$ denotes the volume of $D_r^\prime$ in $\R^{n-1}$. Since we could have taken $v$ to be equal to $\psi$, \eqref{asymptreducdimenerg} also holds with $\varphi$ in place of $\widetilde v_r$ and $\psi$ in place of $v$. Therefore, dividing both sides of \eqref{mintesttangmap} by $|D^\prime_r|$ and letting $r\to\infty$ leads to 
$$\mathcal{E}_s\big(\psi,(-1,1)\big)\leq \mathcal{E}_s\big(v,(-1,1)\big) \,,$$
which proves that $\psi$ is indeed minimizing in $(-1,1)$. 

Let us now compute $\mathcal{E}_s(\widetilde v_r,Q_{r})$ to prove \eqref{asymptreducdimenerg}. First, by Fubini's theorem we have 
\begin{multline*}
\iint_{Q_r\times Q_r}\frac{|\widetilde v_r(x)-\widetilde v_r(y)|^2}{|x-y|^{n+2s}}\,\de x\de y\\
=\iint_{(-1,1)^2}|v(x_1)-v(y_1)|^2\Big(\iint_{D_r^\prime\times D_r^\prime}\frac{\de x^\prime\de y^\prime}{(|x_1-y_1|^2+|x^\prime-y^\prime|^2)^{\frac{n+2s}{2}}}\Big)\,\de x_1\de y_1\,.
\end{multline*}
Then we observe that a change of variables yields 
\begin{multline*}
\iint_{D_r^\prime\times D_r^\prime}\frac{\de x^\prime\de y^\prime}{(|x_1-y_1|^2+|x^\prime-y^\prime|^2)^{\frac{n+2s}{2}}}\\
=\iint_{D_r^\prime\times \R^{n-1}}\frac{\de x^\prime\de y^\prime}{(|x_1-y_1|^2+|x^\prime-y^\prime|^2)^{\frac{n+2s}{2}}}-A_r(|x_1-y_1|)\\
=\frac{\alpha_{n,s}|D^\prime_r|}{|x_1-y_1|^{1+2s}}-A_r(|x_1-y_1|)\,,
\end{multline*}
where $\alpha_{n,s}$ is given by \eqref{computalphans}, and $A_r(t)$ is defined for $t>0$ by
$$A_r(t):= \iint_{D_r^\prime\times(D^\prime_r)^c}\frac{\de x^\prime\de y^\prime}{(t^2+|x^\prime-y^\prime|^2)^{\frac{n+2s}{2}}}\,.$$ 
Therefore, 
\begin{multline}\label{calcaoutprout1}
\iint_{Q_r\times Q_r}\frac{|\widetilde v_r(x)-\widetilde v_r(y)|^2}{|x-y|^{n+2s}}\,\de x\de y=\alpha_{n,s}|D^\prime_r|\iint_{(-1,1)^2}\frac{|v(x_1)-v(y_1)|^2}{|x_1-y_1|^{1+2s}}\,\de x_1\de y_1\\
-\iint_{(-1,1)^2}|v(x_1)-v(y_1)|^2A_r(|x_1-y_1|)\,\de x_1\de y_1\,.
\end{multline}
Similarly, we compute 
\begin{multline*}
\iint_{Q_r\times (Q_r)^c}\frac{|\widetilde v_r(x)-\widetilde v_r(y)|^2}{|x-y|^{n+2s}}\,\de x\de y\\
=\iint_{(-1,1)\times(-1,1)^c}|v(x_1)-v(y_1)|^2\Big(\iint_{D_r^\prime\times \R^{n-1}}\frac{\de x^\prime\de y^\prime}{(|x_1-y_1|^2+|x^\prime-y^\prime|^2)^{\frac{n+2s}{2}}}\Big)\,\de x_1\de y_1\\
+\iint_{(-1,1)^2}|v(x_1)-\psi(y_1)|^2A_r(|x_1-y_1|)\,\de x_1\de y_1\,,
\end{multline*}
so that 
\begin{multline}\label{calcaoutprout2}
\iint_{Q_r\times (Q_r)^c}\frac{|\widetilde v_r(x)-\widetilde v_r(y)|^2}{|x-y|^{n+2s}}\,\de x\de y
= \alpha_{n,s}|D^\prime_r|\iint_{(-1,1)\times(-1,1)^c}\frac{|v(x_1)-v(y_1)|^2}{|x_1-y_1|^{1+2s}}\,\de x_1\de y_1\\
+\iint_{(-1,1)^2}|v(x_1)-\psi(y_1)|^2A_r(|x_1-y_1|)\,\de x_1\de y_1\,.
\end{multline}
Combining \eqref{calcaoutprout1} and \eqref{calcaoutprout2} leads to 
%\begin{equation}\label{calcaoutprout3}
$$\frac{1}{|D^\prime_r|}\,\mathcal{E}_s(\widetilde v_r,Q_{r})=  \mathcal{E}_s\big(v,(-1,1)\big)-I_r+II_r\,,$$
%\end{equation}
where 
$$I_r:=\frac{\gamma_{1,s}}{4|D^\prime_r|} \iint_{(-1,1)^2}|v(x_1)-v(y_1)|^2A_r(|x_1-y_1|)\,\de x_1\de y_1\,,$$
and 
$$II_r :=\frac{\gamma_{1,s}}{2|D^\prime_r|}\iint_{(-1,1)^2}|v(x_1)-\psi(y_1)|^2A_r(|x_1-y_1|)\,\de x_1\de y_1\,.$$
Since $|v|=|\psi|=1$, we have 
$$I_r+II_r\leq Cr^{1-n}\iint_{(-1,1)^2}A_r(|x_1-y_1|)\,\de x_1\de y_1\,,$$
and  using Fubini's theorem again, we estimate 
\begin{align*}
\iint_{(-1,1)^2}A_r(|x_1-y_1|)&\;\de x_1\de y_1& \\
&\leq \iint_{D^\prime_r\times(D^\prime_r)^c}\Big(\iint_{(-1,1)\times\R}\frac{\de x_1\de y_1}{(|x_1-y_1|^2+|x^\prime-y^\prime|^2)^{\frac{n+2s}{2}}}\Big)\,\de x^\prime\de y^\prime\\
&\leq C \iint_{D^\prime_r\times(D^\prime_r)^c}\frac{\de x^\prime\de y^\prime}{|x^\prime-y^\prime|^{n-1+2s}}\\
&\leq C r^{n-1-2s}\,.
\end{align*}
%for a constant $C$ depending only on $n$ and $s$. 
Therefore, 
$$\frac{1}{|D^\prime_r|}\,\mathcal{E}_s(\widetilde v_r,Q_{r})=  \mathcal{E}_s\big(v,(-1,1)\big)+O(r^{-2s})\,,$$
and the proof is complete.
%
%
%%By Lemma \ref{lemcompPoikern} above, we have $\varphi^\e(x,z)=\psi^\e(x_1,z)$ for every $(x,z)\in\R^{n+1}_+$ (here $\psi^\e$ corresponds to the convolution product with the $1$-dimensional fractional Poisson kernel ${\bf P}_{1,s}$).
% Recalling that $\varphi$ is a minimizing $s$-harmonic map in $D_r$ for every $r>0$, we infer from {\bf COMPLETE !!} that $\varphi^\e$ is a minimizing weighted harmonic map in the open set $Q\times(0,1)$, where $Q\subset \R^n$ denotes the open cube $(-1,1)^n$. We now claim that $\psi^\e$ is a minimizing weighted harmonic map in the open set $(-1,1)\times(0,1)\subset\R^2_+$. To prove this claim, we fix an arbitrary $w\in H^1((-1,1)\times(0,1),|z|^a\de x_1\de z)$ satisfying $w(x_1,0)\in\mathbb{S}^{d-1}$ for a.e. $x_1\in(-1,1)$, and such that ${\rm spt}(w-\psi^\e)\subset (-1,1)\times[0,1)$. 
%%We choose a function $\zeta\in \mathscr{D}((-1,1)^{n-1})$ satisfying $\|\nabla\zeta\|_{L^2((-1,1)^{n-1})}=1$, and we define $v(x_1,x^\prime,z):=\zeta()$
\end{proof}

\subsection{Proof of Theorem \ref{mainthm1}, Theorem \ref{mainthm2}, and Theorem \ref{mainthm3}}

\begin{proof}[Proof of Theorem \ref{mainthm1}]
Let us fix an arbitrary point $x_0\in \Omega$, and set $r_0:=\frac{1}{2}{\rm dist}(x_0,\Omega^c)$. Without loss of generality, we can assume that $x_0=0$, so that our aim is to show that $u$ is smooth in a neighborhood of $x_0=0$. 
As noticed in Remark \ref{remarlepsholdregsubcritic}, the function $r\in(0,2r_0-|{\bf x}|)\mapsto \boldsymbol{\Theta}_s(u^\e,{\bf x},r)$ is nondecreasing for every ${\bf x}\in\partial^0B^+_{2r_0}$. 
Moreover, since $2s-n=2s-1\geq0$, we have 
$$\lim_{r\to0}\boldsymbol{\theta}_s(u,0,r)=0\,. $$
Then we deduce from  Corollary \ref{corequivvanishdensities} that
$$\lim_{r\to0}\boldsymbol{\Theta}_s(u^\e,0,r)=0\,. $$
As a consequence, we can find $r_1\in(0,r_0)$ such that $\boldsymbol{\Theta}(u^\e,0,r_1)\leq\boldsymbol{\eps}_1$, where  the constant $\boldsymbol{\eps}_1$ is given by Corollary \ref{coroepsreghold}. 
From Theorem \ref{thmepsregLip}, we infer that $u\in C^{0,1}(D_{\boldsymbol{\kappa}_2}r_1)$ for a constant $\boldsymbol{\kappa}_2\in(0,1)$ depending only on $s$. 
In turn, Theorem \ref{highordthm} tells us that $u\in C^\infty(D_{\boldsymbol{\kappa}_2r_1/2})$. 
\end{proof}

\begin{proof}[Proof of Theorem \ref{mainthm2}, case $s=1/2$] Considering  the constant $\boldsymbol{\eps}_1>0$ given by Corollary \ref{coroepsreghold}, we define 
\begin{equation}\label{defSigmaproofmainthms}
\Sigma:=\Big\{x\in\Omega: \boldsymbol{\Xi}_s(u,x)\geq \boldsymbol{\eps}_1 \Big\}\,.
\end{equation}
By Corollary \ref{corolmonotform}, $\Sigma$ is relatively closed subset of $\Omega$. On the other hand, it is well known that $\mathcal{H}^{n-1}(\Sigma)=0$, see e.g. \cite[Corollary 3.2.3]{Ziem}. 

We claim that $u\in C^\infty(\Omega\setminus\Sigma)$. Indeed, if $x_0\in\Omega\setminus\Sigma$, then we can find a radius $r\in(0,\frac{1}{2}{\rm dist}(x_0,\Omega^c))$ such that $\boldsymbol{\Theta}(u^\e,0,r)\leq\boldsymbol{\eps}_1$. Applying Theorem  \ref{thmepsregLip} and Theorem \ref{highordthm}, we conclude that $u\in C^\infty(D_{\boldsymbol{\kappa}_2r/2})$, and the claim is proved. 

Obviously, ${\rm sing}(u)\subset\Sigma$, and it now only remains to show that ${\rm sing}(u)=\Sigma$. This is in fact a direct consequence of the regularity result in \cite[Theorem 4.1]{GJ}. Indeed, assume by contradiction that there is a point $x_0\in \Sigma\setminus{\rm sing}(u)$. Since ${\rm sing}(u)$ is a relatively closed subset of $\Omega$, we can find $r>0$ such that $D_{2r}(x_0)\subset \Omega\setminus{\rm sing}(u)$, i.e., $u$ is continuous in $D_{2r}(x_0)$. Consequently, $u^\e$ is continuous in $B_r^+({\bf x}_0)\cup\partial^0B_r^+({\bf x}_0)$, where ${\bf x}_0=(x_0,0)$. However, by Proposition \ref{equivsharmfreebdry} (with $s=1/2$), $u^\e\in H^1(B_r^+({\bf x}_0);\R^d)$ also solves
$$\int_{B_r(x_0)}\nabla u^\e\cdot\nabla\Phi\,\de{\bf x}=0 $$
for every $\Phi\in H^1(B_r({\bf x}_0);\R^d)$ such that $\Phi=0$ on $\partial^+B_r({\bf x}_0)$ and $u\cdot\Phi=0$ on $\partial^0B_r({\bf x}_0)$. 
Then   \cite[Theorem 4.1]{GJ} tells us that $u^\e\in C^{1,\alpha}(B^+_{r/2}(x_0))$ for every $\alpha\in(0,1)$. Consequently, $\boldsymbol{\Xi}_s(u,x_0)=0$, i.e., $x_0\not\in \Sigma$, a contradiction. 
\end{proof}

\begin{proof}[Proof of Theorem \ref{mainthm2}, case $s\not=1/2$]
We still consider the relatively closed subset $\Sigma$ of $\Omega$ defined in \eqref{defSigmaproofmainthms}. As in the case $s=1/2$, it follows from Theorem  \ref{thmepsregLip} and Theorem \ref{highordthm} that $u\in C^\infty(\Omega\setminus\Sigma)$. In particular, ${\rm sing}(u)\subset \Sigma$. On the other hand, if $u$ is continuous in a neighborhood of a point $x_0\in\Omega$, then $T_{x_0}(u)=\{u(x_0)\}$, and thus $\boldsymbol{\Xi}_s(u,x_0)=0$. Hence, $x_0\not\in\Sigma$, and we conclude that ${\rm sing}(u)= \Sigma$. In view of Remark \ref{remdimSphinonconst} and Lemma \ref{rigidlemtangmapsbig1/2}, we have 
$$\Sigma= 
\begin{cases}
\big\{x\in\Omega: {\rm dim}\,S(\varphi)\leq n-1\;\;\forall\varphi\in T_x(u) \big\} &\text{if $s\in(0,1/2)$}\,;\\[5pt]
\big\{x\in\Omega: {\rm dim}\,S(\varphi)\leq n-2\;\;\forall\varphi\in T_x(u) \big\} &\text{if $s\in(1/2,1)$}\,.
\end{cases}
$$
We can now apply e.g. \cite[Chapter 3.4, proof of Lemma 1]{Sim} (which only relies on the upper semicontinuity of $\boldsymbol{\Xi}_s$ stated in Corollary \ref{uscdensit}, the strong convergence of blow-ups to tangent maps, and the structure results on tangent maps established in Subection \ref{secttangmap}) to conclude that ${\rm dim}_{\mathcal{H}}\Sigma\leq n-1$ for $s\in(0,1/2)$, ${\rm dim}_{\mathcal{H}}\Sigma\leq n-2$ for $s\in(1/2,1)$, and that $\Sigma$ is locally finite in $\Omega$ if $n=1$ with $s\in(0,1/2)$ or $n=2$ with $s\in(1/2,1)$. 
\end{proof}

\begin{proof}[Proof of Theorem \ref{mainthm3}]
For $s\in(1/2,1)$, we simply apply Theorem \ref{mainthm2} (recalling that minimality implies stationarity). We thus assume that $s\in(0,1/2]$. Since $u$ is minimizing in $\Omega$, the results in Subsection  \ref{secttangmap} apply. Hence, we can repeat the proof of Theorem \ref{mainthm2} to derive that $u\in C^\infty(\Omega\setminus\Sigma)$, ${\rm sing}(u)=\Sigma$, where $\Sigma$ is still given by \eqref{defSigmaproofmainthms}. In view of  Lemma \ref{rigidlemtangmapsbig1/2} and Lemma \ref{rigidminsharmtangmap}, we now have 
$$\Sigma=\big\{x\in\Omega: {\rm dim}\,S(\varphi)\leq n-2\;\;\forall\varphi\in T_x(u) \big\}\,.$$
Once again, \cite[Chapter 3.4, proof of Lemma 1]{Sim} shows that ${\rm dim}_{\mathcal{H}}\Sigma\leq n-2$,  and that $\Sigma$ is locally finite in $\Omega$ if $n=2$.  
\end{proof}

%\begin{proof}[Proof of Theorem \ref{mainthm2}, case $s\in(1/2,1)$]
%
%\end{proof}
%

%\subsection{Proof of Theorem \ref{mainthm3}}

%%%%%%%%%%%%%%%%%%%%%%%%%%%%%%%%%%%%%%%%%%%%%%%%%%%%%%%
%%%%%%%%%%%%%%%%%%%%%%%%%%%%%%%%%%%%%%%%%%%%%%%%%%%%%%%
   								       						%%%%%%%%%%%%%%%%%%%
\appendix													%%%%%%%%%
								 						%%%%%%%%%%%%%%%%%%%
%%%%%%%%%%%%%%%%%%%%%%%%%%%%%%%%%%%%%%%%%%%%%%%%%%%%%%%
%%%%%%%%%%%%%%%%%%%%%%%%%%%%%%%%%%%%%%%%%%%%%%%%%%%%%%%

%%%%%%%%%%%%%%%%%%%%%%%%%%%%%%%%%%%%%%%%%%%%%%%%%%%%%%%
%%%%%%%%%%%%%%%%%%%%%%%%%%%%%%%%%%%%%%%%%%%%%%%%%%%%%%%
   								       						%%%%%%%%%%%%%%%%%%%
\section{On the degenerate Laplace equation}\label{appendweightharm} 			 %%%%%%%%%
								 						%%%%%%%%%%%%%%%%%%%
%%%%%%%%%%%%%%%%%%%%%%%%%%%%%%%%%%%%%%%%%%%%%%%%%%%%%%%
%%%%%%%%%%%%%%%%%%%%%%%%%%%%%%%%%%%%%%%%%%%%%%%%%%%%%%%

In this first appendix, our aim is to recall some of the properties satisfied by weak solutions of the (scalar) degenerate linear elliptic equation
\begin{equation}\label{maineqappendA}
{\rm div}(|z|^a\nabla w)= 0 \quad\text{in $B_R({\bf x_0})$}\,,
\end{equation}
with ${\bf x}_0=(x_0,z_0)\in\R^{n+1}$. Those properties are essentially taken from \cite{Rob}, and we reproduce here the statements for convenience of the reader. The notion of weak solution to this equation corresponds to the  variational formulation. In other words, we say that $w\in H^1(B_R({\bf x}_0),|z|^a\de{\bf x})$ is a weak solution of \eqref{maineqappendA} if 
$$\int_{B_R({\bf x}_0)}|z|^a\nabla w\cdot\nabla\Phi\,\de{\bf x}=0$$
for every $\Phi\in H^1(B_R({\bf x}_0),|z|^a\de{\bf x})$ such that $\Phi=0$ on $\partial B_R({\bf x_0})$. 

One may complement \eqref{maineqappendA} with a boundary condition of the form $w=v$ on $\partial B_R({\bf x}_0)$ for a given $v\in H^1(B_R({\bf x}_0),|z|^a\de{\bf x})$. This boundary condition is thus interpreted in the sense of traces. Classically, such a boundary condition uniquely determines the solution of \eqref{maineqappendA} which can be characterized by energy minimality. 

\begin{lemma}\label{minimalityharmonw}
Let $v\in H^1(B_R({\bf x}_0),|z|^a\de{\bf x})$. The equation 
\begin{equation}\label{maineqappendAwithbdry}
\begin{cases}
{\rm div}(|z|^a\nabla w)= 0 &\text{in $B_R({\bf x_0})$}\,,\\
w=v & \text{on $\partial B_R({\bf x}_0)$}\,,
\end{cases} 
\end{equation}
admits a unique weak solution which is characterized by 
$$\int_{B_R({\bf x}_0)}|z|^a|\nabla w|^2\,\de{\bf x}\leq  \int_{B_R({\bf x}_0)}|z|^a|\nabla \Phi|^2\,\de{\bf x}$$
for every $\Phi \in H^1(B_R({\bf x}_0),|z|^a\de{\bf x})$ satisfying $\Phi=v$ on $\partial B_R({\bf x_0})$. 
\end{lemma}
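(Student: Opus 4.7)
The plan is to solve \eqref{maineqappendAwithbdry} by the direct method applied to the strictly convex quadratic functional
$$\mathcal{F}(\Phi):=\int_{B_R({\bf x}_0)}|z|^a|\nabla \Phi|^2\,\de {\bf x}\,,$$
on the affine subspace $\mathcal{A}_v:=\{\Phi\in H^1(B_R({\bf x}_0),|z|^a\de{\bf x}):\Phi-v\in H^1_0(B_R({\bf x}_0),|z|^a\de{\bf x})\}$, where $H^1_0(B_R({\bf x}_0),|z|^a\de{\bf x})$ denotes the closure of $\mathscr{D}(B_R({\bf x}_0))$ in $H^1(B_R({\bf x}_0),|z|^a\de{\bf x})$. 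The reason this works is that the weight $|z|^a$ with $a=1-2s\in(-1,1)$ lies in the Muckenhoupt class $A_2$, so the classical theory of weighted Sobolev spaces (see e.g. \cite{FKS}) applies, giving in particular a Poincaré inequality on $H^1_0(B_R({\bf x}_0),|z|^a\de{\bf x})$ of the form
$$\|\Phi\|^2_{L^2(B_R({\bf x}_0),|z|^a\de{\bf x})}\leq C_R\int_{B_R({\bf x}_0)}|z|^a|\nabla \Phi|^2\,\de{\bf x}\qquad\forall \Phi\in H^1_0(B_R({\bf x}_0),|z|^a\de{\bf x})\,,$$
as well as the completeness and reflexivity of $H^1(B_R({\bf x}_0),|z|^a\de{\bf x})$.

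First I would establish existence of a minimizer of $\mathcal{F}$ over $\mathcal{A}_v$. Noticing that $\mathcal{A}_v$ is non empty (since $v\in \mathcal{A}_v$) and that $\mathcal{F}(v)<+\infty$, a minimizing sequence $\{\Phi_k\}\subset\mathcal{A}_v$ satisfies, through the Poincaré inequality above applied to $\Phi_k-v$, a uniform bound on $\|\Phi_k\|_{H^1(B_R({\bf x}_0),|z|^a\de{\bf x})}$. Reflexivity yields a (not relabeled) subsequence converging weakly in $H^1(B_R({\bf x}_0),|z|^a\de{\bf x})$ to some $w\in H^1(B_R({\bf x}_0),|z|^a\de{\bf x})$. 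Since $\mathcal{A}_v$ is weakly closed (as a translate of a closed linear subspace) and $\mathcal{F}$ is weakly lower semicontinuous (as the square of a Hilbertian norm), $w\in \mathcal{A}_v$ and $w$ realizes the infimum of $\mathcal{F}$ on $\mathcal{A}_v$. Uniqueness follows from the strict convexity of $\mathcal{F}$: if $w_1,w_2\in\mathcal{A}_v$ are two minimizers, the parallelogram identity gives $\mathcal{F}\bigl(\tfrac{w_1+w_2}{2}\bigr)<\min\mathcal{F}$ unless $\nabla w_1=\nabla w_2$ a.e., and the Poincaré inequality applied to $w_1-w_2\in H^1_0(B_R({\bf x}_0),|z|^a\de{\bf x})$ then forces $w_1=w_2$.

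Next I would verify that minimality over $\mathcal{A}_v$ is equivalent to being a weak solution of \eqref{maineqappendAwithbdry}. Given $w\in\mathcal{A}_v$, for any test function $\Phi\in H^1(B_R({\bf x}_0),|z|^a\de{\bf x})$ with $\Phi=0$ on $\partial B_R({\bf x}_0)$ (i.e., $\Phi\in H^1_0(B_R({\bf x}_0),|z|^a\de{\bf x})$) and $t\in\R$, the map $w+t\Phi$ belongs to $\mathcal{A}_v$, and one computes
$$\mathcal{F}(w+t\Phi)=\mathcal{F}(w)+2t\int_{B_R({\bf x}_0)}|z|^a\nabla w\cdot\nabla\Phi\,\de{\bf x}+t^2\int_{B_R({\bf x}_0)}|z|^a|\nabla\Phi|^2\,\de{\bf x}\,.$$
Minimality of $w$ at $t=0$ yields the vanishing of the linear term, which is precisely the weak formulation of \eqref{maineqappendAwithbdry}. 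Conversely, expanding $\mathcal{F}(\Phi)=\mathcal{F}(w+(\Phi-w))$ for any $\Phi\in\mathcal{A}_v$ shows that a weak solution is automatically a minimizer, because $\Phi-w\in H^1_0(B_R({\bf x}_0),|z|^a\de{\bf x})$ is an admissible test function. This chain of equivalences, combined with existence and uniqueness of the minimizer, proves the lemma.

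The main technical point is really the applicability of the weighted Poincaré inequality and the reflexivity of $H^1(B_R({\bf x}_0),|z|^a\de{\bf x})$; both rest on the $A_2$ character of the weight $|z|^a$, which is standard and has already been invoked implicitly in the body of the paper (e.g. in the proof of Lemma \ref{HsregtraceH1weight}). Once these tools are in hand, the remainder of the argument is a textbook application of the direct method.
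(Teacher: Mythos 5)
The paper itself does not prove this lemma; it is stated as ``essentially taken from \cite{Rob},'' so there is no internal proof to compare against. Your direct-method argument is correct and complete: since $a=1-2s\in(-1,1)$, the weight $|z|^a$ belongs to the Muckenhoupt class $A_2$ on $\R^{n+1}$, and the three ingredients you invoke --- reflexivity (Hilbert-space structure) of $H^1(B_R({\bf x}_0),|z|^a\de{\bf x})$, the weighted Poincar\'e inequality on $H^1_0(B_R({\bf x}_0),|z|^a\de{\bf x})$ from \cite{FKS}, and the identification of the trace-zero condition on $\partial B_R({\bf x}_0)$ with membership in the closure of $\mathscr{D}(B_R({\bf x}_0))$ --- are all available in that framework (see also \cite{HKM,TurBook}). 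The existence, uniqueness, and Euler--Lagrange equivalence then follow exactly as you write, so your proof is a valid, self-contained substitute for the external citation.
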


As for the usual Laplace equation, energy minimality can be used to prove that $w$ inherits symmetries from the boundary condition. In our case, we make use of the following lemma. 

\begin{lemma}\label{symmharmw}
Let ${\bf x}_0\in\R^n\times\{0\}$ and $v\in H^1(B_R,|z|^a\de{\bf x})$. If $v$ is symmetric with respect to $\{z=0\}$, then the weak solution $w$ of \eqref{maineqappendAwithbdry}  is also symmetric with respect to $\{z=0\}$. 
\end{lemma}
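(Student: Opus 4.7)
The plan is to exploit the uniqueness statement in Lemma \ref{minimalityharmonw} by constructing an obvious second solution from $w$ via reflection. Set $\tilde w(x,z) := w(x,-z)$, which is well defined on $B_R(\mathbf{x}_0)$ since $\mathbf{x}_0 \in \R^n\times\{0\}$ makes the ball invariant under the reflection $\mathbf{R}(x,z):=(x,-z)$.

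First I would check that $\tilde w$ lives in the correct energy space: a change of variables together with the fact that the weight $|z|^a$ is invariant under $\mathbf{R}$ gives
\begin{equation*}
\int_{B_R(\mathbf{x}_0)}|z|^a\,|\tilde w|^2\,\de\mathbf{x} = \int_{B_R(\mathbf{x}_0)}|z|^a\,|w|^2\,\de\mathbf{x}, \qquad \int_{B_R(\mathbf{x}_0)}|z|^a\,|\nabla \tilde w|^2\,\de\mathbf{x} = \int_{B_R(\mathbf{x}_0)}|z|^a\,|\nabla w|^2\,\de\mathbf{x},
\end{equation*}
so $\tilde w \in H^1(B_R(\mathbf{x}_0),|z|^a\de\mathbf{x})$. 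Next I would verify the boundary condition: the trace of $\tilde w$ on $\partial B_R(\mathbf{x}_0)$ is $v\circ \mathbf{R}$, which coincides with $v$ by the assumed symmetry of $v$.

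It remains to check that $\tilde w$ is a weak solution of \eqref{maineqappendA}. Given any test function $\Phi \in H^1(B_R(\mathbf{x}_0),|z|^a\de\mathbf{x})$ with $\Phi = 0$ on $\partial B_R(\mathbf{x}_0)$, I would set $\tilde\Phi := \Phi \circ \mathbf{R}$, which belongs to the same space and still vanishes on the boundary. Using the invariance of the weight and performing the change of variables $(x,z)\mapsto(x,-z)$, I get
\begin{equation*}
\int_{B_R(\mathbf{x}_0)}|z|^a\,\nabla \tilde w \cdot \nabla \Phi\,\de\mathbf{x} = \int_{B_R(\mathbf{x}_0)}|z|^a\,\nabla w \cdot \nabla \tilde\Phi\,\de\mathbf{x} = 0,
\end{equation*}
where the last equality uses that $w$ is itself a weak solution of \eqref{maineqappendAwithbdry}. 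Thus $\tilde w$ solves the same boundary value problem \eqref{maineqappendAwithbdry} as $w$, and the uniqueness clause of Lemma~\ref{minimalityharmonw} forces $\tilde w = w$, i.e.\ $w$ is symmetric with respect to $\{z=0\}$.

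The only subtlety worth double-checking is the trace identification $\tilde w_{|\partial B_R(\mathbf{x}_0)} = v\circ \mathbf{R}$; this is harmless because $\mathbf{R}$ is a smooth diffeomorphism preserving $B_R(\mathbf{x}_0)$ and its boundary, and the trace operator commutes with composition by such a diffeomorphism. Everything else is routine change-of-variables, so no genuine obstacle is anticipated.
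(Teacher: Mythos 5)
Your argument is correct and is the standard reflection-plus-uniqueness proof the paper has in mind (the lemma is stated without proof, prefaced only by the remark that ``energy minimality can be used to prove that $w$ inherits symmetries from the boundary condition''). A marginally shorter route to the same conclusion bypasses checking the weak formulation for $\tilde w$: since the reflection $\mathbf{R}$ preserves the weighted Dirichlet energy $\int_{B_R(\mathbf{x}_0)}|z|^a|\nabla\cdot|^2\,\de\mathbf{x}$ as well as the boundary datum $v$, the reflected map $\tilde w$ is also an energy minimizer in the admissible class, so the uniqueness clause of Lemma~\ref{minimalityharmonw} gives $\tilde w = w$ directly.
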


Concerning interior regularity of weak solutions, the issue is of course near the hyperplane $\{z=0\}$. Indeed, if the ball $B_R({\bf x}_0)$ is away from $\{z=0\}$, then the operator becomes uniformly elliptic with smooth coefficients, and the classical elliptic theory tells us that weak solutions are $C^\infty$ in the interior. For an arbitrary ball, the general results of \cite{FKS} about degenerate elliptic equations  apply, and they  provide at least local H\"older continuity in the interior. Using the invariance of the equation with respect to the $x$-variables, the regularity can be further improved (see e.g. \cite[Corollary~2.13]{Rob}). 
 %but we do not require this fact here.   
 Some boundary regularity and related maximum principles are also known from the general theory in \cite{HKM}.  We reproduce here the statement in \cite[Lemma2.18]{Rob}. 

\begin{lemma}\label{maxprincip}
Let $v\in H^1(B_R({\bf x}_0),|z|^a\de{\bf x})\cap C^0\big(\overline B_R({\bf x}_0)\big)$. The weak solution $w$ of \eqref{maineqappendAwithbdry} belongs to $C^0\big(\overline B_R({\bf x}_0)\big)$. Moreover, 
$$\min_{\overline B_R({\bf x}_0)} w=\min_{\partial B_R({\bf x}_0)} v \quad\text{and}\quad \max_{\overline B_R({\bf x}_0)} w=\max_{\partial B_R({\bf x}_0)} v\,.$$
\end{lemma}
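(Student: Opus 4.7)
The plan is to exploit the fact that the weight $|z|^a$ with $a=1-2s$ lies in the Muckenhoupt class $A_2$, so that the full De Giorgi--Nash--Moser theory of Fabes--Kenig--Serapioni \cite{FKS} is available for the operator ${\rm div}(|z|^a\nabla\,\cdot\,)$ on $B_R({\bf x}_0)$. I will prove the (pointwise) maximum principle and the boundary continuity in three steps: an almost-everywhere maximum principle, interior continuity, and boundary continuity via barriers.

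First, I would establish the a.e. maximum principle. Set $M:=\max_{\partial B_R({\bf x}_0)} v$, so that $v\leq M$ on $\partial B_R({\bf x}_0)$. Since $v\in H^1\cap C^0$, the trace of $w=v$ satisfies $(w-M)_+ = 0$ on $\partial B_R({\bf x}_0)$, hence $(w-M)_+$ lies in the closure in $H^1(B_R({\bf x}_0),|z|^a\de{\bf x})$ of smooth functions vanishing on $\partial B_R({\bf x}_0)$ — this closure is an admissible space of test functions. Plugging $\Phi=(w-M)_+$ into the weak formulation yields
$$
\int_{B_R({\bf x}_0)}|z|^a|\nabla (w-M)_+|^2\,\de{\bf x}=0,
$$
and the weighted Poincaré inequality for $A_2$-weights in $B_R({\bf x}_0)$ forces $(w-M)_+\equiv 0$ a.e.; the symmetric argument using $(m-w)_+$ with $m:=\min_{\partial B_R({\bf x}_0)}v$ gives the lower bound. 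This proves $m\leq w\leq M$ a.e.\ in $B_R({\bf x}_0)$.

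Next I would use the interior regularity theorem of \cite{FKS}: any weak solution of ${\rm div}(|z|^a\nabla w)=0$ is locally H\"older continuous in $B_R({\bf x}_0)$. Thus $w$ admits a continuous representative on the open ball, and the a.e.\ bounds above become pointwise on the interior. It remains to show that this representative extends continuously to $\partial B_R({\bf x}_0)$ and matches $v$ there. Fix ${\bf y}\in\partial B_R({\bf x}_0)$ and $\eps>0$; by continuity of $v$ on the boundary, pick $\delta>0$ with $|v({\bf y}')-v({\bf y})|<\eps$ for ${\bf y}'\in\partial B_R({\bf x}_0)\cap B_{2\delta}({\bf y})$. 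I would then build sub/super-barriers in $B_R({\bf x}_0)\cap B_\delta({\bf y})$: explicitly, one sets $\overline\Phi:=v({\bf y})+\eps+K\cdot\Psi$ and $\underline\Phi:=v({\bf y})-\eps-K\cdot\Psi$, where $K$ is a large constant bounding $\|v\|_\infty$ and $\Psi\in H^1\cap C^0$ is a fixed ${\rm div}(|z|^a\nabla\,\cdot\,)$-superharmonic function on $B_R({\bf x}_0)\cap B_\delta({\bf y})$ with $\Psi({\bf y})=0$, $\Psi\geq 1$ on $B_R({\bf x}_0)\cap\partial B_\delta({\bf y})$, and $\Psi\geq 0$ up to the relevant part of $\partial B_R$. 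Applying the a.e.\ maximum principle on the subdomain $B_R({\bf x}_0)\cap B_\delta({\bf y})$ (where $w$ is defined by prescribed boundary data $v$ on the curved portion and its own trace on the internal sphere) to $w-\overline\Phi$ and $\underline\Phi-w$ yields $|w-v({\bf y})|\leq \eps+K\,\Psi$ throughout, and continuity of $\Psi$ at ${\bf y}$ completes the argument.

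The main obstacle is the construction of the barrier $\Psi$ at boundary points ${\bf y}\in\partial B_R({\bf x}_0)$ lying on $\{z=0\}$, where the weight $|z|^a$ degenerates. For ${\bf y}$ off the hyperplane, the operator is locally uniformly elliptic with smooth coefficients and one may use the classical distance-function barrier; everywhere else I would invoke the Wiener-type criterion of \cite{HKM} for $A_2$-weighted equations, which is automatically satisfied by boundary points of a smooth domain such as $B_R({\bf x}_0)$ (an exterior ball / uniform cone condition suffices). Alternatively, one can cite the boundary H\"older continuity result directly from \cite[Thm.\ 3.68]{HKM} applied to the degenerate Laplace equation with weight $|z|^a$, or from the corresponding lemma in \cite{Rob}, to bypass the explicit barrier construction entirely.
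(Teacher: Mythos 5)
The paper does not actually prove this lemma: it is quoted verbatim from \cite[Lemma 2.18]{Rob}, with a pointer to the general potential theory of \cite{HKM}, and no argument is given. Your proposal, by contrast, reconstructs the standard proof from scratch, and the reconstruction is correct. Step 1 (a.e.\ bounds) is the usual weak maximum principle: testing against $(w-M)_+$ is legitimate because truncations are stable in $A_2$-weighted Sobolev spaces and the trace of $(w-M)_+$ vanishes when $M=\max_{\partial B_R}v$; once $\nabla(w-M)_+=0$ a.e., connectedness of the ball and the zero trace force $(w-M)_+\equiv 0$ (the Poincar\'e inequality is one way to phrase this). Step 2 is exactly the local H\"older continuity of \cite{FKS} for $A_2$-weights, which applies since $|z|^{1-2s}\in A_2$ for $s\in(0,1)$. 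Step 3 (boundary continuity) is the only place where a genuine construction is needed, and you correctly identify that the subtlety is at boundary points on $\{z=0\}$; falling back on the Wiener criterion for $A_2$-weighted equations in \cite{HKM} (a ball satisfies an exterior cone condition, so regularity of boundary points is automatic) is the standard resolution. You even note at the end that one could cite \cite{Rob} directly to bypass the barrier argument, which is precisely what the paper does. The trade-off is transparency versus length: the paper buys brevity by deferring entirely to the literature, while your reconstruction makes explicit which pieces of weighted elliptic theory are being used — which is arguably more informative, though it still leans on \cite{FKS} and \cite{HKM} for the two regularity ingredients. One cosmetic point: after obtaining $m\leq w\leq M$ on $\overline{B_R}$, the stated equalities follow because $w=v$ on $\partial B_R$ so both extrema are attained there; this last observation is implicit in your write-up and should be spelled out.
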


A further fundamental property of weak solutions of \eqref{maineqappendA} is an energy monotonicity in which one has to distinguish balls centered at a point of $\{z=0\}$ from balls lying away from  $\{z=0\}$. The two following lemmas are taken from \cite[Lemma 2.8]{Rob} and \cite[Lemma 2.17]{Rob}, respectively. 

\begin{lemma}\label{monotIharmreplac}
Let ${\bf x}_0\in\R^n\times\{0\}$ and $w\in H^1(B_R({\bf x}_0),|z|^a\de{\bf x})$  a weak solution of \eqref{maineqappendA}. Assume that either $s\geq 1/2$, or that $s<1/2$ and $w$ is symmetric with respect to the hyperplane $\{z=0\}$. Then, 
$$\frac{1}{\rho^{n+2-2s}}\int_{B_\rho({\bf x}_0)}|z|^a|\nabla w|^2\,\de{\bf x}\leq \frac{1}{r^{n+2-2s}}\int_{B_r({\bf x}_0)}|z|^a|\nabla w|^2\,\de{\bf x} $$  
for every $0<\rho\leq r\leq R$.
\end{lemma}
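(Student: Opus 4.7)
My plan is to deduce the claim from a weighted sub-mean-value inequality for $|\nabla w|^2$. Indeed, since $|B_r({\bf x}_0)|_a = |B_1|_a r^{n+2-2s}$ by a straightforward scaling (compare \eqref{weightvolball}), the stated monotonicity is equivalent to saying that the weighted volume average
\begin{equation*}
r\mapsto \frac{1}{|B_r({\bf x}_0)|_a}\int_{B_r({\bf x}_0)}|z|^a|\nabla w|^2\,\de\mathbf{x}
\end{equation*}
is nondecreasing in $r$. This, in turn, would be a consequence of weighted subharmonicity of $F:=|\nabla w|^2$, that is, $\operatorname{div}(|z|^a\nabla F)\geq 0$ distributionally in $B_R({\bf x}_0)$.

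To establish this subharmonicity I would first work pointwise away from $\{z=0\}$, where standard elliptic theory (as recalled just before the lemma) ensures that $w$ is $C^\infty$. A direct computation, differentiating the equation $\operatorname{div}(|z|^a\nabla w)=0$ and invoking it back, yields the weighted Bochner-type identity
\begin{equation*}
\operatorname{div}(|z|^a\nabla |\nabla w|^2)=2|z|^a|D^2 w|^2+2a|z|^{a-2}(\partial_z w)^2
\end{equation*}
valid pointwise for $z\neq 0$. The first term is always nonnegative; the sign of the second matches that of $a=1-2s$. Hence for $s\leq 1/2$ (i.e., $a\geq 0$) the pointwise nonnegativity is automatic, while for $s>1/2$ (i.e., $a<0$) the second term is nonpositive and will need to be controlled by combining the Bochner identity with the equation itself and with the finite-energy constraint $\int|z|^a|\nabla w|^2<\infty$, which forces $\partial_z w$ to decay appropriately at $\{z=0\}$ given that the weight blows up there.

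The main obstacle — and where the dichotomy in the hypothesis enters — is to promote this pointwise estimate to a global distributional inequality on $B_R({\bf x}_0)$, which does cross $\{z=0\}$. When $s<1/2$ and $w$ is symmetric across $\{z=0\}$, the function $\partial_z w$ is antisymmetric and its trace on $\{z=0\}$ vanishes, so no distributional source arises from reflecting the pointwise identity across the hyperplane; this is precisely why the symmetry hypothesis appears in the subcritical weight regime. When $s\geq 1/2$, the weight $|z|^a$ (constant for $s=1/2$, singular for $s>1/2$) already enforces enough integrability of $\partial_z w$ near $\{z=0\}$ via finite energy, so no symmetry is required, and for $s>1/2$ a careful integration by parts against radially decreasing test functions compensates between the two terms on the right of the Bochner identity.

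Granted weighted subharmonicity of $F=|\nabla w|^2$, the conclusion is routine. Setting $\phi(r):=\int_{\partial B_r({\bf x}_0)}F|z|^a\,\de S$, a rescaling computation (writing $\partial B_r=r\cdot\partial B_1$) gives
\begin{equation*}
\phi'(r)=\tfrac{n+1+a}{r}\phi(r)+\int_{\partial B_r({\bf x}_0)}|z|^a\partial_\nu F\,\de S,
\end{equation*}
and the last integral equals $\int_{B_r({\bf x}_0)}\operatorname{div}(|z|^a\nabla F)\,\de\mathbf{x}\geq 0$ by weighted subharmonicity. Therefore $r\mapsto r^{-(n+1-2s)}\phi(r)$ is nondecreasing in $r$, and since the weighted volume average of $F$ over $B_r$ is a weighted integral of the spherical averages with positive kernel, the monotonicity of the spherical averages propagates to that of $r^{-(n+2-2s)}\int_{B_r({\bf x}_0)}|z|^a|\nabla w|^2\,\de\mathbf{x}$, which is exactly the assertion of the lemma.
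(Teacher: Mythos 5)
The paper does not prove this lemma: it cites Roberts \cite[Lemma~2.8]{Rob} directly. Your task was therefore to reconstruct or replace that proof.

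Your weighted Bochner identity is correctly computed: differentiating $\operatorname{div}(|z|^a\nabla w)=0$ in the directions $\partial_{x_i}$ (which commute with the operator) and in the direction $\partial_z$ (which does not, but produces exactly the extra term), one gets, for $z\neq 0$,
\begin{equation*}
\operatorname{div}\big(|z|^a\nabla|\nabla w|^2\big)=2|z|^a|D^2w|^2+2a\,|z|^{a-2}(\partial_z w)^2.
\end{equation*}
For $a\geq 0$ (i.e.\ $s\leq 1/2$) this gives pointwise weighted subharmonicity of $|\nabla w|^2$ away from $\{z=0\}$, and the symmetry hypothesis does suppress any distributional source on $\{z=0\}$ since $\partial_z w$ is then odd. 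Up to making the trace and integration-by-parts steps precise (which requires $C^{1,\alpha}$-regularity of $w$ up to $\{z=0\}$), that half of your plan is sound.

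The gap is the regime $s>1/2$ (i.e.\ $a<0$), and it is not a missing technical detail: the weighted subharmonicity you are relying on is genuinely false there. Take $n=1$ and the odd weak solution $w(x,z)=x\,|z|^{1-a}\operatorname{sgn}(z)$ of $\operatorname{div}(|z|^a\nabla w)=0$ on all of $\R^2$ (its flux $|z|^a\partial_z w\equiv(1-a)x$ is continuous across $\{z=0\}$, so there is no distributional source and $w$ is a legitimate test case for the lemma when $s>1/2$, where no symmetry is required). Your Bochner identity then evaluates to
\begin{equation*}
\frac{1}{|z|^a}\operatorname{div}\big(|z|^a\nabla|\nabla w|^2\big)=4(1-a)^2|z|^{-2a}+2a(a+1)(1-a)^2\,x^2\,|z|^{-2a-2},
\end{equation*}
and for $a\in(-1,0)$ the coefficient $a(a+1)$ is strictly negative; since the second term is more singular as $z\to 0$, the right-hand side is strictly negative near $\{z=0\}$ whenever $x\neq 0$. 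Thus $|\nabla w|^2$ is not $|z|^a$-subharmonic, and no clever test-function choice can repair this. Yet the monotonicity asserted by the lemma does hold for this $w$: a direct scaling gives $r^{-(n+2-2s)}\int_{B_r}|z|^a|\nabla w|^2=C\,r^{2-2a}$, which is increasing. The conclusion is that for $s>1/2$ the monotonicity is a phenomenon specific to balls centered on the hyperplane and cannot be deduced from a pointwise differential inequality for $|\nabla w|^2$; your sentence claiming the negative term can be ``controlled by combining the Bochner identity with the equation itself and with the finite-energy constraint'' therefore hides not just work but a genuinely false step. Roberts' argument handles the horizontal derivatives $\nabla_x w$ (which are again $|z|^a$-harmonic) as you do, but treats $\partial_z w$ through its conjugate $|z|^a\partial_z w$, which is $|z|^{-a}$-harmonic, and exploits that for $a<0$ the mismatch between the two weighted-volume scaling exponents points in the favorable direction. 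The $s>1/2$ branch of your argument would need to be replaced along those lines rather than tightened.
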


\begin{lemma}\label{monotIharmreplac2}
Let $w\in H^1(B_R({\bf x}_0),|z|^a\de{\bf x})$ be a weak solution of \eqref{maineqappendA}. If ${\bf x}_0=(x_0,z_0)\in \R^{n+1}_+$ and $R>0$ are such that  $B_R({\bf x}_0)\subset \R^{n+1}_+$ and $z_0\geq \theta R$ for some $\theta\geq 2$, then 
$$\Big(\frac{2}{R}\Big)^{n+1}\int_{B_{R/2}({\bf x}_0)} |z|^a|\nabla w|^2\,\de{\bf x}\leq\Big(1+\frac{C}{\theta-1}\Big)\frac{1}{R^{n+1}}\int_{ B_{R} ( {\bf x}_0 ) } |z|^a|\nabla w|^2\,\de{\bf x}\,,$$
for a constant $C=C(n)$. 
\end{lemma}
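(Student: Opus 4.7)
Since $z_0 \geq \theta R \geq 2R$, the ball $B_R({\bf x}_0)$ lies strictly in $\{z>0\}$, with $z \in [m,M]$ where $m:=z_0-R \geq (\theta-1)R$ and $M:=z_0+R \leq (\theta+1)R$. In particular $M/m \leq 1 + 2/(\theta-1)$, and since $|a|=|1-2s|<1$ the concavity estimate $(1+x)^{|a|}\leq 1+|a|x$ for $x\geq 0$ gives the oscillation bound
$$\max_{B_R({\bf x}_0)} z^a \big/ \min_{B_R({\bf x}_0)} z^a \;\leq\; (M/m)^{|a|} \;\leq\; 1 + \tfrac{2}{\theta-1}.$$
The plan is to first prove the analogous \emph{unweighted} estimate for $\int|\nabla w|^2$, and then sandwich by this near-constant weight.

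To prove the unweighted version, I will compare $w$ with its harmonic replacement. Let $u\in H^1(B_R({\bf x}_0))$ be the harmonic function with $u=w$ on $\partial B_R({\bf x}_0)$ (well defined since $z^a$ is bounded away from $0$ and $\infty$ on $B_R({\bf x}_0)$, so the weighted and unweighted $H^1$-norms are equivalent there). By interior regularity $w$ is smooth on $B_R({\bf x}_0)$, and ${\rm div}(z^a\nabla w)=0$ rewrites pointwise as $\Delta w = -(a/z)\partial_z w$. Hence $v := w-u$ satisfies $\Delta v = -(a/z)\partial_z w$ with $v=0$ on $\partial B_R({\bf x}_0)$. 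Testing this equation with $v$, applying Cauchy--Schwarz together with Poincar\'e's inequality $\|v\|_{L^2(B_R({\bf x}_0))}\leq C_n R\|\nabla v\|_{L^2(B_R({\bf x}_0))}$ (valid since $v|_{\partial B_R({\bf x}_0)}=0$) and using $m\geq(\theta-1)R$, one obtains the smallness bound
$$\|\nabla v\|_{L^2(B_R({\bf x}_0))} \;\leq\; \frac{C_n|a|R}{m}\|\nabla w\|_{L^2(B_R({\bf x}_0))} \;\leq\; \frac{C_*}{\theta-1}\|\nabla w\|_{L^2(B_R({\bf x}_0))},$$
with $C_*=C_*(n,s)$.

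Next, since each $\partial_i u$ is harmonic, $|\nabla u|^2$ is subharmonic on $B_R({\bf x}_0)$, and the sub-mean-value inequality delivers the classical monotonicity $(2/R)^{n+1}\int_{B_{R/2}({\bf x}_0)}|\nabla u|^2 \leq R^{-(n+1)}\int_{B_R({\bf x}_0)}|\nabla u|^2$. Combining this with the minimality $\int_{B_R({\bf x}_0)}|\nabla u|^2\leq \int_{B_R({\bf x}_0)}|\nabla w|^2$, Young's inequality $|\nabla w|^2\leq(1+\varepsilon)|\nabla u|^2+(1+1/\varepsilon)|\nabla v|^2$, the $v$-bound above, and the choice $\varepsilon:=1/(\theta-1)$ produces
$$\Big(\frac{2}{R}\Big)^{n+1}\int_{B_{R/2}({\bf x}_0)}|\nabla w|^2 \;\leq\; \Big(1+\frac{C'}{\theta-1}\Big)\frac{1}{R^{n+1}}\int_{B_R({\bf x}_0)}|\nabla w|^2,$$
with $C'=C'(n,s)$. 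Inserting the weight $z^a$ on both sides and absorbing the oscillation factor from the first paragraph then yields the required weighted inequality.

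The only delicate bookkeeping is the choice of $\varepsilon$: since $\|\nabla v\|^2=O((\theta-1)^{-2})$ carries a prefactor $1+1/\varepsilon$, the choice $\varepsilon=1/(\theta-1)$ is what balances the two error terms so that their sum is $O((\theta-1)^{-1})$ rather than $O((\theta-1)^{-2})$ or $O(1)$. Conceptually the proof is light: on any ball well separated from $\{z=0\}$ the weight is almost constant, so $w$ is a small perturbation of a harmonic function, and the classical subharmonicity of $|\nabla u|^2$ for harmonic $u$ delivers the monotonicity after this perturbative correction.
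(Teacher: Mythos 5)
The paper does not prove this lemma itself; it cites \cite[Lemma 2.17]{Rob}. So your argument has to stand on its own, and I believe it does: the harmonic-replacement strategy (compare $w$ with the unweighted harmonic function $u$ in $B_R(\mathbf{x}_0)$ sharing its boundary values, estimate the defect $v=w-u$ via the pointwise rewriting $\Delta w=-(a/z)\partial_z w$ and Poincar\'e, then invoke subharmonicity of $|\nabla u|^2$ and Young's inequality with $\varepsilon=1/(\theta-1)$) is sound, and the weight-oscillation factor you absorb at the end is of the right size. Two small cosmetic points. First, your bound $M=z_0+R\leq(\theta+1)R$ is false in general (nothing caps $z_0$ from above); it is harmless since you only use the ratio $M/m=(z_0+R)/(z_0-R)\leq 1+2/(\theta-1)$, but the stray inequality should be removed. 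Second, you record $C_*=C_*(n,s)$ and $C'=C'(n,s)$, yet the lemma asserts $C=C(n)$. The dependence on $s$ enters only through the factor $|a|=|1-2s|$, and since $|a|<1$ uniformly in $s\in(0,1)$ you should simply drop it; all the remaining constants (Poincar\'e, sub-mean-value for the subharmonic $|\nabla u|^2$, the factor $2^{n+1}$ from passing between $B_{R/2}$ and $B_R$) depend only on the ambient dimension $n+1$, i.e.\ on $n$. With those two edits the proof delivers exactly the stated estimate.
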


%%%%%%%%%%%%%%%%%%%%%%%%%%%%%%%%%%%%%%%%%%%%%%%%%%%%%%%
%%%%%%%%%%%%%%%%%%%%%%%%%%%%%%%%%%%%%%%%%%%%%%%%%%%%%%%
   								       						%%%%%%%%%%%%%%%%%%%
\section{A Lipschitz estimate for  $s$-harmonic functions} \label{AppSharmfct}			 %%%%%%%%%
								 						%%%%%%%%%%%%%%%%%%%
%%%%%%%%%%%%%%%%%%%%%%%%%%%%%%%%%%%%%%%%%%%%%%%%%%%%%%%
%%%%%%%%%%%%%%%%%%%%%%%%%%%%%%%%%%%%%%%%%%%%%%%%%%%%%%%

The purpose of this appendix is to provide an interior Lipschitz estimate for weak solutions $w\in \widehat H^s(D_1)$ of the fractional Laplace equation
\begin{equation}\label{sharmfuncteqappend}
(-\Delta)^sw=0 \quad\text{in $H^{-s}(D_1)$}\,. 
\end{equation}
The notion of weak solution is understood here according to the weak formulation of the $s$-Laplacian operator, see \eqref{deffraclap}.
Interior regularity for weak solutions is known, and it tells us that $w$ is locally $C^\infty$ in $D_1$. The following estimate is probably 
also well known, but we give a proof for convenience of the reader. 

\begin{lemma}\label{lipestsharmfctlem}
If $w\in \widehat H^s(D_1)$ is a weak solution of \eqref{sharmfuncteqappend}, then $w\in C^\infty(D_{1/2})$, and 
\begin{equation}\label{lipestiappend}
\|w\|^2_{L^\infty(D_{1/2})}+\|\nabla w\|^2_{L^\infty(D_{1/2})}\leq C\big(\mathcal{E}_s(w,D_1)+\|w\|^2_{L^2(D_1)}\big)\,,
\end{equation}
for a constant $C=C(n,s)$. 
\end{lemma}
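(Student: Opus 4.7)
The natural strategy is to transport the problem to the Caffarelli--Silvestre extension, where the fractional equation becomes a local degenerate elliptic equation with a homogeneous Neumann condition, and then to reduce everything to an interior regularity statement by even reflection. Concretely, consider the extension $w^\e \in H^1(B_R^+, z^a\, \de{\bf x})$ given by Lemma~\ref{hatH1/2toH1}. By definition of $(-\Delta)^s$ in weak form and by Lemma~\ref{repnormderfraclap}, the assumption $(-\Delta)^s w = 0$ in $H^{-s}(D_1)$ translates, through the definition \eqref{defNeumOp} of $\boldsymbol{\Lambda}^{(2s)}$, into
\[
\int_{\R^{n+1}_+} z^a \nabla w^\e \cdot \nabla \Phi \,\de{\bf x} = 0
\]
for every $\Phi \in H^1(\R^{n+1}_+, z^a\,\de{\bf x})$ with compact support in $\R^{n+1}_+ \cup D_1$. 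Thus $w^\e$ solves the degenerate Laplace equation ${\rm div}(z^a \nabla w^\e) = 0$ in $B_R^+$ with a vanishing weighted conormal derivative on $\partial^0 B_R^+ \cap D_1$, exactly as in the setting of Appendix~\ref{appendweightharm}.

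The next step is to reflect evenly: set $\tilde w(x,z) := w^\e(x,|z|)$ on the cylinder $Q := D_1 \times (-r_0, r_0)$ with $r_0 > 0$ chosen so that $Q \subset \R^{n+1}_+ \cup \partial^0 B_R^+$ after decomposition. Arguing as in the proof of Corollary~\ref{eqsymtrizedphase} (splitting test functions into symmetric and antisymmetric parts with respect to $\{z=0\}$), the homogeneous Neumann condition for $w^\e$ on $D_1$ translates into the fact that $\tilde w \in H^1(Q, |z|^a \de{\bf x})$ is a weak solution of
\[
{\rm div}(|z|^a \nabla \tilde w) = 0 \quad \text{in } Q\,.
\]
In particular $\{z=0\}$ is now an interior slice of the domain, and one can invoke the interior regularity theory for degenerate elliptic equations with the $A_2$-weight $|z|^a$ developed in \cite{FKS} (as used throughout the paper and in Appendix~\ref{appendweightharm}).

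From that theory, $\tilde w$ is smooth in the interior in the tangential $x$-variables and locally bounded, with the Moser-type estimate
\[
\|\tilde w\|^2_{L^\infty(D_{3/4} \times (-r_0/2, r_0/2))} \leq C \|\tilde w\|^2_{L^2(Q, |z|^a \de{\bf x})}\,.
\]
Since the operator is translation-invariant in $x$, each tangential derivative $\partial_{x_i} \tilde w$ is itself a weak solution of the same degenerate equation in a slightly smaller cylinder, hence satisfies an analogous local boundedness bound, which after a standard Caccioppoli argument gives
\[
\|\nabla_x \tilde w\|^2_{L^\infty(D_{1/2} \times (-r_0/4, r_0/4))} \leq C \|\tilde w\|^2_{L^2(Q, |z|^a \de{\bf x})}\,.
\]
Bootstrapping this (and combining with known regularity of $(-\Delta)^s$ away from singularities, or with Schauder estimates as used in Section~\ref{highordreg}) yields $w \in C^\infty(D_{1/2})$.

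Finally, since $w(x) = \tilde w(x,0)$ and $\nabla w(x) = \nabla_x \tilde w(x,0)$ for $x \in D_{1/2}$, the pointwise bounds above give
\[
\|w\|^2_{L^\infty(D_{1/2})} + \|\nabla w\|^2_{L^\infty(D_{1/2})} \leq C \|\tilde w\|^2_{L^2(Q, |z|^a \de{\bf x})} \leq C \|w^\e\|^2_{L^2(B_{r_1}^+, z^a \de{\bf x})}
\]
for a suitable $r_1$, and Lemma~\ref{hatH1/2toH1} (specifically estimate \eqref{contruextL2}) bounds the right-hand side by $C(\mathcal{E}_s(w, D_1) + \|w\|^2_{L^2(D_1)})$, which is the desired inequality \eqref{lipestiappend}. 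The only mildly delicate point is verifying that the even reflection genuinely produces a weak solution of the degenerate equation on the full cylinder; this is handled by the antisymmetric/symmetric test-function decomposition and uses crucially that the Neumann boundary term is $z^a \partial_z w^\e \equiv 0$ on $D_1$, so no singular contribution arises across $\{z=0\}$.
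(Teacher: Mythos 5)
Your proposal follows essentially the same route as the paper's proof: extend via Caffarelli--Silvestre, observe the homogeneous weighted Neumann condition coming from $(-\Delta)^s w=0$, reflect evenly across $\{z=0\}$ to get an interior weak solution of ${\rm div}(|z|^a\nabla \cdot)=0$, use the $A_2$-weighted regularity theory of \cite{FKS} together with the fact that tangential derivatives again solve the same equation, and finally close the estimate via Lemma~\ref{hatH1/2toH1}. The only difference is cosmetic: the paper applies \cite[Theorem~2.3.12]{FKS} to get weighted-$L^2$-to-H\"older bounds for $w^\e$ and $\partial_i w^\e$ and then converts them to $L^\infty$ by a short mean-comparison step, whereas you directly invoke a Moser-type $L^\infty$ bound and a Caccioppoli inequality; both are valid and give the same conclusion. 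One small point worth flagging is that "each tangential derivative $\partial_{x_i}\tilde w$ is itself a weak solution" requires first establishing that $\partial_{x_i}\tilde w$ lies in $H^1_{\rm loc}$ (a difference-quotient argument), which the paper covers by citing \cite[Corollary~2.13]{Rob}; your write-up assumes this implicitly.
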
 

\begin{proof}
As we already mentioned, the regularity theory is already known, and we take advantage of this to only derive estimate \eqref{lipestiappend}. Let us fix an arbitrary point $x_0\in D_{1/2}$. We consider the extension $w^\e$ which belongs to $H^1(B^+_{1/4}({\bf x}_0),|z|^a\de{\bf x})$ with ${\bf x}_0:=(x_0,0)$ by Lemma~\ref{hatH1/2toH1}. In view of Lemma \ref{repnormderfraclap}, it satisfies 
$$\int_{B_{1/4}^+({\bf x}_0)}z^a\nabla w^\e\cdot\nabla\Phi\,\de x=0 $$
for every $\Phi\in H^1(B_{1/4}^+({\bf x}_0),|z|^a\de{\bf x})$ such that $\Phi=0$ on $\partial^+B_{1/4}^+({\bf x}_0)$. Then we consider the even extension of $w^\e$ to the whole ball $B_{1/4}({\bf x}_0)$ that we still denote by $w^\e$ (i.e. $w^\e(x,z)=w^\e(x,-z)$). Then $w^\e\in H^1(B_{1/4}({\bf x}_0),|z|^a\de{\bf x})$, and arguing as in the proof of Corollary \ref{eqsymtrizedphase}, we infer that $w^\e$ is a weak solution of \eqref{maineqappendA} with $R=1/4$. According to \cite[Corollary 2.13]{Rob}, the weak derivatives $\partial_i w^\e$ belongs to $H^1(B_{1/8}({\bf x}_0),|z|^a\de {\bf x})$ for $i=1,\ldots,n$, and they are weak solutions of \eqref{maineqappendA} with $R=1/8$. Now, applying \cite[Theorem 2.3.12]{FKS} to $w^\e$ and $\partial_i w^\e$, we infer that $w^\e\in C^{1,\alpha}(B_{1/16}({\bf x}_0))$ for some exponent $\alpha=\alpha(n,s)\in(0,1)$, 
\begin{equation}\label{holdestisharmfct}
[w^\e]_{C^{0,\alpha}(B_{1/16}({\bf x}_0))}\leq C\|w^\e\|_{L^2(B_{1/8}({\bf x}_0),|z|^a\de{\bf x})}\,, 
\end{equation}
and
\begin{equation}\label{holdestisharmfct2}
[\nabla_x w^\e] _{C^{0,\alpha}(B_{1/16}({\bf x}_0))}\leq C\|\nabla_x w^\e\|_{L^2(B_{1/8}({\bf x}_0),|z|^a\de{\bf x})}\,,
\end{equation}
for a constant $C=C(n,s)$.

On the other hand, for every ${\bf x}\in B_{1/16}({\bf x}_0)$, we have (recall our notation in \eqref{weightvolball})
\begin{multline*}
|w^\e({\bf x})|\leq \Big| w^\e({\bf x})-\frac{1}{|B_{1/16}|_a}\int_{B_{1/16}(x_0)}|z|^a w^\e({\bf y})\de {\bf y} \Big|+ \frac{1}{|B_{1/16}|_a}\int_{B_{1/16}(x_0)}|z|^a|w^\e({\bf y})|\de {\bf y}\\
 \leq C\big([w^\e]_{C^{0,\alpha}(B_{1/16}({\bf x}_0))} +\|w^\e\|_{L^2(B_{1/16}({\bf x}_0),|z|^a\de{\bf x})} \big)\,.
\end{multline*}
Combining this estimate with \eqref{holdestisharmfct} and  Lemma \ref{hatH1/2toH1}  leads to 
$$\|w^\e\|^2_{L^\infty(B_{1/16}(x_0))} \leq C\big(\mathcal{E}_s(w,D_1)+\|w\|^2_{L^2(D_1)}\big)\,.$$
The same argument applied to $\nabla_xw^\e$ and using \eqref{holdestisharmfct2} instead of \eqref{holdestisharmfct} yields 
$$\|\nabla_x w^\e\|^2_{L^\infty(B_{1/16}(x_0))} \leq C \|\nabla_ xw^\e\|^2_{L^2(B_{1/8}({\bf x}_0),|z|^a\de{\bf x})} \leq C\mathcal{E}_s(w,D_1) \,,$$
thanks to Lemma \ref{hatH1/2toH1} again. Now the conclusion follows from the fact that  $w^\e=w$ and  $\nabla_xw^\e=\nabla w$ on $\partial^0B^+_{1/16}({\bf x}_0)$.
\end{proof}

%%%%%%%%%%%%%%%%%%%%%%%%%%%%%%%%%%%%%%%%%%%%%%%%%%%%%%%
%%%%%%%%%%%%%%%%%%%%%%%%%%%%%%%%%%%%%%%%%%%%%%%%%%%%%%%
   								       						%%%%%%%%%%%%%%%%%%%
\section{An embedding theorem between generalized $\mathcal{Q}_\alpha$-spaces}\label{appQspaces} 			 %%%%%%%%%
								 						%%%%%%%%%%%%%%%%%%%
%%%%%%%%%%%%%%%%%%%%%%%%%%%%%%%%%%%%%%%%%%%%%%%%%%%%%%%
%%%%%%%%%%%%%%%%%%%%%%%%%%%%%%%%%%%%%%%%%%%%%%%%%%%%%%%

In this appendix, our goal is to prove one of the crucial estimates used in the proof of Theorem~\ref{thmepsregholder}, Corollary \ref{coroinjQspaces} below. In turns out that this estimate 
does not explicitly appear in the existing literature (to the best of our knowledge), but it can be shortly derived from recent results in harmonic analysis. The purpose of this appendix is thus to explain how to combine those results to reach our goal. First, we need to recall some definitions and notations. 

The space $\mathscr{S}_\infty(\R^n)$ can be defined as the topological subspace of the Schwartz class $\mathscr{S}(\R^n)$ made of all functions $\varphi$ such that the semi-norm
$$\|\varphi\|_M :=\sup_{|\gamma|\leq M}\sup_{\xi\in\R^n}\big|\partial^\gamma\widehat{\varphi}(\xi)\big|(|\xi|^M+|\xi|^{-M})$$
is finite for every  $M\in\N$, where $\gamma=(\gamma_1,\ldots,\gamma_n)\in\N^n$, $|\gamma|:=\gamma_1+\ldots+\gamma_n$, and $\partial^\gamma:=\partial_1^{\gamma_1}\ldots\partial_n^{\gamma_n}$. Its topological dual is denoted by $\mathscr{S}^\prime_\infty(\R^n)$, and it is endowed with the weak $*$-topology, see e.g.~\cite{Trieb,YangYuan}. 
\vskip3pt

The following $\mathcal{Q}^{\alpha,q}_p$-spaces were introduced in \cite{CuYa,YangYuan}, generalizing the notion of $\mathcal{Q}_\alpha$-space (see \cite[Section 1.2.4]{SYY} and  references therein), in the sense that $\mathcal{Q}_\alpha(\R^n)=\mathcal{Q}^{\alpha,2}_{n/\alpha}(\R^n)$. 

\begin{definition}[\cite{CuYa,YangYuan}]
Given $\alpha\in(0,1)$, $p\in(0,\infty]$ and $q\in[1,\infty)$, define $\mathcal{Q}^{\alpha,q}_p(\R^n)$ as the space made of elements $f\in\mathscr{S}^\prime_\infty(\R^n)$ such that $f(x)-f(y)$ is a measurable function on $\R^n\times\R^n$ and 
$$\|f\|_{\mathcal{Q}^{\alpha,q}_p(\R^n)}:=\sup_Q\, |Q|^{\frac{1}{p}-\frac{1}{q}}\left(\iint_{Q\times Q}\frac{|f(x)-f(y)|^q}{|x-y|^{n+\alpha q}}\,\de x\de y\right)^{1/q}<+\infty\,,$$
where $Q$ ranges over all cubes of dyadic edge lengths in $\R^n$. 
\end{definition}

\begin{remark}\label{equivseminormQsp}
Endowed with $\|\cdot\|_{\mathcal{Q}^{\alpha,q}_p(\R^n)}$, the space $\mathcal{Q}^{\alpha,q}_p(\R^n)$ is a semi-normed vector space, and 
$$N_{\alpha,p,q}(f):=\sup_{D_r(x_0)\subset \R^n} r^{\frac{n}{p}-\frac{n}{q}}\left(\iint_{D_r(x_0)\times D_r(x_0)}\frac{|f(x)-f(y)|^q}{|x-y|^{n+\alpha q}}\,\de x\de y\right)^{1/q}$$
provides an equivalent semi-norm. 
\end{remark}

The following embeddings between $\mathcal{Q}^{\alpha,q}_p$-spaces hold. 

\begin{theorem}\label{prfthminj}
If $0<\alpha_1<\alpha_2<1$, $1\leq q_2<q_1<\infty$, and $0<\lambda\leq n$ are such that 
\begin{equation}\label{conditionembed}
 \alpha_1-\frac{\lambda}{q_1}=\alpha_2-\frac{\lambda}{q_2}\,,
 \end{equation}
then $\mathcal{Q}^{\alpha_2,q_2}_{\frac{nq_2}{\lambda}}(\R^n)\hookrightarrow \mathcal{Q}^{\alpha_1,q_1}_{\frac{nq_1}{\lambda}}(\R^n)$ continuously. 
\end{theorem}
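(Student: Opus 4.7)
The overall strategy is to reduce the claim to a Sobolev-type embedding between Triebel--Lizorkin--Morrey spaces, an arena where the scaling-invariant condition $\alpha_1-\lambda/q_1=\alpha_2-\lambda/q_2$ is precisely the \emph{critical Sobolev relation with ambient dimension replaced by the effective Morrey dimension $\lambda$}. The execution breaks into three steps, following the hint in the introduction about combining \cite{YangYuan,SaYY} with \cite{Ho}.

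\emph{Step 1 (Characterization of $\mathcal{Q}$-spaces as TLM spaces).} I would first invoke the characterization proved in Yang--Yuan \cite{YangYuan} and Sawano--Yang--Yuan \cite{SaYY}: under the parameter range at hand, there is a Triebel--Lizorkin--Morrey space $\dot F^{\alpha,\tau}_{q,q}(\R^n)$, defined by a Littlewood--Paley decomposition summed in $\ell^q$ and then measured in an $L^{q}$-Morrey norm with exponent $\tau=\tau(\lambda,n)$, such that $\mathcal{Q}^{\alpha,q}_{nq/\lambda}(\R^n)$ coincides with $\dot F^{\alpha,\tau}_{q,q}(\R^n)$ with equivalent semi-norms. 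The crucial algebraic point, which is what makes the whole argument go through, is that once we set $p=nq/\lambda$ the Morrey index $\tau$ depends only on $\lambda$ (and $n$), \emph{not} on $\alpha$ or $q$ separately; this is why varying $(\alpha,q)$ subject to $\alpha-\lambda/q=\beta$ keeps us inside a one-parameter family of TLM spaces with fixed Morrey exponent.

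\emph{Step 2 (Sobolev embedding in the TLM scale).} I would then apply Ho's embedding theorem from \cite{Ho}: for the same Morrey parameter $\tau$ as in Step 1, and whenever $0<\alpha_1<\alpha_2<1$ and $1\leq q_2<q_1<\infty$ satisfy the scaling relation $\alpha_2-\lambda/q_2=\alpha_1-\lambda/q_1$, one has the continuous embedding
\begin{equation*}
\dot F^{\alpha_2,\tau}_{q_2,q_2}(\R^n)\hookrightarrow \dot F^{\alpha_1,\tau}_{q_1,q_1}(\R^n).
\end{equation*}
This is the Morrey analogue of the classical fractional Sobolev embedding $W^{\alpha_2,q_2}\hookrightarrow W^{\alpha_1,q_1}$, the condition being the correct one once $n$ is replaced by $\lambda$.

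\emph{Step 3 (Translate back).} Chaining the identifications of Step 1 at both endpoints with the embedding of Step 2 delivers $\mathcal{Q}^{\alpha_2,q_2}_{nq_2/\lambda}(\R^n)\hookrightarrow \mathcal{Q}^{\alpha_1,q_1}_{nq_1/\lambda}(\R^n)$ with the quantitative control provided by the norm equivalences in Step 1 and the operator norm of Ho's embedding. The equivalent semi-norm via $N_{\alpha,p,q}$ noted in Remark~\ref{equivseminormQsp} can be used to state the estimate in the concrete form actually used later in the paper (e.g.\ in Step 5 of the proof of Lemma~\ref{compactexpandmap}).

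\emph{Main obstacle.} The nontrivial part is \emph{bookkeeping}: the three references \cite{YangYuan,SaYY,Ho} use slightly different but compatible parametrizations of TLM-type spaces (some via the index $\tau$, some via the pair $(p,\lambda)$, some via Hausdorff capacities), and one must verify that (i) the characterization in Step 1 is valid over the \emph{full} parameter range $0<\alpha<1$, $q\geq 1$, $0<\lambda\leq n$ in the hypothesis, (ii) Ho's embedding covers the same range and preserves the Morrey index $\tau$, and (iii) the equivalence classes modulo polynomials that appear in the homogeneous TLM scale cause no harm because the $\mathcal{Q}$-semi-norm already quotients out constants through its double-difference structure. Once these index matchings are nailed down, the argument is a two-line chain of inclusions.
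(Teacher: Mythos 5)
Your three-step architecture (characterize $\mathcal{Q}$-spaces as Triebel--Lizorkin--Morrey spaces via \cite{YangYuan,SaYY}, apply Ho's Sobolev embedding \cite{Ho}, translate back) is exactly the route taken in the paper, and the references are the right ones. However, there is a concrete error in the bookkeeping you yourself flag as the ``main obstacle'': you claim that $\mathcal{Q}^{\alpha,q}_{nq/\lambda}(\R^n)$ is identified with $\dot F^{\alpha,\tau}_{q,q}(\R^n)$ for a $\tau=\tau(\lambda,n)$ depending only on $\lambda$ and $n$, and you then state Ho's embedding as $\dot F^{\alpha_2,\tau}_{q_2,q_2}\hookrightarrow \dot F^{\alpha_1,\tau}_{q_1,q_1}$ with the \emph{same} $\tau$ on both sides. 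This is false. In the $\dot F^{s,\tau}_{p,q}$ parametrization, the correct identification coming from \cite{SaYY,YangYuan} is $\mathcal{Q}^{\alpha_i,q_i}_{nq_i/\lambda}(\R^n)=\dot F^{\alpha_i,\tau_i}_{q_i,q_i}(\R^n)$ with $\tau_i=\frac{n-\lambda}{nq_i}$, which \emph{does} depend on $q_i$; the two endpoints therefore have different $\tau$-indices. Likewise, in the $\dot{\mathcal{E}}^s_{p,q,u}$ scale the Morrey exponent $u_i=nq_i/\lambda$ varies with $q_i$. What is held fixed across the embedding is the parameter $\lambda$ in Ho's own $\dot F^{s,u}_{M_{p,q,\lambda}}$ notation (specialized to $u=p=q$), equivalently the ratio $u_i/q_i=n/\lambda$. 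The paper's proof is careful on this point: it first passes to Ho's $M_{q,q,\lambda}$ scale where $\lambda$ is genuinely constant, applies \cite[Theorem 4.1]{Ho} there, and only then translates the two endpoints separately into $\dot F^{\alpha_i,(n-\lambda)/(nq_i)}_{q_i,q_i}$ and finally into $\mathcal{Q}^{\alpha_i,q_i}_{nq_i/\lambda}$. As written, your Step~2 does not match the statement of Ho's theorem and would need to be reformulated in terms of the fixed parameter $\lambda$ rather than a fixed $\tau$; once that is corrected, the argument is exactly the paper's.
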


As we briefly mentioned at the beginning of this appendix, this theorem actually follows quite directly from a more general embedding result between some homogeneous Triebel-Lizorkin-Morrey-Lorentz spaces \cite{Ho} together with an identification result between various definitions of homogeneous Triebel-Lizorkin-Morrey type spaces \cite{SaYY}, and a characterization of the $\mathcal{Q}^{\alpha,q}_p$-spaces within this scale of spaces~\cite{YangYuan}. We refer to the monograph \cite{SYY} for what concerns the spaces involved here, and we limit ourselves to their basic definition. To this purpose, we consider a reference 
bump function $\psi\in \mathscr{S}(\R^n)$ such that 
$$
{\rm spt}\,\widehat{\psi} \subset \Big\{\xi\in\R^n : \frac{1}{2}\leq|\xi|\leq
2\Big\}\quad\text{and}\quad |\widehat\psi(\xi)|\geq C>0 \textrm{ for }\frac{3}{5}\leq |\xi|\leq\frac{5}{3}\,.
$$
(In particular, $\psi\in \mathscr{S}_\infty(\R^n)$.) For $j\in\Z$, we denote by $\psi_j$ the function defined by 
$$\psi_j(x):=2^{jn}\psi(2^jx) \,.$$

\begin{definition}
Given $p,q\in(0,\infty)$, $s\in\R$, and $\tau\in[0,\infty)$, the homogeneous Triebel-Lizorkin space $\dot F^{s,\tau}_{p,q}(\R^n)$ is defined to be the set of all $f\in\mathscr{S}^\prime_\infty(\R^n)$ such that 
$$\|f\|_{\dot F^{s,\tau}_{p,q}(\R^n)}:=\sup_Q\, \frac{1}{|Q|^{\tau}}\left(\int_Q\bigg(\sum_{j=j_Q}^\infty \big(2^{js}|\psi_j*f(x)|\big)^q\bigg)^{p/q} \de x\right)^{1/p}<+\infty\,, $$
where $Q$ ranges over all cubes of dyadic edge lengths in $\R^n$, and $j_Q:=-\log_2\ell(Q)$ with $\ell(Q)$ the edge length of $Q$. 
\end{definition}

\begin{definition}
Given $0<p\leq u<\infty$\,, $0<q<\infty$, and $s\in\R$,  the homogeneous Triebel-Lizorkin-Morrey space $\dot{\mathcal{E}}^s_{p,q,u}(\R^n)$ is defined to be the set of all $f\in\mathscr{S}^\prime_\infty(\R^n)$ such that 
$$\|f\|_{\dot{\mathcal{E}}^s_{p,q,u}(\R^n)}:=\sup_Q\, |Q|^{\frac{1}{u}-\frac{1}{p}}\left(\int_Q\bigg(\sum_{j\in\Z} \big(2^{js}|\psi_j*f(x)|\big)^p\bigg)^{q/p} \de x\right)^{1/q}<+\infty\,,$$
where $Q$ ranges over all cubes of dyadic edge lengths in $\R^n$. 
\end{definition}

\begin{proof}[Proof of Theoremf \ref{prfthminj}]
In \cite{Ho}, the author introduced a more refined scale of homogeneous Triebel-Lizorkin spaces
of Morrey-Lorentz type, denoted by $\dot{F}^{s,u}_{M_{p,q,\lambda}}(\Rn)$. In the case $u=p=q$, those spaces coincide with the homogeneous Triebel-Lizorkin-Morrey spaces above, namely 
$$\dot{F}^{s,p}_{M_{p,p,\lambda}}(\Rn)=\dot{\mathcal{E}}^s_{p,p,\frac{np}{\lambda}}(\R^n) $$
for every $p\in(0,\infty)$, $\lambda\in(0,n]$, and $s\in \R$. More precisely, their defining semi-norms are equivalent (in one case the supremum is taken over all dyadic cubes, while in the other it is taken
over balls). By \cite[Theorem 4.1]{Ho}, under condition \eqref{conditionembed} the space $\dot{F}^{\alpha_2,q_2}_{M_{q_2,q_2,\lambda}}(\Rn)$ embeds continuously into $\dot{F}^{\alpha_1,q_1}_{M_{q_1,q_1,\lambda}}(\Rn)$. In other words, 
\begin{equation}\label{crucialinj}
\dot{\mathcal{E}}^{\alpha_2}_{q_2,q_2,\frac{nq_2}{\lambda}}(\R^n)\hookrightarrow \dot{\mathcal{E}}^{\alpha_1}_{q_1,q_1,\frac{nq_1}{\lambda}}(\R^n) 
\end{equation}
continuously. On the other hand, \cite[Theorem 1.1]{SaYY} tells us that 
$$\dot{\mathcal{E}}^{\alpha_1}_{q_1,q_1,\frac{nq_1}{\lambda}}(\R^n)= \dot F^{\alpha_1,\frac{n-\lambda}{nq_1}}_{q_1,q_1}(\R^n)\quad\text{and}\quad \dot{\mathcal{E}}^{\alpha_2}_{q_2,q_2,\frac{nq_2}{\lambda}}(\R^n)= \dot F^{\alpha_2,\frac{n-\lambda}{nq_2}}_{q_2,q_2}(\R^n)\,,$$
with equivalent semi-norms. Finally, by \cite[Theorem 3.1]{YangYuan} we have 
$$  \dot F^{\alpha_1,\frac{n-\lambda}{nq_1}}_{q_1,q_1}(\R^n) =  \mathcal{Q}^{\alpha_1,q_1}_{\frac{nq_1}{\lambda}}(\R^n) \quad\text{and}\quad  \dot F^{\alpha_2,\frac{n-\lambda}{nq_2}}_{q_2,q_2}(\R^n) =  \mathcal{Q}^{\alpha_2,q_2}_{\frac{nq_2}{\lambda}}(\R^n) \,,$$
with equivalent semi-norms. Hence, the conclusion follows from \eqref{crucialinj}.
\end{proof}

We are now ready to state the important corollary of Theorem \ref{prfthminj} used in the proof of Theorem~\ref{thmepsregholder}. Given $s\in(0,1)$, $p\in[1,\infty)$, and an open set $\Omega\subset\R^n$, we recall that the  Sobolev-Slobodeckij  $W^{s,p}(\Omega)$-semi-norm of a measurable function $f$ is given by  
\begin{equation}\label{defWspseminorm}
[f]_{W^{s,p}(\Omega)}:=\left(\iint_{\Omega\times\Omega}\frac{|f(x)-f(y)|^p}{|x-y|^{n+sp}}\,\de x\de y\right)^{1/p}\,.
\end{equation}

\begin{corollary}\label{coroinjQspaces}
Let $s\in(0,1)$ and $f\in L^1(\R^n)$ with compact support. If
\begin{equation}\label{condHsQspace}
\sup_{D_r(x)\subset\R^n} r^{2s-n}[f]^2_{H^s(D_r(x))}<+\infty\,,
\end{equation}
then,
$$\sup_{D_r(x)\subset\R^n} r^{\frac{2s-n}{3}}[f]^2_{W^{s/3,6}(D_r(x))}\leq C \sup_{D_r(x)\subset\R^n} r^{2s-n}[f]^2_{H^s(D_r(x))}\,,$$
for a constant $C=C(n,s)$. 
\end{corollary}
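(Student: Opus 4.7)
The strategy is simply to specialize Theorem~\ref{prfthminj} so that the semi-norms appearing in its conclusion match (after squaring) those in the statement of Corollary~\ref{coroinjQspaces}. The natural choice is
$$\alpha_2 = s,\quad q_2 = 2,\quad \alpha_1 = s/3,\quad q_1 = 6,\quad \lambda = 2s.$$
First I would verify the hypotheses of Theorem~\ref{prfthminj}: $0<s/3<s<1$ and $1\leq 2<6<\infty$ hold trivially, and the scaling condition \eqref{conditionembed} becomes $s/3 - 2s/6 = 0 = s - 2s/2$, so it is automatically satisfied. The requirement $0<\lambda\leq n$ reduces to $n\geq 2s$, which is precisely the regime in which this corollary is invoked (the case $n<2s$, namely $n=1$ and $s>1/2$, is handled instead by the direct Sobolev embedding $H^s(\R)\hookrightarrow C^{0,s-1/2}(\R)$, and no Morrey-type improvement is needed there).

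With these parameters, $nq_2/\lambda = n/s$ and $nq_1/\lambda = 3n/s$, so Theorem~\ref{prfthminj} yields the continuous embedding
$$\mathcal{Q}^{s,2}_{n/s}(\R^n) \hookrightarrow \mathcal{Q}^{s/3,6}_{3n/s}(\R^n).$$
Second, I would translate this embedding into pointwise bounds on semi-norms over balls by invoking the equivalent semi-norm $N_{\alpha,p,q}$ provided by Remark~\ref{equivseminormQsp}. Unwinding definitions,
$$N_{s,n/s,2}(f)^2 = \sup_{D_r(x)\subset\R^n} r^{2s-n}\,[f]^2_{H^s(D_r(x))},$$
since $2(n/(n/s) - n/2) = 2s - n$, while
$$N_{s/3,3n/s,6}(f)^2 = \sup_{D_r(x)\subset\R^n} r^{(2s-n)/3}\,[f]^2_{W^{s/3,6}(D_r(x))},$$
since $2(n/(3n/s) - n/6) = (2s-n)/3$. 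The embedding thus reads $N_{s/3,3n/s,6}(f)\leq C N_{s,n/s,2}(f)$, and squaring gives the estimate of the corollary.

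Third, to legitimately apply the embedding one needs $f\in\mathcal{Q}^{s,2}_{n/s}(\R^n)$, which requires both that $f$ define an element of $\mathscr{S}'_\infty(\R^n)$ and that its defining semi-norm be finite. The former follows from the assumption $f\in L^1(\R^n)$ (with compact support), which makes $f$ a tempered distribution; the latter is precisely the hypothesis \eqref{condHsQspace}, up to the equivalence between the dyadic-cube formulation and the ball formulation stated in Remark~\ref{equivseminormQsp}.

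\textbf{Main obstacle.} All the deep analytic content is packaged inside Theorem~\ref{prfthminj}; the only real work here is bookkeeping of exponents and checking the distributional hypothesis. The subtlety to be careful about is the difference between the supremum over dyadic cubes (in the definition of $\|\cdot\|_{\mathcal{Q}^{\alpha,q}_p}$) and the supremum over arbitrary balls (in $N_{\alpha,p,q}$), which is precisely what Remark~\ref{equivseminormQsp} reconciles. Once this equivalence is accepted, the verification is essentially algebraic.
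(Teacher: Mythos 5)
Your proof is correct and takes essentially the same route as the paper's: both arguments specialize Theorem~\ref{prfthminj} with $\alpha_2=s$, $q_2=2$, $\alpha_1=s/3$, $q_1=6$, $\lambda=2s$, check that $f\in L^1$ with compact support lies in $\mathscr S'_\infty(\R^n)$, and then translate the embedding $\mathcal{Q}^{s,2}_{n/s}\hookrightarrow\mathcal{Q}^{s/3,6}_{3n/s}$ into the ball-based semi-norm estimate via Remark~\ref{equivseminormQspaces}. Your explicit observation that the hypothesis $0<\lambda\leq n$ of Theorem~\ref{prfthminj} forces $n\geq 2s$ (a restriction the corollary's statement leaves implicit but which is satisfied wherever it is invoked, namely inside Proposition~\ref{energimprovprop}) is in fact a bit more careful than the paper's own write-up.
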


\begin{proof}
Since $f\in L^1(\R^n)$ has compact support, it clearly belongs to $\mathscr{S}^\prime_\infty(\R^n)$. Then, condition \eqref{condHsQspace} implies that $f\in\mathcal{Q}^{s,2}_{n/s}(\R^n)$. On the other hand, $\mathcal{Q}^{s,2}_{n/s}(\R^n)\hookrightarrow \mathcal{Q}^{s/3,6}_{3n/s}(\R^n)$ continuously by Theorem \ref{prfthminj}. Then the conclusion follows from the definition of $\mathcal{Q}^{s/3,6}_{3n/s}(\R^n)$ 
together with Remark \ref{equivseminormQsp}.
\end{proof}

\vskip10pt

\noindent{\it Acknowledgements.} V.M. is supported by the Agence Nationale de la Recherche
through the project ANR-14-CE25-0009-01 (MAToS). A.S. is supported by the Simons Fondation through the grant no. 579261.

%=======================
% BIBLIOGRAPHY AND INDEX
%=======================

\end{document}